\author{Geoffrey Powell}
\title{On the Passi and the Mal'cev functors}
\thanks{This work was partially supported by the ANR Project {\em ChroK}, {\tt ANR-16-CE40-0003}.}
\keywords{}
\subjclass[2000]{}
\newtheorem{THM}{Theorem}
\newtheorem{thm}{Theorem}[section]
\newtheorem{prop}[thm]{Proposition}
\newtheorem{cor}[thm]{Corollary}
\newtheorem{lem}[thm]{Lemma}
\theoremstyle{definition}
\newtheorem{defn}[thm]{Definition}
\newtheorem{exam}[thm]{Example}
\theoremstyle{remark}
\newtheorem{rem}[thm]{Remark}
\newtheorem{nota}[thm]{Notation}
\newcommand{\f}{\mathcal{F}}
\newcommand{\com}{\mathfrak{Com}}
\newcommand{\cat}{\mathbf{Cat}\hspace{1pt}}
\newcommand{\catlie}{\cat\lie}
\newcommand{\g}{\mathfrak{g}}
\renewcommand{\phi}{\varphi}
\renewcommand{\hom}{\mathrm{Hom}}
\newcommand{\sym}{\mathfrak{S}}
\newcommand{\gr}{\mathbf{gr}}
\newcommand{\kmod}{\mathtt{Mod}_\kring}
\newcommand{\cala}{\mathscr{A}}
\newcommand{\calc}{\mathcal{C}}
\newcommand{\cald}{\mathcal{D}}
\newcommand{\calm}{\mathcal{M}}
\newcommand{\nat}{\mathbb{N}}
\newcommand{\ab}{\mathbf{ab}}
\newcommand{\zed}{\mathbb{Z}}
\newcommand{\rat}{\mathbb{Q}}
\newcommand{\A}{\mathfrak{a}}
\newcommand{\op}{^\mathrm{op}}
\newcommand{\ob}{\mathrm{Ob}\hspace{2pt}}
\newcommand{\kring}{\mathbbm{k}}
\newcommand{\dash}{\hspace{-2pt}-\hspace{-2pt}}
\newcommand{\modules}{\mathrm{mod}}
\newcommand{\fb}{{\bm{\Sigma}}}
\newcommand{\ucom}{\com^u}
\newcommand{\lie}{\mathfrak{Lie}}
\newcommand{\uass}{\mathfrak{Ass}^u}
\newcommand{\ass}{\mathfrak{Ass}}
\newcommand{\qgr}{\mathsf{q}^\gr}
\newcommand{\tgr}{\tau_\zed^{\gr}}
\newcommand{\tgrop}{\tau_\zed^{\gr\op}}
\newcommand{\dgr}{\delta^{\gr}}
\newcommand{\dgrop}{\delta^{\gr\op}}
\newcommand{\id}{\mathrm{Id}}
\newcommand{\pbar}{\overline{P}}
\newcommand{\abel}{\mathsf{ab}}
\newcommand{\aug}{\mathcal{I}}
\newcommand{\grad}{\mathfrak{gr}}
\newcommand{\opd}{\mathcal{O}}
\newcommand{\ppd}{\mathcal{P}}
\newcommand{\lmod}[1][A]{{}_{#1}\mathtt{Mod}}
\newcommand{\rmod}[1][A]{\mathtt{Mod}_{#1}}
\newcommand{\bimod}[1][A]{{}_{#1}\mathtt{Mod}_{#1}}
\newcommand{\la}{\mathsf{lie}}
\newcommand{\liealg}{\mathrm{Lie}}
\newcommand{\cre}{\mathrm{cr}}
\newcommand{\qhat}[1]{\widehat{\mathsf{q}_{#1}^\gr}}
\newcommand{\fbcr}{\mathsf{grad}^\gr_\fb}
\newcommand{\propoly}{{\f_{\mathrm{pro}<\infty}(\gr)}}
\newcommand{\compl}{\mathfrak{c}}
\newcommand{\obar}{\overline{\otimes}}
\newcommand{\malcev}{\mathfrak{malcev}}
\newcommand{\lr}{\mathfrak{l}}
\newcommand{\prim}{\mathscr{P}}
\newcommand{\gplike}{\mathscr{G}}
\newcommand{\pbif}{P_{(-)}}
\newcommand{\pgrop}{\mathsf{p}}
\newcommand{\qgrop}{\mathsf{q}}
\newcommand{\pmap}{\mathbf{pmap}}
\newcommand{\filt}{\mathfrak{f}}
\newcommand{\triv}{\mathrm{triv}}
\newcommand{\finset}{\mathbf{Fin}}
\newcommand{\cyclie}{\mathfrak{CLie}}
\newcommand{\jac}{\mathfrak{MLie}}
\newcommand{\pjac}{\mathbb{P}}
\newcommand{\ajac}{\mathfrak{A}}
\newcommand{\chord}{\mathsf{Chord}}
\newcommand{\ogr}{\ {\widetilde{\otimes}_\gr}}
\numberwithin{equation}{section}
\begin{document}

\begin{abstract}
The author has shown that the category of analytic contravariant functors on $\mathbf{gr}$, the category of finitely-generated free groups, is equivalent to the category of left modules over the PROP associated to the Lie operad, working over $\mathbb{Q}$. This  exploited properties of the polynomial filtration of the category of contravariant functors on $\mathbf{gr}$.

The first purpose of this paper is to strengthen the corresponding result for {\em covariant} functors on $\mathbf{gr}$. This involves introducing the appropriate analogue of the category of analytic contravariant functors, namely a certain category of towers of polynomial functors on $\mathbf{gr}$. This category is abelian and has a natural symmetric monoidal structure induced by the usual tensor product of functors. Moreover, the projective generators of this category are described in terms of the {\em Mal'cev} functors that are introduced here. It follows that this category is equivalent to the category of right modules over the PROP associated to the Lie operad.  As a fundamental example, the Passi functors arising from the group ring functors are described explicitly. 

The theory is applied to consider bifunctors on $\mathbf{gr}$. This allows the $\mathbb{Q}$-linearization of the category of free groups to be described, up to polynomial filtration. 

As a stronger application of the theory, this is generalized to the Casimir PROP associated to the Lie operad, as studied by Hinich and Vaintrob. Up to polynomial filtration, this recovers the category $\mathbf{A}$ introduced by Habiro and Massuyeau in their study of bottom tangles in handlebodies. 
\end{abstract}

\maketitle
\section{Introduction}
\label{sect:intro}

For $G$ a discrete group and $\kring$ a unital commutative ring, one can consider the group ring $\kring G$ and its augmentation ideal $\aug G$. This leads to the filtration of $\kring G$ by powers of the augmentation ideal $\aug^n G$, for $n \in \nat$, and the quotient rings $\kring G / \aug^n G$; their underlying functors to $\kring$-modules are sometimes termed the Passi functors in reference to  \cite{MR537126}, for example. These constructions are natural with respect to the group $G$; here we usually restrict to the full subcategory $\gr$ of the category of groups with objects the finite-rank free groups. 

Forgetting the multiplicative structure, it is still interesting to ask what is the structure of these functors from $\gr$ to $\kring$-modules, denoted $\f (\gr)$. For example, the associated graded of the filtration of $G \mapsto \kring G$ by powers of the augmentation ideal is 
\[
\bigoplus_{n \in \nat} \A^{\otimes n}
\]
where $\A$ is the functor $G \mapsto \kring \otimes _\zed G_\abel$,  $G_\abel$ the abelianization of $G$. This filtration does not split, and understanding its structure is a key to analysing the structure of the category of functors from $\gr$ to $\kmod$. Moreover, the filtration can be constructed without using the multiplicative structure of $\kring G$ by using the {\em polynomial filtration}, based on a generalization of the notion of polynomial functor due to Eilenberg and Mac Lane \cite{MR65162}. Indeed, the functor $\A^{\otimes n}$ is a basic example of a polynomial functor of degree $n$.

One can go further, replacing the group ring functor by $P_\Gamma : G \mapsto \kring \hom_\gr (\Gamma, G)$, where $\Gamma$ is a finite rank free group. The notation $P_\Gamma$ reflects the fact that it is the projective that corepresents evaluation on $\Gamma$. Moreover, $P_\Gamma$ is contravariantly functorial with respect to $\Gamma$, leading to the consideration of the bifunctor $P_\bullet$, a functor from $\gr\op \times \gr$ to $\kring$-modules. One of the original motivations for this work was to analyse this bifunctor by exploiting the polynomial filtration. This is related to studying the $\kring$-linearization $\kring \gr$ of the category $\gr$, since $P_\bullet$ encodes the morphisms  of this category. 

Specializing to $\kring = \rat$, the question can be rephrased using the results of  \cite{2021arXiv211001934P}, which give a model for the category $\f_\omega (\gr\op)$ of {\em analytic} functors on $\gr\op$  in terms of the `infinitesimal structure', namely the action of the  $\rat$-linear category $\cat \lie$ associated to the Lie operad. Namely,  $\f_\omega (\gr\op)$ is equivalent to the category of left $\cat \lie$-modules (this theory is reviewed and revisited here in Section \ref{sect:catlie}).

Here the case of {\em covariant} functors is developed, improving upon the result established in \cite{2021arXiv211001934P}, which only treated polynomial functors with a finite composition series.  The solution is to work with pro-polynomial functors, exploiting properties of the polynomial filtration: the appropriate analogue of $\f_\omega (\gr\op)$ is the category $\propoly$ introduced in Section \ref{sect:propoly}. This has objects given by towers 
\[
\ldots \twoheadrightarrow F_d \twoheadrightarrow F_{d-1} \twoheadrightarrow \ldots \twoheadrightarrow F_0 \twoheadrightarrow F_{-1} =0, 
\]
where $F_d$ has polynomial degree $d$ and the structure map $F_d \rightarrow F_{d-1}$ induces an isomorphism $\qgr_{d-1} F_d \cong F_{d-1}$, where $\qgr_{d-1}$ is the $(d-1)$st Taylor functor that yields the universal quotient  of polynomial degree $d-1$.  

Moreover, $\propoly$ is shown to be abelian and is equipped with a symmetric monoidal structure $\obar$ induced by the tensor product on $\f (\gr)$. The category $\propoly$ can be considered as being `dual' to $\f_\omega (\gr\op)$; in particular, vector space duality induces an adjunction:
\[
\propoly \op \rightleftarrows \f_\omega (\gr\op).
\]

The category $\propoly$ is related to $\f (\gr)$ by the  completion functor $\compl : \propoly \rightarrow \f(\gr)$ given by passage to the inverse limit. This is right adjoint to the functor $\qgr_\bullet : \f (\gr) \rightarrow \propoly$ that sends a functor $F$ to the tower provided by its polynomial filtration, $(\qgr_\bullet F)$.

For example, taking $P_\zed : G \mapsto \rat G$ as above,  $\qgr_\bullet P_\zed$ encodes  the tower of Passi functors; applying the completion functor yields the completed group ring functor $G \mapsto \widehat{\rat G}$. The adjunction unit is the natural map 
$
\rat G \rightarrow \widehat{\rat G}
$, which  
 is injective when $G$ is a finite rank free group. For current purposes, it is  more natural to work at the level of $\propoly$, without passing to the completion, since one needs to retain the information on the polynomial filtration, anyway.

The key ingredient is that $\propoly$ has enough projectives; these are introduced in Section \ref{sect:malcev}. The fundamental object is $\malcev \in \ob \propoly$, which is shown to be the projective cover of $\A$ in $\propoly$ (see Theorem \ref{thm:proj_cover_malcev_s}). The functor $\malcev$ is the counterpart in $\propoly$ of  $\prim \widehat{\rat G}$, the primitives of the completed group ring, as considered by Quillen \cite[Appendix A]{MR258031}.  In particular, $\malcev$ is a Lie algebra in $\propoly$.

For each $s \in \nat$, one has $\malcev^{\obar s}\in \ob \propoly$. By Corollary \ref{cor:malcev_s_proj_generators}, $\{ \malcev^{\obar s} \ | \ s\in \nat\}$ is a set of projective generators for $\propoly$. Moreover, using the Lie algebra structure of $\malcev$, these assemble  to a left $\cat \lie$-module, denoted by $\underline{\malcev}$. This structure encodes {\em all} the morphisms between the above projective generators, by Proposition \ref{prop:malcev_full_subcat}.

Then, denoting the category of right $\cat\lie$-modules by $\rmod[\cat\lie]$, one has the following:

\begin{THM}
(Theorem  \ref{thm:equiv_propoly_modcatlie}.)
The functor $\hom_{\propoly} (\underline{\malcev}, -) $ induces an equivalence of categories
\[
\hom_{\propoly} (\underline{\malcev}, -) \ : \ 
\propoly \stackrel{\cong}{\rightarrow} \rmod[\catlie].
\]

The inverse equivalence is given by $
- \otimes _{\cat \lie} \underline{\malcev} \ : \ \rmod[\catlie] \rightarrow \propoly$.
\end{THM}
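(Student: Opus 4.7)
The strategy is to apply the standard Morita-type recognition principle: an abelian category equipped with a small set of compact projective generators whose full endomorphism linear subcategory is $\calm$ is equivalent, via $\hom$, to the category of right $\calm$-modules. Here Corollary \ref{cor:malcev_s_proj_generators} provides $\{\malcev^{\obar s}\mid s \in \nat\}$ as a set of projective generators of $\propoly$, and Proposition \ref{prop:malcev_full_subcat} identifies the full subcategory of $\propoly$ on these generators with $\cat\lie$, the identification being precisely the left $\cat\lie$-module structure on $\underline{\malcev}$. Granted these inputs, the theorem is essentially formal.

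Concretely, the first step is to set up the adjunction. The left $\cat\lie$-action on $\underline{\malcev}$ makes $\hom_{\propoly}(\underline{\malcev}, F)$ naturally a right $\cat\lie$-module, so $\hom_{\propoly}(\underline{\malcev}, -)$ takes values in $\rmod[\catlie]$. Its putative left adjoint $- \otimes_{\cat \lie} \underline{\malcev}$ is the coend $\int^{s \in \cat\lie} M(s) \otimes \malcev^{\obar s}$, constructed from direct sums of the projective generators and coequalizers in $\propoly$; the adjunction is then a standard enriched tensor-hom pairing. On the representable right module $\cat\lie(-, s)$ the coend Yoneda lemma gives $\cat\lie(-, s) \otimes_{\cat\lie} \underline{\malcev} \cong \malcev^{\obar s}$, while Proposition \ref{prop:malcev_full_subcat} yields $\hom_{\propoly}(\underline{\malcev}, \malcev^{\obar s}) \cong \cat\lie(-, s)$. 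Hence the unit and the counit are isomorphisms on the respective generating families.

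To pass to arbitrary objects, for any $F \in \propoly$ take a presentation $\bigoplus_i \malcev^{\obar s_i} \to \bigoplus_j \malcev^{\obar t_j} \twoheadrightarrow F \to 0$ supplied by the generation property, and for any $M \in \rmod[\catlie]$ take a free presentation by representables; exactness of $\hom_{\propoly}(\underline{\malcev}, -)$ on projectives, right exactness of $-\otimes_{\cat\lie}\underline{\malcev}$, and the five lemma propagate the isomorphism from the generating families to all objects, yielding both the equivalence and the identification of the inverse. The main obstacle lies not in the formal Morita assembly but in the two cited inputs, particularly the precise identification of $\hom_{\propoly}(\malcev^{\obar s}, \malcev^{\obar t})$ with $\cat\lie(s,t)$, together with the compactness assertion that $\hom_{\propoly}(\malcev^{\obar s}, -)$ commutes with the direct sums appearing in the presentations. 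This last point is the delicate one to check in $\propoly$ because its objects are towers rather than single functors; however, it can be settled by examining each level of the tower, where one is reduced to polynomial functors on $\gr$ for which such sums are well-behaved.
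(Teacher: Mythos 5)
Your proposal is correct and follows essentially the same route as the paper: both identify $\{\malcev^{\obar s}\}$ as a family of small projective generators whose full subcategory is $\cat\lie$ and then invoke the Morita-type recognition theorem (the paper cites Freyd's theorem directly via Mitchell, while you unpack the adjunction/coend mechanism it encodes). The key technical input you flag — that $\hom_{\propoly}(\malcev^{\obar s},-)$ commutes with colimits — is handled in the paper precisely via the corepresentability from Theorem \ref{thm:proj_cover_malcev_s}: this functor is $G_\bullet \mapsto \cre_s G_s$, which manifestly commutes with colimits, matching the level-wise reasoning you sketch.
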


This is the counterpart of the result of \cite{2021arXiv211001934P} establishing the equivalence between $\f_\omega (\gr\op)$ and the category $\lmod[\cat\lie]$ of left $\cat \lie$-modules. The latter result is reformulated here, based on the following observation: starting from the Lie operad $\lie$, one can form the universal enveloping algebra $U \lie$, which is a unital associative algebra in right $\cat \lie$-modules. This has the structure of a cocommutative Hopf algebra in right $\cat \lie$-modules, so that one can form the exponential functor $\Phi U\lie$, a functor from $\gr\op$ to $\rmod [\cat \lie]$. The equivalence can be restated as:

\begin{THM}
(Theorem \ref{thm:analytic_grop}.)
The functor $\Phi U \lie \otimes_{\cat \lie} - $ induces an equivalence of categories
\[
\Phi U \lie \otimes_{\cat \lie} - : \lmod[\cat \lie] \rightarrow \f_\omega (\gr\op).
\] 
\end{THM}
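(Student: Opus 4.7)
The plan is to upgrade the abstract equivalence $\f_\omega (\gr\op) \simeq \lmod[\cat\lie]$ from \cite{2021arXiv211001934P} (recalled in Section \ref{sect:catlie}) to the explicit form $\Phi U\lie \otimes_{\cat \lie} -$. First I would observe that, being a relative tensor product over a linear category, the functor is automatically a left adjoint and hence preserves all colimits; its candidate right adjoint is $\hom_{\f_\omega(\gr\op)}(\Phi U\lie, -)$, which takes values in $\lmod[\cat\lie]$ by virtue of the right $\cat\lie$-module structure on $\Phi U\lie$.

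Next I would analyse the adjunction on projective generators. The category $\lmod[\cat\lie]$ is generated by the representable free modules $\cat\lie(n, -)$ for $n \in \nat$, for which Yoneda collapses the tensor product to
\[
\Phi U\lie \otimes_{\cat \lie} \cat\lie(n, -) \;\cong\; \Phi U\lie(-)(n),
\]
i.e., one evaluates the right $\cat\lie$-module $\Phi U\lie (G)$ at the object $n$ for each $G \in \gr$. The decisive step is to identify this with the analytic projective generator of $\f_\omega(\gr\op)$ of polynomial degree $n$ that corresponds under the prior equivalence to $\cat\lie(n, -)$. This should follow from the definition of $\Phi$ applied to the cocommutative Hopf algebra $U\lie$: the internal grading of $U\lie$ (by Lie-bracket length in its PBW monomial basis) governs the polynomial filtration of the resulting contravariant functor on $\gr$, matching the canonical polynomial filtration on the generators of $\f_\omega(\gr\op)$.

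Granted this identification, fully faithfulness on the generating set is automatic: morphisms on the source side are $\hom_{\lmod[\cat\lie]}(\cat\lie(n,-), \cat\lie(m,-)) = \cat\lie(n,m)$ by Yoneda, and the corresponding hom-spaces on the target side reproduce exactly this by the abstract equivalence of \cite{2021arXiv211001934P}. Essential surjectivity, and hence the full equivalence, then follows from the standard principle that a colimit-preserving functor between cocomplete abelian categories which restricts to an equivalence on a set of compact projective generators is itself an equivalence.

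The principal obstacle I expect is the key identification --- recognising $\Phi U\lie(-)(n)$ as the canonical polynomial-of-degree-$n$ projective generator of $\f_\omega(\gr\op)$. This requires a careful unpacking of the exponential functor construction for a cocommutative Hopf algebra in right $\cat\lie$-modules, combined with the PBW theorem, so as to match the induced grading with the polynomial truncations of the standard generators of $\f_\omega(\gr\op)$. Once this compatibility on generators is established, the remainder of the argument is purely formal.
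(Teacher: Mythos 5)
Your plan is essentially the route taken in the paper: the key step you single out --- identifying $\Phi U\lie(-)(n)$ with the degree-$n$ projective generator of $\f_\omega(\gr\op)$ --- is exactly what the paper establishes in Propositions \ref{prop:U_right_cat_Lie_modules} and \ref{prop:Phi_U_L_model}, after which Theorem \ref{thm:analytic_grop} is presented as a reformulation of the main result of \cite{2021arXiv211001934P}. One small imprecision worth flagging: the polynomial degree $n$ of $(\Phi U\lie)(n)$ is not controlled by an internal ``Lie-bracket length'' grading of $U\lie$ --- PBW enters only to give the isomorphism $U\lie \cong \uass$ of algebras in right $\cat\lie$-modules, hence $\Phi U\lie \cong \cat\uass$, and the fact that $\cat\uass(n,-)$ is polynomial of degree $n$ on $\gr\op$ and is the projective cover of $(\A^\sharp)^{\otimes n}$ is a nontrivial input from \cite{2021arXiv211001934P}, not a formal consequence of PBW.
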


These results are then combined to study bifunctors on $\gr$, i.e., functors from $\gr\op \times \gr$ to $\rat$-vector spaces. The category of bifunctors is denoted $\f (\gr\op \times \gr)$; it is equivalent to $\f (\gr\op; \f (\gr))$, the category of functors from $\gr\op$ to $\f (\gr)$.  

Clearly one cannot expect to recover {\em all} bifunctors by the current techniques derived from polynomial approximation. Instead, one uses $\f _\omega (\gr\op; \propoly)$, the category of analytic functors from $\gr\op$ to $\propoly$. This category will (abusively) be referred to as the category of (pro)bifunctors;  it contains as a full subcategory the category of bifunctors that are polynomial  in the appropriate sense. Moreover,  it comes equipped with a symmetric monoidal structure induced by $\obar$ on $\propoly$.

The category of $\cat \lie$-bimodules is written $\bimod[\cat \lie]$; this is equipped with a symmetric monoidal structure based on the convolution product for $\rat\fb$-bimodules (where $\fb$ is the category of finite sets and bijections). The main result of Section \ref{sect:bifunctors} is:

\begin{THM}
\label{THM:bimod}
(Theorem 
\ref{thm:bifunctors_sym_monoidal}.)
The functor 
\[
\Phi U \lie \otimes_{\cat \lie}( - )\otimes_{\cat \lie} \underline{\malcev} \ : \ 
\bimod[\cat \lie] \stackrel{\cong}{\rightarrow} \f_\omega (\gr\op; \propoly).
\]
is a symmetric monoidal equivalence.
\end{THM}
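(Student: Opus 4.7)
The plan is to obtain Theorem \ref{THM:bimod} by combining Theorem \ref{thm:equiv_propoly_modcatlie} (applied pointwise) with a coefficient-enhanced version of Theorem \ref{thm:analytic_grop}, and then to verify the symmetric monoidal compatibility. Applying $\hom_{\propoly}(\underline{\malcev}, -) : \propoly \stackrel{\cong}{\rightarrow} \rmod[\cat\lie]$ pointwise in the $\gr\op$ variable yields an equivalence
\[
\f_\omega(\gr\op; \propoly) \stackrel{\cong}{\longrightarrow} \f_\omega(\gr\op; \rmod[\cat\lie]),
\]
with inverse $- \otimes_{\cat\lie} \underline{\malcev}$ applied pointwise. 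Analyticity is preserved because the equivalence is $\rat$-linear and exact, so the polynomial filtration on $\gr\op$-valued functors is detected coordinate-wise and is unchanged by transport.

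The next step is to upgrade Theorem \ref{thm:analytic_grop} from coefficients in $\rat$-vector spaces to coefficients in $\rmod[\cat\lie]$. The exponential functor $\Phi U \lie$ and the relative tensor product $\otimes_{\cat\lie}$ make sense in any $\rat$-linear cocomplete symmetric monoidal category, and the proof strategy of Theorem \ref{thm:analytic_grop}---identifying the projective generators, then computing morphism spaces via the cocommutative Hopf-algebra structure of $U \lie$---transfers to the internal setting. This yields
\[
\Phi U \lie \otimes_{\cat\lie} - \ : \ \bimod[\cat\lie] \stackrel{\cong}{\longrightarrow} \f_\omega(\gr\op; \rmod[\cat\lie]),
\]
where $\cat\lie$-bimodules are viewed as left $\cat\lie$-modules internal to $\rmod[\cat\lie]$. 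Composing with the first step and using associativity of the relative tensor products identifies the composite with $\Phi U \lie \otimes_{\cat\lie} (-) \otimes_{\cat\lie} \underline{\malcev}$, giving the underlying abelian equivalence.

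The symmetric monoidal compatibility, which I expect to be the main point of difficulty, splits into two halves. The monoidal structure on $\bimod[\cat\lie]$ is the convolution descending from the external convolution on $\rat\fb$-bimodules. On the $\gr\op$ side, since $U\lie$ is a cocommutative Hopf algebra in $\rmod[\cat\lie]$, its exponential $\Phi U \lie$ is a symmetric monoidal functor from $\gr\op$ to $\rmod[\cat\lie]$, which makes $\Phi U\lie \otimes_{\cat\lie} (-)$ symmetric monoidal in the left variable. On the $\propoly$ side, the Lie-algebra structure on $\underline{\malcev}$ (together with Corollary \ref{cor:malcev_s_proj_generators}) provides canonical isomorphisms $\malcev^{\obar s} \obar \malcev^{\obar t} \cong \malcev^{\obar (s+t)}$, which make $(-) \otimes_{\cat\lie} \underline{\malcev}$ symmetric monoidal in the right variable. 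Combining these two enrichments, bimodule convolution on $\bimod[\cat\lie]$ translates precisely to $\obar$ on $\f_\omega(\gr\op; \propoly)$.

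The principal obstacle is the base-changed Theorem \ref{thm:analytic_grop}: verifying that the identification of projective generators, the computation of morphism spaces, and the preservation of analyticity go through with coefficients in a general $\rat$-linear cocomplete symmetric monoidal category rather than $\rat$-vector spaces. Once this is in place, the symmetric monoidal compatibility reduces to assembling the Hopf-algebra and Lie-algebra structures already available and checking their coherence across the relative tensor product.
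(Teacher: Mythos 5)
Your overall plan matches the paper's: Theorem \ref{thm:bifunctors_sym_monoidal} is proved by decomposing the functor into the two one-sided equivalences from Theorem \ref{thm:analytic_grop} and Theorem \ref{thm:equiv_propoly_modcatlie}, establishing the categorical equivalence in Theorem \ref{thm:equivalence_bimodules}, and then arguing that each of the two pieces is symmetric monoidal. The minor difference---you transport $\hom_\propoly(\underline{\malcev},-)$ pointwise on the target side and speak of ``base change'' of Theorem \ref{thm:analytic_grop}, while the paper post-composes with $-\otimes_{\cat\lie}\underline{\malcev}$ on the source side and simply observes that $\Phi U\lie\otimes_{\cat\lie}-$ carries the extra structure along---is cosmetic.

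However, the justifications you offer for each half of the symmetric monoidality claim do not hold up. For the $\gr\op$ half, the fact that $\Phi U \lie$ is an exponential (hence symmetric monoidal) functor $\gr\op\to\rmod[\cat\lie]$ does \emph{not} imply that $\Phi U\lie\otimes_{\cat\lie}-$ is symmetric monoidal from $(\lmod[\cat\lie],\odot)$ to $(\f_\omega(\gr\op),\otimes)$: what needs to be shown is the natural isomorphism
\[
\Phi U\lie\otimes_{\cat\lie}(M\odot N) \cong (\Phi U\lie\otimes_{\cat\lie}M)\otimes(\Phi U\lie\otimes_{\cat\lie}N),
\]
which is a genuine computation with the Hopf algebra $U\lie$ and the convolution product, not a formal consequence of exponentiality. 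The paper imports this from \cite{2021arXiv211001934P}. For the $\propoly$ half, the isomorphisms $\malcev^{\obar s}\obar\malcev^{\obar t}\cong\malcev^{\obar(s+t)}$ are just associativity of $\obar$ (the Lie algebra structure plays no role there), and they do not by themselves establish that $-\otimes_{\cat\lie}\underline{\malcev}$ takes the convolution product on $\rmod[\cat\lie]$ to $\obar$ on $\propoly$; one also has to verify compatibility with the $\cat\lie$-actions on both sides. That verification is the content of Theorem \ref{thm:hom_underline_malcev_sym_monoidal}, which you do not invoke (you cite Corollary \ref{cor:malcev_s_proj_generators} instead, which addresses projective generation, not monoidality). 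In short, you have the right decomposition and have correctly located where the difficulty lies, but the explanations you substitute for the two symmetric monoidality inputs are non-sequiturs; the proof needs those inputs exactly as the paper supplies them.
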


This is applied in Section \ref{sect:model_qgr_pbif} to study the $\rat $-linearization  $\rat \gr$ of $\gr$. More precisely, in Section \ref{sect:polyq}, it is shown that the polynomial filtration induces a tower of categories under $\rat \gr$ (see Theorem \ref{thm:tower}):
\[
\xymatrix{
\rat \gr 
\ar@{.>}[d]
\ar[rd]
\ar[rrd]
\ar@{.>}[rrrd]
\ar[rrrrd]
\\
\ldots 
\ar[r]
&
\qgr_d \rat \gr
\ar[r]
&
\qgr_{d-1} \rat \gr
\ar[r]
&
\ldots
\ar[r]
&
\qgr_0 \rat \gr.
}
\]

Likewise, there is a tower of $\rat$-linear categories under $\cat \lie$:

\[
\xymatrix{
\cat \lie
\ar@{.>}[d]
\ar[rd]
\ar[rrd]
\ar@{.>}[rrrd]
\ar[rrrrd]
\\
\ldots
\ar[r]
&
\cat^{\leq d+1}  \lie
\ar[r]
&
\cat^{\leq d}  \lie 
\ar[r]
&\ldots 
\ar[r]
&
\cat^{\leq 0} \lie.
}
\]

These are related:

\begin{THM}
\label{THM:tower_iso}
(Theorem \ref{thm:tower_isomorphism}.)
Under the equivalence of Theorem \ref{THM:bimod}, the tower 
\[
\ldots \rightarrow \cat ^{\leq d} \lie  \rightarrow \cat^{\leq d-1} \lie \rightarrow 
\ldots 
\rightarrow 
\cat^{\leq 0} \lie
\]
corresponds to the tower 
\[
\ldots \rightarrow \qgr_{d+1}\rat \gr \rightarrow \qgr_d \rat \gr \rightarrow \ldots \rightarrow \qgr_0 \rat \gr.
\]
\end{THM}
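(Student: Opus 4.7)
The plan is to factor the claim through two identifications. First, the regular bimodule $\cat\lie$ should correspond under the equivalence of Theorem \ref{THM:bimod} to the probifunctor encoding $\rat\gr$ up to its polynomial filtration. Second, the canonical bimodule quotient $\cat\lie \twoheadrightarrow \cat^{\leq d}\lie$ should correspond to the canonical polynomial truncation. The tower compatibility is then automatic from the functoriality of the equivalence.

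For the first identification, I would compute the image of $\cat\lie$ under the equivalence:
\[
\Phi U\lie \otimes_{\cat\lie} \cat\lie \otimes_{\cat\lie} \underline{\malcev} \;\cong\; \Phi U\lie \otimes_{\cat\lie}\underline{\malcev},
\]
and verify that this is naturally isomorphic, as an object of $\f_\omega(\gr\op;\propoly)$, to the probifunctor $(\Gamma, -) \mapsto \qgr_\bullet P_\Gamma$. Evaluating at $\Gamma \in \gr\op$, the right $\cat\lie$-module $\Phi U\lie(\Gamma)$ corresponds under Theorem \ref{thm:equiv_propoly_modcatlie} to some object of $\propoly$; the assertion is that this object is $\qgr_\bullet P_\Gamma$, with the contravariant $\Gamma$-dependence recovered from $\Phi U\lie$ as in Theorem \ref{thm:analytic_grop}. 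This reflects the Mal'cev picture: $U\lie$ is the cocommutative Hopf algebra whose primitives are $\lie$, while $\malcev$ plays the role of primitives of the completed group ring (in the spirit of Quillen's Appendix A).

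For the second identification, the key point is that the projective generators $\malcev^{\obar s}$ of $\propoly$ correspond, under Theorem \ref{thm:equiv_propoly_modcatlie}, to the arity-$s$ components of right $\cat\lie$-modules. Consequently, the arity filtration on a right $\cat\lie$-module matches the polynomial filtration on the associated object of $\propoly$: truncating arity at $\leq d$ on the right corresponds to applying $\qgr_d$. The symmetric statement applies on the left factor via the polynomial-degree correspondence on $\f_\omega(\gr\op)$. Since $\cat^{\leq d}\lie$ realizes the arity-$\leq d$ truncation of $\cat\lie$ as a bimodule, applying the equivalence yields the probifunctor of morphisms of $\qgr_d \rat\gr$.

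The main obstacle is the careful matching of the arity filtration with the polynomial filtration, coherently with respect to both $\cat\lie$-actions. Once this is established, the tower $\cat^{\leq d}\lie \to \cat^{\leq d-1}\lie$ corresponds to $\qgr_d \rat\gr \to \qgr_{d-1}\rat\gr$ (up to indexing convention), and compatibility with the projections from $\cat\lie$ and $\rat\gr$ follows formally from the naturality of the equivalence and the universal properties of the Taylor tower.
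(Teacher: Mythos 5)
Your object-level identification is correct and follows essentially the same route as the paper: first showing $\cat\lie$ maps under $\Phi U\lie\otimes_{\cat\lie}(-)\otimes_{\cat\lie}\underline{\malcev}$ to $\qgr_\bullet\pbif$ (Theorem \ref{thm:equiv_Umalcev_PhiUlie}), then matching the arity truncation $\cat\lie\twoheadrightarrow\cat^{\leq d}\lie$ with $\qgr_\bullet\pbif\twoheadrightarrow\qgr_\bullet(\qgr_d\pbif)$ via Proposition \ref{prop:compare_filtrations}. This is precisely the content of Proposition \ref{prop:catlie_trunc_d}.

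However there is a genuine gap. The theorem asserts an isomorphism of {\em categories} (equivalently, of monoids), not just of bimodules or probifunctors: the composition $\cat^{\leq d}\lie\otimes_{\cat\lie}\cat^{\leq d}\lie\to\cat^{\leq d}\lie$ must match the composition maps in $\qgr_d\rat\gr$. Your final sentence, ``applying the equivalence yields the probifunctor of morphisms of $\qgr_d\rat\gr$,'' slides over this: an isomorphism of the underlying objects does not by itself transport the monoid structure. The missing ingredient is the monoidality of the equivalence for the {\em composition} product, namely Theorem \ref{thm:monoidal_otimes_catlie_gr}, which establishes that $\Phi U\lie\otimes_{\cat\lie}(-)\otimes_{\cat\lie}\underline{\malcev}$ is monoidal from $(\bimod[\cat\lie],\otimes_{\cat\lie}\op,\cat\lie)$ to $(\f_\omega(\gr\op;\propoly),\ogr,\qgr_\bullet\pbif)$; this rests on the bimodule isomorphism $\cat\lie\cong\Phi U\lie\ogr\underline{\malcev}$ of Theorem \ref{thm:otimes_gr_PhiUL_malcev}, which is not a formality. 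Without invoking this, the passage from ``the bimodule corresponds to this bifunctor'' to ``the category corresponds to this category'' is unjustified. A related point you omit is the variance: the monoidality holds for $\otimes_{\cat\lie}\op$, so under the equivalence $\cat^{\leq d}\lie$ corresponds to $(\qgr_d\rat\gr)\op$, not $\qgr_d\rat\gr$; ``paying attention to the variance'' is part of the proof, not just bookkeeping.
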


This is the starting point for analysing the more general situation in which $\cat \lie$ is replaced by the PROP $\pjac$ that encodes Casimir Lie algebras, as described in the work of Hinich and Vaintrob \cite{MR1913297}; this relies on the fact that the operad $\lie$ has a cyclic structure. The PROP $\pjac$ is $\nat$-graded and $\pjac_0$ identifies with $\cat \lie$; this means that each $\pjac_n$ has a natural $\cat \lie$-bimodule structure and the composition in $\pjac$ is compatible with these bimodule structures. 

One can thus form the associated (pro)bifunctors:
\[
\ajac(n) := \Phi U \lie \otimes _{\cat \lie} \pjac_n \otimes _{\cat \lie} \underline{\malcev}
\]
in $\f_\omega (\gr\op; \propoly)$, for each $n \in\nat$. 

Then, applying the general theory developed here,  the composition in $\pjac$ induces natural transformations:
\[
\ajac(n) \otimes_\gr \ajac (m) \rightarrow \ajac (m+n)
\]
for $m,n \in \nat$. Unlike the situation of Theorem \ref{THM:tower_iso}, this cannot be analysed one polynomial degree at a time; the more general framework proposed here is necessary. 

These structures are the basis of the following result:

\begin{THM}
(Theorem \ref{thm:ajac}.)
The (pro)bifunctors $\ajac(*)$ have the structure of a unital, $\nat$-graded associative monoid in 
$\f_\omega (\gr\op; \propoly)$, with $\ajac(0)$ isomorphic to $(\qgr_\bullet \rat \gr)\op$ considered as a monoid.
\end{THM}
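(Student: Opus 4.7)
The plan is to transport the monoid structure of $\pjac$ across the symmetric monoidal equivalence of Theorem~\ref{thm:bifunctors_sym_monoidal}, then identify the degree-zero piece using Theorem~\ref{thm:tower_isomorphism}.

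First, I would verify that $(\pjac_n)_{n \in \nat}$ carries the structure of a unital $\nat$-graded associative monoid in $\bimod[\cat\lie]$ equipped with the convolution product used in Theorem~\ref{thm:bifunctors_sym_monoidal}. The multiplication comes from the horizontal composition of the PROP $\pjac$: since $\pjac$ is $\nat$-graded and horizontal composition adds grading, one obtains maps $\pjac_n \otimes \pjac_m \to \pjac_{n+m}$. That these descend to $\cat\lie$-bimodule maps with respect to the convolution product follows from the fact that the $\cat\lie$-bimodule structure on each $\pjac_n$ is induced by PROP composition with $\pjac_0 \cong \cat\lie$, and that, by interchange between horizontal and vertical composition in the PROP, left and right $\cat\lie$ actions commute with the horizontal composition. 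Associativity and unitality of the resulting monoid follow directly from the PROP axioms, with the unit being the identity $\cat\lie \cong \pjac_0$ in degree $0$.

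Next, I would apply the symmetric monoidal functor
\[
\Phi U\lie \otimes_{\cat\lie}(-) \otimes_{\cat\lie} \underline{\malcev} \; : \; \bimod[\cat\lie] \stackrel{\cong}{\longrightarrow} \f_\omega(\gr\op; \propoly).
\]
Since symmetric monoidal functors preserve associative monoids, the image of $\pjac_*$ is a unital $\nat$-graded associative monoid in $\f_\omega(\gr\op; \propoly)$ whose underlying graded sequence is $(\ajac(n))_{n \in \nat}$, with multiplication realized precisely by the natural transformations $\ajac(n) \otimes_\gr \ajac(m) \to \ajac(m+n)$ displayed in the excerpt.

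For the identification of $\ajac(0)$, use $\pjac_0 \cong \cat\lie$ as a $\cat\lie$-bimodule, so that
\[
\ajac(0) = \Phi U\lie \otimes_{\cat\lie} \cat\lie \otimes_{\cat\lie} \underline{\malcev} \cong \Phi U\lie \otimes_{\cat\lie} \underline{\malcev}.
\]
Under the equivalence of Theorem~\ref{thm:bifunctors_sym_monoidal}, the unit $\cat\lie$ of $\bimod[\cat\lie]$ corresponds to the (pro)bifunctor that assembles the tower $(\cat^{\leq d}\lie)$; by Theorem~\ref{thm:tower_isomorphism} this tower matches $(\qgr_d\rat\gr)$, yielding $\ajac(0) \cong (\qgr_\bullet\rat\gr)\op$ as a (pro)bifunctor. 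Under this identification, the monoid structure induced by vertical composition in $\pjac_0 = \cat\lie$ corresponds to the composition law in $\rat\gr$, read on the opposite side, giving the claimed isomorphism of monoids.

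The main obstacle is verifying cleanly that horizontal composition in $\pjac$ factors through the coend defining the convolution product of $\cat\lie$-bimodules, so that $\pjac_*$ is genuinely a monoid in $\bimod[\cat\lie]$ rather than merely in graded $\fb$-bimodules; this is the interchange argument combined with bookkeeping of the $\cat\lie$-actions. A secondary point is tracking the variance that produces the opposite in $(\qgr_\bullet\rat\gr)\op$, which one should match against the conventions fixed by the definitions of $\Phi U\lie$ and $\underline{\malcev}$ used in Theorem~\ref{thm:bifunctors_sym_monoidal}.
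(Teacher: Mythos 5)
Your proposal hinges on using the \emph{horizontal} composition of the PROP $\pjac$ together with the convolution product $\circledcirc$ on $\bimod[\cat\lie]$, transported via the \emph{symmetric} monoidal equivalence of Theorem~\ref{thm:bifunctors_sym_monoidal}. This is the wrong composition and the wrong monoidal structure, and the paper makes this point explicitly: Proposition~\ref{prop:horizontal} states that while horizontal composition $\pjac_m \circledcirc \pjac_n \to \pjac_{m+n}$ is a map of \emph{right} $\cat\lie$-modules, it is \emph{not} a map of \emph{left} $\cat\lie$-modules when both $m,n>0$. The interchange argument you invoke does not rescue this: post-composition with an element of $\cat\lie(2,1)$ can mix outputs of the two factors, which is not captured by the convolution of the two bimodules, so $\pjac_*$ is not a monoid in $(\bimod[\cat\lie], \circledcirc, \rat)$. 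Moreover, even if this were fixed, $\circledcirc$ corresponds under Theorem~\ref{thm:bifunctors_sym_monoidal} to $\obar$ (the pointwise tensor product) on $\f_\omega(\gr\op;\propoly)$, whereas the monoid structure asserted in the theorem is with respect to $\ogr$, the tensor product over $\gr$.

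The correct route, which the paper takes, uses the \emph{vertical} composition of $\pjac$, giving $\cat\lie$-bimodule maps $\pjac_m \otimes_{\cat\lie} \pjac_n \to \pjac_{m+n}$ with respect to the monoidal structure $(\bimod[\cat\lie], \otimes_{\cat\lie}, \cat\lie)$. One then applies Theorem~\ref{thm:monoidal_otimes_catlie_gr}, which asserts that $\Phi U\lie \otimes_{\cat\lie}(-)\otimes_{\cat\lie}\underline{\malcev}$ is monoidal for $(\bimod[\cat\lie], \otimes_{\cat\lie}\op, \rat)$ and $(\f_\omega(\gr\op;\propoly), \ogr, \qgr_\bullet \pbif)$. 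This is where the $\ogr$ structure and the opposite come from (the opposite is built into $\otimes_{\cat\lie}\op$, per Remark~\ref{rem:order_otimes_catlie}), and the identification $\ajac(0) \cong (\qgr_\bullet \rat\gr)\op$ as a monoid is then Theorem~\ref{thm:tower_isomorphism}. Your degree-zero identification is essentially right, but it needs to be transported through Theorem~\ref{thm:monoidal_otimes_catlie_gr} rather than Theorem~\ref{thm:bifunctors_sym_monoidal} to match the monoidal structures.
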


Part of the interest of this result is that $\ajac(*)$ is related to Habiro and Massuyeau's category $\mathbf{A}$ \cite{MR4321214}, introduced in their work on bottom tangles in handlebodies. By definition, $\ob \mathbf{A}= \nat$  and $\mathbf{A}$ is $\nat$-graded. The category $\mathbf{A}$ is equipped with an embedding $\rat \gr\op \hookrightarrow \mathbf{A}$ that sends the free group $\zed^{\star r}$ to the object $r \in \nat$ and induces an equivalence $\rat \gr\op \cong \mathbf{A}_0$. In particular, for each grading $n \in \nat$, $\mathbf{A}_n(-,-)$ is a bifunctor in $\f (\gr\op \times \gr) \cong \f (\gr\op ; \f (\gr))$.

Applying the functor $\qgr_\bullet$ gives 
$
\qgr_\bullet \mathbf{A}_n (-, -) \in \f _\omega(\gr\op; \propoly).
$ 
 The composition of $\mathbf{A}$ induces an associative composition operation:
\[
\qgr_\bullet \mathbf{A} \otimes_\gr \qgr_\bullet \mathbf{A} 
\rightarrow 
\qgr_\bullet \mathbf{A}
\] 
making $\qgr_\bullet \mathbf{A}$ into an associative monoid with unit $\qgr_\bullet \mathbf{A}_0 \cong (\qgr_\bullet \rat \gr)\op$. 

A  proof of the following  is sketched in Section \ref{subsect:sketchy}:

\begin{THM}
(Theorem \ref{thm:ajac_versus_A}.)
\label{THM:ajac_HM}
There is an isomorphism $
\ajac (*) \cong \qgr_\bullet \mathbf{A}
$ of $\nat$-graded associative monoids in 
$\f_\omega (\gr\op; \propoly)$.
\end{THM}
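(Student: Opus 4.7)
The plan is to exploit the symmetric monoidal equivalence of Theorem \ref{THM:bimod} to transfer the statement into a comparison of $\nat$-graded associative monoids in $\bimod[\cat\lie]$. By construction, the object $\ajac(*)$ is the image under this equivalence of the $\nat$-graded $\cat\lie$-bimodule $\{\pjac_n\}_{n \in \nat}$ equipped with its composition in the Casimir PROP of Hinich and Vaintrob. Hence it suffices to identify the preimage of $\qgr_\bullet \mathbf{A}$ under the inverse equivalence with $\pjac$, both as an $\nat$-graded $\cat\lie$-bimodule and as a monoid.

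The degree-zero case is settled by Theorem \ref{THM:tower_iso}. The hypothesis $\mathbf{A}_0 \cong \rat\gr\op$ gives $\qgr_\bullet \mathbf{A}_0 \cong (\qgr_\bullet \rat\gr)\op$, which corresponds under the equivalence to $\cat\lie = \pjac_0$. This also identifies the units of the two monoid structures and pins down the $\cat\lie$-bimodule actions on all higher gradings, since the left and right actions are determined by pre- and post-composition with the unit.

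For $n \geq 1$, the strategy is to match generators and verify relations, grading by grading. Both $\pjac_n$ and the preimage of $\qgr_\bullet \mathbf{A}_n$ are generated as $\cat\lie$-bimodules by a controlled family of elements: the Casimir generators on the Hinich-Vaintrob side, and the elementary bottom-tangle morphisms of $\mathbf{A}_n$ providing the geometric content beyond $\mathbf{A}_0$, on the Habiro-Massuyeau side. Sending one set of generators to the other produces a candidate $\cat\lie$-bimodule map in each grading; compatibility with composition then follows from the degree-zero case together with the observation that composition in $\pjac$ is generated by operad composition in $\lie$ and by insertion of Casimirs, and that the analogous operations in $\mathbf{A}$ are encoded in the handlebody-tangle calculus.

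The main technical obstacle is the verification that the defining relations match: the handlebody-tangle relations of $\mathbf{A}$, once pushed through the polynomial filtration $\qgr_\bullet$, must reproduce precisely the Casimir relations defining $\pjac$. This amounts to establishing an explicit dictionary between Habiro and Massuyeau's presentation of $\mathbf{A}$ via bottom tangles in handlebodies and Hinich and Vaintrob's presentation of $\pjac$, after polynomial approximation. No single step is conceptually surprising, but each is technically involved; this is presumably why the author provides only a sketch in Section \ref{subsect:sketchy} rather than a full verification.
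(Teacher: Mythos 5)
Your proposal is essentially the paper's sketch: transfer to $\cat\lie$-bimodules via Theorem~\ref{thm:bifunctors_sym_monoidal}, settle degree zero (unit and induced bimodule actions) by Theorem~\ref{thm:tower_isomorphism}, and then compare the Casimir--Hopf presentation of $\mathbf{A}$ (Habiro--Massuyeau, \cite[Theorem 5.11]{MR4321214}) with the Casimir PROP $\pjac$ (Hinich--Vaintrob, \cite[Theorem 7.1.1]{MR1913297}) for the higher gradings. The only organizational difference is that the paper's indications construct the underlying isomorphism $\ajac(n)\cong\qgr_\bullet\mathbf{A}_n$ first (by the methods of Theorem~\ref{thm:tower_isomorphism}, citing Vespa's partial result \cite[Theorem 6.7]{2022arXiv220210907V}) and then verify monoid compatibility as a second step, whereas you fold both into a single generators-and-relations comparison; both remain sketches at the same level of detail as Section~\ref{subsect:sketchy}.
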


The significance of this result is the following: the construction of $\ajac (*)$ is direct, with the composition following directly from that of $\pjac$. However, the construction does not  give natural `models' for the $\ajac (*)$ as bifunctors (without passing to completion).

By contrast, the construction of  the composition in $\mathbf{A}$ given in \cite[Section 4]{MR4321214} may appear somewhat complex on first acquaintance. However, that construction has the advantage of furnishing {\em genuine} bifunctors, rather than (pro)bifunctors.

\subsection{Notation and conventions}
\label{subsect:nota_conv}

Throughout $\kring$ is a unital commutative ring (for the general theory, this can  be taken to be $\zed$; for the applications it will be the field $\rat$ of rational numbers);  $\kmod$ denotes the category of $\kring$-modules, considered as symmetric monoidal with respect to $\otimes_\kring$ (this will be written $\otimes$ when no confusion can result). 

\begin{nota}
Denote by
\begin{itemize}
\item[]
 $\gr$ the category of finitely-generated free groups, considered as a full subcategory of the category of groups; 
\item[]
$\fb$ the category of finite sets and bijections; 
\item[] 
 $\mathbf{n}$, for $n \in \nat$, the finite set $\{1, \ldots , n\}$ (with $\mathbf{0} = \emptyset$, by convention), so that $\fb$ has small skeleton with objects $\{ \mathbf{n}\ | \ n \in \nat\}$;
\item[]  
$\sym_n$ the symmetric group on $n$ letters (aka. $\mathrm{Aut}(\mathbf{n})$),  for $n \in \nat$ .
\end{itemize}
\end{nota}

For an essentially small category $\calc$, $\f (\calc)$ denotes the category of functors from $\calc$ to $\kmod$. This is abelian, equipped with the pointwise tensor product induced by $\otimes_\kring$. (The category $\f (\calc)$ is sometimes also referred to as the category of $\calc$-modules.) More generally, for any category $\mathcal{E}$, $\f (\calc; \mathcal{E})$ denotes the category of functors from $\calc$ to $\mathcal{E}$, so that $\f (\calc)$ is shorthand for $\f (\calc; \kmod)$. If $\mathcal{E}$ is abelian, then so is $\f (\calc; \mathcal{E})$; moreover, if $\mathcal{E}$ is symmetric monoidal, then so is $\f (\calc; \mathcal{E})$ for the induced pointwise structure.

For small categories $\calc, \cald$, the exterior tensor product is denoted $\boxtimes : \f(\calc) \times \f(\cald) \rightarrow \f (\calc \times \cald)$, given for  $F_1 \in \ob \f(\calc)$ and $F_2 \in \ob \f (\cald )$ by $F_1 \boxtimes F_2 : (X,Y) \mapsto F_1 (X) \otimes F_2 (Y)$. 

Taking $\calc = \cald\op$, the composite of $\boxtimes$ with the coend construction is denoted: 
\[
\otimes_\cald : \f(\cald\op) \times \f(\cald) \rightarrow \kmod.
\]

\begin{rem}
\label{rem:kring_C_modules}
The category $\f (\calc)$ will also be referred to here as the category of $\kring \calc$-modules, where $\kring \calc$ is the $\kring$-linear category constructed from $\calc$. In the literature, the terminology $\calc$-modules is also used for this; this is avoided here since we also work with modules over $\kring$-linear categories which are not of the form $\kring \calc$. 
\end{rem}

\begin{exam}
\label{exam:fb-modules}
The category $\f (\fb)$ (or $\kring \fb$-modules) is equivalent to the category of sequences $M(n)$ of left $\kring \sym_n$-modules, for $n \in \nat$, and equivariant maps. Likewise, the category $\f (\fb\op)$ is equivalent to the category of sequences of right $\kring \sym_n$-modules. 

The isomorphism $\fb \cong \fb\op$ of categories (provided by the passage to inverse morphisms for the groupoid $\fb$) induces the isomorphism $\f (\fb) \cong \f (\fb\op)$. This corresponds (for each $n \in \nat$) to the isomorphism of categories between left $\kring \sym_n$-modules and right $\kring \sym_n$-modules obtained using the group isomorphism $\sym_n \cong \sym_n\op$ provided by the inverse $g \mapsto g^{-1}$. 

Given two $\fb$-modules $M_1$, $M_2$, by the above, one can consider $M_1$ as a $\fb\op$-module and then form $M_1 \otimes_\fb M_2$. This is given explicitly by 
\[
M_1 \otimes_\fb M_2 = \bigoplus_{n \in \nat} M_1 (n) \otimes_{\sym_n} M_2 (n).
\]
\end{exam}

\begin{rem}
\label{rem:fb_op}
The passage between $\f (\fb)$ and $\f (\fb\op)$-modules as above will be used freely.
\end{rem}

Post-composing with the $\kring$-linear duality functor $(-)^\sharp := \hom_{\kmod} (-, \kring)$ yields the  functors $\f (\calc) \op \rightarrow \f (\calc\op)$ and $\f (\calc\op ) \op \rightarrow \f (\calc)$, both denoted by $D$, since context should make clear what is intended. If $\kring$ is a field, these are adjoint and restrict to equivalences between the respective full subcategories of functors taking finite-dimensional values.

\subsection{Organization of the paper}

An overview of functors on $\gr$ and $\gr\op$ is provided in Section \ref{sect:gr} and this is illustrated in Section \ref{sect:polyq}, where the polynomial filtration of the category $\kring \gr$ is introduced.

Section \ref{sect:propoly} introduces the category $\propoly$ that serves to encode the polynomial filtration of functors on $\gr$. This is applied in Section \ref{sect:group-ring}, where the group ring is considered as a functor on $\gr$. This feeds into the definition of the {\em Mal'cev} functors in Section \ref{sect:malcev} and their tensor products. 

Section \ref{sect:opd} is an interlude to introduce background on operadic structures. This is used in Section \ref{sect:endo_malcev} to relate $\propoly$ and right $\catlie$-modules.

Section \ref{sect:catlie} revisits the relationship between left $\catlie$-modules and analytic functors on $\gr\op$. Section \ref{sect:compare} then shows how this is related to the framework for $\propoly$ introduced here. These results are put together in Section \ref{sect:bifunctors} to treat bifunctors. 

Finally, Section \ref{sect:model_qgr_pbif} and Section \ref{sect:modular} apply the theory to the two fundamental examples.

\subsection{Acknowledgement}

The author is grateful to Christine Vespa, who drew Habiro and Massuyeau's work \cite{MR4321214} and   Katada's work \cite{2021arXiv210206382K} to his attention, in relation to the study of functors on $\gr$. This formed one of  the original motivations for studying the relationship between bifunctors and $\catlie$-bimodules. Moreover, the basic results on the polynomial filtration of functors on $\gr$ were first considered in the joint work with Vespa, \cite{PV}.

\tableofcontents

\section{Background on functors on $\gr$}
\label{sect:gr}

This section surveys  various flavours of functors on $\gr$, so as to make this paper reasonably self-contained. 
Most of this material is covered in \cite{PV} and elsewhere; in particular, no claim to originality is made.

Throughout $\kring$ is a unital commutative ring; for some arguments this will be taken to be a field and, for the analysis of the polynomial filtration, $\kring$ is taken to be  $\rat$, in which case the theory is particularly powerful.  $\f (\gr)$ denotes the category of functors from $\gr$ to $\kmod$ (respectively $\f (\gr\op)$ for contravariant functors). As in Section \ref{subsect:nota_conv}, the tensor product on $\kmod$ induces symmetric monoidal structures on $\f(\gr)$ and on $\f(\gr\op)$, denoted simply $\otimes$.

\begin{exam}
\label{exam:abelianization}
The abelianization functor $\A$ in $\f(\gr)$ is given by $\A (G) = G_\abel\otimes_\zed \kring$.
 The linear dual $\A ^\sharp$ in $\f(\gr\op)$ is given by $G \mapsto \hom (G, \kring) \cong \hom(G_\abel, \kring)$, where $\hom$ denotes homomorphisms of groups, using the underlying additive structure of $\kring$. 
\end{exam}

The free product defines a symmetric monoidal structure on $\gr$ and on $\gr\op$. This gives the respective shift functors $\tgr$ and $\tgrop$ defined by precomposition with $- \star \zed$; for instance, for $F\in \ob \f (\gr)$, one has $\tgr F (\zed^{\star r}) = F (\zed^{\star r} \star \zed) \cong  F (\zed^{\star r+1})$.
The associated difference functors $\dgr$ and $\dgrop$ are defined so that there are natural isomorphisms:
\begin{eqnarray*}
\tgr & \cong & \dgr \oplus \id_{\f(\gr)} \\
\tgrop & \cong & \dgrop \oplus \id_{\f(\gr\op)}.
\end{eqnarray*}

\begin{nota}
\label{nota:PZ_pbar}
Let $P_\zed$ denote the functor $H \mapsto \kring H$ (the  $\kring$-module generated by the underlying set of $H$). 
 This splits canonically in $\f (\gr)$ as $P_\zed \cong \kring \oplus \pbar$, where $\pbar$ is the reduced part (a functor $F$ on $\gr$ is reduced if $F(\zed^{\star 0}) =0$).
\end{nota}

\begin{rem}
\label{rem:PZ_pbar}
\ 
\begin{enumerate}
\item 
The functor $P_\zed$ is projective in $\f(\gr)$; it corepresents the evaluation functor $F \mapsto F (\zed)$.
\item 
For $H \in \ob \gr$, $P_\zed (H)$ is the $\kring$-module underlying the group ring $\kring H$; as explained in Section \ref{sect:group-ring}, this implies that $P_\zed$ is a cocommutative Hopf algebra in $\f (\gr)$.
\item 
The functor $\pbar$ identifies as the functor to $\kring$-modules underlying the augmentation ideal functor $H \mapsto \aug H$, where $\aug H := \ker \big ( \kring H \rightarrow \kring \big).$
\end{enumerate}
\end{rem}

\subsection{Exponential functors on $\gr\op$} 
\label{subsect:expo}

Suppose that $(\calm, \odot, \mathbbm{1})$ is a symmetric monoidal category, then one can consider Hopf algebras in $\calm$ and, in particular, cocommutative Hopf algebras in $\calm$.

Recall (see \cite{PV}, for example) that an exponential functor on $\gr\op$ with values in $\calm$ is a symmetric monoidal functor from $\gr\op$ to $\calm$.

\begin{nota}
\label{nota:Phi}
For $H$ a cocommutative Hopf algebra in the symmetric monoidal category $\calm$, $\Phi H : \gr\op \rightarrow \calm$ denotes the associated exponential functor. This has values given by 
 $
 \zed^{\star n} \mapsto H^{\odot n}$,  and with morphisms of $\gr\op$ acting via the Hopf algebra structure of $H$.
\end{nota}

The construction $\Phi$ is natural: it defines a functor from cocommutative Hopf algebras in $\calm$ to functors from $\gr\op$ to $\calm$. Moreover, it is natural with respect to the symmetric monoidal category $\calm$: namely, if $\alpha: \calm_1 \rightarrow \calm_2$ is a symmetric monoidal functor and $H$ is a cocommutative Hopf algebra in $\calm_1$ then  $\alpha H$ is a cocommutative Hopf algebra in $\calm_2$ and there is a natural isomorphism 
$
\alpha (\Phi H) 
\cong 
\Phi (\alpha H)$,
 where $\Phi H$ is constructed in $\calm_1$ and $\Phi (\alpha H)$ in $\calm_2$.

\subsection{Bifunctors}
\label{subsect:bifunctors}

We  consider the category of bifunctors on $\gr$, by which we mean $\f (\gr\op\times \gr)$. The exterior tensor product $\boxtimes : \f(\gr\op) \times \f(\gr) \rightarrow \f (\gr\op \times \gr)$ gives one way of constructing bifunctors.

Another fundamental example of a bifunctor  is provided by the standard projectives $P_\Gamma$ of the category $\f (\gr)$:

\begin{nota}
\label{nota:pbif}
Denote by $\pbif$ the bifunctor 
$
\pbif : (\Gamma,H) 
\mapsto 
P_\Gamma (H) := 
\kring \hom_\gr (\Gamma, H). 
$
\end{nota}

Since $\star$ is the coproduct in $\gr$, there is a natural isomorphism:
\begin{eqnarray}
\label{eqn:P_bullet_star}
\quad 
\kring \hom_\gr (G_1 \star G_2, H) \cong 
\kring ( \hom_\gr (G_1 , H) \times \hom_\gr (G_2 , H))
\cong 
 \kring \hom_\gr (G_1 , H) \otimes \kring \hom_\gr (G_2 , H).
\end{eqnarray}

This property can be restated as:

\begin{prop}
\label{prop:P_expo}
Considered as a functor from $\gr \op$ to $\f (\gr)$, $\pbif$ is exponential. Explicitly, $\pbif \cong \Phi P_\zed$, where $P_\zed$ is considered as a cocommutative Hopf algebra in $\f (\gr)$.
\end{prop}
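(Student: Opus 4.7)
The plan is to leverage \eqref{eqn:P_bullet_star} directly as the exponentiality datum and match the induced Hopf algebra structure on $\pbif(\zed) = P_\zed$ with the canonical one coming from the group ring. By Remark \ref{rem:PZ_pbar}(2), $P_\zed(H) = \kring H$ carries its standard group-ring Hopf algebra structure naturally in $H$, so $P_\zed$ is a cocommutative Hopf algebra in $\f(\gr)$ and $\Phi P_\zed : \gr\op \to \f(\gr)$ is defined. The aim is then to construct a natural isomorphism $\pbif \cong \Phi P_\zed$.

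On objects, the comparison isomorphism is obtained by iterating \eqref{eqn:P_bullet_star}: for each $n \in \nat$ and each $H \in \gr$, one has
$$
\pbif(\zed^{\star n})(H) = \kring \hom_\gr(\zed^{\star n}, H) \cong \kring (H^n) \cong (\kring H)^{\otimes n} = \Phi P_\zed(\zed^{\star n})(H),
$$
using the universal property of the free product and the standard isomorphism $\kring(X \times Y) \cong \kring X \otimes \kring Y$. These are natural in $H$, hence define the comparison in $\f(\gr)$.

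The key step is verifying naturality in $\gr\op$. Morphisms in $\gr\op$ are generated, as a symmetric monoidal category, by the free-product symmetries together with the duals of the cogroup structure on $\zed$ in $\gr$: the fold $\nabla : \zed \star \zed \to \zed$, the unique map $\zed \to \zed^{\star 0}$, and inversion $\iota : \zed \to \zed$ --- this expresses $\gr\op$ as (a skeleton of) the Lawvere theory of groups. It therefore suffices to check naturality on these generators. Direct computation shows that $\nabla^*$ acts as $h \mapsto h \otimes h$ on $\pbif(\zed)(H) = \kring H$, the map $\zed \to \zed^{\star 0}$ induces $h \mapsto 1$, and $\iota^*$ induces $h \mapsto h^{-1}$, matching respectively the coproduct, counit, and antipode of $\kring H$ prescribed by the definition of $\Phi P_\zed$.

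The main obstacle is conceptual rather than computational: one must ensure coherence of the symmetric monoidal structure on $\pbif$ (pentagon, triangle, and symmetry axioms). However, since \eqref{eqn:P_bullet_star} is induced from the universal property of $\star$ as coproduct in $\gr$, both sides of each coherence diagram coincide with the unique arrow determined by universality, so coherence holds formally.
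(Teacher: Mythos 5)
Your overall strategy — use the natural isomorphism \eqref{eqn:P_bullet_star} for the monoidal structure and check naturality with respect to $\gr\op$ on a generating set of morphisms — is the right one, and is what the paper leaves implicit behind ``This property can be restated as.''  However, there are two substantive problems in the execution.

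First, the generating set you give for $\gr\op$ is incomplete.  Describing $\gr\op$ as (a skeleton of) the Lawvere theory of groups is correct, but a Lawvere theory has finite \emph{products}, so in addition to the group-object operations and the symmetries you must also include the diagonal $\Delta : \zed \to \zed^{\star 2}$ and the terminal map $\zed \to \zed^{\star 0}$ as generating morphisms: these are precisely what encode the cartesian structure, and they are \emph{not} part of the ``bare'' symmetric monoidal data.  Under the free functor $\mathrm{Free} : \finset \to \gr$ they come from the set-theoretic codiagonal $\{1,2\} \to \{1\}$ and $\emptyset \to \{1\}$, which is why the paper's own description of the generators of $\gr$ (Section \ref{subsect:describe_qgr_d_PZ}) lists the image of $\mathrm{Free}$ \emph{together with} $p$, $\chi$, $\nabla$.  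You have listed the terminal map (your ``$\zed \to \zed^{\star 0}$'') but dropped $\Delta$ and also the group-unit $\zed^{\star 0}\to\zed$; without $\Delta$ you cannot even generate the endomorphism $\zed\to\zed$ of $\gr\op$ corresponding to $x\mapsto x^2$ in $\gr$, so the generating claim fails.

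Second, the asserted action of $\nabla$ is wrong.  The $\gr\op$-morphism $\nabla : \zed\star\zed \to \zed$ is dual to the cogroup comultiplication $\nabla : \zed \to \zed^{\star 2}$ in $\gr$; applying $\pbif$ gives a map $\pbif(\zed\star\zed)(H) \to \pbif(\zed)(H)$, that is $\kring H \otimes \kring H \to \kring H$, and precomposition with $x\mapsto x_1 x_2$ sends $[h_1]\otimes[h_2]$ to $[h_1 h_2]$ — this is the \emph{product} of the group ring, not the coproduct, and it does not have the form $h \mapsto h\otimes h$ (which would go in the opposite direction).  The coproduct $[h]\mapsto[h]\otimes[h]$ is induced by the \emph{missing} generator $\Delta$, i.e. by the $\gr$-morphism $\mathrm{Free}(\{1,2\}\to\{1\})$.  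Once you add $\Delta$ and the group unit to your list and sort out which $\gr\op$-generator induces which Hopf-structure map (product $\leftrightarrow \nabla\op$, coproduct $\leftrightarrow \Delta$, unit $\leftrightarrow p\op$, counit $\leftrightarrow$ terminal map, antipode $\leftrightarrow \chi\op$), the argument goes through.
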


In particular, one has the following:

\begin{lem}
\label{lem:dgrop_P_bullet}
There are natural isomorphisms 
$\tgrop \pbif \cong  \pbif \otimes P_\zed $ and  $\dgrop \pbif \cong  \pbif \otimes \pbar$ 
 in $\f(\gr\op \times \gr)$,  
where $P_\zed$ and $\pbar$ are constant with respect to $\gr\op$.
\end{lem}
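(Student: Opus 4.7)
The plan is to deduce both isomorphisms from the exponential property of $\pbif$ stated in Proposition \ref{prop:P_expo}, equivalently from the natural isomorphism (\ref{eqn:P_bullet_star}). The first isomorphism is essentially a direct unfolding of definitions; the second requires identifying how the canonical splittings on each side correspond.

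First I would unfold $\tgrop$ applied to the $\gr\op$-variable: by definition $(\tgrop \pbif)(\Gamma, H) = \pbif(\Gamma \star \zed, H) = \kring \hom_\gr(\Gamma \star \zed, H)$. Applying (\ref{eqn:P_bullet_star}) with $G_1 = \Gamma$ and $G_2 = \zed$ yields a natural isomorphism with $\kring \hom_\gr(\Gamma, H) \otimes \kring \hom_\gr(\zed, H) = \pbif(\Gamma, H) \otimes P_\zed(H)$. Since $P_\zed(H)$ does not depend on $\Gamma$, this produces the first natural isomorphism $\tgrop \pbif \cong \pbif \otimes P_\zed$ in $\f(\gr\op \times \gr)$, with $P_\zed$ constant in the $\gr\op$-direction. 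Naturality in both variables is inherited from the naturality of (\ref{eqn:P_bullet_star}).

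For the second isomorphism, recall that the splitting $\tgrop \cong \dgrop \oplus \id_{\f(\gr\op)}$ is induced by the pair of $\gr$-morphisms $\zed^{\star r} \rightleftarrows \zed^{\star r} \star \zed$ consisting of the inclusion of the first factor and the retraction (projection collapsing the second factor). The key step is to verify that under the first isomorphism, the summand $\id \,\pbif = \pbif$ corresponds precisely to $\pbif \otimes \kring \hookrightarrow \pbif \otimes P_\zed$, where $\kring \hookrightarrow P_\zed$ is the unit and the complementary retraction is the augmentation $P_\zed \twoheadrightarrow \kring$. This is a direct identification: under the isomorphism of (\ref{eqn:P_bullet_star}), a morphism $\Gamma \to H$ corresponds to the pair consisting of itself and the trivial morphism $\zed \to H$, and the trivial homomorphism corresponds to the unit $e \in H$, i.e., to $1 \in \kring \hookrightarrow \kring H = P_\zed(H)$; the compatibility with augmentation is analogous.

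Once this compatibility is established, the second isomorphism follows formally: $\dgrop \pbif$ is the complementary summand of $\pbif$ inside $\tgrop \pbif \cong \pbif \otimes P_\zed$, which corresponds to the complementary summand of $\pbif \otimes \kring$ in $\pbif \otimes P_\zed$, namely $\pbif \otimes \pbar$, using the canonical splitting $P_\zed \cong \kring \oplus \pbar$ (Notation \ref{nota:PZ_pbar}). The only obstacle is bookkeeping: one must check that the unit/augmentation pair for $P_\zed$ genuinely matches the section/retraction pair defining the splitting of $\tgrop$ under (\ref{eqn:P_bullet_star}), but this reduces to inspection on generators.
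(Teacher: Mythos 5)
Your proof is correct and takes essentially the same route as the paper: establish $\tgrop \pbif \cong \pbif \otimes P_\zed$ directly from (\ref{eqn:P_bullet_star}), then derive the $\dgrop$ statement from the splitting $\tgrop \cong \dgrop \oplus \id$ together with $P_\zed \cong \kring \oplus \pbar$. You have simply spelled out the verification that the two splittings match under the isomorphism, which the paper leaves implicit.
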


\begin{proof}
Using (\ref{eqn:P_bullet_star}), it is clear that $\tgrop \pbif \cong \pbif \otimes P_\zed$. The splitting $\tgrop \cong \dgrop \oplus \id$ then leads to the  result for $\dgrop$.   
\end{proof}

This is the main ingredient in the proof of the following standard result:

\begin{prop}
\label{prop:dgr_adjoint}
As endofunctors of $\f (\gr)$, the shift functor $\tgr$ is right adjoint to $ -\otimes P_\zed$ and the difference functor $\dgr$ is right adjoint to $- \otimes \pbar$. 
\end{prop}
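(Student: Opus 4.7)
The plan is to reduce the adjunction to the case where the first argument is a standard projective $P_\Gamma$, where everything is computable by Yoneda and the exponential property, and then extend by a standard colimit argument.

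First I would recall two facts. By Yoneda, $\hom_{\f(\gr)}(P_\Gamma, F) \cong F(\Gamma)$ naturally in $F$ and $\Gamma$. By Proposition \ref{prop:P_expo}, $\pbif$ is exponential, which on evaluation at a fixed $H$ gives a natural isomorphism $P_{\Gamma \star \zed}(H) \cong P_\Gamma(H) \otimes P_\zed(H)$, i.e., $P_{\Gamma \star \zed} \cong P_\Gamma \otimes P_\zed$ in $\f(\gr)$ (naturally in $\Gamma$). This is essentially the content of Lemma \ref{lem:dgrop_P_bullet} restricted to $\tgrop\pbif \cong \pbif \otimes P_\zed$.

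Combining these, for any $G \in \f(\gr)$ one has the chain of natural isomorphisms
\[
\hom_{\f(\gr)}(P_\Gamma \otimes P_\zed, G) \cong \hom_{\f(\gr)}(P_{\Gamma \star \zed}, G) \cong G(\Gamma \star \zed) = (\tgr G)(\Gamma) \cong \hom_{\f(\gr)}(P_\Gamma, \tgr G),
\]
which establishes the adjunction on the full subcategory of representables $\{P_\Gamma\}_{\Gamma \in \gr}$. Since the $P_\Gamma$ form a set of projective generators of $\f(\gr)$, every $F$ is a colimit of representables; as $-\otimes P_\zed$ preserves colimits (tensor in $\f(\gr)$ is computed pointwise) and $\hom_{\f(\gr)}(-, \tgr G)$ sends colimits to limits, the isomorphism extends to an isomorphism $\hom_{\f(\gr)}(F \otimes P_\zed, G) \cong \hom_{\f(\gr)}(F, \tgr G)$ natural in $F$ and $G$, proving the first assertion.

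For the second statement, I would exploit naturality of the two direct-sum decompositions $P_\zed \cong \kring \oplus \pbar$ and $\tgr \cong \dgr \oplus \id$. Both decompositions arise from the group-theoretic retraction $\zed \rightrightarrows 0$ (the unit/augmentation of $P_\zed$ on the one hand, and precomposition with the retraction $\zed^{\star r+1} \to \zed^{\star r}$ on the other), so under the adjunction already established they are sent to one another. Hence the natural isomorphism from the first part splits compatibly as a direct sum of two isomorphisms; the summand corresponding to the identity summands is just the tautological $\hom_{\f(\gr)}(F, G) \cong \hom_{\f(\gr)}(F, G)$, and the remaining summand yields the desired $\hom_{\f(\gr)}(F \otimes \pbar, G) \cong \hom_{\f(\gr)}(F, \dgr G)$. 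The only mild subtlety, which I would verify by tracing the Yoneda isomorphism on $P_\Gamma$, is that the two splittings do indeed match under the adjunction—but this is immediate since both are induced by the same morphism of $\gr$.
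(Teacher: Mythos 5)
Your proof is correct and follows essentially the route the paper gestures at: the paper offers no explicit proof, merely noting that Lemma \ref{lem:dgrop_P_bullet} (the exponential isomorphism $\tgrop\pbif \cong \pbif \otimes P_\zed$, equivalently $P_{\Gamma\star\zed}\cong P_\Gamma\otimes P_\zed$) is "the main ingredient" in this standard result. Your Yoneda-plus-density argument is the natural way to turn that ingredient into an adjunction, and your handling of the $\dgr$/$\pbar$ case via matching the two direct-sum splittings (both induced by the retraction of $\zed$ onto the trivial group) is sound — the compatibility you flag at the end does check out on representables exactly as you say.
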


\subsection{Polynomial functors}

The functors $\dgr$ and $\dgrop$ allow the following definition of polynomiality:

\begin{defn}
\label{defn:polynomial}
\ 
\begin{enumerate}
\item 
A functor $F \in \ob \f(\gr)$ has polynomial degree $d \in \nat$ if $(\dgr)^{d+1} F = 0$.
\item 
A functor $F' \in \ob \f(\gr\op)$ has polynomial degree $d \in \nat$ if $(\dgrop)^{d+1} F' = 0$.
\end{enumerate}
The  full subcategories of polynomial functors of degree at most $d$ are denoted $\f_d (\gr)$ and $\f_d (\gr\op)$ 
 respectively.
\end{defn}

\begin{rem}
\ 
\begin{enumerate}
\item 
For $d \in \nat$, $\f_d (\gr)$ is an abelian subcategory of $\f(\gr)$ and $\f_d(\gr\op)$ an abelian  subcategory of $\f(\gr\op)$.
\item 
An alternative, equivalent definition uses the {\em cross-effect functors}, as in \cite{MR3340364}, generalizing the approach of Eilenberg and Mac Lane for functors on additive categories. 
\end{enumerate}
\end{rem}

Not all functors considered here are polynomial:

\begin{exam}
\label{exam:non_polynomiality}
\ 
\begin{enumerate}
\item 
The functor $P_\zed$ is not polynomial  in $\f (\gr)$: for $G \in \ob \gr$, the set map $G \rightarrow G \star \zed$ given by $g \mapsto g x$, where $x$ denotes the generator of $\zed$, is injective. On linearization, this induces an injective natural transformation $P_\zed \hookrightarrow \delta^\gr P_\zed$. Since $\delta ^\gr$ is exact, a straightforward induction shows that $(\delta^\gr)^{d+1}P_\zed$ is non-zero, for all $d \in \nat$. 
 More generally, for $\Gamma \in \ob \gr\op$ of positive rank, $P_\Gamma$ is not polynomial in $\f (\gr)$. 
\item 
For $G \in \ob \gr$ of positive rank, the functor   $\Gamma \mapsto P_\Gamma (G)$ is not polynomial  in $ \f (\gr\op)$. Indeed, by Lemma \ref{lem:dgrop_P_bullet}, $\dgrop \pbif (G) \cong  \pbif (G) \otimes \pbar (G)$. Since $G$ has positive rank, $\pbar (G)$ is a non-zero, free $\kring$-module. Iterating this shows that $(\dgrop)^{d+1} \pbif (G)$ is non-zero for all $d \in \nat$.
\end{enumerate}
\end{exam}

\begin{nota}
\label{nota:fin_poly}
Denote by 
\begin{enumerate}
\item 
$\f_{<\infty}(\gr)$ the full subcategory of $\f (\gr)$ given by $\bigcup_{d \in \nat} \f_d (\gr)$; 
\item 
$\f_{< \infty}(\gr\op)$ the full subcategory of $\f (\gr\op)$ given by $\bigcup_{d \in \nat} \f_d (\gr\op)$.
\end{enumerate}
\end{nota}

There are inclusions 
\begin{eqnarray*}
0 = \f_{-1} (\gr) \subset \f_0 (\gr) \subset \f_1 (\gr) \subset \ldots \subset \f_d (\gr) \subset \f_{d+1}(\gr) \subset \ldots \subset \f_{<\infty}(\gr) \subset \f(\gr)
\end{eqnarray*}
and likewise for $\f(\gr\op)$.

\begin{exam}
\label{exam:deg_leq_1}
Let $F$ be a  functor in $\f(\gr)$.
\begin{enumerate}
\item 
$F$ has polynomial degree $0$ if and only if it is constant. 
\item 
If $F$ is reduced (i.e., $F(\zed^{\star 0})=0$), then $F$ has polynomial degree $1$ if and only if $F$ is additive (i.e., the obvious morphisms induce an isomorphism $F (\zed^{\star r} \star \zed ^{\star s}) \cong F(\zed^{\star r} ) \oplus F(\zed^{\star s})$).
\end{enumerate}
The abelianization functor $\A$ in $\f(\gr)$ is reduced and additive; in particular it has  polynomial degree one. Likewise, the linear dual $\A ^\sharp$ in $\f(\gr\op)$ is reduced and additive, thus has  polynomial degree one. 
\end{exam}

The tensor product $\otimes$ is compatible with the polynomial filtration. Namely, for $d, e \in \nat$ the tensor product on $\f (\gr)$ restricts to 
 $
\f_d (\gr) \times \f_e (\gr) \stackrel{\otimes}{\longrightarrow} \f_{d+e} (\gr)$.
 The corresponding statement holds for functors on $\gr\op$. 

\begin{exam}
\label{exam:A_otimes_d}
For $d \in \nat$, the iterated tensor product $\A^{\otimes d}$ is polynomial of degree $d$, i.e.,  belongs to $\f_d (\gr)$. This is the fundamental example of a degree $d$ polynomial functor. Likewise, in the contravariant case, $(\A^\sharp)^{\otimes d}$ belongs to $\f_d (\gr\op)$.
\end{exam}

For $d \in \nat$, the inclusion $\f_d (\gr) \hookrightarrow \f (\gr)$ has both a left and a right adjoint. Here it is the right adjoint 
\[
\qgr_d : \f (\gr) \rightarrow \f_d (\gr)
\]
that is exploited; this will usually be considered as a functor $\qgr_d : \f(\gr) \rightarrow \f(\gr)$.
With this interpretation,  for a functor $F$, $\qgr_d F$ is the largest quotient of $F$ that has polynomial degree $d$, so that there is a canonical surjection $F \twoheadrightarrow \qgr_d F$, corresponding to the adjunction unit.

The inclusion $\f_d (\gr) \subset \f_{d+1} (\gr)$ then gives the commutative (up to natural isomorphism) diagram of  natural surjections 
\[
\xymatrix{
\id_{\f(\gr)}
\ar@{->>}[r]
\ar@/^1pc/@{->>}[rr]
&
\qgr_{d+1}
\ar@{->>}[r]
&
\qgr_d .
}
\]

\begin{exam}
\label{exam:qgr_PZ}
Applied to the standard projective $P_\zed$, one has $
\qgr_0 P_\zed \cong  \kring $
and $\qgr_1 P_\zed \cong  \A \oplus \kring$. 
Hence $\qgr_0 \pbar =0$ and $\qgr_1 \pbar =\A$. 

The corresponding surjection $P_\zed \twoheadrightarrow \A$ evaluated on $G\in \ob \gr$ is the map $\kring G \twoheadrightarrow \kring \otimes_\zed G_\abel$; this is the $\kring$-linearization of the map $G \rightarrow \kring \otimes_\zed G_\abel$, $g \mapsto 1 \otimes \overline{g}$, where $\overline{g}$ denotes the image of $g$ in $G_\abel$. This restricts to the surjection $\pbar \twoheadrightarrow \A$.

An analysis of the functors $\qgr_d \pbar$, for $d \in \nat$, is given in Section \ref{sect:group-ring}.
\end{exam}

The behaviour of  $\qgr_d$ is illustrated by the following:

\begin{lem}
\label{lem:qgr_tensor}
For $d < n \in \nat$, $\qgr_d (\A^{\otimes n}) =0 = \qgr_d (\pbar^{\otimes n})$.
\end{lem}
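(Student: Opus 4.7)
My plan is to prove both vanishings by induction on $n$, using the adjunction $-\otimes \pbar \dashv \dgr$ from Proposition \ref{prop:dgr_adjoint}. First I would reduce to a Hom-vanishing statement: $\qgr_d F = 0$ holds if and only if $\hom_{\f(\gr)}(F, G) = 0$ for every $G$ of polynomial degree at most $d$. Indeed, given any $\phi\colon F \to G$ with $G$ of degree $\le d$, the image $\mathrm{im}\,\phi$ is polynomial of degree $\le d$ (since $\f_d(\gr)$ is an abelian subcategory of $\f(\gr)$) and is a quotient of $F$, so it factors through the maximal such quotient; if this maximal quotient vanishes, so does $\mathrm{im}\,\phi$. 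The converse is immediate by taking $G = \qgr_d F$.

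For $\qgr_d \pbar^{\otimes n} = 0$ when $d<n$, I would proceed by induction on $n\geq 1$. The base case $n=1$, $d=0$ is the statement $\qgr_0 \pbar = 0$ recorded in Example \ref{exam:qgr_PZ}. For the inductive step, consider any $G$ of polynomial degree $\le d$ with $d<n$. The adjunction applied to the factorization $\pbar^{\otimes n} \cong \pbar^{\otimes(n-1)} \otimes \pbar$ gives
\[
\hom(\pbar^{\otimes n}, G) \;\cong\; \hom(\pbar^{\otimes(n-1)}, \dgr G).
\]
Since $\dgr G$ has polynomial degree at most $d-1 < n-1$, the inductive hypothesis forces this to vanish.

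The statement $\qgr_d \A^{\otimes n} = 0$ for $d<n$ then follows formally from the previous one. The surjection $\pbar \twoheadrightarrow \qgr_1 \pbar = \A$ from Example \ref{exam:qgr_PZ}, applied in each tensor factor and combined with right exactness of the pointwise tensor product, produces a surjection $\pbar^{\otimes n} \twoheadrightarrow \A^{\otimes n}$. This in turn induces an injection $\hom(\A^{\otimes n}, G) \hookrightarrow \hom(\pbar^{\otimes n}, G)$ for every $G$, and the target is zero whenever $G$ has degree $\le d < n$ by the preceding paragraph.

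There is no substantive obstacle: the entire argument rests on the single adjunction $-\otimes \pbar \dashv \dgr$ together with the fact that $\dgr$ strictly decreases polynomial degree. The mildest subtle point is the Hom-characterization of the vanishing of $\qgr_d$, which uses the abelianness of $\f_d(\gr)$ inside $\f(\gr)$.
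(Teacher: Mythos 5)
Your proposal is correct and rests on exactly the same mechanism as the paper's proof: the adjunction $-\otimes\pbar\dashv\dgr$ from Proposition \ref{prop:dgr_adjoint} together with the fact that $\dgr$ strictly lowers polynomial degree, so that $(\dgr)^n$ kills anything of degree $<n$. The paper applies the $n$-fold adjunction in a single step to the composite surjection $\pbar^{\otimes n}\twoheadrightarrow\qgr_d(\A^{\otimes n})$, obtaining a map $\kring\to(\dgr)^n\qgr_d(\A^{\otimes n})=0$, whereas you package the same observation as an induction on $n$ and handle $\pbar^{\otimes n}$ first, deducing the $\A^{\otimes n}$ case from the surjection $\pbar^{\otimes n}\twoheadrightarrow\A^{\otimes n}$; the paper goes in the other order, noting the $\pbar$ case is similar. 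Your explicit Hom-characterization of $\qgr_d F=0$ is sound, though it is just a restatement of the defining adjunction property of $\qgr_d$ as left adjoint to the inclusion $\f_d(\gr)\hookrightarrow\f(\gr)$, and the paper uses it implicitly by observing that the composite surjection must be zero.
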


\begin{proof}
Consider the  composite surjection $\pbar^{\otimes n} \twoheadrightarrow \A^{\otimes n} \twoheadrightarrow \qgr_d (\A^{\otimes n})$, where the first map is the $n$-fold tensor product of the  surjection $\pbar \twoheadrightarrow \A$ of Example \ref{exam:qgr_PZ}. To show that $\qgr_d (\A^{\otimes n})=0$, it suffices to show that the composite is zero.  By the adjunction of Proposition \ref{prop:dgr_adjoint}, this is adjoint to a map $\kring \rightarrow (\dgr)^n \qgr_d (\A^{\otimes n})$. By construction, $\qgr_d (\A^{\otimes n})$ has polynomial degree $d$ hence, since $d<n$, by the definition of polynomiality, $(\dgr)^n \qgr_d (\A^{\otimes n})=0$. The result follows in this case; the argument for $\qgr_d (\pbar^{\otimes n})$ is similar.
\end{proof}

This leads to the following:

\begin{prop}
\label{prop:proj_gen_Fd} 
For $d \in \nat$, the functors $\qgr_d (\pbar^{\otimes n})$,  $0 \leq n \leq d$, form a set of projective generators of $\f_d (\gr)$. 
\end{prop}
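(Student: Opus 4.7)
The plan is to exploit two general facts: that $\qgr_d$ preserves both projectives and epimorphisms, and that the tensor powers $\pbar^{\otimes k}$ furnish a convenient set of projective generators of the full category $\f(\gr)$. The proposition will then follow from Lemma \ref{lem:qgr_tensor} by applying $\qgr_d$ to any projective presentation.

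First I would identify projective generators of $\f(\gr)$ given by $\pbar^{\otimes k}$ for $k \in \nat$. Since $\{\zed^{\star r} : r \in \nat\}$ is a set of representatives of isomorphism classes of objects of $\gr$, the standard projectives $P_{\zed^{\star r}}$ form a set of projective generators of $\f(\gr)$. By the exponential property (Proposition \ref{prop:P_expo}) one has $P_{\zed^{\star r}} \cong P_\zed^{\otimes r}$, and using the canonical splitting $P_\zed \cong \kring \oplus \pbar$ (Notation \ref{nota:PZ_pbar}), tensor expansion yields
\[
P_\zed^{\otimes r} \cong \bigoplus_{k=0}^{r} (\pbar^{\otimes k})^{\oplus \binom{r}{k}}.
\]
Thus each $\pbar^{\otimes k}$ is a direct summand of $P_\zed^{\otimes k}$, hence projective; and since every $P_\zed^{\otimes r}$ is in turn a direct sum of such summands, $\{\pbar^{\otimes k} : k \in \nat\}$ is a set of projective generators of $\f(\gr)$.

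Next I would deduce that $\qgr_d \pbar^{\otimes n}$ is projective in $\f_d(\gr)$. The universal property of $\qgr_d F$ as the largest polynomial degree $d$ quotient exhibits $\qgr_d$ as left adjoint to the fully faithful inclusion $\iota : \f_d(\gr) \hookrightarrow \f(\gr)$. The inclusion $\iota$ is exact (as noted in the Remark following Definition \ref{defn:polynomial}), so its left adjoint preserves projectives and epimorphisms. Consequently $\qgr_d \pbar^{\otimes n}$ is projective in $\f_d(\gr)$, and by Lemma \ref{lem:qgr_tensor} it vanishes for $n > d$, leaving only $0 \leq n \leq d$ to consider.

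It remains to verify generation. Given $F \in \ob \f_d(\gr)$, view $F$ in $\f(\gr)$ and choose an epimorphism $\bigoplus_i \pbar^{\otimes k_i} \twoheadrightarrow F$ coming from the projective generators above. Applying $\qgr_d$ preserves this epimorphism, and since $F$ is already polynomial of degree at most $d$ the canonical surjection $F \twoheadrightarrow \qgr_d F$ is an isomorphism; hence one obtains an epimorphism $\bigoplus_i \qgr_d \pbar^{\otimes k_i} \twoheadrightarrow F$ in $\f_d(\gr)$. Discarding the vanishing terms with $k_i > d$ gives the required surjection from a direct sum of $\qgr_d \pbar^{\otimes n}$ with $0 \leq n \leq d$. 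The only non-routine point is the preservation of projectives by $\qgr_d$, which has been reduced above to the exactness of the inclusion $\iota$.
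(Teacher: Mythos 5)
Your argument is essentially the paper's proof: you establish that $\{\pbar^{\otimes k}\}$ is a set of projective generators of $\f(\gr)$ via the splitting $P_\zed \cong \kring \oplus \pbar$, observe that $\qgr_d$ preserves projectives as a left adjoint to an exact inclusion, and invoke Lemma~\ref{lem:qgr_tensor} to truncate at $n \leq d$. You merely spell out the binomial decomposition of $P_\zed^{\otimes r}$ and the generation step, which the paper leaves implicit.
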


\begin{proof}
Yoneda's lemma implies that the set of functors $P_{\zed^n}$, for $n \in \nat$, is a set of projective generators of $\f (\gr)$; from this, one deduces readily that $\pbar^{\otimes n}$, $n \in \nat$, also forms a set of projective generators. 

The functor $\qgr_d :\f (\gr) \rightarrow \f_d (\gr)$ is left adjoint to an exact functor, hence preserves projectives. Using Lemma \ref{lem:qgr_tensor}, it follows that $\{\qgr_d (\pbar^{\otimes n}) \ | \ 0 \leq n \leq d\}$ is a set of projective generators of $\f_d (\gr)$. 
\end{proof}

We record the following, which is clear from the constructions:

\begin{prop}
\label{prop:F_d_complete_cocomplete}
For $d \in \nat$ $\f_d (\gr)$ and $\f_d(\gr\op)$ are both complete and cocomplete.
\end{prop}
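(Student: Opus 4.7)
The proof is short and essentially assembles results already recorded in the section. Limits and colimits in $\kmod$ exist, and (co)limits in a functor category of the form $\f(\calc;\kmod)$ are computed pointwise, so both $\f(\gr)$ and $\f(\gr\op)$ are complete and cocomplete. It therefore suffices to show that $\f_d(\gr)$ is closed under small limits and small colimits inside $\f(\gr)$, and likewise for $\f_d(\gr\op)$.

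For this, I would invoke the fact recalled just above Proposition~\ref{prop:proj_gen_Fd}: the fully faithful inclusion $\iota : \f_d(\gr) \hookrightarrow \f(\gr)$ admits both a left and a right adjoint. By a standard formal argument, a reflective subcategory (fully faithful inclusion with left adjoint) is closed under the ambient category's limits, and dually a coreflective subcategory (fully faithful inclusion with right adjoint) is closed under its colimits. Hence $\f_d(\gr)$ is closed under both small limits and small colimits in $\f(\gr)$, and so inherits completeness and cocompleteness. The argument for $\f_d(\gr\op)$ is identical, working with $\tgrop$ and $\dgrop$ throughout.

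As an alternative concrete verification (and a sanity check) that does not rely on naming the right adjoint, one may check closure directly: the shift functor $\tgr$ is precomposition with $-\star\zed$ and therefore preserves all pointwise limits and colimits. Since $\tgr \cong \dgr \oplus \id$, the difference functor $\dgr$ is a retract of $\tgr$ and inherits the same preservation properties. Iterating, $(\dgr)^{d+1}$ commutes with arbitrary small limits and colimits, so it vanishes on any limit or colimit of functors of polynomial degree at most $d$; the analogous statement holds for $(\dgrop)^{d+1}$. There is no genuine obstacle in either approach: the statement is a formal consequence of the pointwise nature of (co)limits in $\f(\gr)$ together with the adjointness/exactness properties of the shift functors already established.
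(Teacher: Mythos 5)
Your proof is correct, and since the paper only notes that this is ``clear from the constructions,'' you are supplying an argument the paper leaves implicit. Both of your routes work and are natural given what has been set up just above in the text: the reflective/coreflective observation is exactly what the earlier remark that the inclusion $\f_d(\gr)\hookrightarrow\f(\gr)$ has both a left and a right adjoint is designed to deliver, and your direct check via $\tgr\cong\dgr\oplus\id$ (with $\dgr$ a retract of a limit- and colimit-preserving functor) is an equally valid and somewhat more self-contained way to see closure of $\f_d(\gr)$ under (co)limits in $\f(\gr)$. One small caveat: for $\f_d(\gr\op)$ the paper only introduces the analogous adjoints $\pgrop_d$ and $\qgrop_d$ slightly later (Notation~\ref{nota:pgrop}), so if you lean on the reflective/coreflective argument there you are implicitly using a fact stated just afterwards; your second, concrete argument with $\dgrop$ avoids any such forward reference.
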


\subsection{The tensor product over $\gr$ and duality}
\label{subsect:tensor_gr}

The tensor product $\otimes_\gr : \f(\gr\op) \times \f (\gr) \rightarrow \kmod$ is defined as usual: this can be taken to be the composite of the external tensor product $\boxtimes : \f(\gr\op ) \times \f (\gr) \rightarrow \f (\gr\op \times \gr)$ followed by the coend $\f(\gr\op \times \gr) \rightarrow \kmod$. 

\begin{exam}
\label{exam:otimes_gr}
Consider the standard projective $P_\Gamma$, for $\Gamma \in \ob \gr\op$. Then $- \otimes _\gr P_\Gamma : \f (\gr\op) \rightarrow \kmod$ is naturally isomorphic to the evaluation functor $F \mapsto F(\Gamma)$, for $F \in \ob \f(\gr\op)$.

Likewise, for $H \in \ob\gr$, consider $P_{(-)}(H) = \kring \hom_\gr (-, H)$ in $\f (\gr\op)$, which is projective. The functor $P_{(-)}(H) \otimes_\gr - : \f(\gr) \rightarrow \kmod$ is naturally isomorphic to the evaluation functor $G \mapsto G(H)$, for $G \in \ob \f (\gr)$.  
\end{exam}

\begin{rem}
\label{rem:otimes_gr_colimits}
The bifunctor $\otimes_\gr$ commutes with colimits with respect to both variables. For current purposes it is colimits in $\f (\gr)$ that are of most interest.
\end{rem}

For the rest of this subsection, we suppose that $\kring$ is a field. Post-composing with vector space duality yields $D: \f (\gr)\op \rightarrow \f (\gr\op)$ and $D: \f (\gr\op) \op \rightarrow \f(\gr)$. These  are exact and are adjoint: for $F \in \ob \f(\gr\op)$ and $G \in \ob \f(\gr)$, there is a  natural isomorphism:
\[
\hom_{\f(\gr)}(G, DF) \cong \hom_{\f(\gr\op) } (F, DG).
\]
Moreover, the duality functors restrict to equivalences between the respective full subcategories of functors taking finite-dimensional values.

Duality restricts to the full subcategories of polynomial functors: 

\begin{lem}
\label{lem:duality_polynomial}
For $d \in \nat$, the duality adjunction restricts to
$
D : \f_d (\gr) \op \rightleftarrows \f_d (\gr\op) : D$.
\end{lem}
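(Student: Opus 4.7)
The plan is to show that the duality functor $D$ intertwines the shift (and hence the difference) functors on the covariant and contravariant sides, after which polynomiality of degree $\leq d$ is preserved because it is detected by the vanishing of the $(d+1)$-fold iterate of the difference functor.

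First, I would record the basic compatibility: since $\tgr$ and $\tgrop$ are both defined by precomposition with $-\star\zed : \gr \to \gr$ (respectively its opposite), and vector space duality is applied pointwise, there is a canonical natural isomorphism
\[
\tgrop \circ D \;\cong\; D \circ \tgr
\]
of functors $\f(\gr)\op \to \f(\gr\op)$, given on $F \in \ob \f(\gr)$ and $\Gamma \in \ob \gr$ by the identification $(\tgrop DF)(\Gamma) = F(\Gamma \star \zed)^\sharp = (D\tgr F)(\Gamma)$. The splittings $\tgr \cong \dgr \oplus \id_{\f(\gr)}$ and $\tgrop \cong \dgrop \oplus \id_{\f(\gr\op)}$ (and exactness of $D$) then yield the analogous isomorphism
\[
\dgrop \circ D \;\cong\; D \circ \dgr.
\]

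Iterating, for every $d \in \nat$ we obtain $(\dgrop)^{d+1} D \cong D (\dgr)^{d+1}$. Hence if $F \in \ob \f_d(\gr)$, i.e.\ $(\dgr)^{d+1} F = 0$, then $(\dgrop)^{d+1} DF \cong D(\dgr)^{d+1}F = 0$, so $DF \in \ob \f_d(\gr\op)$. The symmetric argument, using the dual isomorphism $\dgr \circ D \cong D \circ \dgrop$ on the contravariant side (established in exactly the same way), shows that $D$ sends $\f_d(\gr\op)$ into $\f_d(\gr)$.

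The adjunction $D \dashv D$ between the full categories restricts to the polynomial subcategories since these are full, and the unit and counit, being pointwise the biduality maps on $\kring$-modules, lie in the appropriate subcategories by what has just been shown. There is no serious obstacle here: the only point requiring a little care is checking the naturality of the isomorphism $\tgrop D \cong D \tgr$, which is automatic from the functoriality of pointwise duality.
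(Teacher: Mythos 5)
The paper states this lemma without proof, treating it as a routine consequence of the definitions. Your argument is correct and is exactly the natural verification: the pointwise nature of vector-space duality gives the canonical identification $\tgrop \circ D \cong D \circ \tgr$, and because the splittings $\tgr \cong \dgr \oplus \id$ and $\tgrop \cong \dgrop \oplus \id$ are defined via the same pair of morphisms $\Gamma \hookrightarrow \Gamma\star\zed$ and $\Gamma\star\zed \twoheadrightarrow \Gamma$ in $\gr$ (with the roles of inclusion and retraction exchanged after dualization), that identification is compatible with the splittings, yielding $\dgrop \circ D \cong D \circ \dgr$ and hence $(\dgrop)^{d+1} D \cong D (\dgr)^{d+1}$. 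One small imprecision: the deduction of $\dgrop D \cong D\dgr$ does not really hinge on exactness of $D$; what matters is that $D$ is additive (so preserves finite direct sums) together with the compatibility of the two split decompositions just mentioned, which is worth making explicit. The final observation that the hom-isomorphism of the adjunction restricts automatically by fullness of the polynomial subcategories is correct and is all that is needed; the remark about units and counits is superfluous but harmless.
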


The tensor product $\otimes_\gr$ relates to duality as follows:

\begin{prop}
\label{prop:otimes_gr_duality}
For $F \in \ob \f(\gr\op)$ and $G \in \ob \f(\gr)$, there are natural isomorphisms:
\[
\hom_{\kmod} (F \otimes _\gr G, \kring) \cong 
\hom_{\f(\gr)}(G, DF) \cong \hom_{\f(\gr\op) } (F, DG).
\]
\end{prop}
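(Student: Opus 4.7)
The second isomorphism is exactly the duality adjunction recalled just before Lemma~\ref{lem:duality_polynomial}, so the work is concentrated in producing the first isomorphism; after that, equating the two expressions on the right is immediate. My plan is therefore to establish
\[
\hom_{\kmod}(F \otimes_\gr G,\kring) \;\cong\; \hom_{\f(\gr)}(G,DF)
\]
by unpacking the coend definition of $\otimes_\gr$ and applying the standard tensor--hom adjunction in $\kmod$ pointwise.

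First I would recall from Section~\ref{subsect:tensor_gr} that $F \otimes_\gr G$ is, by definition, the coend
\[
F \otimes_\gr G \;=\; \int^{\Gamma \in \gr} F(\Gamma) \otimes_\kring G(\Gamma).
\]
Applying $\hom_{\kmod}(-,\kring)$ turns this coend into an end (since $\hom_{\kmod}(-,\kring)$ is a contravariant right adjoint, and hence sends colimits to limits):
\[
\hom_{\kmod}(F \otimes_\gr G,\kring) \;\cong\; \int_{\Gamma \in \gr} \hom_{\kmod}\bigl(F(\Gamma) \otimes_\kring G(\Gamma),\,\kring\bigr).
\]
Next, the ordinary tensor--hom adjunction in $\kmod$ gives a natural isomorphism
\[
\hom_{\kmod}\bigl(F(\Gamma) \otimes_\kring G(\Gamma),\,\kring\bigr) \;\cong\; \hom_{\kmod}\bigl(G(\Gamma),\,\hom_{\kmod}(F(\Gamma),\kring)\bigr) \;=\; \hom_{\kmod}\bigl(G(\Gamma),\,DF(\Gamma)\bigr),
\]
and this isomorphism is dinatural in $\Gamma$ (naturality in $\Gamma$ being the only point one needs to check carefully to pass the isomorphism under the end).

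Finally, I would identify the resulting end with $\hom_{\f(\gr)}(G,DF)$: this is the standard expression of the $\kring$-module of natural transformations between two functors in $\f(\gr)$ as an end of $\hom_{\kmod}$-spaces. Chaining the three displayed isomorphisms yields the first half of the statement, and combining with the duality adjunction produces the second. The only substantive point to verify is the dinaturality of the tensor--hom adjunction in $\Gamma$, which is routine; no real obstacle arises.
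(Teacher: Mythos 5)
Your proof is correct and is the standard argument one would give; indeed, the paper states Proposition~\ref{prop:otimes_gr_duality} without proof, treating it as routine. Unwinding the coend, passing $\hom_{\kmod}(-,\kring)$ through as an end, applying the tensor--hom adjunction pointwise, and recognising the end as $\hom_{\f(\gr)}(G,DF)$ is exactly the expected verification, and your note about checking compatibility of the pointwise isomorphism with the end's wedge maps is the only nontrivial (but routine) step.
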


Now consider bifunctors, i.e., the category $\f (\gr\times \gr\op)$. For two bifunctors $F_1$, $F_2$, one can form $F_1 \otimes_\gr F_2$, using the $\gr$-structure of $F_2$ and the $\gr\op$-structure of $F_1$. This has  a natural bifunctor structure, with $\gr$ acting via $F_1$ and $\gr\op$ via $F_2$. As usual, one has:

\begin{prop}
\label{prop:gr_bifunctors_monoidal}
The functor $\otimes_\gr$ yields a monoidal structure $(\f (\gr\times \gr\op) , \otimes_\gr, \pbif)$. 
\end{prop}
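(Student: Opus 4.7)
The statement asserts that bifunctors on $\gr$ form a monoidal category under the operation $\otimes_\gr$ with unit $\pbif$; this is an instance of the general fact that $\kring$-linear profunctors on a small $\kring$-linear category form the hom-category of a bicategory, specialised to the single object $\kring\gr$. The plan is therefore to assemble the associator and unitors from the standard coend calculus and observe that coherence is formal.

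First, I would rewrite $\otimes_\gr$ as an explicit coend: for bifunctors $F_1,F_2 \in \ob \f(\gr\op \times \gr)$,
\[
(F_1 \otimes_\gr F_2)(\Gamma, H) \ \cong\  \int^{K \in \gr} F_1(\Gamma, K) \otimes F_2(K, H),
\]
and verify directly from this description that $F_1 \otimes_\gr F_2$ carries the claimed bifunctor structure: $\Gamma$-variance comes from $F_1$, $H$-variance from $F_2$, and no cowedge condition interferes. This is well-defined since $\kmod$ is cocomplete and the tensor product on $\kmod$ preserves colimits in each variable, so the coend exists and is functorial in $(\Gamma,H)$ as well as in $F_1$ and $F_2$.

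Next, I would construct the associator and unitors. For associativity, iterate the coend description to get
\[
((F_1 \otimes_\gr F_2) \otimes_\gr F_3)(\Gamma,H) \ \cong\ \int^{K} \Bigl(\int^L F_1(\Gamma, L) \otimes F_2(L,K)\Bigr) \otimes F_3(K,H),
\]
and apply Fubini for coends (using that $-\otimes-$ on $\kmod$ commutes with colimits in each variable) to rewrite both $(F_1\otimes_\gr F_2)\otimes_\gr F_3$ and $F_1\otimes_\gr(F_2\otimes_\gr F_3)$ as the double coend $\int^{K,L} F_1(\Gamma,L)\otimes F_2(L,K)\otimes F_3(K,H)$; the resulting comparison gives the associator. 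For the unit, apply the co-Yoneda (density) lemma: for any $F \in \ob\f(\gr\op\times\gr)$,
\[
\int^{K} \kring \hom_\gr(\Gamma, K) \otimes F(K,H) \ \cong\  F(\Gamma, H),
\qquad
\int^{K} F(\Gamma, K) \otimes \kring\hom_\gr(K, H) \ \cong\  F(\Gamma,H),
\]
which yield the left and right unitors $\pbif \otimes_\gr F \cong F \cong F \otimes_\gr \pbif$. (Note that evaluating $\pbif$ in the relevant slot gives $\kring\hom_\gr(-,-)$, as recorded in Example~\ref{exam:otimes_gr}.)

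Finally, I would verify the pentagon and triangle axioms. This step is formal: once the associator is identified with the canonical Fubini isomorphism of iterated coends, the pentagon reduces to the compatibility of two rewritings of a triple iterated coend as $\int^{J,K,L} F_1(\Gamma,L)\otimes F_2(L,K)\otimes F_3(K,J) \otimes F_4(J,H)$, and the triangle reduces to the compatibility between co-Yoneda and Fubini. The only mildly delicate point is bookkeeping of variances (making sure that $\gr\op$-variables only appear paired with $\gr$-variables inside each coend), which is where one must be careful; but this is straightforward once the coend formula above is in hand. No further hypothesis on $\kring$ is needed.
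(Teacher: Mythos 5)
Your proof is correct and is precisely the standard coend-calculus argument (Fubini for the associator, co-Yoneda for the unitors, formal coherence for pentagon and triangle) that the paper invokes without supplying details via ``As usual, one has''. One point worth flagging: your coend $\int^K F_1(\Gamma,K)\otimes F_2(K,H)$ pairs the $\gr$-variable of $F_1$ with the $\gr\op$-variable of $F_2$, which is the reverse of the pairing the paper's lead-up paragraph specifies (``using the $\gr$-structure of $F_2$ and the $\gr\op$-structure of $F_1$ \ldots\ $\gr$ acting via $F_1$ and $\gr\op$ via $F_2$''), so your $F_1\otimes_\gr F_2$ is the paper's $F_2\otimes_\gr F_1$. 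Either convention gives a monoidal structure with unit $\pbif$, so the proposition itself is unaffected, but the paper tracks this order carefully elsewhere (cf.\ Remark~\ref{rem:order_otimes_catlie} and the appearance of the opposite monoidal structure on $\catlie$-bimodules in Theorem~\ref{thm:monoidal_otimes_catlie_gr}), so you should align with the stated convention before using this proposition downstream.
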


\subsection{The polynomial filtration for functors on $\gr$}
\label{subsect:poly_Q}

For $d \in \nat$, the relationship between $\f_d(\gr)$ and the category $\kring \sym_d \dash \modules$ of $\kring \sym_d$-modules is important. This uses the functors introduced below, exploiting the action  of $\sym_d$ by place permutations on  $\A^{\otimes d}$. 

\begin{nota}
For $d \in \nat$, denote by 
\begin{enumerate}
\item 
$\alpha_d : \kring \sym_d \dash\modules \rightarrow \f_d (\gr)$ the functor $\A^{\otimes _d} \otimes_{\sym_d } -$; 
\item 
$\cre_d: \f_d (\gr) \rightarrow \kring \sym_d \dash\modules$ the functor $\hom_{\f_d (\gr)} (\A^{\otimes d}, -)$.
\end{enumerate}
\end{nota}

The functor $\cre_d$ is sometimes referred to as the {\em cross-effect} functor (more precisely, it is the restriction to $\f_d (\gr)$ of the $d$th cross-effect functor as in \cite{MR3340364}); it is exact. The functor $\alpha_d$ is left adjoint to $\cre_d$ and is right exact. The composite $\cre_d \alpha_d$ is naturally isomorphic to the identity on $\kring \sym_d$-modules via the unit of the adjunction;  for $F \in \ob \f_d (\gr)$, the adjunction counit
$
\alpha_d \cre_d F \rightarrow F
$ 
has  kernel and cokernel that are both polynomial of degree $d-1$. 

\begin{rem}
\label{rem:beta}
The cross-effect functor also has a right adjoint, $\beta_d : \kring \sym_d\dash \modules \rightarrow \f_d (\gr)$. This is of significant importance in the theory of polynomial functors, as exploited in \cite{PV} and \cite{2021arXiv211001934P}, for example. 
\end{rem}

We now specialize to $\kring = \rat$.

\begin{prop}
\label{prop:alpha_Q}
\cite{PV}
For $\kring = \rat$, 
\begin{enumerate}
\item 
the functor $\alpha_d$ is exact; 
\item 
the adjunction counit $\alpha_d \cre_d \rightarrow \id_{\f_d(\gr)}$ is injective and fits into the natural short exact sequence (for $F \in \ob \f_d (\gr)$):
\[
0
\rightarrow 
\alpha_d \cre_d F \rightarrow F
\rightarrow 
\qgr_{d-1} F 
\rightarrow 
0;
\]
\item 
$\alpha_d \cre_d F $ is semisimple and all of its composition factors have polynomial degree exactly $d$. 
\end{enumerate}
\end{prop}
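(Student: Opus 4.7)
My plan is to prove the three parts in order, with the injectivity in (2) being the technical heart.

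For (1), the functor $\alpha_d(M) = \A^{\otimes d} \otimes_{\sym_d} M$ is computed pointwise on $G \in \ob \gr$; over $\rat$, Maschke's theorem makes $\rat\sym_d$ semisimple, so every right $\rat\sym_d$-module is flat and $\alpha_d$ is exact.

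For the short exact sequence in (2), I first record the vanishing $\cre_d G = 0$ for every $G \in \f_{d-1}(\gr)$: by the universal property of $\qgr_{d-1}$ as left adjoint to the inclusion, $\hom_{\f(\gr)}(\A^{\otimes d}, G) \cong \hom_{\f_{d-1}(\gr)}(\qgr_{d-1}\A^{\otimes d}, G) = 0$ by Lemma \ref{lem:qgr_tensor}. The paper already records that the kernel $K$ and cokernel $C$ of the counit $\alpha_d\cre_d F \to F$ both lie in $\f_{d-1}(\gr)$. To identify $C$ with $\qgr_{d-1} F$: since $C \in \f_{d-1}$, the surjection $F \twoheadrightarrow C$ factors as $F \twoheadrightarrow \qgr_{d-1} F \twoheadrightarrow C$; conversely, the composite $\alpha_d\cre_d F \to F \to \qgr_{d-1} F$ is adjoint to $\cre_d F \to \cre_d\qgr_{d-1}F = 0$ and so vanishes, making $F \to \qgr_{d-1} F$ factor through $F \twoheadrightarrow C \twoheadrightarrow \qgr_{d-1} F$; uniqueness in the universal properties makes these surjections mutually inverse.

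The main obstacle is showing $K = 0$. I reduce this to the sub-claim that $\alpha_d M$ admits no nonzero subfunctor in $\f_{d-1}(\gr)$, for any $M \in \rat\sym_d\dash\modules$. For the sub-claim I invoke the right adjoint $\beta_d$ of $\cre_d$ from Remark \ref{rem:beta}: in characteristic zero the canonical natural transformation $\alpha_d \to \beta_d$ is an isomorphism, reflecting the fact that semisimplicity splits the Serre projection $\f_d(\gr) \twoheadrightarrow \f_d(\gr)/\f_{d-1}(\gr) \simeq \rat\sym_d\dash\modules$ so that its left and right sections coincide. Granting $\alpha_d M \cong \beta_d M$, the adjunction $\cre_d \dashv \beta_d$ gives $\hom_{\f_d(\gr)}(Y, \alpha_d M) \cong \hom_{\sym_d}(\cre_d Y, M) = 0$ for every $Y \in \f_{d-1}(\gr)$, proving the sub-claim and hence $K = 0$.

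Finally for (3), Maschke decomposes $\cre_d F \cong \bigoplus_{\lambda \vdash d} S^\lambda \otimes_\rat V_\lambda$ into isotypic components of the simple $\rat\sym_d$-modules $S^\lambda$, and exactness from (1) gives $\alpha_d\cre_d F \cong \bigoplus_\lambda \alpha_d(S^\lambda)\otimes_\rat V_\lambda$. Each $\alpha_d(S^\lambda)$ has polynomial degree exactly $d$ since $\cre_d\alpha_d(S^\lambda) = S^\lambda \neq 0$ precludes membership in $\f_{d-1}(\gr)$. For simplicity, apply the exact $\cre_d$ to a subfunctor $N \hookrightarrow \alpha_d(S^\lambda)$: since $S^\lambda$ is simple, $\cre_d N$ is either $0$ or $S^\lambda$. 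The first case uses $\alpha_d \cong \beta_d$ to give $\hom(N, \alpha_d S^\lambda) \cong \hom_{\sym_d}(\cre_d N, S^\lambda) = 0$, forcing $N = 0$; in the second case, full faithfulness of $\alpha_d$ together with the counit $\alpha_d\cre_d N \to N \hookrightarrow \alpha_d S^\lambda$ (whose composite is the identity of $\alpha_d S^\lambda$ by the triangle identity) forces $N = \alpha_d S^\lambda$.
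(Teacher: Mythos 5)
The paper defers this proposition to \cite{PV}, so there is no in-paper proof to compare with; I will assess your argument on its own terms. Part (1), the identification of the cokernel with $\qgr_{d-1} F$, and the ``second case'' of part (3) are all sound. The problem is that both the vanishing $K = 0$ and the ``first case'' of part (3) rest on the assertion that the canonical map $\alpha_d \to \beta_d$ is an isomorphism, and your justification of this --- that semisimplicity of the quotient $\f_d(\gr)/\f_{d-1}(\gr) \simeq \rat\sym_d\dash\modules$ forces the left and right adjoints to the Serre projection to coincide --- is false as a formal statement. Counterexample: take finite-dimensional representations over $\rat$ of the quiver $1 \to 2$ and the Serre subcategory generated by the simple $S_1$ supported at the source; the quotient is equivalent to $\rat$-vector spaces, as semisimple as could be, yet the left adjoint to the quotient functor sends $\rat$ to the one-dimensional simple $S_2$ at the sink while the right adjoint sends $\rat$ to the injective envelope of $S_2$, which is two-dimensional, so the two sections differ.

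The isomorphism $\alpha_d \cong \beta_d$ over $\rat$ is a genuine, non-formal theorem about the polynomial filtration on $\f(\gr)$ (established in \cite{PV} and in \cite{MR3505136}), and it carries essentially the same content as the injectivity claim in part (2) that you are deducing from it; invoking it without proof is circular in spirit. So your argument is incomplete precisely at its crucial step: you should either explicitly cite $\alpha_d \cong \beta_d$ as a result from those references, or actually prove it, which requires specific rational input (for instance the structure of the polynomial filtration on the projective generators $\pbar^{\otimes n}$, or the identification of the dimension subgroups over $\rat$) rather than appealing to the semisimplicity of $\rat\sym_d$ alone.
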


\begin{rem}
A polynomial functor of the form $\alpha_d M$ for $M $ a $\rat \sym_d$-module will be referred to as a {\em homogeneous polynomial functor of degree $d$}. (Thus a functor $F \in \ob \f_d(\gr)$ is homogeneous  of degree $d$ if and only if the adjunction counit $\alpha_d \cre_d F \rightarrow F$ is an isomorphism.) 
\end{rem}

Now, for any $F \in \ob \f (\gr)$, one has the natural tower under $F$: 
\[
\ldots \twoheadrightarrow 
\qgr_{d+1}F 
\twoheadrightarrow 
\qgr_d F
\twoheadrightarrow 
\ldots 
\twoheadrightarrow 
\qgr_{-1} F=0.
\]
By considering the kernels of the natural surjections $F \twoheadrightarrow 
\qgr_d F$, this is equivalent to giving a decreasing filtration of $F$, the {\em polynomial filtration}. This filtration stabilizes if and only if $F$ is polynomial.

We introduce the following notation for the subquotients of the polynomial filtration:

\begin{nota}
\label{nota:qhat}
For $d\in \nat$, denote by $\qhat{d}$  the kernel of the natural surjection $\qgr_d \twoheadrightarrow \qgr_{d-1}$.
\end{nota}

The associated graded to the polynomial filtration of $F\in \ob \f (\gr)$ is thus 
$
\bigoplus_{d \in \nat} \qhat{d} F
$.
 The following is used in Corollary \ref{cor:fbcr} to encode this as a $\fb$-module:

\begin{prop}
\label{prop:qhat}
\cite{PV}
For $\kring =\rat$ and $d \in \nat$,
\begin{enumerate}
\item 
the functors $\qgr_d$ and  $\qhat{d}$ are exact on $\f_{< \infty} (\gr)$; 
\item 
for $F \in \f (\gr)$, there is a natural isomorphism $\qhat{d} F \cong \alpha_d \cre_d \qgr_d F$; in particular, $\qhat{d} F$ is a homogeneous polynomial functor of degree $d$.
\end{enumerate}
\end{prop}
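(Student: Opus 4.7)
The plan is to deduce (2) essentially by unpacking definitions, and to prove the exactness statement in (1) by induction on the polynomial degree, with the rational-coefficient exactness of $\alpha_d$ doing all the work.

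For (2), I would apply the natural short exact sequence of Proposition \ref{prop:alpha_Q}(2) to the polynomial functor $\qgr_d F \in \ob \f_d(\gr)$, obtaining
\[
0 \to \alpha_d \cre_d \qgr_d F \to \qgr_d F \to \qgr_{d-1}(\qgr_d F) \to 0.
\]
A short adjunction argument (using that $\qgr_e$ is left adjoint to the inclusion $\f_e(\gr) \hookrightarrow \f(\gr)$) identifies $\qgr_{d-1}(\qgr_d F)$ naturally with $\qgr_{d-1} F$, the quotient map becoming the canonical surjection $\qgr_d F \twoheadrightarrow \qgr_{d-1} F$. By Notation \ref{nota:qhat} the kernel of this surjection is $\qhat{d} F$, so $\qhat{d} F \cong \alpha_d \cre_d \qgr_d F$; the right-hand side is homogeneous of degree $d$ by construction. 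No exactness hypothesis is needed for this part, so it applies to arbitrary $F \in \ob \f(\gr)$.

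For (1), I would induct on the least $n$ with $F', F, F'' \in \ob \f_n(\gr)$; such an $n$ exists for any short exact sequence in $\f_{<\infty}(\gr)$ since $\f_n(\gr)$ is closed under subobjects and quotients and $F$ has some finite polynomial degree. The base case $n = 0$ is trivial: $\qgr_d$ restricts to the identity on $\f_0(\gr)$ for $d \geq 0$ and to the zero functor for $d = -1$. For the inductive step, I would apply Proposition \ref{prop:alpha_Q}(2) pointwise to $F', F, F''$ to form a $3\times 3$ diagram whose rows are the short exact sequences coming from Proposition \ref{prop:alpha_Q}(2) and whose columns come from the original short exact sequence. Exactness of $\cre_n$ combined with Proposition \ref{prop:alpha_Q}(1) makes $\alpha_n \cre_n$ exact, so the left column is a short exact sequence; the middle column is one by hypothesis; and the $3\times 3$ lemma then forces the right column to be one, which is exactly the statement that $\qgr_{n-1}$ is exact on $\f_n(\gr)$. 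For general $d$: when $d \geq n$, $\qgr_d$ is the identity on $\f_n(\gr)$, while for $d < n$ the natural isomorphism $\qgr_d \cong \qgr_d \circ \qgr_{n-1}$ (again formal from adjunction) reduces to the inductive hypothesis. Exactness of $\qhat{d}$ then follows from a second $3\times 3$ argument applied to the defining short exact sequence $0 \to \qhat{d} \to \qgr_d \to \qgr_{d-1} \to 0$ together with the just-established exactness of $\qgr_d$ and $\qgr_{d-1}$.

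The main obstacle is the exactness of $\alpha_n$: without Proposition \ref{prop:alpha_Q}(1) only right-exactness of the left column is available and the $3\times 3$ argument collapses. This is precisely what forces the hypothesis $\kring = \rat$; everything else is a formal manipulation with the adjoint pair $\qgr_e \dashv (\f_e(\gr) \hookrightarrow \f(\gr))$.
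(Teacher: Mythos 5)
Your proof is correct. This proposition is cited from \cite{PV} and the paper gives no in-text argument, so there is nothing to compare against; your write-up supplies a complete, self-contained proof. Both the reduction of (2) to Proposition \ref{prop:alpha_Q}(2) applied to $\qgr_d F$, via the adjunction-formal identity $\qgr_{d-1}\circ\qgr_d \cong \qgr_{d-1}$, and the descending-degree induction in (1), using exactness of $\alpha_n\cre_n$, go through as stated. The one point worth making explicit is the version of the nine lemma you invoke — exact rows together with exactness of two of the three columns force exactness of the third; this is most transparently justified by viewing the $3\times 3$ diagram as a short exact sequence of three-term chain complexes (the columns) and taking the associated long exact sequence in homology. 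The same mechanism handles your second $3\times 3$ argument establishing exactness of $\qhat{d}$.
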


The category $\f (\fb)$ used below was introduced in Example \ref{exam:fb-modules}.

\begin{nota}
\label{nota:fbcr}
Let $\fbcr : \f (\gr) \rightarrow \f (\fb)$ be the functor that sends $F$ to the $\fb$-module $\mathbf{t} \mapsto \cre_t \qgr_t F = \cre_t \qhat{t} F$, for $\mathbf{t} = \{ 1, \ldots , t \} \in \fb$.  
\end{nota}

The following consequence of Proposition \ref{prop:qhat} shows that $\fbcr$ determines the associated graded of the polynomial filtration.

\begin{cor}
\label{cor:fbcr}
Suppose $\kring= \rat$.
\begin{enumerate}
\item 
For $F \in \ob \f(\gr)$, the associated graded of the polynomial filtration is naturally isomorphic to 
\[
\bigoplus_{t \in \nat} \alpha_t \big((\fbcr F) (\mathbf{t})\big).
\]
\item 
The restriction of $\fbcr$ to $\f_{< \infty}(\gr)$ is exact.
\end{enumerate}
\end{cor}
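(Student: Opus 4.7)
The plan is to reduce everything to Proposition \ref{prop:qhat} together with the definition of $\fbcr$ in Notation \ref{nota:fbcr}.

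For part (1), the associated graded of the polynomial filtration of $F$ is by definition
\[
\bigoplus_{t \in \nat} \qhat{t} F,
\]
where $\qhat{t} F = \ker (\qgr_t F \twoheadrightarrow \qgr_{t-1} F)$. Proposition \ref{prop:qhat}(2) provides a natural isomorphism $\qhat{t} F \cong \alpha_t \cre_t \qgr_t F$, and by Notation \ref{nota:fbcr} we have $(\fbcr F)(\mathbf{t}) = \cre_t \qgr_t F$. Substituting gives $\qhat{t} F \cong \alpha_t \big((\fbcr F)(\mathbf{t})\big)$; summing over $t \in \nat$ yields the claimed natural isomorphism.

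For part (2), it suffices to check that, for each $t \in \nat$, the component $F \mapsto (\fbcr F)(\mathbf{t}) = \cre_t \qgr_t F$ is exact on $\f_{<\infty}(\gr)$, since exact sequences in $\f(\fb)$ are detected componentwise (see Example \ref{exam:fb-modules}). By Proposition \ref{prop:qhat}(1), $\qgr_t$ is exact on $\f_{<\infty}(\gr)$, and $\cre_t$ is exact on all of $\f_d(\gr)$ (for any $d$), as recalled after the definition of the cross-effect. The composite is therefore exact, giving the result.

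The argument is entirely bookkeeping on top of Proposition \ref{prop:qhat}; there is no real obstacle, the only subtlety being the reminder that exactness in $\f(\fb)$ is tested on each arity separately, so that the conclusion follows from exactness of $\cre_t \circ \qgr_t$ for each fixed $t$.
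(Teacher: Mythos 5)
Your proof is correct and matches the intended derivation: the paper presents the corollary as an immediate consequence of Proposition \ref{prop:qhat} (together with Notation \ref{nota:fbcr}) and does not spell out the bookkeeping, which is exactly what you have supplied. One small imprecision: you write that ``$\cre_t$ is exact on all of $\f_d(\gr)$ (for any $d$),'' but in the paper's conventions $\cre_t$ is defined on $\f_t(\gr)$; what you actually need (and use) is that $\cre_t$ is exact on $\f_t(\gr)$ and that $\qgr_t$ lands there, so the composite $\cre_t \circ \qgr_t$ restricted to $\f_{<\infty}(\gr)$ is exact.
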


\subsection{The polynomial filtration for $\f (\gr\op)$ and analytic functors} 
\label{subsect:analytic_grop}

In \cite{2021arXiv211001934P} it is the polynomial filtration for functors in $\f (\gr\op)$ that is used. 
When working with $\f (\gr\op)$, the relevant  counterpart of  $\qgr_d$ is the {\em  right} adjoint to the inclusion $\f_d (\gr\op) \hookrightarrow \f (\gr\op)$.  We also introduce the left adjoint, since this occurs when considering the behaviour of $\otimes _\gr$.
 
\begin{nota}
\label{nota:pgrop}
For $d \in \nat$, denote by
\begin{enumerate}
\item 
 $\pgrop_d : \f (\gr\op) \rightarrow \f_ d (\gr\op)$  the right adjoint to the inclusion $\f_d (\gr\op) \hookrightarrow \f (\gr\op)$;
 \item 
 $\qgrop_d : \f (\gr\op) \rightarrow \f_ d (\gr\op)$  the left adjoint to the inclusion $\f_d (\gr\op) \hookrightarrow \f (\gr\op)$.
\end{enumerate}
These may also be considered as functors $\f(\gr\op) \rightarrow \f (\gr\op)$.
\end{nota}

Every functor $G \in \ob \f (\gr\op)$ has a canonical polynomial filtration 
\[
0 = \pgrop _{-1} G \subset \pgrop_0 G \subset \pgrop _1 G \subset \ldots 
\subset \pgrop_d G \subset \ldots \subset G.
\]
The functor $G$ is {\em analytic} if the canonical morphism $\lim_{\substack{\rightarrow\\ d} } \pgrop_d G \rightarrow G$ is an isomorphism. 

\begin{defn}
\label{defn:analytic_grop}
The category $\f_\omega (\gr\op) \subset \f (\gr\op)$ is the full subcategory of analytic functors. 
\end{defn}

\begin{rem}
\label{rem:polynomial_filt_grop}
Properties of the polynomial filtration for $\f (\gr\op)$ are studied in 
 \cite{2021arXiv211001934P}; these are  analogous (via `duality') to those of the polynomial filtration of $\f (\gr)$.
\end{rem}

\section{The tower of categories $\qgr_\bullet \kring \gr$}
\label{sect:polyq}

This section considers the bifunctor $\qgr_d \pbif$ (for $d \in \nat$) that is constructed from  $\pbif = \kring \hom_\gr (-,-)$. These bifunctors lead to the tower of categories $\qgr_\bullet \kring \gr$, providing `polynomial approximations' to $\kring \gr$. 

Here $\kring$ is taken to be a field, so as to simplify the exposition. For the main applications, it will be taken to be $\rat$.
\subsection{Polynomial functors and $\otimes_\gr$}

Proposition \ref{prop:otimes_gr_duality} allows a quick proof of the following:

\begin{prop}
\label{prop:otimes_gr_poly}
For $F \in \ob \f(\gr\op)$ and $G \in \ob \f(\gr)$,
\begin{enumerate}
\item 
 if $F$ has polynomial degree $d$, then the canonical surjection $G \twoheadrightarrow \qgr_d G$ induces an isomorphism 
$
F \otimes _\gr G \stackrel{\cong}{\rightarrow} F \otimes_\gr \qgr_d G
$;
\item 
if $G$ has polynomial degree $d$, then the canonical surjection $F \twoheadrightarrow \qgrop_d F$ induces an isomorphism
$F \otimes_\gr G \stackrel{\cong}{\rightarrow} \qgrop_d F \otimes_\gr  G$.
\end{enumerate}
Hence, in general, there are natural isomorphisms:
\[
\qgrop_d F \otimes_\gr  G 
\cong 
\qgrop_d F \otimes_\gr \qgr_d G
\cong 
F \otimes_\gr \qgr_d G.
\]
\end{prop}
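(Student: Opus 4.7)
The plan is to derive both parts directly from Proposition~\ref{prop:otimes_gr_duality} by passing to linear duals. Since $\kring$ is a field, the duality functor $(-)^\sharp = \hom_\kmod(-,\kring)$ is exact and conservative on $\kring$-modules (a morphism of $\kring$-vector spaces is an isomorphism iff its linear dual is), so to prove each of the claimed natural maps is an isomorphism it suffices to show this after applying $(-)^\sharp$.

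For part~(1), write $\phi : F\otimes_\gr G \to F\otimes_\gr \qgr_d G$ for the natural map induced by the canonical surjection $G\twoheadrightarrow \qgr_d G$. Naturality of the first isomorphism in Proposition~\ref{prop:otimes_gr_duality} with respect to $G$ identifies $\phi^\sharp$ with the precomposition map
\[
\hom_{\f(\gr)}(\qgr_d G, DF) \longrightarrow \hom_{\f(\gr)}(G, DF).
\]
Since $F$ has polynomial degree $d$, Lemma~\ref{lem:duality_polynomial} yields $DF \in \ob \f_d(\gr)$. The universal property of $\qgr_d$ as reflection of $\f(\gr)$ onto $\f_d(\gr)$ (with unit $G\twoheadrightarrow \qgr_d G$) then makes the above precomposition map a bijection. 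Hence $\phi^\sharp$ is an isomorphism, so $\phi$ is an isomorphism.

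Part~(2) is obtained by the mirror argument: dualize the natural map $F\otimes_\gr G \to \qgrop_d F \otimes_\gr G$ using the second isomorphism in Proposition~\ref{prop:otimes_gr_duality}, invoke Lemma~\ref{lem:duality_polynomial} to get $DG\in \ob \f_d(\gr\op)$ when $G$ has polynomial degree $d$, and then apply the universal property of $\qgrop_d$ as left adjoint to the inclusion $\f_d(\gr\op)\hookrightarrow \f(\gr\op)$. The concluding chain of natural isomorphisms follows by combining the two statements: the first isomorphism applies~(1) to $\qgrop_d F$ (which has polynomial degree $d$), and the second applies~(2) to $\qgr_d G$. There is no substantive obstacle here: the proof is essentially formal, resting on Proposition~\ref{prop:otimes_gr_duality}, the conservativity of $(-)^\sharp$ over a field, Lemma~\ref{lem:duality_polynomial}, and the reflection property of $\qgr_d$ and $\qgrop_d$.
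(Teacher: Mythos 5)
Your proof is correct and follows essentially the same route as the paper: both arguments pass to $\kring$-linear duals via Proposition \ref{prop:otimes_gr_duality}, invoke Lemma \ref{lem:duality_polynomial} to place $DF$ (resp.\ $DG$) in the relevant polynomial subcategory, apply the universal property of $\qgr_d$ (resp.\ $\qgrop_d$), and conclude by conservativity of $\hom_\kring(-,\kring)$ over a field. You merely spell out the conservativity step a bit more explicitly than the paper does; the substance is identical.
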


\begin{proof}
Consider the first statement. By Proposition \ref{prop:otimes_gr_duality}, there is a natural isomorphism 
$\hom_{\kring} (F \otimes_\gr G, \kring) \cong \hom_{\f(\gr)} (G, DF)$. Moreover, $DF$ has polynomial degree $d$ in $\f (\gr)$, by Lemma \ref{lem:duality_polynomial}. Hence,  there is a natural isomorphism $\hom _{\f(\gr)} (G, DF)\cong 
\hom_{\f(\gr)} (\qgr_d G, DF)$. Applying the isomorphism of Proposition \ref{prop:otimes_gr_duality} again gives $\hom_{\kring} (\qgr_d F \otimes_\gr G, \kring)$. More precisely, the induced map $F \otimes _\gr G \stackrel{\cong}{\rightarrow} F \otimes_\gr \qgr_d G$ gives an isomorphism on applying $\hom_\kring (-, \kring)$, hence is an isomorphism.

The proof of the second statement is categorically dual. The final statement then follows. 
\end{proof}

We now seek to apply this to bifunctors (i.e., working with $\f (\gr\op \times \gr)$).  The key example is the bifunctor $\pbif$ that sends $(\Gamma, H) \in \ob \gr\op \times \gr$ to $\kring \hom_{\gr} (\Gamma, H)$.

When working with bifunctors, we have the following, which follows directly from naturality:

\begin{lem}
\label{lem:qgr}
The functors  $\qgr_d$ and $\qgrop_d$, for   $d \in \nat$, induce endofunctors of $\f (\gr\op \times \gr)$.
Moreover, the natural surjections $\id \twoheadrightarrow \qgr_{d+1} \twoheadrightarrow \qgr_d $ are natural transformations of endofunctors of $\f (\gr\op \times \gr)$. 
\end{lem}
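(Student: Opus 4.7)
The plan is to reduce everything to the one-variable statements already recorded, via the standard currying equivalence
\[
\f(\gr\op \times \gr) \ \cong \ \f(\gr\op;\f(\gr)),
\]
under which a bifunctor $F$ corresponds to the functor $\widetilde F : \gr\op \rightarrow \f(\gr)$ defined by $\Gamma \mapsto F(\Gamma,-)$. Since $\qgr_d$ is an endofunctor of $\f(\gr)$ (via composition with the inclusion $\f_d(\gr)\hookrightarrow \f(\gr)$), post-composition gives $\qgr_d\circ \widetilde F : \gr\op \rightarrow \f(\gr)$, and uncurrying recovers an object of $\f(\gr\op \times \gr)$. This construction is functorial in $F$ by the functoriality of $\qgr_d$ on $\f(\gr)$, and so defines an endofunctor of $\f(\gr\op \times \gr)$. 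For $\qgrop_d$ the argument is identical after swapping the roles of the two variables: use the currying $\f(\gr\op \times \gr) \cong \f(\gr;\f(\gr\op))$ and post-compose with $\qgrop_d : \f(\gr\op) \rightarrow \f(\gr\op)$.

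For the second assertion, recall that $\id_{\f(\gr)} \twoheadrightarrow \qgr_{d+1} \twoheadrightarrow \qgr_d$ are natural transformations of endofunctors of $\f(\gr)$. Whiskering each on the right by $\widetilde F$ produces natural transformations between the corresponding functors $\gr\op \rightarrow \f(\gr)$: the naturality square associated to a morphism $u : \Gamma \rightarrow \Gamma'$ in $\gr\op$ commutes precisely because the one-variable transformations are natural with respect to the $\f(\gr)$-morphism $F(u,-) : F(\Gamma',-) \rightarrow F(\Gamma,-)$. Uncurrying yields the desired surjective natural transformations of endofunctors of $\f(\gr\op \times \gr)$, and their compatibility with morphisms of bifunctors follows from naturality in the same way.

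There is essentially no obstacle here: the entire lemma is a formal consequence of the currying equivalence combined with the functoriality of $\qgr_d$ (respectively $\qgrop_d$) and the naturality of the surjections already established on $\f(\gr)$. The only point that is worth noting explicitly is that, because $\qgr_d$ and $\qgrop_d$ are defined one variable at a time, we must be free to choose which currying to apply; either choice gives the same endofunctor of $\f(\gr\op \times \gr)$ since in both cases the effect on a bifunctor $F$ is just pointwise application of the corresponding one-variable Taylor quotient.
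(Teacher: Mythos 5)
Your proof is correct and spells out exactly the naturality argument the paper merely gestures at (the lemma is introduced with ``which follows directly from naturality'' and given no formal proof). One small caveat: the closing remark about ``either choice of currying giving the same endofunctor'' is a little misleading---for a fixed $d$, the endofunctor induced by $\qgr_d$ requires the currying $\f(\gr\op;\f(\gr))$ (holding the $\gr\op$-variable fixed), while $\qgrop_d$ requires the currying $\f(\gr;\f(\gr\op))$, so each of the two constructions is tied to exactly one currying and there is no genuine choice to compare.
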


In particular, a bifunctor $B$ in $\f (\gr\op \times \gr)$ is said to be polynomial of degree $d$ with respect to $\gr$ (respectively $\gr\op$) if the natural surjection $B \twoheadrightarrow \qgr_d B$ (resp. $B \twoheadrightarrow \qgrop_d B$) is an isomorphism. Equivalently, $B$  is  polynomial of degree $d$ with respect to $\gr$ if, for every $\Gamma \in \ob \gr\op$, $B (\Gamma, -)$ (considered as an object of $\f (\gr)$) is polynomial of degree $d$. The corresponding statement holds for $\gr\op$.

\begin{prop}
\label{prop:pbif_bipolynomiality}
For $d \in \nat$, there is an isomorphism 
$
\qgr_d \pbif \cong \qgrop_d \pbif$. 
 In particular, $\qgr_d \pbif$ is polynomial with respect to $\gr\op$.

Moreover, for $F \in \ob \f_d (\gr\op)$ and $G \in \ob \f_d (\gr)$, there are natural isomorphisms:
\begin{eqnarray*}
F \otimes_\gr \qgr_d \pbif & \cong & F \\
\qgr_d \pbif \otimes _\gr G & \cong & G.
\end{eqnarray*}
\end{prop}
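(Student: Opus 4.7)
The plan is to deduce all three isomorphisms from Proposition \ref{prop:otimes_gr_poly} together with the co-Yoneda identities of Example \ref{exam:otimes_gr}, namely $F \otimes_\gr P_\Gamma \cong F(\Gamma)$ for $F \in \ob \f(\gr\op)$ and $P_{(-)}(H) \otimes_\gr G \cong G(H)$ for $G \in \ob \f(\gr)$. I would treat the bifunctor identification $\qgr_d \pbif \cong \qgrop_d \pbif$ first, and then use it together with Proposition \ref{prop:otimes_gr_poly} to settle the remaining two natural isomorphisms.

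For the bifunctor identification, the strategy is to evaluate both sides pointwise at $(\Gamma, H) \in \ob \gr\op \times \gr$ and re-express the evaluations as coends via Example \ref{exam:otimes_gr}:
\[
(\qgrop_d \pbif)(\Gamma,H) \cong \qgrop_d P_{(-)}(H) \otimes_\gr P_\Gamma,
\qquad
(\qgr_d \pbif)(\Gamma,H) \cong P_{(-)}(H) \otimes_\gr \qgr_d P_\Gamma.
\]
The identification of these two expressions is then a direct instance of the last chain of isomorphisms in Proposition \ref{prop:otimes_gr_poly}, applied to $F = P_{(-)}(H)$ and $G = P_\Gamma$; naturality in $(\Gamma, H)$ upgrades this to an isomorphism of bifunctors.

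For $F \in \ob \f_d(\gr\op)$, the coend $F \otimes_\gr \qgr_d \pbif$ contracts the covariant $\gr$-variable of $\qgr_d \pbif$ against $F$, so its value at $\Gamma$ is $F \otimes_\gr \qgr_d P_\Gamma$; by Proposition \ref{prop:otimes_gr_poly}(1) this collapses to $F \otimes_\gr P_\Gamma = F(\Gamma)$. Symmetrically, for $G \in \ob \f_d(\gr)$ the value at $H$ of $\qgr_d \pbif \otimes_\gr G$ is $\qgr_d \pbif(-, H) \otimes_\gr G$; using the bifunctor isomorphism just established this is $\qgrop_d P_{(-)}(H) \otimes_\gr G$, which Proposition \ref{prop:otimes_gr_poly}(2) then collapses to $P_{(-)}(H) \otimes_\gr G = G(H)$. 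The only genuine pitfall is bookkeeping: $\qgr_d$ and $\qgrop_d$ act on opposite variables of $\pbif$, so at each step one must keep track of which variable is being integrated out and which carries the polynomial truncation; once this is straight, the proof is a direct assembly of Proposition \ref{prop:otimes_gr_poly} and Example \ref{exam:otimes_gr}.
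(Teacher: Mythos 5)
Your proposal is correct and follows essentially the same route as the paper: the key ingredients are Proposition \ref{prop:otimes_gr_poly} together with the co-Yoneda identities, which are precisely what underlies Proposition \ref{prop:gr_bifunctors_monoidal} (that $\pbif$ is the unit for $\otimes_\gr$). The only cosmetic difference is that the paper phrases the argument at the level of bifunctors — sandwiching $\qgrop_d\pbif\otimes_\gr\qgr_d\pbif$ between $\pbif\otimes_\gr\qgr_d\pbif$ and $\qgrop_d\pbif\otimes_\gr\pbif$ and invoking the unit isomorphisms — whereas you evaluate pointwise at $(\Gamma,H)$ and invoke co-Yoneda directly, then appeal to naturality; these are the same argument viewed at different levels of abstraction, and your bookkeeping of which variable carries $\qgr_d$ versus $\qgrop_d$ and which is integrated out is correct throughout.
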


\begin{proof}
By Proposition \ref{prop:gr_bifunctors_monoidal}, $\pbif$ is the unit for the monoidal structure on $\f(\gr\op \times \gr)$ given by $\otimes_\gr$. In particular, this gives the isomorphisms $\pbif \otimes_\gr \qgr_d \pbif \cong \qgr_d \pbif$ and $\qgrop_d \pbif \otimes_\gr\pbif \cong \qgrop_d \pbif$.

Proposition \ref{prop:otimes_gr_poly} gives that both of the left hand expressions of the above isomorphisms are isomorphic to $\qgrop_d \pbif \otimes _\gr \qgr_d \pbif$. Hence the first statement follows by composing the isomorphisms of bifunctors. 

The remaining statements are proved similarly.
\end{proof}

\begin{rem}
\label{rem:monoidal_polynomial_bifunctors}
\ 
\begin{enumerate}
\item 
The fact that $\qgr_d$, $d \in \nat$ applied to $\pbif$ (with respect to $\gr\op$) yields an interesting tower of surjections 
 may appear surprising, since it is usually the increasing filtration associated to the functors $\pgrop_d$ that is considered when dealing with $\gr\op$.
\item
Proposition \ref{prop:pbif_bipolynomiality} implies that, restricting the monoidal structure $(\f (\gr\op \times \gr), \otimes_\gr , \pbif)$ to the full subcategory of $\f (\gr\op \times \gr)$ with objects bifunctors that are polynomial degree $d$ with respect to both $\gr$ and $\gr\op$ gives a monoidal structure with unit $\qgr_d \pbif$. 
\end{enumerate}
\end{rem}

\begin{exam}
\ 
\begin{enumerate}
\item 
The bifunctor $\qgr_0 \pbif $ is the constant bifunctor $\kring$. 
\item 
The bifunctor $\qgr_1 \pbif $ is isomorphic to $\kring \oplus (\A ^\sharp \boxtimes \A)$. This can also be viewed as $(\Gamma , H) \mapsto \kring \otimes \hom (\Gamma_\abel, H_\abel)$, where $(-)_\abel :  \gr \rightarrow \ab$ is the abelianization functor. The surjection to $\A ^\sharp \boxtimes \A$ is then induced by the $\kring$-linear extension of 
 $\hom_\gr (\Gamma, H) \rightarrow \hom (\Gamma_\abel, H_\abel) \rightarrow \kring \otimes \hom (\Gamma_\abel, H_\abel)$, where the first map is given by the abelianization functor $(-)_\abel$ and the second is induced by the unit of $\kring$. 
\end{enumerate}
\end{exam}

Proposition \ref{prop:pbif_bipolynomiality} provides the tower of quotients in $\f(\gr\op \times \gr)$:
\[
\xymatrix{
\pbif  
\ar@{.>>}[d]
\ar@{->>}[rd]
\ar@{.>>}[rrd]
\ar@{->>}[rrrd]
\ar@{->>}[rrrrd]
\ar@<.3ex>@{->>}[rrrrrd]
\\
\ldots
\ar@{->>}[r]
&
\qgr_d \pbif  
\ar@{->>}[r]
&
\ldots
\ar@{->>}[r]
&
\qgr_2 \pbif  
\ar@{->>}[r]
&
\qgr_1 \pbif  = \kring \oplus (\A ^\sharp \boxtimes \A)
\ar@{->>}[r]
&
\qgr_0 \pbif  = \kring,
}
\]
in which $\qgr_d \pbif  $ is polynomial of degree $d$ with respect to both $\gr$ and $\gr\op$.

We note the following:

\begin{prop}
\label{prop:qgr_dP_finite}
For $ d \in \nat$, the bifunctor $\qgr_d \pbif $ is finite (i.e., has a finite composition series). In particular, it takes finite-dimensional values.
\end{prop}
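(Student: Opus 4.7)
The plan is to argue by induction on $d$. The base case $d = 0$ is immediate: $\qgr_0\pbif \cong \kring$ is the constant bifunctor, of length one. For the inductive step, the short exact sequence
\[
0 \to \qhat{d}\pbif \to \qgr_d\pbif \to \qgr_{d-1}\pbif \to 0
\]
together with the inductive hypothesis reduces the problem to showing that $\qhat{d}\pbif$ is finite. Applying Proposition \ref{prop:qhat}(2) in the $\gr$-variable yields $\qhat{d}\pbif \cong \alpha_d M$ for $M := \cre_d\qgr_d\pbif$, an object of $\f(\gr\op; \rat\sym_d\dash\modules)$. Since $\alpha_d$ is exact (Proposition \ref{prop:alpha_Q}) and sends finite-dimensional $\sym_d$-modules to finite homogeneous polynomial functors of degree $d$, the finiteness of $\qhat{d}\pbif$ is equivalent to that of $M$.

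Next, analyze $M$ via the polynomial filtration on $\gr\op$. By Proposition \ref{prop:pbif_bipolynomiality} and exactness of $\cre_d$, $M$ is polynomial of degree $d$ as a functor on $\gr\op$. The analogous analysis for polynomial functors on $\gr\op$ then provides a finite filtration of $M$ whose homogeneous subquotients are classified by $\rat[\sym_s\times\sym_d]$-modules $W_{s,d}$, for $0 \leq s \leq d$; finiteness of $M$ reduces to finite-dimensionality of each $W_{s,d}$. To compute these, exploit the exponential structure $\pbif \cong \Phi P_\zed$ on $\gr\op$ (Proposition \ref{prop:P_expo}), which gives $\cre_s^{\gr\op}\pbif(\zed,\ldots,\zed;-) \cong \pbar^{\otimes s}$ (with $\sym_s$ permuting the tensor factors). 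Since $\qgr_d$ in the $\gr$-variable commutes with $\cre_s^{\gr\op}$ (they act on disjoint variables), $W_{s,d}$ is extracted from $\cre_d\qgr_d\pbar^{\otimes s}$.

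By Lemma \ref{lem:qgr_tensor}, $W_{s,d} = 0$ for $s > d$, so only finitely many contribute. For $s = d$, the unique top-degree composition factor of $\pbar^{\otimes d}$ is $\A^{\otimes d}$, yielding $W_{d,d} \cong \cre_d\A^{\otimes d} \cong \rat\sym_d$, which is finite-dimensional. The main obstacle is the intermediate case $1 \leq s < d$: one must show $\cre_d\qgr_d\pbar^{\otimes s}$ is finite-dimensional. This requires the structural analysis of the Passi-type polynomial filtration of the (non-polynomial) tensor power $\pbar^{\otimes s}$, whose associated graded involves tensor products $\A^{\otimes k_1}\otimes\cdots\otimes\A^{\otimes k_s}$ with $k_i \geq 1$ and $\sum k_i \leq d$---only finitely many such tuples, each contributing a finite-dimensional $\sym_d$-module to $\cre_d$. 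This structural analysis is the subject of Section \ref{sect:group-ring}; granted it, the induction closes. The ``finite-dimensional values'' statement then follows because the composition factors of $\qgr_d\pbif$ are simple bipolynomial bifunctors, which automatically take finite-dimensional values on finite-rank free groups.
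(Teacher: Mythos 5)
Your proposal is correct in its essentials, and it takes a genuinely different route from the paper's proof, which consists of a single sentence citing the explicit analysis of the polynomial filtration in Djament--Pirashvili--Vespa \cite{MR3505136}. Your argument, by contrast, is a self-contained induction on $d$: reduce via the $\gr$-variable polynomial filtration and Proposition \ref{prop:qhat} to the functor $M = \cre_d\qgr_d\pbif$ on $\gr\op$; use the exponential structure $\pbif \cong \Phi P_\zed$ (Proposition \ref{prop:P_expo}) to identify the $\gr\op$-cross-effects of $M$ as $\cre_d\qgr_d(\pbar^{\otimes s})$; invoke Lemma \ref{lem:qgr_tensor} for vanishing when $s>d$; and bound the remaining cases by the combinatorics of the associated graded of $\pbar^{\otimes s}$. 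This is more informative than the paper's bare citation, since it makes visible exactly which structural inputs are being used. Two caveats are worth noting. First, you ``grant'' the identification $\qhat{k}\pbar \cong \A^{\otimes k}$ coming from the Passi filtration of $\pbar$; in the paper this is Proposition \ref{prop:passi_filt_polynomial}, which appears \emph{after} the statement you are proving and which itself rests on the same reference \cite{MR3505136}, so your route and the paper's are not independent at the bottom — they both trace back to \cite{MR3505136}, and in the paper's ordering your argument would introduce a forward reference that should be flagged. Second, a minor slip: the tuples in your penultimate step contributing to $\cre_d$ must satisfy $\sum k_i = d$ rather than $\sum k_i \le d$, but this does not affect the finiteness of the count.
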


\begin{proof}
This can be proved by using the explicit analysis of the polynomial filtration carried out in 
 \cite{MR3505136}. 
\end{proof}

\subsection{The tower}
\label{subsect:tower_kgr}

Composition of morphisms in $\gr$ induces `composition maps' 
$
\pbif  \otimes  \pbif  \rightarrow \pbif 
$ 
for $\pbif $ by $\kring$-linearizing. Explicitly,  for $H \in \ob \gr$ one has  the morphism of bifunctors
\[
\kring \hom_\gr (H, -) \otimes \kring \hom_\gr (-, H) \rightarrow \kring \hom_\gr(-,-).
\]
Now, there is a  natural morphism $\kring \hom_\gr (H, -) \otimes \kring \hom_\gr (-, H) \rightarrow \kring \hom_\gr(-,-) \otimes_\gr \kring \hom_\gr (-,-) = \pbif \otimes_\gr \pbif $ and the latter is isomorphic to $\pbif$. 

For $d \in \nat$, this can be repeated, replacing $\pbif $ by $\qgr_d \pbif$. This gives the following commutative diagram:
\begin{eqnarray}
\label{eqn:diag_comp}
\xymatrix{
\pbif (H, -) \otimes \pbif (-,H)
\ar[r]
\ar@{->>}[d]
&
\pbif 
\ar@{->>}[d]
\\
\qgr_d \pbif (H,-) 
\otimes
\qgr_d \pbif (-,H) 
 \ar[r]
 &
\qgr_d \pbif 
}
\end{eqnarray}
where the vertical surjections are induced by the canonical map $\pbif \twoheadrightarrow \qgr_d \pbif$.
 (Note that $\pbif (H,-)$ is the same as $P_H (-)$ and $\pbif (-,H)$ is the same as $\pbif (H)$.)
 This diagram determines the lower horizontal `composition map'.

\begin{thm}
\label{thm:tower}
For $d \in \nat$, 
\begin{enumerate}
\item 
there is a  $\kring$-linear category $\qgr_d \kring \gr$ with objects $\ob \gr$ and such that, for $\Gamma, H \in \ob \gr$, $\hom_{\qgr_d \kring \gr} (\Gamma, H) = \qgr_d \pbif (\Gamma, H)$ and composition of morphisms is given by diagram (\ref{eqn:diag_comp}); 
\item
there is a full $\kring$-linear functor $\kring \gr \rightarrow \qgr_d \kring \gr$ that is the identity on objects and,  on morphisms, is the $\kring$-linear surjection 
$
\pbif  \twoheadrightarrow \qgr_d \pbif
$.
\end{enumerate}

These categories form a tower under $\kring \gr$, with projection
$\qgr_d \kring \gr \rightarrow \qgr_{d-1} \kring \gr$ that is the identity on objects and, on morphisms, is 
induced by the natural surjection $\qgr_d \twoheadrightarrow \qgr_{d-1}$:
\[
\xymatrix{
\kring \gr 
\ar@{.>}[d]
\ar[rd]
\ar@{.>}[rrd]
\ar[rrrd]
\ar[rrrrd]
\\
\ldots 
\ar[r]
&
\qgr_d \kring \gr
\ar[r]
&
\ldots 
\ar[r]
&
\qgr_1 \kring \gr
\ar[r]
&
\qgr_0 \kring \gr.
}
\]
\end{thm}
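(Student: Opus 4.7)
The strategy is to construct the composition morphisms in $\qgr_d \kring \gr$ by extending the composition of $\kring \gr$ along the surjections $\pbif \twoheadrightarrow \qgr_d \pbif$, exploiting the universal property of $\qgr_d$ together with the bi-polynomiality of $\qgr_d \pbif$ established in Proposition \ref{prop:pbif_bipolynomiality}. Associativity, unitality, the functor $\kring \gr \to \qgr_d \kring \gr$, and the tower structure then all follow formally by naturality.

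The key step is to show the existence of the bottom horizontal map in diagram (\ref{eqn:diag_comp}). For each $H \in \ob \gr$, the composite $\pbif(H, -) \otimes \pbif(-, H) \to \pbif \twoheadrightarrow \qgr_d \pbif$ is a morphism of bifunctors landing in something that is polynomial of degree $d$ with respect to both $\gr$ and $\gr\op$. Since $\pbif(-, H)$ is constant in the $\gr$-variable and $\pbif(H, -)$ is constant in the $\gr\op$-variable, the universal properties of $\qgr_d$ (applied to the $\gr$-variable) and of $\qgrop_d$ (applied to the $\gr\op$-variable) yield, by successive factorization, a unique morphism
\[
\qgr_d \pbif(H, -) \otimes \qgr_d \pbif(-, H) \to \qgr_d \pbif
\]
making (\ref{eqn:diag_comp}) commute. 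Evaluating on $(\Gamma, X) \in \gr\op \times \gr$ produces the composition map $\qgr_d \pbif(H, X) \otimes \qgr_d \pbif(\Gamma, H) \to \qgr_d \pbif(\Gamma, X)$ of $\qgr_d \kring \gr$. The identity morphism on $H$ is defined as the image of $\id_H \in \pbif(H,H)$ under $\pbif(H, H) \twoheadrightarrow \qgr_d \pbif(H, H)$.

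Unitality and associativity are then verified at the bifunctor level: both reduce to the corresponding axioms in $\kring \gr$ via diagram (\ref{eqn:diag_comp}) and its three-fold analogue (involving $\pbif(H_1, -) \otimes \pbif(H_2, H_1) \otimes \pbif(-, H_2)$), with the uniqueness of the factorization ensuring that the two ways of associating coincide once pushed down to $\qgr_d \pbif$. The functor $\kring \gr \to \qgr_d \kring \gr$ is the identity on objects and the surjection $\pbif(\Gamma, H) \twoheadrightarrow \qgr_d \pbif(\Gamma, H)$ on morphisms; functoriality is precisely diagram (\ref{eqn:diag_comp}), and fullness is immediate from surjectivity. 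Finally, the tower projections are induced by the natural surjections $\qgr_d \twoheadrightarrow \qgr_{d-1}$ of Lemma \ref{lem:qgr}; their compatibility with the compositions is another diagram chase exploiting the same uniqueness-of-factorization argument. The only real subtlety is notational, namely tracking whether $\qgr_d$ is applied in the $\gr$ or $\gr\op$ variable, but the identification $\qgr_d \pbif \cong \qgrop_d \pbif$ of Proposition \ref{prop:pbif_bipolynomiality} ensures the two viewpoints are compatible throughout.
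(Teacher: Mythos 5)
Your proof is correct, but it takes a genuinely different route from the one the paper adopts — indeed it is precisely the alternative that the paper mentions in the remark immediately following the theorem. The paper's own proof leans on the monoidal structure $(\f(\gr\op \times \gr), \otimes_\gr, \pbif)$: by Proposition \ref{prop:pbif_bipolynomiality} and Remark \ref{rem:monoidal_polynomial_bifunctors}, $\qgr_d\pbif$ is the unit object for $\otimes_\gr$ restricted to bifunctors of degree $\leq d$ in both variables, so composition, associativity and unitality all come packaged with the monoidal structure (and the isomorphism $\qgr_d\pbif \otimes_\gr \qgr_d\pbif \cong \qgr_d\pbif$). Your approach instead constructs the composition maps directly from the adjunction defining $\qgr_d$ as a left adjoint: you observe that in the tensor product $\pbif(H,-) \otimes \pbif(-,H)$ one factor is constant in each of the two variables, which lets $\qgr_d$ (respectively $\qgrop_d$) pass through the tensor factor, and then the universal property provides the unique factorization through $\qgr_d\pbif(H,-) \otimes \qgr_d\pbif(-,H)$. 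Associativity and unitality then reduce, via the uniqueness of these factorizations, to the corresponding properties of $\kring\gr$. What your method buys is a completely elementary and self-contained construction with no appeal to the monoidal framework; what the paper's method buys, as it says itself, is a conceptual explanation of why $\qgr_d\pbif$ is the right object, and it automatically yields the monoid-object structure without further diagram chasing. One point worth making explicit in your write-up: the step where $\qgr_d$ passes through $\pbif(-,H)$ uses that $\pbif(\Gamma,H)$ is a free $\kring$-module (automatic here since $\kring$ is taken to be a field in this section) so that tensoring with it is a direct sum of copies, and $\qgr_d$ preserves all colimits being a left adjoint; and, as you note at the end, the identification between $(\qgr_d\pbif)(-,H)$ and $\qgrop_d(\pbif(-,H))$ rests on Proposition \ref{prop:pbif_bipolynomiality}, which is worth spelling out since the diagram's notation otherwise elides this distinction.
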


\begin{proof}
That $\qgr_d \kring \gr$ forms a category as stated is essentially a restatement of the assertion of Remark \ref{rem:monoidal_polynomial_bifunctors} that $\qgr_d \pbif$ forms the unit for $\otimes_\gr$ in the appropriate full subcategory of bifunctors. 

This provides the composition of morphisms in $\qgr_d \kring \gr$. The fact that composition is unital and associative follows from the corresponding properties for $\kring \gr$. 

That these constructions, for varying $d$, form a tower is clear. 
\end{proof}

\begin{rem}
This result can be proved directly by exploiting the definition of $\qgr_d$ as a left adjoint to construct the composition maps. The above approach has been preferred, since it explains why $\qgr_d \pbif$ is the natural object to consider. 
\end{rem}

\section{Pro-polynomial functors on $\gr$}
\label{sect:propoly}

We introduce a framework for studying functors on $\gr$ that are {\em not} polynomial. This is based on using appropriate towers of polynomial functors on $\gr$,  such as those arising from the polynomial filtration of an object of $\f (\gr)$.

Throughout, $\kring$ is taken to be $\rat$, so that the results of Section \ref{subsect:poly_Q} apply. 

\subsection{Introducing $\propoly$}

Consider $(\nat, \leq)$ as a poset and hence as a category. Thus $\f(\gr)^{\nat\op}$ is the category of towers in $\f(\gr)$ and this inherits an abelian structure from $\f(\gr)$. We use this to define the following category of pro-polynomial functors:

\begin{defn}
\label{defn:propoly}
Let $\propoly$ be the full subcategory of $\f(\gr)^{\nat\op}$ with objects $F_\bullet : \nat \op \rightarrow \f(\gr)$, $n \mapsto F_n$, such that, for all $d \in \nat$,
\begin{enumerate}
\item 
 $F_d \in \f_d (\gr)$; 
\item 
the morphism $\qgr_{d} F_{d+1} {\rightarrow} F_{d}$ induced by $F_{d+1} \rightarrow F_{d}$  is an isomorphism.
\end{enumerate}
\end{defn}

\begin{rem}
\label{rem:propoly_surj}
The hypothesis implies that, for each $d \in \nat$, the structure morphism $F_{d+1} \rightarrow F_{d}$ is surjective.
 In particular, the endofunctor of $\f(\gr)^{\nat\op}$ that sends a tower $F_\bullet$ to the tower $(\qgr_d F_d)$ (with the induced structure morphisms) does not take values in $\propoly$, since this surjectivity property is not in general satisfied.
\end{rem}

\begin{prop}
\label{prop:abelian_propoly}
The category $\propoly$ is an abelian, $\rat$-linear full subcategory of $ \f(\gr)^{\nat\op}$. Moreover, $\propoly$ is cocomplete.  
\end{prop}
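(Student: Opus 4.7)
The plan is to exhibit $\propoly$ as a full subcategory of $\f(\gr)^{\nat\op}$ that is closed under the formation of levelwise kernels, cokernels, and small colimits, and then deduce both claims from standard abstract nonsense. Throughout, I will use that $\f(\gr)^{\nat\op}$ is already abelian and cocomplete, with kernels, cokernels and colimits computed componentwise in $\f(\gr)$.

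For closure under kernels and cokernels, consider a morphism $f : F_\bullet \to G_\bullet$ in $\propoly$. Degreewise, $\ker f_d$ and $\coker f_d$ land in $\f_d(\gr)$ because $\f_d(\gr)$ is an abelian subcategory of $\f(\gr)$ (Remark 2.12). The essential point is the compatibility with the structure maps: I need to show $\qgr_d \ker f_{d+1} \cong \ker f_d$ and $\qgr_d \coker f_{d+1} \cong \coker f_d$. Here I invoke Proposition \ref{prop:qhat} crucially: over $\rat$, the functor $\qgr_d$ is exact on $\f_{<\infty}(\gr)$. Applying $\qgr_d$ to the short exact sequences built from $f_{d+1}$ (all terms being in $\f_{<\infty}(\gr)$) and using the structural isomorphisms $\qgr_d F_{d+1} \cong F_d$, $\qgr_d G_{d+1} \cong G_d$ yields the required identifications. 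This is the step where working over $\rat$ is genuinely needed; without the exactness of $\qgr_d$, the kernel identification would fail.

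For cocompleteness, fix a small diagram $\{F_\bullet^\alpha\}$ in $\propoly$ and form the levelwise colimit $(\colim_\alpha F_d^\alpha)_{d \in \nat}$ in $\f(\gr)^{\nat\op}$. Each level lies in $\f_d(\gr)$, since $\f_d(\gr) \hookrightarrow \f(\gr)$ preserves colimits (being a left adjoint as noted in the proof of Proposition \ref{prop:proj_gen_Fd}, or directly because $\f_d(\gr)$ is reflective). The structure-map condition follows from the fact that $\qgr_d$ itself is a left adjoint and hence commutes with colimits, giving $\qgr_d \colim_\alpha F^\alpha_{d+1} \cong \colim_\alpha \qgr_d F^\alpha_{d+1} \cong \colim_\alpha F^\alpha_d$.

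Once these two closure statements are established, the remainder is formal. The $\rat$-linear structure is inherited from $\f(\gr)^{\nat\op}$; a full additive subcategory of an abelian category that is closed under kernels and cokernels is itself abelian, with the induced exact structure. Cocompleteness follows from closure under the levelwise colimits just computed. The only real obstacle is the kernel calculation in the first paragraph, which is why Proposition \ref{prop:qhat} has been set up beforehand; everything else is bookkeeping.
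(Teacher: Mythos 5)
Your proof follows essentially the same route as the paper: closure under kernels and cokernels is reduced, via the exactness of $\qgr_d$ on $\f_{<\infty}(\gr)$ from Proposition~\ref{prop:qhat}, to the compatibility of the tower structure maps, and cocompleteness is obtained by forming levelwise colimits and using that $\qgr_d$ commutes with them as a left adjoint. One small correction on a side justification: the inclusion $\f_d(\gr)\hookrightarrow\f(\gr)$ preserves colimits because it has a \emph{right} adjoint (the paper notes that it has both a left and a right adjoint), not because $\f_d(\gr)$ is reflective — reflectivity would give preservation of limits, not colimits; alternatively one can observe directly that $\dgr$ commutes with colimits so the defining condition $(\dgr)^{d+1}F=0$ passes to levelwise colimits. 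The conclusion you need is correct, and the overall argument matches the paper's.
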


\begin{proof}
For the first statement, it suffices to show that the kernel and cokernel of a morphism in $\propoly$ both lie in $\propoly$. The condition on the polynomiality is immediate, hence it suffices to check that the formation of the kernel and cokernel of a morphism between polynomial functors preserves the polynomial filtration in the appropriate sense. This follows since the functors $\qgr_d$ restricted to $\f_{< \infty}(\gr)$ are exact, by Proposition \ref{prop:qhat}.

That $\propoly$ is cocomplete is a consequence of the fact that $\f(\gr)^{\nat \op}$ is cocomplete (since $\f (\gr)$ is) and that the functor $\qgr_d$, $d \in \nat$, commutes with colimits, since it is a left adjoint.
\end{proof}

\begin{prop}
\label{prop:d_evaluate_exact_f_to_propoly}
\ 
\begin{enumerate}
\item 
For $d \in \nat$, the functor $\propoly \rightarrow \f_d (\gr)$ given by $F_\bullet \mapsto F_d$ is exact. 
\item
The functors $\qgr_n$, $n \in \nat$, induce a  $\rat$-linear functor $\qgr_\bullet : \f (\gr) \rightarrow \propoly$ given by 
$F \mapsto (\qgr_n F \ | \ n \in \nat)$. This is exact when restricted to $\f_{< \infty} (\gr)$.
\end{enumerate}
\end{prop}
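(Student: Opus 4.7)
The plan is to reduce everything to levelwise computations. For part (1), the key input is the way $\propoly$ inherits its abelian structure from $\f(\gr)^{\nat\op}$, as spelled out in the proof of Proposition \ref{prop:abelian_propoly}: the inclusion $\propoly \hookrightarrow \f(\gr)^{\nat\op}$ is exact, because the ambient (levelwise) kernel and cokernel of a morphism between two objects of $\propoly$ again lie in $\propoly$, this verification using the exactness of each $\qgr_d$ on $\f_{<\infty}(\gr)$ supplied by Proposition \ref{prop:qhat}. Since the level-$d$ evaluation functor $\f(\gr)^{\nat\op} \to \f(\gr)$ is exact and takes $\propoly$ into $\f_d(\gr)$ by condition (1) of Definition \ref{defn:propoly}, the composite $\propoly \to \f_d(\gr)$ is exact. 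This step requires essentially no calculation.

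For part (2), I would first check that $\qgr_\bullet F = (\qgr_n F)_{n \in \nat}$ actually defines an object of $\propoly$ for every $F \in \ob \f(\gr)$. The polynomiality condition is immediate, so the substantive point is the isomorphism $\qgr_n(\qgr_{n+1} F) \stackrel{\cong}{\to} \qgr_n F$ induced by the structure map. This is obtained by comparing universal properties: the surjection $F \twoheadrightarrow \qgr_n F$ factors uniquely through $F \twoheadrightarrow \qgr_{n+1} F$ (since $\qgr_n F$ has polynomial degree $\leq n+1$), and the resulting surjection $\qgr_{n+1} F \twoheadrightarrow \qgr_n F$ exhibits $\qgr_n F$ as the universal degree-$n$ quotient of $\qgr_{n+1} F$. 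Functoriality in $F$, and the required surjectivity of structure maps, then follow from the naturality and left-adjoint nature of each $\qgr_n$.

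Once $\qgr_\bullet$ is established as a $\rat$-linear functor to $\propoly$, exactness on $\f_{<\infty}(\gr)$ is immediate from part (1) combined with Proposition \ref{prop:qhat}(1). Given a short exact sequence in $\f_{<\infty}(\gr)$, each $\qgr_n$ sends it to a short exact sequence in $\f_n(\gr)$; by part (1), exactness in $\propoly$ can be tested levelwise, so the image under $\qgr_\bullet$ is short exact. The only substantive ingredient is Proposition \ref{prop:qhat}, without which neither the coherence of the abelian structure on $\propoly$ nor the exactness of $\qgr_\bullet$ would survive restriction to subcategories beyond $\f_d(\gr)$; everything else is formal manipulation of universal properties, and I do not anticipate any real obstacle beyond carefully identifying the map $\qgr_n(\qgr_{n+1} F) \to \qgr_n F$ in the well-definedness step.
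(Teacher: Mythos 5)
Your proposal is correct and follows the same route as the paper's (very terse) proof, which simply declares part (1) and the well-definedness of $\qgr_\bullet$ to be clear and notes that exactness comes from Proposition \ref{prop:qhat}; you have accurately filled in the details the paper leaves implicit, including the universal-property argument for $\qgr_n(\qgr_{n+1}F)\cong\qgr_n F$ and the observation that exactness in $\propoly$ is detected levelwise because kernels and cokernels there are computed in the ambient $\f(\gr)^{\nat\op}$.
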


\begin{proof}
The first statement is clear, as is the fact that the functors $\qgr_n$ give rise to a $\rat$-linear functor $\qgr_\bullet : \f (\gr) \rightarrow \propoly$. The exactness statement follows from that for the functors $\qgr_n$ given by Proposition \ref{prop:qhat}, which requires the restriction to $\f_{< \infty} (\gr)$.
\end{proof}

Restricting to polynomial functors, one has: 

\begin{prop}
\label{prop:poly_propoly}
For $d \in \nat$, the category $\f_d (\gr)$ is equivalent to the full subcategory of $\propoly$ with objects $F_\bullet$ such that the structure morphism $F_{n+1} \rightarrow F_n$ is the identity for all $n \geq d$.

Hence, $\f_{< \infty} (\gr)$ is equivalent to the full subcategory of $\propoly$ with objects $F_\bullet$ such that there exists $d \in \nat$ such that  the structure morphism $F_{n+1} \rightarrow F_n$ is the identity for all $n \geq d$.
\end{prop}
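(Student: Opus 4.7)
The plan is to write down an explicit equivalence in both directions and verify that they are mutually inverse. In one direction, restrict the functor $\qgr_\bullet : \f(\gr) \to \propoly$ from Proposition \ref{prop:d_evaluate_exact_f_to_propoly} to the subcategory $\f_d(\gr)$; in the other, take the evaluation-at-$d$ functor $F_\bullet \mapsto F_d$ described in Proposition \ref{prop:d_evaluate_exact_f_to_propoly}(1).

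First I would check that $\qgr_\bullet$ lands in the claimed subcategory when restricted to $\f_d(\gr)$. For $F \in \ob \f_d(\gr)$, the key point is that $F$ is already polynomial of degree $\leq n$ for every $n \geq d$, so $\qgr_n F = F$ and the structure morphism $\qgr_{n+1} F \to \qgr_n F$ is the identity. Combined with the fact that $\qgr_\bullet F$ automatically satisfies the defining conditions of $\propoly$ (for $n<d$ the required isomorphism $\qgr_n \qgr_{n+1} F \cong \qgr_n F$ follows from the universal property of $\qgr_n$, since $\qgr_{n+1}F \twoheadrightarrow \qgr_n F$ is the universal quotient of polynomial degree $\leq n$), this shows $\qgr_\bullet F$ is an object of the claimed subcategory.

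Next I would verify that evaluation at $d$ is inverse to $\qgr_\bullet$. On the one hand, $(\qgr_\bullet F)_d = \qgr_d F = F$ for $F \in \ob \f_d(\gr)$, giving $\mathrm{ev}_d \circ \qgr_\bullet \cong \id_{\f_d(\gr)}$. On the other hand, given a tower $F_\bullet$ in the subcategory, one has $F_n = F_d$ for every $n \geq d$ by definition, while for $n<d$ an iterated application of the defining isomorphism $\qgr_n F_{n+1} \cong F_n$ yields $F_n \cong \qgr_n F_d$; this exhibits a natural isomorphism $\qgr_\bullet(F_d) \cong F_\bullet$. To conclude it is an equivalence, I would note that these identifications are compatible with morphisms in $\propoly$, which amounts to observing that a morphism of towers in the subcategory is determined by its component at level $d$: the components at levels $n\geq d$ agree with it via the identity structure maps, and the components at levels $n<d$ are uniquely determined by applying $\qgr_n$ by the universal property, since the targets already have polynomial degree $\leq n$.

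The second statement then follows formally: the subcategory of towers eventually equal to the identity for $n\geq d$ for some $d$ is the union over $d$ of the subcategories in the first statement, and these inclusions correspond under the equivalences to the inclusions $\f_d(\gr) \subset \f_{d+1}(\gr)$, so passing to the union yields the equivalence with $\f_{<\infty}(\gr) = \bigcup_d \f_d(\gr)$. The only mild subtlety throughout is tracking the idempotency-type identity $\qgr_n \qgr_{n+1} \cong \qgr_n$, but this is immediate from the universal property characterizing $\qgr_n$ as the left adjoint to the inclusion $\f_n(\gr) \hookrightarrow \f(\gr)$; no serious obstacle is anticipated.
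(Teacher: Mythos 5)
Your proposal is correct and follows essentially the same route as the paper's proof: restrict $\qgr_\bullet$ to $\f_d(\gr)$, use evaluation at $d$ as the quasi-inverse, and verify the two composites are naturally isomorphic to the identities (the paper phrases the $\qgr_\bullet \circ \mathrm{ev}_d \cong \id$ step as the existence of a unique morphism $\qgr_\bullet(F_d)\to F_\bullet$ that is the identity at level $d$, which is the same content as your iterated-$\qgr_n$ calculation). The deduction of the second statement as a union over $d$ likewise matches the paper.
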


\begin{proof}
The restriction of the functor of Proposition \ref{prop:d_evaluate_exact_f_to_propoly},  $\qgr_\bullet : \f (\gr) \rightarrow \propoly$, to $\f_d (\gr)$ clearly takes values in the given full subcategory. We claim that this has quasi-inverse given by the restriction of the functor of Proposition \ref{prop:d_evaluate_exact_f_to_propoly}. This is verified as follows: for $F$ a functor of polynomial degree $d$, one has the natural isomorphism $(\qgr_\bullet F) _d \cong F$; for $F_\bullet \in \ob \propoly$ lying in the given full subcategory, the definition of $\propoly$ implies that there is a unique morphism $\qgr_\bullet (F_d) \rightarrow F_\bullet$ in $\propoly$ that is the identity on $F_d$. 

The second statement follows from the first. In particular, the quasi-inverse sends an object $(F_\bullet)$ of the given subcategory to $\lim_{\substack{\leftarrow \\ n }} F_n$, which identifies with $F_N$ for some $N \gg 0$. 
\end{proof}

\subsection{The symmetric monoidal structure}

The category $\f(\gr)^{\nat\op}$ comes equipped with the `pointwise' tensor product; namely, for $F_\bullet $ and $G_\bullet$ two such functors, one has $F_\bullet \otimes G_\bullet$ such that, for all $n \in \nat$, $(F_\bullet \otimes G_\bullet)_n = F_n \otimes G_n$. However, $\propoly$ is not stable under this, so one introduces the following truncation, designed so that Proposition \ref{prop:propoly_sym_mon} below holds:

\begin{defn}
\label{defn:obar}
Let $\obar : \propoly \times \propoly \rightarrow \propoly$ be the functor defined on objects by 
$(F_\bullet \obar G_\bullet)_n:= \qgr_n (F_n \otimes G_n)$ and with structure morphism (for $n>0$) determined by the commutative diagram:
\[
\xymatrix{
F_n \otimes G_n 
\ar@{->>}[d]
\ar[r]
&
F_{n-1} \otimes G_{n-1}
\ar@{->>}[d]
\\
\qgr_n(F_n \otimes G_n) 
\ar@{.>}[r]
&
\qgr_{n-1}(F_{n-1} \otimes G_{n-1})
}
\]
in which the vertical maps are the canonical surjections, the top map is the tensor product of the respective structures maps and the dotted arrow is provided by the adjunction defining $\qgr_n$.
\end{defn}

\begin{rem}
\label{rem:obar_well_defined}
One must check that $\obar$ does indeed take values in $\propoly$, namely that, for $0< d \in \nat$, the canonical surjections $F_d \twoheadrightarrow \qgr_{d-1} F_d \cong F_{d-1}  $ and $G_d \twoheadrightarrow  \qgr_{d-1} G_d \cong G_{d-1}$ induce an isomorphism
\[
\qgr_{d-1} (F_d \otimes G_d)
\cong 
\qgr_{d-1} (F_{d-1} \otimes G_{d-1}).
\]
This is established using the properties of the polynomial filtration presented in Section \ref{subsect:poly_Q}, as follows.

By hypothesis one has the short exact sequence $0 \rightarrow \qhat{d} F_d \rightarrow F_{d} \rightarrow F_{d-1} \rightarrow 0$, where $F_d \rightarrow F_{d-1}$ is  the structure morphism.  Forming the tensor product with $G_d$ gives the short exact sequence in $\f_{< \infty}(\gr )$
\[
0 \rightarrow (\qhat{d} F_d) \otimes G_d \rightarrow F_{d} \otimes G_d  \rightarrow F_{d-1} \otimes G_d \rightarrow 0. 
\] 
Since $\qhat {d} F_d $ is homogeneous polynomial of degree $d$ and $G_d$ is polynomial, it is straightforward to see that $\qgr_{d-1} (  (\qhat{d} F_d) \otimes G_d) =0$. Since $\qgr_{d-1}$ is exact  (by Proposition \ref{prop:qhat}), the structure morphism $F_d \rightarrow F_{d-1}$ induces an isomorphism 
 $\qgr_{d-1} (F_d \otimes G_d) \cong \qgr_{d-1} (F_{d-1} \otimes G_d)$. Repeating the argument with the rôles of $F$ and $G$ reversed gives the required result.
\end{rem}

\begin{rem}
The definition of $\obar$ given in Definition \ref{defn:obar} adapts to give the natural symmetric monoidal structure  on $\f_d (\gr)$, for each $d \in \nat$. Namely, for $F, G \in \ob \f_d (\gr)$, 
$
F\obar_d G := \qgr_d (F \otimes G).
$ 
\end{rem}

In the following $\rat \in \ob \f(\gr)$ is considered as an object of $\propoly$ via the functor $\qgr_\bullet$ of Proposition \ref{prop:d_evaluate_exact_f_to_propoly}. Moreover, $\f_{< \infty}(\gr)$ is considered as symmetric monoidal for the structure inherited from $(\f(\gr), \otimes, \rat)$.
   
\begin{prop}
\label{prop:propoly_sym_mon}
The category $(\propoly, \obar ,\rat)$ is symmetric monoidal.  Moreover, the functor $\qgr_\bullet : \f_{< \infty}(\gr) \rightarrow \propoly$ is symmetric monoidal with respect to this structure.
\end{prop}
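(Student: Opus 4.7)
The plan is to build the symmetric monoidal structure on $\propoly$ out of level-wise symmetric monoidal structures on each $\f_d(\gr)$, exploiting the fact that each $\qgr_d : \f(\gr) \to \f_d(\gr)$ is a reflective localization (left adjoint to the fully faithful inclusion). The whole argument rests on the following reduction identity that generalizes Remark \ref{rem:obar_well_defined}: for $F, G \in \ob \f_{<\infty}(\gr)$ and $n \in \nat$, the canonical surjections $F \twoheadrightarrow \qgr_n F$ and $G \twoheadrightarrow \qgr_n G$ induce a natural isomorphism
\[
\qgr_n(F \otimes G) \stackrel{\cong}{\rightarrow} \qgr_n(\qgr_n F \otimes \qgr_n G).
\]
The argument for this proceeds exactly as in Remark \ref{rem:obar_well_defined}, filtering $\ker(F \twoheadrightarrow \qgr_n F)$ by homogeneous pieces of degree $>n$ and using that tensoring a homogeneous polynomial of degree $k>n$ with any polynomial functor produces a functor whose $\qgr_n$ vanishes. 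This in turn follows from the description of the polynomial filtration over $\rat$ provided by Proposition \ref{prop:qhat} (and, more concretely, the cross-effect analysis of Corollary \ref{cor:fbcr}), together with the exactness of $\qgr_n$ on $\f_{<\infty}(\gr)$.

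Armed with the reduction identity, I would first observe that it endows each $\f_d(\gr)$ with a symmetric monoidal structure $(\f_d(\gr), \obar_d, \rat)$ with tensor product $F \obar_d G := \qgr_d(F \otimes G)$: the associator $(F \obar_d G) \obar_d H \cong F \obar_d (G \obar_d H)$ is obtained by two applications of the reduction identity to identify both sides with $\qgr_d(F \otimes G \otimes H)$, and the unit, symmetry, and coherence axioms (pentagon, triangle, hexagon) are transported from $(\f(\gr), \otimes, \rat)$ through the same device. Next, using the reduction identity together with the evident identification $\qgr_{n-1}\qgr_n \cong \qgr_{n-1}$, the functor $F \mapsto \qgr_{n-1} F$ from $\f_n(\gr)$ to $\f_{n-1}(\gr)$ becomes strong symmetric monoidal, since
\[
\qgr_{n-1}(F \obar_n G) = \qgr_{n-1}\qgr_n(F \otimes G) \cong \qgr_{n-1}(F \otimes G) \cong (\qgr_{n-1}F) \obar_{n-1} (\qgr_{n-1}G).
\]

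Since objects of $\propoly$ are by definition towers of polynomial functors whose transition maps are canonical $\qgr_{n-1}$-quotients, the level-wise tensor product $\obar$ of Definition \ref{defn:obar} inherits a symmetric monoidal structure: the associator, symmetry, and unit isomorphisms assemble levelwise from those on each $(\f_n(\gr), \obar_n, \rat)$, compatibly with the transition maps thanks to the compatibility just established; all coherence diagrams hold at each level, hence in $\propoly$. Finally, the functor $\qgr_\bullet : \f_{<\infty}(\gr) \to \propoly$ becomes symmetric monoidal because at each level $n$ the reduction identity supplies the required isomorphism $\qgr_n(F \otimes G) \cong \qgr_n(\qgr_n F \otimes \qgr_n G)$, while the unit constraint reduces to the tautology $\qgr_n(\rat) \cong \rat$. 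The main obstacle in this plan is the reduction identity in the form stated; once it is in hand, all remaining verifications are formal and follow the standard treatment of symmetric monoidal structures on reflective localizations.
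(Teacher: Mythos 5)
Your proof takes essentially the same route as the paper: the paper's (very terse) proof cites "the argument employed in Remark \ref{rem:obar_well_defined}" as the key ingredient for monoidality of $\qgr_\bullet$, which is precisely your reduction identity $\qgr_n(F\otimes G)\cong\qgr_n(\qgr_n F\otimes\qgr_n G)$, and dismisses the first statement as "clear." You have simply made explicit what the paper leaves implicit — constructing the levelwise structures $(\f_n(\gr),\obar_n,\rat)$, checking compatibility with the transition maps, and noting that coherence is inherited levelwise from $(\f(\gr),\otimes,\rat)$ — which is a faithful expansion of the same idea.
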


\begin{proof}
The first statement is clear. The fact that $\qgr_\bullet$ is symmetric monoidal follows by using the argument employed in Remark \ref{rem:obar_well_defined}. 
\end{proof}

\subsection{Completion and projectives}

A further relationship between $\propoly$ and $\f (\gr)$ is provided by completion:

\begin{nota}
\label{nota:compl}
Denote by $\compl : \propoly \rightarrow \f (\gr) $ the completion functor given by $F_\bullet \mapsto \lim_{\substack{\leftarrow \\ n }} F_n$. 
\end{nota}

\begin{prop}
\label{prop:compl_right_adjoint}
The completion functor $\compl : \propoly \rightarrow \f (\gr) $ is right adjoint to $\qgr_\bullet : \f (\gr) \rightarrow \propoly$. 
\end{prop}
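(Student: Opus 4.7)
The plan is to exhibit, for each $F\in \ob \f(\gr)$ and $G_\bullet \in \ob \propoly$, a natural bijection
\[
\hom_{\propoly}(\qgr_\bullet F, G_\bullet) \cong \hom_{\f(\gr)}(F, \compl G_\bullet),
\]
which I would construct by composing three elementary identifications and then check naturality in both variables.

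First, I would unfold the left-hand side. A morphism in $\propoly$ from $\qgr_\bullet F$ to $G_\bullet$ is, by definition, a natural transformation of diagrams $\nat\op \to \f (\gr)$, i.e.\ a compatible family $(\phi_n : \qgr_n F \to G_n)_{n \in \nat}$ such that, for every $n$, the square formed with the canonical surjection $\qgr_{n+1} F \twoheadrightarrow \qgr_n F$ (coming from the tower structure on $\qgr_\bullet F$) and the structure map $G_{n+1} \to G_n$ commutes. Since $G_n \in \ob \f_n (\gr)$, the adjunction of $\qgr_n$ with the inclusion $\f_n(\gr) \hookrightarrow \f(\gr)$ gives a natural bijection $\hom_{\f_n(\gr)}(\qgr_n F, G_n) \cong \hom_{\f(\gr)}(F, G_n)$. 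This rewrites the compatible family as a family $(\psi_n : F \to G_n)_{n \in \nat}$ compatible with the structure maps of $G_\bullet$ (compatibility is preserved because the bijection is induced by precomposition with the adjunction unit $F \twoheadrightarrow \qgr_n F$, which is itself natural in $n$).

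Second, since $\compl G_\bullet = \lim_n G_n$ is computed as a limit in $\f (\gr)$ (limits in $\f(\gr)$ are computed pointwise and commute with $\hom(F, -)$), the universal property of the limit identifies such compatible families with elements of $\hom_{\f(\gr)}(F, \compl G_\bullet)$. Composing the two bijections gives the desired adjunction isomorphism, and naturality in $F$ and $G_\bullet$ is immediate from the naturality of each individual step.

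The only mildly subtle point, and the step I would check most carefully, is the translation of compatibility with the tower structure when passing through the adjunction $\qgr_n \dashv \mathrm{inclusion}$: one must verify that the unit maps $F \twoheadrightarrow \qgr_n F$ for varying $n$ assemble into a morphism in the appropriate sense, so that the two notions of compatibility (one for the $\phi_n$, one for the $\psi_n$) agree. This follows formally from the fact that $\qgr_\bullet F$, as defined in Proposition \ref{prop:d_evaluate_exact_f_to_propoly}, has structure maps induced precisely by the adjunction units $\qgr_{n+1} F \twoheadrightarrow \qgr_n F$; no further calculation is required beyond chasing definitions.
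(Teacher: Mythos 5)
Your argument is correct and is essentially the same as the paper's: unfold $\hom_\propoly(\qgr_\bullet F, G_\bullet)$ as a compatible family (the paper phrases this as an equalizer), transport each $\phi_n : \qgr_n F \to G_n$ across the adjunction $\qgr_n \dashv \mathrm{inclusion}$ to a map $\psi_n : F \to G_n$, and conclude by the universal property of the inverse limit defining $\compl G_\bullet$. Your attention to the subtle point about compatibility of the families being preserved across the adjunction is exactly the content the paper handles by observing that the rewritten system has "structure morphisms induced by the inverse system $\hom_{\f(\gr)}(F, G_\bullet)$."
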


\begin{proof}
By definition of the category $\propoly$, for any two objects $G'_\bullet$ and $G_\bullet$,  $\hom_\propoly (G'_\bullet, G_\bullet)$ is given by the equalizer of the usual diagram:
\[
\prod_{n\in \nat} \hom_{\f(\gr)} (G'_n, G_n) \rightrightarrows \prod_{j\in \nat} \hom_{\f(\gr)} (G'_{j+1}, G_j).
\]

Taking $G'_\bullet = \qgr_\bullet F$, for $F \in \f (\gr)$, using the definition of $\qgr_n$ as the left adjoint to the inclusion $\f_n (\gr) \hookrightarrow \f (\gr)$, this diagram can be rewritten naturally as: 
\[
\prod_{n\in \nat} \hom_{\f(\gr)} (F, G_n) \rightrightarrows \prod_{j\in \nat} \hom_{\f(\gr)} (F, G_j),
\]
with structure morphisms induced by the inverse system $\hom_{\f(\gr)} (F, G_\bullet)$. The equalizer thus identifies with 
$\lim_\leftarrow \hom_{\f(\gr)} (F, G_\bullet)$. By the universal property of the inverse limit defining $\compl G_\bullet$, the latter is naturally isomorphic to $\hom_{\f(\gr)} (F, \compl G_\bullet)$, as required.
\end{proof}

\begin{rem}
Composing the functor  $\qgr_\bullet : \f (\gr) \rightarrow \propoly$ with the completion functor gives the composite
 $\f (\gr) \rightarrow \f (\gr)$ that sends a functor $F$ to the inverse limit $
 \lim_{\substack{\leftarrow \\ n }} \qgr_n F
 $  of the polynomial filtration of $F$. 

The adjunction unit is the natural map $F \rightarrow \compl (\qgr_\bullet F)$ induced by the canonical surjections $F \twoheadrightarrow q_n F$. (The behaviour of this morphism in the  case $F= P_\zed$ is explained in Corollary \ref{cor:compl_qgr_unit}.)
\end{rem}

\begin{prop}
\label{prop:compl_exact}
The functor $\compl : \propoly \rightarrow \f (\gr) $ is exact.
\end{prop}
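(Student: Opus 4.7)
The plan is to use the adjunction of Proposition \ref{prop:compl_right_adjoint} together with a Mittag-Leffler argument. As a right adjoint, $\compl$ is automatically left exact, so the real content is showing that it preserves epimorphisms (equivalently, that it sends short exact sequences to short exact sequences). Since (co)limits and (co)kernels in $\f(\gr)$ are computed pointwise, it suffices to check this after evaluating at each $G \in \ob\gr$, so the question reduces to a statement about inverse limits of towers of $\rat$-vector spaces.

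First I would take a short exact sequence $0 \to F'_\bullet \to F_\bullet \to F''_\bullet \to 0$ in $\propoly$ and observe, using the description of kernels and cokernels inherited from $\f(\gr)^{\nat\op}$ given in the proof of Proposition \ref{prop:abelian_propoly}, that for each $n \in \nat$ the level-$n$ sequence $0 \to F'_n \to F_n \to F''_n \to 0$ is short exact in $\f_n(\gr)$. Evaluating at any $G \in \ob\gr$ then yields a short exact sequence of towers of $\rat$-vector spaces. The crucial point, recorded in Remark \ref{rem:propoly_surj}, is that every object of $\propoly$ has surjective structure morphisms; hence in particular the tower $\{F'_n(G)\}_{n \in \nat}$ has surjective transition maps.

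Next, I would invoke the standard fact that a tower of $\rat$-vector spaces with surjective transition maps satisfies the Mittag-Leffler condition, so that $\lim^1$ vanishes on it. The six-term $\lim / \lim^1$ exact sequence then collapses to give exactness of
\[
0 \to \lim_{\substack{\leftarrow\\n}} F'_n(G) \to \lim_{\substack{\leftarrow\\n}} F_n(G) \to \lim_{\substack{\leftarrow\\n}} F''_n(G) \to 0,
\]
which is precisely the evaluation at $G$ of the sequence $0 \to \compl F'_\bullet \to \compl F_\bullet \to \compl F''_\bullet \to 0$. Since $G$ was arbitrary, this sequence is short exact in $\f(\gr)$, proving that $\compl$ is exact.

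No real obstacle is anticipated, the whole point being the surjectivity built into the definition of $\propoly$. The only thing requiring a moment's care is the very first step, namely confirming that short exactness in $\propoly$ is detected levelwise in $\f_n(\gr)$; this is immediate from the description of kernels and cokernels used to establish the abelian structure in Proposition \ref{prop:abelian_propoly}, since the relevant $\qgr_n$ functors are exact on $\f_{<\infty}(\gr)$ by Proposition \ref{prop:qhat}.
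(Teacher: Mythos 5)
Your proof is correct and takes essentially the same approach as the paper: both rest on the surjectivity of structure morphisms recorded in Remark \ref{rem:propoly_surj} and the resulting Mittag-Leffler vanishing of $\lim^1$. You have simply spelled out the intermediate bookkeeping (levelwise exactness inherited from $\f(\gr)^{\nat\op}$, pointwise evaluation, the $\lim/\lim^1$ six-term sequence) that the paper leaves implicit.
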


\begin{proof}
This follows from the Mittag-Leffler condition by using the fact (cf. Remark \ref{rem:propoly_surj}) that the structure morphisms in the tower of an object of $\propoly$ are surjective. This implies that $\lim_\leftarrow^1$ vanishes, as required.
\end{proof}

We note the following consequence:

\begin{lem}
\label{lem:surj_compl_eval}
For $G_\bullet$ an object of $\propoly$ and $n \in \nat$, the canonical morphism $\compl G_\bullet \rightarrow G_n$ is surjective. 
\end{lem}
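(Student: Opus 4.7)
The plan is to reduce to pointwise surjectivity and then use a standard lifting argument based on Remark \ref{rem:propoly_surj}. Since limits in $\f(\gr)$ are computed objectwise, for each $H \in \ob \gr$ one has $(\compl G_\bullet)(H) = \lim_{\substack{\leftarrow\\ m}} G_m(H)$, realized as the submodule of $\prod_{m \in \nat} G_m(H)$ consisting of compatible tuples $(x_m)_{m \in \nat}$ with $x_m \in G_m(H)$ and each structure map $G_{m+1}(H) \to G_m(H)$ sending $x_{m+1}$ to $x_m$. Since a morphism in $\f(\gr)$ is surjective iff it is pointwise surjective, it suffices to check that for each $H$ the projection $\lim_{\substack{\leftarrow\\ m}} G_m(H) \to G_n(H)$ is surjective.

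To that end, fix $x \in G_n(H)$ and construct a compatible tuple $(x_m)$ with $x_n = x$ as follows. For indices $m \leq n$, define $x_m$ to be the image of $x$ under the composite $G_n(H) \to G_{n-1}(H) \to \cdots \to G_m(H)$ of structure maps; this ensures compatibility below $n$. For indices $m > n$, proceed by induction: having chosen $x_m$, invoke Remark \ref{rem:propoly_surj}, according to which the structure morphism $G_{m+1} \to G_m$ is surjective in $\f(\gr)$, hence pointwise surjective, and pick any preimage $x_{m+1} \in G_{m+1}(H)$ of $x_m$. This produces the required element of the inverse limit. The only subtle point is the use of choice to pick preimages at each level, but no genuine obstacle arises; alternatively, the same conclusion can be obtained as a direct consequence of Proposition \ref{prop:compl_exact} by applying the exact functor $\compl$ to the short exact sequence of towers $0 \to K_\bullet \to G_\bullet \to (\text{constant tower at } G_n)_{\geq n} \to 0$, where $K_m = \ker(G_m \to G_n)$ for $m \geq n$, giving at once a surjection $\compl G_\bullet \twoheadrightarrow G_n$.
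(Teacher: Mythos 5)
Your main argument (pointwise lifting using the surjectivity of the structure maps) is correct, and it differs in presentation from the paper's proof: the paper constructs the quotient tower $\overline{G}_\bullet$ with $\overline{G}_t = G_t$ for $t \leq n$ and $\overline{G}_t = G_n$ for $t \geq n$, then applies the exact completion functor $\compl$ (Proposition \ref{prop:compl_exact}) to the surjection $G_\bullet \twoheadrightarrow \overline{G}_\bullet$ and observes that $\compl \overline{G}_\bullet \cong G_n$ — which is exactly the ``alternatively\ldots'' remark at the end of your proposal. Your primary argument is more self-contained: it unwinds the Mittag-Leffler mechanism directly, constructing a compatible tuple in $\lim_\leftarrow G_m(H)$ by pushing forward below degree $n$ and lifting inductively above degree $n$ via the pointwise surjectivity of $G_{m+1}(H) \to G_m(H)$. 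The paper instead reuses Proposition \ref{prop:compl_exact} as a black box, which is shorter given that the result is already available. Both arguments are really two faces of the same phenomenon — surjective structure maps over $\nat\op$ force $\lim^1 = 0$ — so there is no essential mathematical difference, only a trade-off between self-containedness (yours) and economy within the paper (theirs). One small notational point: your ``constant tower at $G_n$'' object should, for $m < n$, be taken to agree with $G_m$ (as the paper does for $\overline{G}_\bullet$); the genuinely constant tower would fail the condition $\overline{G}_d \in \f_d(\gr)$ together with $\qgr_{d-1}\overline{G}_d \cong \overline{G}_{d-1}$ in low degrees and so would not lie in $\propoly$.
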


\begin{proof}
For the purposes of this proof, write $\overline{G}_\bullet$ for the quotient of $G_\bullet$ with $\overline{G}_t$ equal to $G_t$ for $t \leq n$ and $G_n$ for $t \geq n$, with the obvious structure morphisms. Applying $\compl$ (which is exact by Proposition \ref{prop:compl_exact}) gives the surjection $\compl G_\bullet \twoheadrightarrow \compl \overline{G}_\bullet$. 
The codomain is canonically isomorphic to $G_n$, by construction of $\overline{G}_\bullet$, and the corresponding map identifies with that of the statement, which is thus surjective, as required.
\end{proof}

Combining Proposition \ref{prop:compl_right_adjoint} with Proposition \ref{prop:compl_exact} allows the identification of a set of projective generators of $\propoly$:

\begin{cor}
\label{cor:proj_gen_propoly}
For $\Gamma$ a finite rank free group, $\qgr_\bullet P_\Gamma$ is projective in $\propoly$; it corepresents the functor $G_\bullet \mapsto (\compl G_\bullet) (\Gamma)$. 

The set of functors $\qgr_\bullet P_{\zed^{\star n}}$, for $n \in \nat$, is a set of projective generators for $\propoly$. 
\end{cor}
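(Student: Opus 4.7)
The plan is to deduce the three assertions formally from the adjunction $\qgr_\bullet \dashv \compl$ of Proposition \ref{prop:compl_right_adjoint}, the exactness of $\compl$ (Proposition \ref{prop:compl_exact}), the Yoneda corepresentability of $P_\Gamma$ (as in Remark \ref{rem:PZ_pbar}(1), extended to arbitrary $\Gamma \in \ob \gr$), and the surjectivity statement of Lemma \ref{lem:surj_compl_eval}.

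First, for the corepresentation claim, I would chain the adjunction isomorphism with Yoneda to get
\[
\hom_{\propoly}(\qgr_\bullet P_\Gamma, G_\bullet) \;\cong\; \hom_{\f(\gr)}(P_\Gamma, \compl G_\bullet) \;\cong\; (\compl G_\bullet)(\Gamma),
\]
naturally in $G_\bullet$. Projectivity of $\qgr_\bullet P_\Gamma$ then comes for free: the display identifies $\hom_{\propoly}(\qgr_\bullet P_\Gamma, -)$ with the composite of $\compl$ (exact) followed by evaluation at $\Gamma$ on $\f(\gr)$ (also exact), so it is exact. Equivalently, $\qgr_\bullet$ is left adjoint to an exact functor, hence preserves projectives, and Yoneda projectivity of $P_\Gamma$ in $\f(\gr)$ transfers.

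For the generation assertion, since each $\qgr_\bullet P_{\zed^{\star n}}$ is already known to be projective, it suffices to check that the family $\{\qgr_\bullet P_{\zed^{\star n}}\}_{n \in \nat}$ is jointly faithful on objects: if $\hom_{\propoly}(\qgr_\bullet P_{\zed^{\star n}}, G_\bullet)=0$ for all $n$, then $G_\bullet = 0$. By the isomorphism above, this reduces to the implication that $(\compl G_\bullet)(\zed^{\star n}) = 0$ for every $n \in \nat$ forces $G_\bullet = 0$. Assuming $G_\bullet \neq 0$, I would pick the smallest $d$ with $G_d \neq 0$; since the $\zed^{\star n}$ exhaust $\ob \gr$ up to isomorphism, there is some $n$ with $G_d(\zed^{\star n}) \neq 0$, and Lemma \ref{lem:surj_compl_eval} provides a surjection $(\compl G_\bullet)(\zed^{\star n}) \twoheadrightarrow G_d(\zed^{\star n})$, yielding a contradiction. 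The standard ``canonical epimorphism from a sum of projectives'' construction then delivers, for any $G_\bullet \in \ob \propoly$, a surjection $\bigoplus_{n \in \nat} (\qgr_\bullet P_{\zed^{\star n}})^{\oplus I_n} \twoheadrightarrow G_\bullet$ indexed by $I_n := \hom_{\propoly}(\qgr_\bullet P_{\zed^{\star n}}, G_\bullet)$, whose cokernel is killed by every $\hom_{\propoly}(\qgr_\bullet P_{\zed^{\star n}}, -)$ and so vanishes.

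I do not anticipate a serious obstacle: the whole statement is a formal consequence of the adjunction together with the exactness and surjectivity properties already established. The mildly delicate point is the joint faithfulness check, and this is exactly where Lemma \ref{lem:surj_compl_eval} intervenes decisively, by bridging the vanishing of $\compl G_\bullet$ pointwise and the vanishing of each term $G_d$ of the tower.
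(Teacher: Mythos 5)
Your proposal is correct, and both arguments rest on the same pillars: the adjunction $\qgr_\bullet \dashv \compl$, exactness of $\compl$, and the surjectivity from Lemma \ref{lem:surj_compl_eval}. The first part (projectivity and corepresentability of $\qgr_\bullet P_\Gamma$) is handled identically. The difference lies in how generation is established. The paper constructs an explicit surjection: it picks a surjection $\bigoplus_i P_{\Gamma_i} \twoheadrightarrow \compl G_\bullet$ using projective generators of $\f(\gr)$, passes to the adjoint morphism $\bigoplus_i \qgr_\bullet P_{\Gamma_i} \rightarrow G_\bullet$ (using that $\qgr_\bullet$ commutes with coproducts), and verifies degreewise surjectivity by observing that the composite $\bigoplus_i P_{\Gamma_i} \twoheadrightarrow \compl G_\bullet \twoheadrightarrow G_n$ factors through $\bigoplus_i \qgr_n P_{\Gamma_i}$. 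You instead prove that the family $\{\qgr_\bullet P_{\zed^{\star n}}\}$ jointly reflects zero (via the minimal-$d$ argument and Lemma \ref{lem:surj_compl_eval}), then invoke the standard fact that a set of projectives which jointly reflects zero is a set of generators, via the canonical epimorphism from $\bigoplus_n (\qgr_\bullet P_{\zed^{\star n}})^{\oplus I_n}$ whose cokernel vanishes by exactness of $\hom$ out of projectives. Your route is slightly more formal and defers the surjectivity check to general nonsense; the paper's route is more concrete and produces the covering epimorphism directly. Either way, the essential input is the same — Lemma \ref{lem:surj_compl_eval} is what bridges vanishing of $\compl G_\bullet$ against vanishing of the tower — so the two proofs should be regarded as variants rather than genuinely distinct strategies.
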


\begin{proof}
The first statement is an immediate consequence of Propositions \ref{prop:compl_right_adjoint} and  \ref{prop:compl_exact}.

For the second statement, consider an object $G_\bullet$ of $\propoly$. Since $\f (\gr)$ has set of projective generators $\{ P_{\zed^{\star n}}\ | \ n \in \nat \}$, there exists a set of finite rank free groups $\{ \Gamma_i \  | \ i \in \mathcal{I} \}$ and a surjection 
\begin{eqnarray}
\label{eqn:proj_compl}
\bigoplus_{i \in \mathcal{I}} P_{\Gamma_i} \twoheadrightarrow \compl G_\bullet.
\end{eqnarray}
In particular, for each $n \in \nat$, composing with the canonical surjection $\compl G_\bullet \twoheadrightarrow G_n$ given by Lemma \ref{lem:surj_compl_eval}, this induces a surjection $\bigoplus_{i \in \mathcal{I}} P_{\Gamma_i} \twoheadrightarrow  G_n$.

Using the adjunction (which implies that $\qgr_\bullet$ commutes with coproducts), (\ref{eqn:proj_compl}) yields:
$
\bigoplus_{i \in \mathcal{I}} \qgr_\bullet P_{\Gamma_i} \rightarrow G_\bullet$.
  This is surjective since, by the above observation,  the composite 
$\bigoplus_{i \in \mathcal{I}} P_{\Gamma_i} 
\twoheadrightarrow \bigoplus_{i \in \mathcal{I}} \qgr_n P_{\Gamma_i} 
\twoheadrightarrow  G_n$ is surjective (using the canonical factorization) for each $n \in \nat$.
\end{proof}

\subsection{Relating to analytic functors on $\gr\op$}

 That the category $\f_\omega (\gr\op)$ of analytic functors on $\gr\op$ (introduced in Section \ref{subsect:analytic_grop}) is related to $\propoly$ via duality $D$ is manifest. This is made explicit by the following statement:

\begin{lem}
\label{lem:propoly_analytic_duality}
\ 
\begin{enumerate}
\item 
The duality functor $D :\f(\gr) \op \rightarrow \f(\gr\op)$ induces an exact functor $D: \propoly \op \rightarrow \f_\omega (\gr\op)$, given on objects by $D(G_\bullet):= \lim_{\substack{\rightarrow \\ n}} (DG_n)$.
\item 
The duality functor $D: \f(\gr\op) \op \rightarrow \f (\gr)$ induces an exact functor $D  : \f _\omega (\gr\op) \op \rightarrow \propoly$, given on objects by $(DF)_n := D (p_n F)$.
\end{enumerate}
\end{lem}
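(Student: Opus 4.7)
The two parts are structurally symmetric and both rest on the natural isomorphism $D\qgr_d \cong \pgrop_d D$, which follows by Yoneda: for $P\in \ob\f_d(\gr\op)$ one has
\[
\hom(P,D\qgr_dF)\cong \hom(\qgr_dF,DP)\cong \hom(F,DP)\cong \hom(P,\pgrop_d DF),
\]
using the $D\dashv D$ adjunction of Proposition \ref{prop:otimes_gr_duality}, the universal property of $\qgr_d$, and $DP\in \ob\f_d(\gr)$ from Lemma \ref{lem:duality_polynomial}. In words, the universal polynomial-degree-$d$ quotient corresponds under $D$ to the universal polynomial-degree-$d$ subobject.

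For part (1), the duality $D$ turns the tower of surjections $\ldots\twoheadrightarrow G_{d+1}\twoheadrightarrow G_d\twoheadrightarrow\ldots$ defining $G_\bullet\in\ob\propoly$ into a chain of inclusions $DG_0\hookrightarrow DG_1\hookrightarrow\ldots$ in $\f(\gr\op)$, with $DG_d\in \ob\f_d(\gr\op)$ (Lemma \ref{lem:duality_polynomial}), and the defining isomorphism $\qgr_{d-1}G_d\cong G_{d-1}$ dualizing to $\pgrop_{d-1}DG_d\cong DG_{d-1}$. Defining $D(G_\bullet):=\lim_{\substack{\rightarrow\\ n}}DG_n$, the identification $\pgrop_nD(G_\bullet)=DG_n$ follows immediately, showing analyticity. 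Functoriality and exactness are automatic: $D$ is exact on $\f(\gr)$ and filtered colimits in $\f(\gr\op)$ are exact, so exactness can be read off level by level.

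For part (2), the polynomial filtration $\pgrop_0F\subset \pgrop_1F\subset\ldots$ of an analytic $F$ dualizes to a tower of surjections $(DF)_n:=D\pgrop_nF$ of polynomial functors of decreasing degree. The polynomiality condition is Lemma \ref{lem:duality_polynomial}; the identity $\qgr_{n-1}(DF)_n\cong (DF)_{n-1}$ is verified by dualizing the short exact sequence
\[
0\to \pgrop_{n-1}F\to \pgrop_nF\to \pgrop_nF/\pgrop_{n-1}F\to 0
\]
(whose quotient is homogeneous polynomial of degree $n$) and then applying $\qgr_{n-1}$, which is exact on $\f_{<\infty}(\gr)$ by Proposition \ref{prop:qhat}: the dual of the homogeneous-degree-$n$ piece is itself homogeneous of degree $n$ and is therefore annihilated by $\qgr_{n-1}$ (by the argument of Lemma \ref{lem:qgr_tensor}), while $\qgr_{n-1}$ is the identity on the polynomial-degree-$(n-1)$ functor $D\pgrop_{n-1}F$, yielding the required isomorphism.

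The principal obstacle is the exactness of $D:\f_\omega(\gr\op)\op\to\propoly$. Since $D$ is exact and a sequence in $\propoly$ is exact iff it is exact at every level, the issue reduces to showing that $\pgrop_n$ preserves short exact sequences of analytic functors. Left exactness is automatic (right adjoint), so what must be shown is that $\pgrop_nF\to \pgrop_nF''$ is surjective whenever $\phi:F\twoheadrightarrow F''$ is a surjection between analytic functors. My plan: given $x\in \pgrop_nF''(H)$, pick a lift $y\in F(H)$; by analyticity of $F$, one has $y\in \pgrop_NF(H)$ for some $N\geq n$, so the subfunctor $\langle y\rangle\subset \pgrop_NF$ it generates and its image $I:=\phi(\langle y\rangle)\subset F''$ both lie in $\f_{<\infty}(\gr\op)$. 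The exactness of $\pgrop_n$ on $\f_{<\infty}(\gr\op)$ (the analog on $\gr\op$ of Proposition \ref{prop:qhat}, from \cite{2021arXiv211001934P}) applied to the short exact sequence $0\to \langle y\rangle\cap F'\to \langle y\rangle\to I\to 0$ makes $\pgrop_n\langle y\rangle\to \pgrop_nI$ surjective, and then $x\in I\cap \pgrop_nF''=\pgrop_nI$ lifts to an element of $\pgrop_n\langle y\rangle\subset \pgrop_nF$, as required.
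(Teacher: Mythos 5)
Your argument is correct, and the core of part (2) — showing $(D(\pgrop_n F))_n$ lies in $\ob\propoly$ — coincides with the paper's proof: dualize $0 \to \pgrop_{n-1}F \to \pgrop_n F \to \pgrop_n F/\pgrop_{n-1}F \to 0$ and use that $\qgr_{n-1}$ annihilates the dual of the homogeneous degree-$n$ quotient. What you supply beyond the paper's (rather terse) proof is an explicit check of part (1) and of exactness, neither of which the paper spells out. Isolating the identity $D\qgr_d \cong \pgrop_d D$ is a useful organizing step; note, however, that ``$\pgrop_n D(G_\bullet) = DG_n$ follows immediately'' still requires the small argument that any polynomial degree-$\leq n$ subobject of $\bigcup_m DG_m$ already lies in $DG_n$ — a finitely-generated-subobject reduction of exactly the same flavour as the one you use later, so it is worth making explicit. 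The most substantive addition is your proof that $\pgrop_n$ preserves surjections of analytic functors: passing to $\langle y \rangle \subset \pgrop_N F$, using exactness of $\pgrop_n$ on $\f_{<\infty}(\gr\op)$ (the $\gr\op$-analogue of Proposition \ref{prop:qhat}), and the identity $\pgrop_n I = I \cap \pgrop_n F''$ is sound. The paper does not record this reduction; it tacitly relies on the dual exactness properties of the polynomial filtration on $\f(\gr\op)$ recorded in Remark \ref{rem:polynomial_filt_grop} and established in \cite{2021arXiv211001934P}. In short, your proof is a more self-contained and explicit version of the lemma; where the paper does give an argument, yours agrees with it.
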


\begin{proof}
Consider the functor $D  : \f _\omega (\gr\op) \op \rightarrow \propoly$. For an analytic functor $F$ on $\gr\op$, we require to show that $DF$ as defined is an object of $\propoly$. By exactness of $D$ on $\f(\gr\op)$, for any $n \in \nat$, it is clear that $\pgrop_{n-1} F \hookrightarrow \pgrop_n F$ induces a surjection $D (\pgrop_n F)\twoheadrightarrow D(\pgrop_{n-1}F)$, with the domain of polynomial degree $n$ and codomain of degree $n-1$. It remains to show that this induces an isomorphism $\qgr_{n-1} D(\pgrop_n F) \cong D(\pgrop_{n-1}F)$. 

For this, consider the short exact sequence $0 \rightarrow \pgrop_{n-1} F \rightarrow \pgrop_n F \rightarrow (\pgrop_n F) / (\pgrop_{n-1} F) \rightarrow 0$. By construction of the polynomial filtration, $(\pgrop_n F) / (\pgrop_{n-1} F)$ is a {\em homogeneous} polynomial functor of degree $n$. On applying $D$, which is exact, the short exact sequence gives:
\[
0 
\rightarrow D ((\pgrop_n F) / (\pgrop_{n-1} F)) 
\rightarrow 
D (\pgrop_n F)
\rightarrow 
D(\pgrop_{n-1}F)
\rightarrow 
0.
\]
By the above, $D ((\pgrop_n F) / (\pgrop_{n-1} F))$ is a   a {\em homogeneous} polynomial functor of degree $n$, in particular applying $\qgr_{n-1}$ to this gives $0$. The  result follows.
\end{proof}

\begin{rem}
\label{rem:cts_dual}
The functor $D :\propoly \op \rightarrow \f_\omega (\gr\op)$ may be thought of as a  `continuous dual'.
\end{rem}

\begin{prop}
\label{prop:D_propoly_analytic_adjoint}
The duality functors of Lemma \ref{lem:propoly_analytic_duality} are adjoint:
\[
D : \propoly \op \rightleftarrows \f_\omega (\gr\op) : D.
\]
\end{prop}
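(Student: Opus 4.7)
The plan is to build the adjunction by assembling four elementary natural isomorphisms. For $G_\bullet \in \ob \propoly$ and $H \in \ob \f_\omega(\gr\op)$, the target is a natural identification
\[
\hom_{\f_\omega(\gr\op)}(DG_\bullet, H) \cong \hom_{\propoly}(DH, G_\bullet),
\]
since $\hom_{\propoly\op}(G_\bullet, DH) = \hom_{\propoly}(DH, G_\bullet)$.

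First I would rewrite the left-hand side. By Lemma \ref{lem:propoly_analytic_duality} one has $DG_\bullet = \lim_{\substack{\rightarrow \\ n}} DG_n$ in $\f_\omega(\gr\op) \subset \f(\gr\op)$, so the universal property of the colimit yields
\[
\hom_{\f_\omega(\gr\op)}(DG_\bullet, H) \;\cong\; \lim_{\substack{\leftarrow \\ n}} \hom_{\f(\gr\op)}(DG_n, H).
\]
Next, since $DG_n \in \f_n(\gr\op)$ by Lemma \ref{lem:duality_polynomial}, the right-adjoint property of $\pgrop_n$ gives $\hom_{\f(\gr\op)}(DG_n, H) \cong \hom_{\f(\gr\op)}(DG_n, \pgrop_n H)$. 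The duality adjunction $D: \f(\gr)\op \rightleftarrows \f(\gr\op) : D$ recalled in Section \ref{subsect:nota_conv} then converts this into
\[
\hom_{\f(\gr\op)}(DG_n, \pgrop_n H) \;\cong\; \hom_{\f(\gr)}(D\pgrop_n H, G_n).
\]
Finally, by the definition of morphisms in $\propoly$ (the equalizer description used in the proof of Proposition \ref{prop:compl_right_adjoint}) combined with the definition $(DH)_n = D \pgrop_n H$ from Lemma \ref{lem:propoly_analytic_duality}, the inverse limit of the resulting system is precisely $\hom_\propoly(DH, G_\bullet)$. Concatenating the four isomorphisms furnishes the desired bijection.

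The main technical point I expect is not the existence of each of these bijections, which are standard, but rather their compatibility with the transition morphisms in $n$: one must check that the adjunction unit $DG_n \hookrightarrow DG_{n+1}$ and the structure morphisms $\pgrop_n H \hookrightarrow \pgrop_{n+1} H$ on one side correspond, under each of the four steps, to the structure morphisms $G_{n+1} \twoheadrightarrow G_n$ and $D\pgrop_{n+1} H \twoheadrightarrow D\pgrop_n H$ on the other. Once this compatibility is recorded, passage to $\lim_\leftarrow$ is legitimate and naturality in both variables $G_\bullet$ and $H$ follows formally from the naturality of each individual step, so one obtains a genuine adjunction rather than just a family of pointwise bijections.
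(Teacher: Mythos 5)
Your proof sets out to establish
\[
\hom_{\f_\omega(\gr\op)}(DG_\bullet, H) \cong \hom_{\propoly}(DH, G_\bullet),
\]
with $D$ applied to the \emph{domain} of each Hom, i.e., exhibiting $D : \propoly\op \rightarrow \f_\omega(\gr\op)$ as a left adjoint. This is not what the paper proves, and it is false in general. The adjunction actually established is
\[
\hom_{\propoly}(G_\bullet, DF) \cong \hom_{\f_\omega(\gr\op)}(F, DG_\bullet),
\]
with $D$ on the \emph{codomain} of each Hom, matching the shape of the basic duality adjunction $\hom_{\f(\gr)}(G, DF) \cong \hom_{\f(\gr\op)}(F, DG)$ recalled in Section \ref{subsect:nota_conv}. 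Your form would require biduality $DD \cong \id$, which holds only on finite-dimensional-valued functors, and no such restriction is imposed in $\propoly$ or $\f_\omega(\gr\op)$. For a concrete failure, take $G_\bullet = \qgr_\bullet V$ for $V$ an infinite-dimensional $\rat$-vector space viewed as a constant functor and $H = \rat$: the left side of your target is $V^{\sharp\sharp}$ while the right side is $V$.

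The error shows up precisely at your third step: the duality adjunction applied with $G := G_n$ and $F := \pgrop_n H$ gives $\hom_{\f(\gr)}(G_n, D\pgrop_n H) \cong \hom_{\f(\gr\op)}(\pgrop_n H, DG_n)$, not the isomorphism $\hom_{\f(\gr\op)}(DG_n, \pgrop_n H) \cong \hom_{\f(\gr)}(D\pgrop_n H, G_n)$ you wrote, where the $D$'s have migrated to the domains. The correct route (the paper's) is to start from $\hom_{\propoly}(G_\bullet, DF)$, unravel it as the equalizer of $\prod_n \hom_{\f(\gr)}(G_n, D\pgrop_n F) \rightrightarrows \prod_j \hom_{\f(\gr)}(G_{j+1}, D\pgrop_j F)$ as in the proof of Proposition \ref{prop:compl_right_adjoint}, apply the duality adjunction term by term (keeping $D$ in the codomain) to obtain $\hom_{\f(\gr\op)}(\pgrop_n F, DG_n)$, identify $DG_n = \pgrop_n(DG_\bullet)$, and finally invoke analyticity of $F$ to identify the resulting inverse limit with $\hom_{\f_\omega(\gr\op)}(F, DG_\bullet)$. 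Your steps 1, 2 and 4 and the compatibility remark are fine as far as they go, but they cannot repair the misplaced $D$ in step 3.
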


\begin{proof}
This adjunction extends the duality adjunctions $D : \f_d (\gr)\op \rightleftarrows \f_d (\gr\op) :D$ between the categories of polynomial functors (for $d \in \nat$) as follows. Consider $G_\bullet \in \ob \propoly$ and $F \in \ob \f_\omega (\gr\op)$, then by definition, $\hom_{\propoly} (G_\bullet , DF)$ is the equalizer of the associated diagram:
\[
\prod _n  \hom_{\f (\gr) } (G_n, D (\pgrop_n F)) \rightrightarrows \prod _j \hom_{\f(\gr) } (G_{j+1}, D (\pgrop_jF)).
\]
By the duality adjunction, this can be rewritten as
\[
\prod _n  \hom_{\f (\gr) } (\pgrop_n F, D G_n) \rightrightarrows \prod _j \hom_{\f(\gr) } (\pgrop_j F, DG_{j+1}).
\]
Since $D(G_\bullet)$ is (by construction) analytic on $\gr\op$, with polynomial filtration such that $\pgrop_n (DG_\bullet) = D G_n$, using the defining property of the functors $\pgrop_n$ this can be rewritten as:
 \[
\prod _n  \hom_{\f (\gr) } (\pgrop_n F, D (G_\bullet)) \rightrightarrows \prod _j \hom_{\f(\gr) } (\pgrop_j F, D(G_\bullet)).
\]
This has equalizer $\lim_{\substack{\leftarrow \\ n}} \hom_{\f (\gr) } (\pgrop_n F, D (G_\bullet))$, which is isomorphic to $\hom_{\f_\omega (\gr)} (F, D (G_\bullet))$, since $F$ is analytic, by hypothesis.

This provides the required natural isomorphism $ \hom_{\propoly} (G_\bullet , DF) \cong \hom_{\f_\omega (\gr)} (F, D (G_\bullet))$.
\end{proof}

This leads to the following version of $\otimes_\gr$ for $\propoly$ and $\f_\omega(\gr\op)$:

\begin{prop}
\label{prop:otimes_gr_propoly_analytic}
The tensor product $\otimes_\gr : \f (\gr\op) \times \f (\gr) \rightarrow \kmod$ induces 
\[
\ogr : \f_\omega (\gr\op) \times \propoly \rightarrow \kmod
\]
given explicitly for $F \in \ob \f_\omega (\gr\op)$ and $G_\bullet \in \ob \propoly$ by:
\[
F \ogr G_\bullet := \lim_{\substack{\rightarrow \\ n}} (\pgrop_n F \otimes _\gr G_n),
\]
where the morphisms of the direct system are induced by the structure morphisms.

There are natural isomorphisms:
\[
\hom_{\kmod} (F \ogr G_\bullet, \kring)
\cong 
  \hom_{\propoly} (G_\bullet , DF) \cong \hom_{\f_\omega (\gr)} (F, D (G_\bullet)).
\]
\end{prop}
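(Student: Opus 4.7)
The plan is to verify that the direct system is well-defined, then establish the first natural isomorphism by dualizing the colimit, and finally deduce the second via the adjunction of Proposition \ref{prop:D_propoly_analytic_adjoint}.

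First I would show the direct system makes sense. For each $n$, the functor $\pgrop_n F$ is polynomial of degree $n$, and $G_n \cong \qgr_n G_{n+1}$ by the very definition of $\propoly$. Proposition \ref{prop:otimes_gr_poly}(1) therefore yields a natural isomorphism $\pgrop_n F \otimes_\gr G_{n+1} \stackrel{\cong}{\rightarrow} \pgrop_n F \otimes_\gr G_n$, and composing its inverse with the map induced by the inclusion $\pgrop_n F \hookrightarrow \pgrop_{n+1} F$ gives the structure morphism $\pgrop_n F \otimes_\gr G_n \rightarrow \pgrop_{n+1} F \otimes_\gr G_{n+1}$. Functoriality in both $F$ and $G_\bullet$ is routine from this construction, so $\ogr$ is a well-defined bifunctor.

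Next, since $\hom_\kmod(-, \kring)$ sends colimits to limits, applying it term-by-term and invoking Proposition \ref{prop:otimes_gr_duality} produces
\[
\hom_\kmod(F \ogr G_\bullet, \kring) \cong \varprojlim_n \hom_{\f(\gr)}(G_n, D\pgrop_n F),
\]
which equals $\varprojlim_n \hom_{\f(\gr)}(G_n, (DF)_n)$ since $(DF)_n = D\pgrop_n F$ by the construction in Lemma \ref{lem:propoly_analytic_duality}. I would then identify this inverse limit with $\hom_\propoly(G_\bullet, DF)$ using the equalizer presentation appearing in the proof of Proposition \ref{prop:D_propoly_analytic_adjoint}. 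The second isomorphism is then immediate from the adjunction of Proposition \ref{prop:D_propoly_analytic_adjoint}.

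The main obstacle is the compatibility check hidden in that identification: one must show that the dualized structure morphisms of the direct system match the two parallel maps in the equalizer diagram presenting $\hom_\propoly(G_\bullet, DF)$. This is a naturality diagram chase combining Proposition \ref{prop:otimes_gr_duality} applied to both the surjections $G_{n+1} \twoheadrightarrow G_n$ and the inclusions $\pgrop_n F \hookrightarrow \pgrop_{n+1} F$, together with the canonical isomorphism $\hom_{\f(\gr)}(G_{j+1}, (DF)_j) \cong \hom_{\f(\gr)}(G_j, (DF)_j)$ arising from the universal property of $\qgr_j$ applied to the polynomial functor $(DF)_j$ of degree $j$. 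Once this compatibility is in hand, the remaining bookkeeping is formal.
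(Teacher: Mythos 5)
Your proposal is correct and follows essentially the same route as the paper: the structure morphisms of the direct system are built from Proposition \ref{prop:otimes_gr_poly} exactly as in the paper's commutative triangle, and the natural isomorphisms are obtained by dualizing the colimit term-by-term via Proposition \ref{prop:otimes_gr_duality} and matching the resulting inverse limit to the equalizer presentation of $\hom_\propoly(G_\bullet, DF)$ from Proposition \ref{prop:D_propoly_analytic_adjoint}. The paper leaves that last identification implicit (``arguing as in the proof of Proposition \ref{prop:D_propoly_analytic_adjoint}''), whereas you spell out the compatibility check explicitly; this is a useful elaboration, not a deviation.
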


\begin{proof}
This is a direct consequence of the isomorphism given by Proposition \ref{prop:otimes_gr_poly}. To illustrate this, consider the structure morphisms of the direct system defining $F \otimes_\gr G_\bullet$. For $n \in\nat$, the natural inclusion $\pgrop_n F \hookrightarrow \pgrop_{n+1} F$ and the natural surjection $G_{n+1} \twoheadrightarrow G_n \cong \qgr_n (G_{n+1}) $ give the (natural) solid arrows in 
\[
\xymatrix{
\pgrop_n F \otimes_\gr G_{n+1} 
\ar[r]
\ar[d]_\cong 
&
\pgrop_{n+1} F \otimes_\gr G_{n+1}
\\
 \pgrop_n F \otimes_\gr G_{n},
 \ar@{.>}[ur]
}
\]
where the isomorphism is provided by Proposition \ref{prop:otimes_gr_poly}. This yields the required structure morphism, indicated by the dotted arrow making the diagram commute.

The given natural isomorphisms extend those of Corollary \ref{prop:otimes_gr_duality}, arguing as in the proof of Proposition \ref{prop:D_propoly_analytic_adjoint}.
\end{proof}

One may also consider $\otimes_\gr$ restricted to $\f_\omega (\gr\op) \times \f (\gr)$. This is compatible with the functor of Proposition \ref{prop:otimes_gr_propoly_analytic}:

\begin{prop}
\label{prop:compatibility_otimes_gr}
For $F \in \ob \f_\omega (\gr\op) $ and $G \in \ob \f(\gr)$, there is a natural isomorphism:
\[
F \otimes_\gr G \cong F \ogr \qgr_\bullet G.
\]
\end{prop}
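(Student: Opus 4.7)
The plan is to reduce the claimed identity to the colimit presentation of $F$ coming from analyticity, then apply the polynomial reduction of $\otimes_\gr$ in Proposition~\ref{prop:otimes_gr_poly} term by term.

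First, since $F \in \ob \f_\omega(\gr\op)$, the canonical map $\lim_{\substack{\rightarrow \\ n}} \pgrop_n F \rightarrow F$ is an isomorphism (Definition~\ref{defn:analytic_grop}). The functor $-\otimes_\gr G$ commutes with colimits (Remark~\ref{rem:otimes_gr_colimits}), so there is a natural isomorphism
\[
F \otimes_\gr G \;\cong\; \lim_{\substack{\rightarrow \\ n}} \bigl(\pgrop_n F \otimes_\gr G\bigr).
\]
For each $n \in \nat$, $\pgrop_n F$ has polynomial degree $n$, so Proposition~\ref{prop:otimes_gr_poly}(1) provides a natural isomorphism $\pgrop_n F \otimes_\gr G \stackrel{\cong}{\rightarrow} \pgrop_n F \otimes_\gr \qgr_n G$ induced by the canonical surjection $G \twoheadrightarrow \qgr_n G$.

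Next I would check that these isomorphisms are compatible with the structure maps of the two directed systems. The system on the left has transition maps $\pgrop_n F \otimes_\gr G \rightarrow \pgrop_{n+1} F \otimes_\gr G$ induced by the inclusion $\pgrop_n F \hookrightarrow \pgrop_{n+1} F$; the system on the right is the one appearing in Proposition~\ref{prop:otimes_gr_propoly_analytic}, whose transition maps were constructed precisely from this inclusion together with the isomorphism $\pgrop_n F \otimes_\gr G_{n+1} \cong \pgrop_n F \otimes_\gr G_n$ provided by Proposition~\ref{prop:otimes_gr_poly}. Specializing $G_\bullet = \qgr_\bullet G$ and using $\qgr_n \qgr_{n+1} G \cong \qgr_n G$, the commutativity of the relevant square reduces to naturality of the surjection $G \twoheadrightarrow \qgr_n G$ with respect to $G \twoheadrightarrow \qgr_{n+1} G \twoheadrightarrow \qgr_n G$, which is immediate from the adjunction defining $\qgr_n$.

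Passing to the colimit then yields
\[
F \otimes_\gr G \;\cong\; \lim_{\substack{\rightarrow \\ n}} \bigl(\pgrop_n F \otimes_\gr \qgr_n G\bigr) \;=\; F \ogr \qgr_\bullet G,
\]
the second identification being the definition in Proposition~\ref{prop:otimes_gr_propoly_analytic}. Naturality in $F$ and $G$ follows by transport of structure. The only real point requiring care is the compatibility of transition maps; once this is checked, the result is formal.
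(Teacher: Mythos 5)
Your proof is correct and is essentially the argument the paper implicitly relies on: it states the Proposition with no proof, immediately after constructing $\ogr$ in Proposition~\ref{prop:otimes_gr_propoly_analytic}, precisely because it follows formally from that construction together with analyticity of $F$, the cocontinuity of $\otimes_\gr$ in the $\f(\gr\op)$ variable, and Proposition~\ref{prop:otimes_gr_poly}(1). Your verification of the compatibility of transition maps — reducing to the factorization $G \twoheadrightarrow \qgr_{n+1} G \twoheadrightarrow \qgr_n G$ — is exactly the point encoded by the commutative triangle in the proof of Proposition~\ref{prop:otimes_gr_propoly_analytic}, so there is nothing missing.
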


\subsection{The associated graded}

There is a functor $\propoly \rightarrow \f (\gr)$ that generalizes the construction of the associated graded of the polynomial filtration. This is given by 
\[
G_\bullet 
\mapsto 
\grad (G_\bullet):= 
\bigoplus_{n \in \nat} \ker \big( G_n \rightarrow G_{n-1} \big).
\]
This can be rewritten $
\grad (G_\bullet):= \bigoplus_{n \in \nat} \qhat{n} G_n.
$

As in Notation \ref{nota:fbcr}, by composing with the cross-effect functors, one obtains a functor with values in $\rat\fb$-modules (aka. $\f (\fb)$):

\begin{nota}
\label{nota:fbcr_propoly}
Denote by $\fbcr : \propoly \rightarrow \f (\fb)$ the functor that associates to $G_\bullet \in \ob \propoly$ the $\fb$-module $\mathbf{t} \mapsto \cre_t \qhat{t} (G_t) \cong \cre_t \qgr_t (G_t) \cong \cre_t G_t$.
\end{nota}

\begin{rem}
Using the same notation $\fbcr$ as in Notation \ref{nota:fbcr} should cause no confusion, since these functors are compatible via $\qgr_\bullet : \f (\gr) \rightarrow \propoly$.
\end{rem}

The category  of $\rat \fb$-modules is symmetric monoidal with respect to the Day convolution product $\odot$. Here it is useful to recall its construction in terms of representations of the symmetric groups: a $\rat\fb$-module can be  considered as as sequence of representations of the symmetric groups $M(n)$, $n \in \nat$. For two such, $M(-)$ and $N(-)$, the Day convolution $M\odot N$ has $n$th term
\[
\bigoplus _{s+t=n} M(s) \otimes N(t) \uparrow_{\sym_s \times \sym_t}^{\sym_n}.
\]
The unit is the $\rat\fb$-module $\kring$ (with $\kring (0) = \kring$ and $\kring (s)=0$ otherwise).

\begin{prop}
\label{prop:properties_fbcr_propoly}
The functor $\fbcr : \propoly \rightarrow \f (\fb)$ is exact. Moreover, it is symmetric monoidal with respect to the structures $(\propoly , \obar , \kring) $ and $(\f (\fb), \odot , \kring)$.
\end{prop}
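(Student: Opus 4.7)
The plan is to treat exactness and the monoidal compatibility separately; the latter carries the technical content. For exactness, $\fbcr$ is the composite of the evaluation $G_\bullet \mapsto (G_t)_{t \in \nat}$ followed by degree-wise cross-effects $(F_t) \mapsto (\cre_t F_t)$. The first is exact by Proposition~\ref{prop:d_evaluate_exact_f_to_propoly} and the second by the exactness of $\cre_t$ on $\f_t(\gr)$ recalled in Section~\ref{subsect:poly_Q}; since (co)kernels in $\f (\fb)$ are formed degree-wise, $\fbcr$ is exact.

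For the symmetric monoidal statement I need a natural isomorphism $\fbcr(F_\bullet \obar G_\bullet) \cong \fbcr(F_\bullet) \odot \fbcr(G_\bullet)$ that is coherent with the associators, symmetries, and units. The unit comparison is immediate: $\rat \in \propoly$ is the constant tower, so $\fbcr(\rat)(\mathbf{t}) = \cre_t \rat$ equals $\rat$ for $t = 0$ and vanishes for $t > 0$, matching the Day convolution unit. For the main isomorphism, Definition~\ref{defn:obar} gives
$$\fbcr(F_\bullet \obar G_\bullet)(\mathbf{t}) = \cre_t \qgr_t(F_t \otimes G_t) = \cre_t \qhat{t}(F_t \otimes G_t),$$
so what is needed is the Leibniz-type identity
$$\cre_t \qhat{t}(F \otimes G) \cong \bigoplus_{s+r=t} \bigl(\cre_s \qhat{s} F \otimes \cre_r \qhat{r} G\bigr)\uparrow_{\sym_s \times \sym_r}^{\sym_t}$$
for polynomial $F, G \in \f_{<\infty}(\gr)$; combined with $\qhat{s} F_t \cong \qhat{s} F_s$ for $s \leq t$ (from the defining tower condition $F_s \cong \qgr_s F_t$ of $F_\bullet$, and similarly for $G_\bullet$), this identifies with $(\fbcr F_\bullet \odot \fbcr G_\bullet)(\mathbf{t})$.

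The Leibniz identity I would prove by induction on $\deg F + \deg G$. The base case is homogeneous $F = \alpha_s M$ and $G = \alpha_r N$, for which the direct computation
$$\alpha_s M \otimes \alpha_r N \;\cong\; \A^{\otimes(s+r)} \otimes_{\sym_s \times \sym_r}(M \otimes N) \;\cong\; \alpha_{s+r}\bigl(M \otimes N \uparrow_{\sym_s \times \sym_r}^{\sym_{s+r}}\bigr)$$
exhibits $F \otimes G$ as homogeneous of degree $s + r$ with the asserted underlying $\sym_{s+r}$-module, while the remaining $(s', r')$-summands on the right vanish since $\qhat{s'}$ of a homogeneous degree-$s$ functor is zero unless $s' = s$. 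For the inductive step, the short exact sequence $0 \to \alpha_d \cre_d F \to F \to \qgr_{d-1} F \to 0$ from Proposition~\ref{prop:alpha_Q} (with $d = \deg F$), tensored with $G$ and then passed through the exact functor $\cre_t \qhat{t}$ (exactness by Proposition~\ref{prop:qhat}), yields a short exact sequence of $\rat\sym_t$-modules which splits by semisimplicity over $\rat$; both outer terms satisfy the formula by the inductive hypothesis, and the identifications $\qhat{d} F = \alpha_d \cre_d F$ and $\qhat{s} F = \qhat{s}\qgr_{d-1} F$ for $s < d$ ensure that their direct sum recovers the expected right-hand side for $F$.

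Coherence of the resulting isomorphism with associators, symmetries, and unit reduces, by naturality, to the corresponding coherence for $\otimes$ on $\f(\gr)$ together with the standard coherence of symmetric-group induction underlying the Day convolution. I expect the main technical obstacle to be the Leibniz identity, and specifically the careful tracking of $\sym_t$-actions in the inductive step so that both sides match canonically; the characteristic-zero hypothesis is essential throughout, providing the exactness of $\qhat{t}$ and the semisimplicity of the resulting representations.
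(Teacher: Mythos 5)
Your proposal follows essentially the same route as the paper's proof: exactness of $\fbcr$ from exactness of $\cre_d$, and the monoidal comparison by first computing $\cre_d$ on a tensor product of two homogeneous polynomial functors $\alpha_s M \otimes \alpha_r N$ and then passing to general polynomial functors via the polynomial filtration using exactness of $\cre_d$ and $\qhat{d}$. The paper's proof is terser, saying only that the homogeneous case ``extends'' using the finite length polynomial filtrations; your version spells this out as an induction, which is a useful expansion.

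One small repair is needed in your induction. You induct on $\deg F + \deg G$, with base case both $F$ and $G$ homogeneous, and in the inductive step apply the short exact sequence $0 \to \alpha_d \cre_d F \to F \to \qgr_{d-1}F \to 0$. But the left-hand outer term $(\alpha_d\cre_d F)\otimes G$ has $\deg(\alpha_d\cre_d F)=d=\deg F$, so $\deg+\deg$ does not decrease; the inductive hypothesis as stated does not cover it, and it is not the base case unless $G$ is also homogeneous. The fix is easy: either first prove the identity when one factor is homogeneous (inducting on the degree of the other), or induct on the total number of composition factors (or the total length of the filtrations) of $F$ and $G$, which does strictly decrease for both outer terms. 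With that adjustment the argument is complete. The remark that you need semisimplicity of $\rat\sym_t$ for the splitting is correct and is part of why the proposition requires $\kring=\rat$, though strictly speaking splitting is not needed if you instead observe that the two outer terms land in disjoint sets of summands of the conjectured decomposition and then compare dimensions.
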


\begin{proof}
That $\fbcr$ is exact follows from the fact that $\cre_d$ is exact (as recalled in Section \ref{subsect:poly_Q}). It remains to show that it is symmetric monoidal. 

First consider the behaviour of the cross-effect functor $\cre_d$ on the tensor product of two homogeneous polynomial functors $F:= \alpha_s M(s) =\A^{\otimes s} \otimes_{\sym_s}  M(s)$ and $G:= \alpha _t N(t) = \A^{\otimes t} \otimes_{\sym_t} N(t)$, for a $\sym_s$-module $M(s)$ and $\sym_t$-module $N(t)$. By construction, $\cre_j \qhat{j} F$ is zero unless $j=s$, when one recovers $M(s)$; similarly for $G$.

Now, $F \otimes G = (\A^{\otimes s} \otimes_{\sym_s}  M(s)) \otimes (\A^{\otimes t} \otimes_{\sym_t} N(t) \cong 
(\A^{\otimes s+t} \otimes_{\sym_s \times \sym_t} (M(s)\otimes N(t))$, so that 
$
F \otimes G
\cong \A^{\otimes s+t} \otimes_{\sym_{s+t}} (M(s)\otimes N(t))\uparrow_{\sym_s \times \sym_t} ^{\sym_{s+t}}$.

Hence, for $d \in \nat$, and the above homogeneous polynomial functors $F$, $G$:
\[
\cre_d (F \otimes G) 
\cong 
\left\{ 
\begin{array}{ll}
(M(s) \otimes N(t))\uparrow_{\sym_s \times \sym_t} ^{\sym_{d}} & d = s+t \\
0 & \mbox{otherwise}.
\end{array}
\right.
\]
From this, one deduces the result in the homogeneous case.

Now suppose that $F$ and $G$ are polynomial. Using the finite length polynomial filtrations (as  in Section \ref{subsect:poly_Q}) of $F$ and $G$ respectively and the fact that $\cre_d$ is exact, the above extends to give the natural isomorphism
\[
\cre_d \qgr_d(F \otimes G) \cong \bigoplus_{s+t=d} (\cre_s \qgr_s F \otimes \cre_t  \qgr_t G)\uparrow_{\sym_s \times \sym_t} ^{\sym_{d}}.
\] 

From this, it is straightforward to deduce the required natural isomorphism for $F$ and $G$ in $\propoly$:
\[
\fbcr (F \obar G) 
\cong 
\fbcr F \odot \fbcr G
\]
and that this makes $\fbcr$ a symmetric monoidal functor.
\end{proof}

It is a useful  fact that $\fbcr$ is conservative:

\begin{prop}
\label{prop:fbcr_conservative}
An object $G_\bullet$ of $\propoly$ is zero if and only if $\fbcr (G_\bullet) =0$. 

Hence a morphism $f : G_\bullet \rightarrow G'_\bullet$ is an isomorphism (resp. surjective, resp. injective) if and only if $\fbcr f$ is an isomorphism (resp. surjective, resp. injective).
\end{prop}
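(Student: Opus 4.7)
The plan is to prove the first (conservativity) statement by induction on the polynomial degree, and then deduce the injectivity/surjectivity/isomorphism criteria formally from conservativity plus exactness of $\fbcr$ (Proposition \ref{prop:properties_fbcr_propoly}).

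For the conservativity claim, one direction is immediate: if $G_\bullet = 0$, then $\cre_t G_t = 0$ for all $t$, so $\fbcr(G_\bullet) = 0$. For the converse, I would prove by induction on $t \in \nat$ that if $\fbcr(G_\bullet) = 0$ then $G_t = 0$. The base case $t = 0$ is immediate: by Example \ref{exam:deg_leq_1}, $G_0$ is a constant functor, and $\cre_0 G_0 = \hom_{\f_0(\gr)}(\rat, G_0)$ recovers its value, so $(\fbcr G_\bullet)(\mathbf{0}) = 0$ forces $G_0 = 0$. For the inductive step, assume $G_{t-1} = 0$. The defining property of $\propoly$ yields $\qgr_{t-1} G_t \cong G_{t-1} = 0$. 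Since $G_t$ is polynomial of degree $t$ (and we work over $\rat$), Proposition \ref{prop:alpha_Q} provides the short exact sequence
\[
0 \rightarrow \alpha_t \cre_t G_t \rightarrow G_t \rightarrow \qgr_{t-1} G_t \rightarrow 0,
\]
so $G_t \cong \alpha_t \cre_t G_t = \alpha_t \bigl((\fbcr G_\bullet)(\mathbf{t})\bigr) = 0$, as required. This completes the induction and shows $G_\bullet = 0$.

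For the second statement, let $f : G_\bullet \rightarrow G'_\bullet$ be a morphism in $\propoly$. Since $\propoly$ is abelian (Proposition \ref{prop:abelian_propoly}) and $\fbcr$ is exact (Proposition \ref{prop:properties_fbcr_propoly}), there are natural isomorphisms $\fbcr(\ker f) \cong \ker(\fbcr f)$ and $\fbcr(\mathrm{coker}\, f) \cong \mathrm{coker}(\fbcr f)$. Applying the conservativity statement just established: $f$ is injective iff $\ker f = 0$ iff $\fbcr(\ker f) = 0$ iff $\ker(\fbcr f) = 0$ iff $\fbcr f$ is injective; similarly for surjectivity; and iso follows by combining the two.

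The only genuinely non-trivial step is the inductive argument, which hinges on combining the defining relation $\qgr_{t-1}G_t \cong G_{t-1}$ of the category $\propoly$ with the short exact sequence from Proposition \ref{prop:alpha_Q}; both ingredients are already in place, so no real obstacle arises, beyond checking the base case is handled correctly.
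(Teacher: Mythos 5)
Your proof is correct and takes essentially the same route as the paper: the key step in both is that if $G_{t-1}=0$ but $G_t\neq 0$, then $G_t$ is homogeneous of degree $t$ (the paper phrases this via $\qhat{t}G_t = G_t \cong \alpha_t\cre_t G_t$, yours via the short exact sequence of Proposition \ref{prop:alpha_Q}), so $\cre_t G_t\neq 0$. The paper picks a minimal $\ell$ with $G_\ell\neq 0$ rather than running an explicit induction, but these are logically the same argument, and the deduction of the second statement from exactness of $\fbcr$ is identical.
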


\begin{proof}
Clearly, if $G_\bullet=0$, then $\fbcr (G_\bullet)=0$. Otherwise, choose $\ell$ minimal such that $G_\ell \neq 0$. Then, by minimality of $\ell$, $\qhat{\ell} G_{\ell} \neq 0$ so that $\cre_\ell G_{\ell} \neq 0$. It follows that $\fbcr (G_\bullet)$ is non-zero. 

The remaining statements follow using that $\fbcr$ is exact, by Proposition \ref{prop:properties_fbcr_propoly}.
\end{proof}

\section{The group ring functor}
\label{sect:group-ring}

The purpose of this section is to review the structure of the group ring functors $G \mapsto \rat G$ when restricted to $\gr$, by using the polynomial filtration and the category $\propoly$ introduced in Section \ref{sect:propoly}.  In particular, this introduces the Passi functors.

\subsection{Recollections}

For $G$ a group, the group ring $\kring G$ is equipped with a natural cocommutative Hopf algebra structure. In particular, the unit is induced by the inclusion $\{ e \} \subset G$ and the augmentation (or counit) by the canonical surjection of groups $G \twoheadrightarrow \{ e\}$. The product $\kring G \otimes \kring G \rightarrow \kring G$ is induced by the multiplication of the group $G$ and the coproduct makes the generators `grouplike' (i.e., $[g] \mapsto [g] \otimes [g]$).  

This structure gives the augmentation ideal $\aug G \lhd \kring G$; this has basis given by the elements $[g] - [e]$, for $g \in G \backslash \{ e \}$. One has the (natural) filtration of $\kring G$ by powers of the augmentation ideal:
\[
\ldots \subset \aug^n G \subset \aug^{n-1} G \subset \ldots \subset \aug G \subset \kring G.
\]
The associated graded ring $\grad (\kring G) $ is $\bigoplus_{n \in \nat} \aug^n G/ \aug^{n+1} G$; this is natural in $G$ and it is convenient to write $ \aug^n/ \aug^{n+1} (G)$ for $ \aug^n G/ \aug^{n+1} G$.

\begin{rem}
\label{rem:passi_functors}
For $d \in \nat$, $G \mapsto \kring G / \aug^{d+1} G$ is sometimes termed the $d$th Passi functor, in reference to Passi's work \cite{MR537126}.
\end{rem}

\begin{rem}
Restricting to $G$ in $\gr$, this structure transposes to the functor category $\f (\gr)$. The underlying functor is the projective $P_\zed$. This is equipped with a cocommutative Hopf algebra structure in $\f (\gr)$: the unit and counit induce (and are determined by) the natural splitting $P_\zed \cong \kring \oplus \pbar$, the product is a natural transformation $P_\zed \otimes P_\zed \rightarrow P_\zed$ (unital and associative), and the coproduct a natural transformation $P_\zed \rightarrow P_\zed \otimes P_\zed$ (counital, cocommutative and coassociative); conjugation is the natural transformation $P_\zed \rightarrow P_\zed$ corepresented by $[-x]$, for $x$ the chosen generator of $\zed$.

The product restricts to $\pbar \otimes \pbar \rightarrow \pbar$ and, under the above identifications, the functor $G \mapsto \aug^n G$ corresponds to the  image of the iterated multiplication 
$
\pbar^{\otimes n} 
\rightarrow 
\pbar 
\subset P_\zed.
$
\end{rem}

The quotient $\aug / \aug^2 (G) $ is naturally isomorphic to $\kring \otimes _\zed G_\abel$, where $G_\abel$ is the abelianization of $G$. Restricted to $\gr$, this has the following interpretation:

\begin{lem}
\label{lem:resolution_A}
The product of $P_\zed$ gives the exact sequence
$ 
\pbar \otimes \pbar \rightarrow \pbar \rightarrow \A \rightarrow 
0
$ 
which induces the isomorphism: $ \aug / \aug^2 (G) \cong \kring \otimes _\zed G_\abel$.
\end{lem}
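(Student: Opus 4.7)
The plan is to unwind this as the classical identification $\aug G / \aug^2 G \cong \kring \otimes_\zed G_\abel$, lifted to the level of functors on $\gr$. All three terms are functors in $\f(\gr)$, so it suffices to produce the claimed exact sequence and then evaluate on $G$.

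First I would fix the map $\pbar \twoheadrightarrow \A$ already introduced in Example \ref{exam:qgr_PZ}: on an element $g \in G$ it sends $[g] - [e] \mapsto 1 \otimes \overline{g}$, where $\overline{g}$ denotes the class of $g$ in $G_\abel$. This is surjective, since the elements $1 \otimes \overline{g}$ generate $\A(G) = \kring \otimes_\zed G_\abel$.

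Next I would verify that the composite $\pbar \otimes \pbar \to \pbar \to \A$ vanishes. Since $\pbar \otimes \pbar$ is generated by pure tensors $([g]-[e]) \otimes ([h]-[e])$, it is enough to check on these: the product sends this to $[gh] - [g] - [h] + [e]$, which then maps in $\A(G)$ to $1 \otimes (\overline{gh} - \overline{g} - \overline{h}) = 0$, since abelianization turns the group product into addition. Hence the image of the product is contained in the kernel of $\pbar \twoheadrightarrow \A$.

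The main point, and the only step that is not formal, is exactness in the middle. Here I would construct an inverse to the induced map $\pbar / (\text{image of product}) \to \A$ using the classical observation that $g \mapsto [g]-[e]$ is a group homomorphism from $G$ to the underlying additive group of $\aug G / \aug^2 G$; indeed
\[
([gh]-[e]) - ([g]-[e]) - ([h]-[e]) = ([g]-[e])([h]-[e])
\]
lies in $\aug^2 G$, and this is precisely the image of $\pbar \otimes \pbar$ evaluated at $G$. Consequently this map factors through $G_\abel$ and extends $\kring$-linearly to a natural map $\kring \otimes_\zed G_\abel \to \pbar(G)/(\text{image})$ inverse to the one induced by $\pbar \twoheadrightarrow \A$. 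This yields both the asserted exactness and the identification $\aug G / \aug^2 G \cong \kring \otimes_\zed G_\abel = \A(G)$, natural in $G \in \ob \gr$.
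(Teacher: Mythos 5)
The paper states this lemma without proof, treating it as the classical identification of $\aug G/\aug^2 G$ with $\kring \otimes_\zed G_\abel$ recast in functorial form. Your argument is the standard one and is correct: vanishing of the composite follows from the relation $([g]-[e])([h]-[e]) = ([gh]-[e]) - ([g]-[e]) - ([h]-[e])$, and middle-exactness is obtained by observing that $g \mapsto [g]-[e]$ descends to a group homomorphism $G_\abel \to \aug G/\aug^2 G$ whose $\kring$-linear extension furnishes the inverse. Since exactness in $\f(\gr)$ is checked pointwise and all the maps involved are manifestly natural in $G$, this yields the asserted exact sequence in $\f(\gr)$ and the identification $\aug/\aug^2(G) \cong \kring \otimes_\zed G_\abel$.
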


\subsection{The associated graded and more}

We use the following notation for the lower central series of a group (which is functorial):

\begin{nota}
\label{nota:lcs}
For $G$ a group, let $(\gamma_t G)$ denote the lower central series of $G$, defined recursively by $\gamma_1 G = G$ and $\gamma_{i+1} G = [\gamma_i G, G]$. 
\end{nota}

Recall the definition of the dimension subgroups of $G$ (with respect to $\kring$): the $n$th dimension subgroup is
\[
D_{n,\kring} (G) := G \cap (1 + \aug^n G) = \{ g \in G \ | \ [g] - [e] \in \aug^n G \}.
\]
This is a normal subgroup of $G$ and $D_{n+1,\kring}(G) \subseteq D_{n,\kring} (G)$; moreover, $[D_{i,\kring} (G) , D_{j,\kring} (G)] \subseteq D_{i+j,\kring} (G)$ so that there is a natural inclusion 
$ 
\gamma_t G \subseteq D_{t,\kring} G,
$  for all $0< t \in \nat$.

\begin{rem}
Jennings \cite{MR68540} identified   the dimension subgroups  over $\kring = \rat$ as 
$
D_{n , \rat} (G) = \sqrt {\gamma_n G},
$ 
where, for a subgroup $H \subseteq G$, $\sqrt H = \{ x \in G \ |\  \exists 0<n \in \nat, x^n \in H \}$. 
\end{rem}

\begin{exam}
\label{exam:magnus}
For $G$  a finite rank free group, Magnus \cite{MR1581549} showed that $ D_{t,\zed}G =\gamma_t G $, $\forall 0< t \in \nat$. 
 One also has $ D_{t,\rat}G =\gamma_t G $, $\forall 0< t \in \nat$. 
\end{exam}

The associated Lie algebra $\lr_\kring G$ (defined over $\zed$)  is the associated graded:
\[
\lr_\kring G := \bigoplus _{0< t \in \nat} D_{t,\kring} G / D_{t+1,\kring} G
\]
with Lie bracket induced by the group commutator. By construction, there is a map $D_{t, \kring} G \rightarrow \aug^t G$ given by $g \mapsto [g] -[e]$ and this induces a morphism of Lie algebras 
$
\lr _\kring G \rightarrow \grad (\kring G)$, 
where  the algebra $\grad (\kring G)$ is considered as a Lie algebra for the commutator Lie bracket. 

Quillen  proved:

\begin{thm}
\label{thm:quillen_assoc_graded}
\cite{MR231919}
For $\kring$ a field of characteristic zero, the induced morphism of Hopf algebras
$
U (\kring \otimes_\zed \lr_\kring G ) \rightarrow \grad (\kring G)
$
 is an isomorphism.
\end{thm}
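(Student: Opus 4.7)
The plan is to reduce, via the Milnor--Moore structure theorem, to the identification of primitives. First I would verify that $\grad(\kring G)$ is naturally a graded connected cocommutative Hopf algebra. Since each $[g]$ is grouplike in $\kring G$, an elementary calculation gives $\Delta(\aug^n G) \subseteq \sum_{i+j=n} \aug^i G \otimes \aug^j G$, so the $\aug$-adic filtration is a Hopf algebra filtration and the associated graded inherits the required structure. The formula
\[
\Delta([g]-[e]) \;=\; ([g]-[e])\otimes 1 \;+\; 1 \otimes ([g]-[e]) \;+\; ([g]-[e]) \otimes ([g]-[e])
\]
then shows that for $g \in D_{t,\kring} G$ the class of $[g]-[e]$ in $\aug^t/\aug^{t+1}(G)$ is primitive, since the last summand has filtration degree at least $2t$. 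This defines a Lie algebra map $\kring \otimes_\zed \lr_\kring G \to \prim (\grad(\kring G))$, which by the universal property of $U$ extends to the Hopf algebra map of the theorem.

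Since $\kring$ has characteristic zero and both sides are graded connected cocommutative Hopf algebras, the Milnor--Moore theorem identifies each with the universal enveloping algebra of its primitives. As the primitives of $U(\kring \otimes_\zed \lr_\kring G)$ are $\kring \otimes_\zed \lr_\kring G$ itself, the theorem is equivalent to the assertion that the induced map
\[
\kring \otimes_\zed \lr_\kring G \;\longrightarrow\; \prim (\grad(\kring G))
\]
is a degree-wise isomorphism.

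This identification of primitives is the heart of the argument and the step I expect to be the main obstacle. Injectivity is essentially built into the definitions: $g \in G$ satisfies $[g]-[e] \in \aug^t G$ exactly when $g \in D_{t,\kring} G$, so the quotients $D_{t,\kring} G / D_{t+1,\kring} G$ embed into $\aug^t/\aug^{t+1}(G)$ compatibly. For surjectivity, I would pass to the $\aug$-adic completion $\widehat{\kring G}$: in characteristic zero, $\log$ and $\exp$ set up a bijection between primitives and grouplike elements of $\widehat{\kring G}$, and the Malcev correspondence realises the latter as the Malcev completion of $G$, whose associated graded is precisely $\kring \otimes_\zed \lr_\kring G$. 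A degree-by-degree comparison --- using PBW on the enveloping side and the coalgebra decomposition from Milnor--Moore on the other, together with Jennings's theorem $D_{n,\rat} G = \sqrt{\gamma_n G}$ (and Magnus's integral analogue, Example~\ref{exam:magnus}, for free $G$) --- matches the graded $\kring$-dimensions in each degree, completing the proof.
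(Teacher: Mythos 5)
Note first that the paper does not prove this theorem: it cites Quillen \cite{MR231919}, so there is no internal argument to compare against. Your reduction via Milnor--Moore to the identification of primitives is the right framework and is what Quillen does; the primitivity computation for $[g]-[e]$ is correct, and the injectivity observation is salvageable once you make explicit that Jennings's theorem gives $D_{t,\rat}G/D_{t+1,\rat}G=\sqrt{\gamma_t G}/\sqrt{\gamma_{t+1}G}$, which is torsion-free, so that the tautological embedding into $\aug^t/\aug^{t+1}(G)$ survives rationalization (a torsion-free abelian group embedded in a $\rat$-vector space has injective $\rat$-extension, and the general characteristic-zero case follows by flat base change).

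The surjectivity step is where the genuine gap lies. Invoking the Mal'cev correspondence and the claim that the associated graded of $\gplike\widehat{\rat G}$ is $\rat\otimes_\zed\lr_\rat G$ is circular in spirit: those facts are developed in Quillen's \cite[Appendix A]{MR258031}, which postdates and \emph{uses} \cite{MR231919}, i.e.\ the theorem you are proving. Moreover the proposed ``degree-by-degree dimension count'' cannot close the argument in any case, since the theorem is for arbitrary groups $G$ and $\dim_\kring\big(\aug^n/\aug^{n+1}(G)\big)$ is infinite whenever $\kring\otimes_\zed G_\abel$ is. The intended surjectivity argument is simpler and avoids completion entirely: since $\aug^n/\aug^{n+1}$ is a quotient of $(\aug/\aug^2)^{\otimes n}$, the algebra $\grad(\kring G)$ is generated in degree~$1$; writing $\grad(\kring G)\cong UL$ with $L=\prim\grad(\kring G)$ by Milnor--Moore, note that if $UL$ is generated as an algebra by a subspace $V\subseteq L$ then the Lie subalgebra generated by $V$ has the same enveloping algebra, hence the same primitives, so $L$ is generated as a Lie algebra by $L_1=\aug/\aug^2$. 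Your Lie algebra map $\kring\otimes_\zed\lr_\kring G\to L$ is surjective in degree~$1$ because $\aug$ is $\kring$-linearly spanned by the elements $[g]-[e]$, and since its image is a Lie subalgebra containing $L_1$, it is all of $L$.
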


\begin{exam}
For $G$ a finite rank free group, by the work of Magnus, $\lr G$ is isomorphic to the free Lie algebra (over $\zed$) on the abelianization $G_\abel$ of $G$ (see, for example, \cite[Theorem 5.12]{MR2109550}).  Thus, for $\kring $ a field of characteristic zero, Quillen's theorem gives the isomorphism of Hopf algebras
\[
U (\liealg (G_\abel \otimes \kring) ) 
\stackrel{\cong}{\rightarrow}
\grad (\kring G),
\]
where $\liealg (G_\abel \otimes \kring)$ is the free Lie algebra (over $\kring$) on $G_\abel \otimes \kring$. In particular, this shows that $\grad (\kring G)$ is isomorphic to the tensor Hopf algebra on $G_\abel \otimes \kring$. These isomorphisms are natural with respect to $G \in \ob \gr$.
\end{exam}

Essentially by construction of the dimension subgroups, one has the following (the proof  is left to the reader):

\begin{prop}
\label{prop:passi_nilpotent}
For $1 \leq t \leq n \in \nat$ and a group $G$, the canonical surjection $G \twoheadrightarrow G/ D_{n,\kring} G$ induces a natural isomorphism of $\kring$-algebras:
\[
\kring G / \aug^t G 
\stackrel{\cong}{\rightarrow} 
\kring (G/ D_{n,\kring} G) / \aug^t (G/D_{n,\kring} G).
\]

Hence, the canonical projection $G \twoheadrightarrow G/ \gamma_n G$ induces a natural isomorphism of $\kring$-algebras:
\[
\kring G / \aug^t G 
\stackrel{\cong}{\rightarrow} 
\kring (G/ \gamma_n G) / \aug^t (G/\gamma_n G).
\]
\end{prop}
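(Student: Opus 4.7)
The plan is to reduce the statement to the elementary observation that the kernel of $\kring G \twoheadrightarrow \kring(G/N)$ is contained in $\aug^t G$ whenever the normal subgroup $N$ lies in $D_{t,\kring}G$.

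First, I would fix a normal subgroup $N \lhd G$ and consider the ring surjection $\pi : \kring G \twoheadrightarrow \kring(G/N)$ induced by the quotient $G \twoheadrightarrow G/N$. Its kernel is the two-sided ideal $I(N)$ generated by $\{[m] - [e] \mid m \in N\}$ (this is standard: the kernel is spanned by elements of the form $[g] - [gm]$ with $g \in G$ and $m \in N$, and $[g] - [gm] = [g]([e] - [m]) \cdot (-1)$, so lies in $I(N)$; conversely, the stated generators manifestly lie in the kernel). Moreover, $\pi$ restricts to a surjection $\aug^t G \twoheadrightarrow \aug^t(G/N)$, so that
\[
\pi^{-1}\bigl(\aug^t(G/N)\bigr) = \aug^t G + I(N).
\]
Consequently, $\pi$ induces an isomorphism $\kring G/\aug^t G \stackrel{\cong}{\rightarrow} \kring(G/N)/\aug^t(G/N)$ of $\kring$-algebras if and only if $I(N) \subseteq \aug^t G$.

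The key (and essentially only) step is then: if $N \subseteq D_{t,\kring}G$, then $I(N) \subseteq \aug^t G$. This is immediate from the definition of the dimension subgroup: for $m \in N \subseteq D_{t,\kring}G$, one has $[m] - [e] \in \aug^t G$; since $\aug^t G$ is a two-sided ideal of $\kring G$, the ideal $I(N)$ generated by these elements is contained in $\aug^t G$. (The reverse containment $\aug^t G \supseteq I(N) \cap \aug^t G$ is trivial.)

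With this lemma in hand, the first assertion follows by taking $N = D_{n,\kring} G$: since $t \leq n$, one has $D_{n,\kring}G \subseteq D_{t,\kring}G$, so the hypothesis is satisfied. For the second (``hence'') assertion, take $N = \gamma_n G$; then $\gamma_n G \subseteq D_{n,\kring}G \subseteq D_{t,\kring}G$, as recalled just before Notation~\ref{nota:lcs}, and the same argument applies. Naturality in $G$ is automatic, being inherited from the naturality of the group ring, the augmentation ideal, and the dimension (resp. lower central series) subgroup constructions. There is no serious obstacle; the only point worth care is the identification of $\ker \pi$ with $I(N)$, which is a standard computation in group rings.
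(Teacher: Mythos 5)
Your proof is correct, and it realizes precisely the hint the paper gives (``essentially by construction of the dimension subgroups''); the paper in fact leaves the verification to the reader, so there is no written proof to compare against. Your reduction to the inclusion $I(N)\subseteq \aug^t G$ for $N\subseteq D_{t,\kring}G$, via the identification of $\ker\bigl(\kring G\to\kring(G/N)\bigr)$ with the two-sided ideal generated by $\{[m]-[e]\mid m\in N\}$ and the equality $\pi^{-1}\bigl(\aug^t(G/N)\bigr)=\aug^t G+I(N)$, is the expected argument and is complete.
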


\begin{rem}
Since $\gamma_t G \subseteq D_{t, \kring} G$, Proposition \ref{prop:passi_nilpotent} means that, when studying the Passi functor $G \mapsto \kring G / \aug^t G$, one can reduce to working with nilpotent groups of class $t-1$. 

In particular, for $G$ a finite rank free group and $\kring = \rat$, since $\gamma_t G = D_{t, \rat} G$, one can replace $G$ by $G / \gamma_t G$. The group $G/ \gamma_t G$ is a finitely-generated free nilpotent group of class $t-1$; it is torsion-free.
\end{rem}

\subsection{Completion of the group ring}

Quillen \cite[Appendix A]{MR258031} showed the interest of studying the completion of the group ring $\kring G$ with respect to the $\aug G$-adic filtration: 
\[
\widehat{\kring G} = \lim_{\substack{\leftarrow\\ t}}\kring G/ \aug^t G, 
\]
with augmentation ideal $\widehat{\aug G}$.

This has the structure of a complete Hopf algebra, with coproduct  $
\Delta : 
\widehat{\kring G}
\rightarrow 
\widehat{\kring G}
\hat{\otimes}
\widehat{\kring G}
$.
 One has the primitive elements $\prim \widehat{\kring G}$, defined as $\{ x \in \widehat{\aug G} \ | \  \Delta x = x \hat{\otimes} 1 + 1 \hat{\otimes} x\}$ and the grouplike elements $\gplike \widehat{\kring G}$, defined as $\{ x \in 1 + \widehat{\aug G} \ | \ \Delta x = x \hat {\otimes } x \}$. The primitives yield a Lie subalgebra of $\widehat{\kring G}$ and the grouplike elements form a subgroup of $\widehat{\kring G} ^{\times}$. There is a natural map $G \rightarrow \gplike\widehat{\kring G}$. 
 
When $\kring$ is a field of characteristic zero, one has $\log$ and $\exp$ defined by the usual power series; Quillen \cite[Proposition A.2.6]{MR258031} showed that these induce filtration-preserving isomorphisms:
\[
\log : \gplike \widehat{\kring G} \stackrel{\cong}{\leftrightarrow} \prim \widehat {\kring G} : \exp.
\]

\begin{rem}
\label{rem:quillen_case_G_nilpotent}
Quillen explained the relationship between the passage from $G$ to $\gplike \widehat{\rat G}$ and Mal'cev completion of nilpotent groups. (The functor $G \mapsto \gplike \widehat{\rat G}$ is presented as Mal'cev completion in \cite[Chapter 8]{MR3643404}.)

Recall that a nilpotent group  is uniquely divisible if the map $x \mapsto x^n$ is bijective for each $0 \neq n \in \zed$. 
 In \cite[Corollary A.3.8]{MR258031}, Quillen established that, for $G$ a nilpotent group  and $j : G \rightarrow \gplike \widehat{\rat G}$ the canonical map, $j$ is the universal map to a nilpotent, uniquely divisible group and $j$ is characterized up to canonical isomorphism by the following conditions: 
\begin{enumerate}
\item 
$\gplike \widehat{\rat G}$ is nilpotent, uniquely divisible;
\item 
$\ker (j)$ is the torsion subgroup of $G$; 
\item 
for $x \in \gplike \widehat{\rat G}$, there exists $0 \neq n \in \nat$ such that $x^n \in \mathrm{image} j$.  
\end{enumerate}
Hence $\gplike \widehat{\rat G}$ is the Mal'cev completion of the nilpotent group $G$. For example, if $G$ is abelian, then $\gplike \widehat{\rat G}$ is isomorphic to $G \otimes_\zed \rat$.
\end{rem}

For a finite rank free group and working over a field of characteristic zero, these structures are well understood:

\begin{exam}
\label{exam:complete_free_group}
\cite[Example A.2.11]{MR258031}
For $G$ a finite rank free group, $\widehat{\rat G}$ is isomorphic as a complete Hopf algebra to the completed tensor Hopf algebra $\widehat{T} (G_\abel \otimes \rat)$. Under this isomorphism, $\prim \widehat{\rat G}$ is isomorphic to the completed free Lie algebra $\widehat{\liealg} (G_\abel \otimes \rat)$ and there is an isomorphism of complete Hopf algebras
\[
\widehat{U} (\liealg (G_\abel \otimes \rat)) 
\stackrel{\cong}{\rightarrow} 
\widehat{\rat G},
\]
where $\widehat{U}$ denotes the completion of the universal enveloping algebra with respect to the augmentation ideal filtration.
\end{exam}

\begin{prop}
\label{prop:torsion-free_nilp_prim}
Let $G$ be a finitely-generated, torsion-free nilpotent group.  Then $\prim \widehat{\rat G}$ is generated as a $\rat$-vector space by the elements $\{ \log (g) \ | \ g \in G \}$. 
\end{prop}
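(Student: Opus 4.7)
The plan is to combine Quillen's $\log$/$\exp$ correspondence between grouplikes and primitives (the isomorphism of \cite[Proposition A.2.6]{MR258031} recalled above) with his characterization of $\gplike \widehat{\rat G}$ as the Mal'cev completion of the nilpotent group $G$, as summarised in Remark \ref{rem:quillen_case_G_nilpotent}. The torsion-free hypothesis ensures that the canonical map $j : G \to \gplike \widehat{\rat G}$ is injective, so one may identify $G$ with its image and regard $\log(g) \in \prim \widehat{\rat G}$ as unambiguously defined.

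Concretely, let $\xi \in \prim \widehat{\rat G}$. By the bijection $\log : \gplike \widehat{\rat G} \stackrel{\cong}{\rightarrow} \prim \widehat{\rat G}$ there exists a unique grouplike $x \in \gplike \widehat{\rat G}$ with $\xi = \log(x)$. Since $G$ is nilpotent, condition (3) of Remark \ref{rem:quillen_case_G_nilpotent} supplies $0 \neq n \in \nat$ and $g \in G$ with $x^n = j(g)$. Because $x$ commutes with itself, the Baker--Campbell--Hausdorff formula degenerates to give $\log(x^n) = n \log(x)$; equivalently, this follows from $\exp(n\log(x)) = x^n = \exp(\log(j(g)))$ together with the injectivity of $\exp$. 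Hence
\[
\xi \ = \ \log(x) \ = \ \tfrac{1}{n}\log\big(j(g)\big) \ = \ \tfrac{1}{n}\log(g),
\]
which exhibits $\xi$ as a rational multiple of $\log(g)$ for some $g \in G$. Consequently the set $\{\log(g) \mid g \in G\}$ spans $\prim \widehat{\rat G}$ over $\rat$.

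In this argument no step is really an obstacle: the substantive content is imported from Quillen, and the torsion-free and nilpotent hypotheses of the statement are exactly those needed to apply \cite[Corollary A.3.8]{MR258031}. The only point requiring a moment's care is the identity $\log(x^n) = n\log(x)$ in the complete Hopf algebra $\widehat{\rat G}$, but this is immediate from either the power series definition of $\log$ (noting that $x - 1 \in \widehat{\aug G}$ and that $x$ commutes with itself) or from the functorial characterisation of $\log$ as the inverse of $\exp$.
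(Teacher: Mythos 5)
Your proof is correct and follows essentially the same route as the paper's: both invoke Quillen's Mal'cev completion characterisation to find $n$ with $x^n$ in (the image of) $G$, use $\log(x^n)=n\log(x)$, and then pass through the $\log$/$\exp$ bijection between grouplikes and primitives. Your write-up is merely a bit more detailed, in particular making explicit the injectivity of $j$ coming from the torsion-free hypothesis, which the paper leaves implicit.
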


\begin{proof}
The canonical map $G \rightarrow \gplike \widehat{\rat G}$  is a model for the Mal'cev completion of the finitely-generated, torsion-free nilpotent group $G$ (see Remark \ref{rem:quillen_case_G_nilpotent}). In particular, for each $x \in \gplike \widehat{\rat G}$, there exists $0<n \in \nat$ such that $x^n \in G$. Since $\log (x^n)= n \log (x)$, it follows that $\log (x)$ is in the $\rat$-vector space generated by  $\{ \log (g) \ | \ g \in G \}$.

Since $\log$ induces an isomorphism between $\gplike \widehat{\rat G}$ and $\prim \widehat{\rat G}$, the result follows.
\end{proof}

\subsection{The Passi functors restricted to $\gr$}

On restricting to the category of functors on $\gr$, we can exploit the polynomial filtration to study the underlying functor of the group ring functor $G \mapsto \kring G$, which identifies as above with $P_\zed$.

The Passi functors  are closely related to the polynomial filtration of $P_\zed$, as explained in  \cite{MR3505136}:

\begin{prop}
\label{prop:passi_filt_polynomial}
For $d \in \nat$, the Passi functor $G \mapsto \kring G / \aug^{d+1} G$ on $\gr$ is isomorphic to $\qgr_d P_\zed$ and the tower of Passi functors under $\kring G$ identifies with the polynomial tower 
$$
\ldots \twoheadrightarrow \qgr_n P_\zed 
\twoheadrightarrow \qgr_{n-1} P_\zed 
\twoheadrightarrow 
\ldots 
$$ under $P_\zed$.

The associated graded, $G \mapsto \grad (\kring G)$, is naturally isomorphic to the Tensor algebra $T (\A):= \bigoplus_{n \in \nat } \A^{\otimes n}$, considered as a cocommutative Hopf algebra.
\end{prop}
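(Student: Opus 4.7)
The plan is to identify $\aug^{d+1}$ with the image of the iterated multiplication $m^{(d+1)}\colon \pbar^{\otimes d+1} \to P_\zed$ (immediate from the algebra structure of $P_\zed$), and then construct mutually inverse maps between $\qgr_d P_\zed$ and $P_\zed/\aug^{d+1}$ using the universal properties of both sides. The identification of the tower will follow by naturality, and the associated graded identification will be deduced from Quillen's theorem combined with the Magnus identification of $\lr G$ for $G \in \gr$.

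To produce $\bar\pi\colon \qgr_d P_\zed \twoheadrightarrow P_\zed/\aug^{d+1}$, show that $P_\zed/\aug^{d+1}$ has polynomial degree $d$: since $\aug$ is a two-sided ideal, the $i$-fold multiplication induces surjections $\A^{\otimes i} \twoheadrightarrow \aug^i/\aug^{i+1}$ (applying Lemma \ref{lem:resolution_A} to each tensor factor), so each subquotient lies in $\f_i(\gr) \subseteq \f_d(\gr)$ for $0 \le i \le d$; induction along the short exact sequences
\[
0 \to \aug^i/\aug^{i+1} \to P_\zed/\aug^{i+1} \to P_\zed/\aug^i \to 0
\]
then gives $P_\zed/\aug^{d+1} \in \f_d(\gr)$, so that the canonical surjection $P_\zed \twoheadrightarrow P_\zed/\aug^{d+1}$ factors through $\qgr_d P_\zed$ as required. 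Conversely, to produce $\bar q \colon P_\zed/\aug^{d+1} \twoheadrightarrow \qgr_d P_\zed$, show that the composite $\pbar^{\otimes d+1} \to P_\zed \twoheadrightarrow \qgr_d P_\zed$ is zero: by $(d+1)$-fold application of the adjunction $- \otimes \pbar \dashv \dgr$ of Proposition \ref{prop:dgr_adjoint}, this is adjoint to a map $\rat \to (\dgr)^{d+1} \qgr_d P_\zed$, and the target vanishes by polynomiality of $\qgr_d P_\zed$. A diagram chase using the surjectivity of the defining quotients yields $\bar\pi \circ \bar q = \id$ and $\bar q \circ \bar\pi = \id$, and the identification of the two towers follows by naturality of these factorizations in $d$.

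For the associated graded, part 1 gives natural isomorphisms $\aug^n/\aug^{n+1} \cong \qhat n P_\zed$, and the multiplication of $P_\zed$ makes $\grad(\rat G)$ a cocommutative Hopf algebra in $\f(\gr)$. Quillen's Theorem \ref{thm:quillen_assoc_graded} gives an isomorphism of cocommutative Hopf algebras $\grad(\rat G) \cong U(\rat \otimes_\zed \lr G)$; combined with the Magnus identification $\lr G \cong \liealg(G_\abel)$ for $G \in \gr$ (Example \ref{exam:magnus}), this yields $U(\liealg(\A(G))) \cong T(\A(G))$ as cocommutative Hopf algebras, natural in $G$. The principal obstacle is the iterated adjunction argument for the vanishing of $\pbar^{\otimes d+1} \to \qgr_d P_\zed$, which requires careful bookkeeping to verify that $m^{(d+1)}$ really does correspond under iterated adjunction to a map factoring through $(\dgr)^{d+1} \qgr_d P_\zed$; once this is set up cleanly, everything else is essentially formal.
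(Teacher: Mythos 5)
The paper does not supply a proof of Proposition~\ref{prop:passi_filt_polynomial}; it cites Djament--Pirashvili--Vespa \cite{MR3505136}. Your proposal is a complete and correct proof, and it is built entirely from ingredients that the paper makes available (Lemma~\ref{lem:resolution_A}, Proposition~\ref{prop:dgr_adjoint}, Theorem~\ref{thm:quillen_assoc_graded}, and the Magnus identification of the associated graded Lie ring of a free group). The mutual-inverse argument is sound: both $\bar\pi$ and $\bar q$ sit under the projective $P_\zed$, and the identities follow from surjectivity of the quotient maps.

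One remark: your closing concern about ``careful bookkeeping'' in the iterated adjunction step is unfounded. The adjunction $-\otimes\pbar \dashv \dgr$ identifies $\hom_{\f(\gr)}(\pbar^{\otimes d+1}, F)$ with $\hom_{\f(\gr)}(\rat, (\dgr)^{d+1}F)$ for \emph{any} $F$, and when $F$ has polynomial degree $d$ the latter is the zero module. So the \emph{entire} hom space $\hom_{\f(\gr)}(\pbar^{\otimes d+1}, \qgr_d P_\zed)$ vanishes; you do not need to trace where the specific map $m^{(d+1)}$ lands. This is exactly the mechanism behind the paper's Lemma~\ref{lem:qgr_tensor}, so you could have invoked that directly rather than re-running the argument. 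A second minor point: for $G\in\gr$ the Hopf-algebra identification of $\grad(\kring G)$ with $T(\A)$ follows over $\zed$ from Magnus alone (as recorded in the example after Theorem~\ref{thm:quillen_assoc_graded}), and then base-change; the route via Quillen's theorem works for $\kring=\rat$ (the case the paper needs) but loses integrality.
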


By Proposition \ref{prop:propoly_sym_mon}, $\qgr_\bullet$ is symmetric monoidal when restricted to $\f_{<\infty}(\gr)$. We would like to be able to apply this to tensor products of $P_\zed$, but this does not belong to $\f _{<\infty} (\gr)$. Instead, we proceed directly: 

\begin{prop}
\label{prop:qgr_tensor_Pzed}
For $d, t \in \nat$, the natural projection $P_\zed \twoheadrightarrow \qgr_d P_\zed$ induces an isomorphism:
\[
\qgr_d (P_\zed^{\otimes t}) 
\stackrel{\cong}{\rightarrow}
\qgr_d ((\qgr_d P_\zed)^{\otimes t} ). 
\]

Hence, there is a natural isomorphism $\qgr_\bullet (P_\zed ^{\otimes t} ) \cong (\qgr_\bullet P_\zed)^{\obar t}$ and this is $\sym_t$-equivariant.
\end{prop}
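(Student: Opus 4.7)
The strategy is to identify the kernel $K := \ker(P_\zed \twoheadrightarrow \qgr_d P_\zed)$ and prove that tensoring with $K$ kills $\qgr_d$; a short exact sequence argument then delivers the isomorphism by induction on $t$. By Proposition \ref{prop:passi_filt_polynomial}, $\qgr_d P_\zed$ is the Passi functor $G \mapsto \rat G/\aug^{d+1}G$, so $K$ identifies with the subfunctor $G \mapsto \aug^{d+1}G$, which is the image of the iterated product $\pbar^{\otimes(d+1)} \to \pbar \hookrightarrow P_\zed$. In particular there is a natural surjection $\pbar^{\otimes(d+1)} \twoheadrightarrow K$.

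The key vanishing is the assertion that $\qgr_d(K \otimes X) = 0$ for every $X \in \ob\f(\gr)$. Since $\qgr_d$ is a left adjoint (hence right exact) and the tensor product is exact on $\f(\gr)$ (we work over the field $\rat$), the surjection above yields a surjection $\qgr_d(\pbar^{\otimes(d+1)} \otimes X) \twoheadrightarrow \qgr_d(K \otimes X)$, so it suffices to show that the source vanishes. For any $Q \in \ob\f_d(\gr)$, the adjunction $-\otimes\pbar \dashv \dgr$ of Proposition \ref{prop:dgr_adjoint} gives
\[
\hom_{\f(\gr)}\bigl(\pbar^{\otimes(d+1)} \otimes X,\ Q\bigr) \cong \hom_{\f(\gr)}\bigl(X,\ \dgr^{d+1} Q\bigr) = 0,
\]
since $\dgr^{d+1}Q=0$ by the definition of polynomial degree $\leq d$. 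As $\qgr_d F$ is the universal quotient of $F$ in $\f_d(\gr)$, applying this with $Q = \qgr_d(\pbar^{\otimes(d+1)}\otimes X)$ forces $\qgr_d(\pbar^{\otimes(d+1)}\otimes X)=0$.

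The main isomorphism now follows by iteration. Applying the right exact functor $\qgr_d$ to the short exact sequence
\[
0 \to K \otimes X \to P_\zed \otimes X \to \qgr_d P_\zed \otimes X \to 0
\]
and using the vanishing just established yields a natural isomorphism $\qgr_d(P_\zed \otimes X) \stackrel{\cong}{\to} \qgr_d(\qgr_d P_\zed \otimes X)$ for every $X$. Taking $X = P_\zed^{\otimes(t-1)}$ and then iterating this replacement one factor at a time (using the symmetry of $\otimes$ to bring each remaining $P_\zed$ into the distinguished position) produces the claimed isomorphism $\qgr_d(P_\zed^{\otimes t}) \stackrel{\cong}{\to} \qgr_d((\qgr_d P_\zed)^{\otimes t})$. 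Since this map is induced by the $t$-fold tensor power of the canonical surjection $P_\zed \twoheadrightarrow \qgr_d P_\zed$, it is manifestly $\sym_t$-equivariant for the permutation action. For the ``hence'' statement, Definition \ref{defn:obar} gives $\bigl((\qgr_\bullet P_\zed)^{\obar t}\bigr)_d = \qgr_d((\qgr_d P_\zed)^{\otimes t})$, and the structure morphisms match by naturality, yielding the isomorphism in $\propoly$. The only point requiring care is that the vanishing lemma must hold for arbitrary $X$ so that the iterative replacement --- whose intermediate stages mix $P_\zed$ and $\qgr_d P_\zed$ --- is legitimate; this is automatic from the adjunction argument.
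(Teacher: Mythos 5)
Your proof is correct and rests on the same core computation as the paper's: the adjunction $-\otimes\pbar \dashv \dgr$ of Proposition \ref{prop:dgr_adjoint} combined with the presentation $\pbar^{\otimes d+1} \to P_\zed \to \qgr_d P_\zed \to 0$ from Proposition \ref{prop:passi_filt_polynomial}. The only difference is organizational: you isolate a reusable vanishing lemma $\qgr_d(K \otimes X)=0$ and iterate one tensor factor at a time via short exact sequences and right exactness of $\qgr_d$, whereas the paper writes down the $t$-fold presentation of $(\qgr_d P_\zed)^{\otimes t}$ in one shot and checks the single Hom-vanishing, but the underlying argument is identical.
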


\begin{proof}
The final conclusion follows directly from the first statement, so we concentrate on this. 
By construction, the surjection $P_\zed \twoheadrightarrow \qgr_d P_\zed$ induces a surjection 
\[
\qgr_d (P_\zed^{\otimes t})
\twoheadrightarrow 
\qgr_d ((\qgr_d P_\zed)^{\otimes t} ).
\]
We require to show that this is an isomorphism. For this, it is sufficient to show that the canonical projection 
$P_\zed^{\otimes t} \twoheadrightarrow \qgr_d (P_\zed^{\otimes t})$ factors across the surjection $P_\zed^{\otimes t} \twoheadrightarrow (\qgr_d P_\zed)^{\otimes t}$. 
 Since $\qgr_d (P_\zed^{\otimes t})$ is polynomial of degree $d$, it suffices to show that any map $P_{\zed}^{\otimes t} \rightarrow F$, where $F$ has polynomial degree $d$, factors across $(\qgr_d P_\zed)^{\otimes t}$

By Proposition \ref{prop:passi_filt_polynomial}, there is an exact sequence in $\f (\gr)$ 
\[
\pbar^{\otimes d+1} \rightarrow P_\zed \rightarrow \qgr_d P_\zed \rightarrow 0,
\]
where the first map is induced by the product of $P_\zed$. This induces the following presentation of $(\qgr_d P_\zed ) ^{\otimes t}$:
\[
\bigoplus _{i=1}^t 
P_\zed ^{\otimes i-1} \otimes 
(\pbar^{\otimes d+1}) 
\otimes P_\zed ^{\otimes t-i}
\rightarrow P_\zed^{\otimes t} \rightarrow (\qgr_d P_\zed)^{\otimes t} \rightarrow 0,
\]

To prove the result, it suffices to show that, for $F$ polynomial of degree $d$ and any $i \in \{1, \ldots , t\}$:
\[
\hom_{\f(\gr)} (P_\zed ^{\otimes i-1} \otimes 
(\pbar^{\otimes d+1}) 
\otimes P_\zed ^{\otimes t-i}, F ) =0.
\]

By Proposition \ref{prop:dgr_adjoint}, one has  $\hom_{\f(\gr)} (P_\zed ^{\otimes i-1} \otimes 
(\pbar^{\otimes d+1}) 
\otimes P_\zed ^{\otimes t-i}, F ) \cong \hom_{\f(\gr)} (P_\zed ^{\otimes t-1} , (\dgr)^{d+1} F )$. Since $F$ has polynomial degree $d$  by hypothesis, $(\dgr)^{d+1} F=0$ and the result follows. 
\end{proof}

The category $\propoly$ provides a way of modelling the completed group ring functor (restricted to $\gr$):

\begin{prop}
\label{prop:P_zed_propoly}
The cocommutative Hopf algebra structure of $P_\zed$ in $\f (\gr)$ induces a cocommutative Hopf algebra structure on  $\qgr_\bullet P_\zed$ in $\propoly$ (with respect to $\obar$).

In particular, the coproduct induces the reduced coproduct:
\[
\overline{\Delta} : \qgr_\bullet \pbar \rightarrow \qgr _\bullet \pbar \obar \qgr_\bullet \pbar.
\]

Moreover, there is a natural  isomorphism $
\widehat{\rat G} \cong \compl (\qgr_\bullet P_\zed) (G)
$ of complete Hopf algebras,  for $G \in \ob \gr$.
\end{prop}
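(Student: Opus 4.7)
The plan is to bootstrap the Hopf algebra structure on $\qgr_\bullet P_\zed$ directly from that on $P_\zed$, using Proposition \ref{prop:qgr_tensor_Pzed} as the crucial compatibility, and then identify the completion using Proposition \ref{prop:passi_filt_polynomial}.

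First I would transport the structure maps. The functor $\qgr_\bullet : \f(\gr) \rightarrow \propoly$ is $\rat$-linear and by Proposition \ref{prop:qgr_tensor_Pzed} there is a natural, $\sym_t$-equivariant isomorphism $\qgr_\bullet (P_\zed^{\otimes t}) \cong (\qgr_\bullet P_\zed)^{\obar t}$ for every $t \in \nat$. Applying $\qgr_\bullet$ to the multiplication $\mu : P_\zed \otimes P_\zed \rightarrow P_\zed$, the coproduct $\Delta : P_\zed \rightarrow P_\zed \otimes P_\zed$, the unit $\kring \hookrightarrow P_\zed$, the counit $P_\zed \twoheadrightarrow \kring$, and the antipode therefore yields structure morphisms on $\qgr_\bullet P_\zed$ in $\propoly$. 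The (co)associativity, (co)unitality, Hopf compatibility and cocommutativity axioms are diagrams in $\f(\gr)$ among tensor powers of $P_\zed$; because the isomorphism of Proposition \ref{prop:qgr_tensor_Pzed} is natural and $\sym_t$-equivariant, applying $\qgr_\bullet$ sends each of these diagrams to the corresponding diagram in $\propoly$ with $\otimes$ replaced by $\obar$. This gives part (1). For part (2), the splitting $P_\zed \cong \kring \oplus \pbar$ induces $\qgr_\bullet P_\zed \cong \kring \oplus \qgr_\bullet \pbar$ in $\propoly$ (since $\qgr_\bullet$ is additive and $\qgr_d \kring \cong \kring$), and the reduced coproduct $\overline{\Delta}$ is obtained from $\Delta$ on $\qgr_\bullet P_\zed$ by the usual projection onto the $\qgr_\bullet \pbar \obar \qgr_\bullet \pbar$ summand.

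For the final statement, I would first identify the underlying $\rat$-module. By Proposition \ref{prop:passi_filt_polynomial}, $(\qgr_\bullet P_\zed)(G)$ is the tower of Passi functors $\rat G / \aug^{d+1} G$, so by definition of $\compl$ (Notation \ref{nota:compl}) one has
\[
\compl(\qgr_\bullet P_\zed)(G) = \lim_{\substack{\leftarrow \\ d}} \rat G / \aug^{d+1} G = \widehat{\rat G}
\]
as a $\rat$-module, naturally in $G \in \ob \gr$. The remaining point is to match the Hopf algebra structures. The multiplication on $\widehat{\rat G}$ is, by its definition as a completion, the inverse limit of the quotient multiplications on $\rat G / \aug^{d+1} G$; since these quotient multiplications are precisely the evaluations at $G$ of the structure map $(\qgr_\bullet P_\zed) \obar (\qgr_\bullet P_\zed) \to \qgr_\bullet P_\zed$ constructed above (via Proposition \ref{prop:qgr_tensor_Pzed} evaluated at $G$), naturality of the inverse limit yields the product on $\compl(\qgr_\bullet P_\zed)(G)$. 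The coproduct, unit, counit and antipode are dealt with identically, the only point requiring care being that the completed coproduct on $\widehat{\rat G}$ takes values in the completed tensor product $\widehat{\rat G} \hat\otimes \widehat{\rat G}$; this is exactly the canonical map $\compl(X \obar Y) \to (\compl X) \hat{\otimes} (\compl Y)$ evaluated at $X = Y = \qgr_\bullet P_\zed$, which holds because, by definition of $\obar$ and of $\compl$, both sides compute $\lim_{\leftarrow d} \qgr_d(X_d \otimes Y_d)$ evaluated at $G$.

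The main obstacle I anticipate is this last bookkeeping step: verifying that $\compl$ applied to a $\obar$-Hopf algebra in $\propoly$ produces a complete Hopf algebra with respect to the completed tensor product, and that under the identification with $\widehat{\rat G}$ this really recovers Quillen's complete Hopf algebra structure (and not merely some abstract, possibly incompatible one). The key ingredient is the identification of $\compl(X \obar Y)(G)$ with $\lim_{\leftarrow d}\big(X_d(G) \otimes Y_d(G)\big)$ followed by the observation that, specialized to $X = Y = \qgr_\bullet P_\zed$, this inverse limit is exactly the $\aug G$-adic completed tensor product $\widehat{\rat G} \hat\otimes \widehat{\rat G}$; after that, the whole Hopf algebra structure simply arises by applying $\compl$ to the already-constructed $\propoly$-Hopf algebra structure on $\qgr_\bullet P_\zed$ and invoking naturality.
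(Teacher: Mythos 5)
Your proof takes essentially the same route as the paper: transport the Hopf structure via the $\sym_t$-equivariant isomorphism $\qgr_\bullet (P_\zed^{\otimes t}) \cong (\qgr_\bullet P_\zed)^{\obar t}$ of Proposition \ref{prop:qgr_tensor_Pzed}, then identify the completion using Proposition \ref{prop:passi_filt_polynomial}. The paper's own proof is just a two-sentence pointer to those same two propositions; your version fills in the bookkeeping (in particular the identification of $\compl(X \obar Y)(G)$ with the completed tensor product) that the paper treats as immediate.
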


\begin{proof}
The first statement follows by using the properties of $\qgr_\bullet$, in particular
Proposition \ref{prop:qgr_tensor_Pzed}. The final statement is an immediate consequence of the identification of the Passi filtration given in Proposition \ref{prop:passi_filt_polynomial}. 
\end{proof}

\begin{cor}
\label{cor:compl_qgr_unit}
The adjunction unit $P_\zed \rightarrow \compl (\qgr_\bullet P_\zed)$ evaluated on $G \in \ob \gr$ identifies with the canonical morphism
$
\rat G \rightarrow \widehat{\rat G}.
$ 
In particular,  $P_\zed \rightarrow \compl (\qgr_\bullet P_\zed)$ is injective. 
\end{cor}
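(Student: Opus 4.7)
The plan is to unwind the universal property of the adjunction, apply the identification of the polynomial tower with the Passi tower, and then invoke the residual nilpotence of free groups for the injectivity.

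First I would recall, from the proof of Proposition~\ref{prop:compl_right_adjoint}, the explicit form of the unit of the adjunction $\qgr_\bullet \dashv \compl$: for any $F \in \ob\f(\gr)$, the map $F \to \compl(\qgr_\bullet F) = \lim_{\substack{\leftarrow\\ n}} \qgr_n F$ is the canonical map into the inverse limit, induced by the adjunction units $F \twoheadrightarrow \qgr_n F$ of the polynomial truncation functors. Applied to $F = P_\zed$ and evaluated at $G \in \ob\gr$, this yields the canonical map
\[
\rat G = P_\zed(G) \longrightarrow \lim_{\substack{\leftarrow\\ n}} (\qgr_n P_\zed)(G).
\]

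Next I would apply Proposition~\ref{prop:passi_filt_polynomial}, which identifies the tower of polynomial approximations $\qgr_\bullet P_\zed$ with the tower of Passi functors $G \mapsto \rat G/\aug^{n+1}G$, in such a way that the canonical surjection $P_\zed \twoheadrightarrow \qgr_n P_\zed$ corresponds to the canonical quotient $\rat G \twoheadrightarrow \rat G / \aug^{n+1}G$. Passing to the inverse limit, this gives a natural isomorphism
\[
\compl(\qgr_\bullet P_\zed)(G) \;\cong\; \lim_{\substack{\leftarrow\\ n}} \rat G/\aug^{n+1}G \;=\; \widehat{\rat G},
\]
under which the unit morphism above is identified with the canonical map $\rat G \to \widehat{\rat G}$; this identification is already implicit in Proposition~\ref{prop:P_zed_propoly} at the level of complete Hopf algebras, so the only thing to notice is that the arrow itself is exactly the adjunction unit.

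For the injectivity assertion, it suffices to show that $\bigcap_{n\in\nat} \aug^n G = 0$ in $\rat G$ whenever $G$ is a finite rank free group. This is a classical theorem of Magnus, and it is consistent with the framework of the paper: by Example~\ref{exam:magnus}, for $G$ free of finite rank one has $D_{n,\rat}(G) = \gamma_n G$ and $\bigcap_n \gamma_n G = \{e\}$, so $G$ embeds in $\widehat{\rat G}^\times$; more strongly, Example~\ref{exam:complete_free_group} provides an isomorphism of complete Hopf algebras $\widehat{\rat G} \cong \widehat{T}(G_\abel \otimes \rat)$ under which $\rat G \to \widehat{\rat G}$ becomes the Magnus expansion, known to be injective. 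Either reference suffices to conclude injectivity of $\rat G \to \widehat{\rat G}$, and hence of the adjunction unit evaluated at $G$.

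There is no real obstacle here: the bulk of the work has already been done in Proposition~\ref{prop:compl_right_adjoint} and Proposition~\ref{prop:passi_filt_polynomial}, and the injectivity is a direct appeal to Magnus's theorem. The only step requiring a little care is making sure the identification of the unit with the canonical map to the completion is done at the level of the filtration data, not merely at the level of the underlying objects, which is handled cleanly by the naturality built into Proposition~\ref{prop:passi_filt_polynomial}.
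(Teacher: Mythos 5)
Your argument is correct and takes essentially the same approach as the paper: identify the adjunction unit with the canonical map $\rat G \to \widehat{\rat G}$ via Propositions \ref{prop:passi_filt_polynomial}/\ref{prop:P_zed_propoly}, then invoke Magnus for the injectivity. One small caveat: the middle sentence in your injectivity paragraph ("$D_{n,\rat}(G)=\gamma_n G$ and $\bigcap_n \gamma_n G = \{e\}$, so $G$ embeds in $\widehat{\rat G}^\times$") only yields that the restriction of the map to the group $G$ is injective, which is weaker than injectivity of $\rat G \to \widehat{\rat G}$; that requires $\bigcap_n \aug^n G = 0$, and the clean justification is the one you give next via Example \ref{exam:complete_free_group} and the Magnus expansion. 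So "either reference suffices" should really be "the second reference suffices" (the paper's own appeal to residual nilpotence is the same shorthand).
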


\begin{proof}
The first statement follows from Proposition \ref{prop:P_zed_propoly}, together with the fact that the adjunction unit is induced by the canonical surjections $P_\zed \twoheadrightarrow \qgr_d P_\zed$.  The injectivity follows from the fact that finitely-generated free groups are residually nilpotent, as established by Magnus.
\end{proof}

\subsection{Dualizing the group ring functor}

One can dualize the functor $G \mapsto \kring G$ (here $G$ can be any discrete group and $\kring$ an arbitrary commutative unital ring); this gives the functor $G \mapsto \kring ^G$, the set of maps from $G$, since $\hom_\kring (\kring G, \kring) \cong \kring ^G$. This gives a contravariant functor of $G$.

However, for current purposes, the functor $G \mapsto \kring ^G$ is too large. Instead, we use a `continuous dual' with respect to the filtration by powers of the augmentation ideal. 

\begin{defn}
\label{defn:pmap}
For $s \in \nat$, let 
\begin{enumerate}
\item
$\pmap_s (G, \kring)$ be the submodule of $\kring ^G$ given by $\hom_\kring (\kring G / \aug^{s+1} G, \kring)$;
\item 
$\pmap_\infty (G, \kring)$ be the colimit $\lim_{\substack{\rightarrow \\ s} } \pmap _s (G, \kring)$, for the direct system induced by the filtration by powers of the augmentation ideal.
\end{enumerate}
\end{defn} 

\begin{rem}
The notation $\pmap_s (G, \kring)$ reflects the fact that this can be viewed as the module of `polynomial maps of degree $s$' from $G$ to $\kring$.
\end{rem}

By construction, $G \mapsto \pmap_s (G, \kring)$ and $G \mapsto \pmap _\infty (G, \kring) $ are contravariant functors of $G$. Moreover, there are natural inclusions: 
\[
\pmap_s (G, \kring ) \hookrightarrow \pmap _\infty (G, \kring) \hookrightarrow \kring ^G.
\] 

Restricting to $\gr$ and taking $\kring = \rat$, one identifies:

\begin{prop}
\label{prop:dualize_passi}
For $s\in \nat$, there are natural isomorphisms:
\begin{eqnarray*}
\pmap _s (-, \rat) & \cong & D (\qgr_s P_\zed) \\
\pmap_\infty (-, \rat) & \cong & D (\qgr_\bullet P_\zed),
\end{eqnarray*}
where for the second isomorphism, $D$ is the duality functor $\propoly \op \rightarrow \f_\omega (\gr\op)$.

In particular, $\pmap _s (-, \rat)$ is a polynomial functor of degree $s$ and $\pmap_\infty (-, \rat)$ is analytic and is not polynomial.
\end{prop}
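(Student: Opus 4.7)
The first isomorphism is essentially a matter of unwinding definitions. By Definition~\ref{defn:pmap}, one has $\pmap_s(G, \rat) = \hom_\rat(\rat G/\aug^{s+1} G, \rat)$, and Proposition~\ref{prop:passi_filt_polynomial} identifies the Passi functor $G \mapsto \rat G/\aug^{s+1} G$ with $\qgr_s P_\zed$. This identification is natural in $G \in \ob \gr$, so post-composing with $\hom_\rat(-, \rat)$ gives a natural isomorphism $\pmap_s(-, \rat) \cong D(\qgr_s P_\zed)$ in $\f(\gr\op)$.

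For the second isomorphism, the plan is to transport the direct system defining $\pmap_\infty(-, \rat)$ through the first isomorphism and then recognise it as the colimit description of $D$ on $\propoly\op$ provided by Lemma~\ref{lem:propoly_analytic_duality}. Concretely, the inclusion $\pmap_s(G, \rat) \hookrightarrow \pmap_{s+1}(G, \rat)$ is obtained by dualizing the canonical surjection $\rat G/\aug^{s+2} G \twoheadrightarrow \rat G/\aug^{s+1} G$, which under the Passi identification is exactly the structure morphism $\qgr_{s+1} P_\zed \twoheadrightarrow \qgr_s P_\zed$ of the object $\qgr_\bullet P_\zed \in \ob \propoly$. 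Hence, on applying $D$, the resulting directed system of inclusions $D(\qgr_s P_\zed) \hookrightarrow D(\qgr_{s+1} P_\zed)$ matches that defining $\pmap_\infty$, and Lemma~\ref{lem:propoly_analytic_duality}(1) gives $D(\qgr_\bullet P_\zed) \cong \lim_{\substack{\rightarrow\\ s}} D(\qgr_s P_\zed)$, yielding the required natural isomorphism.

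The polynomial degree assertion for $\pmap_s(-, \rat)$ is then immediate from Lemma~\ref{lem:duality_polynomial}, since $\qgr_s P_\zed$ has polynomial degree $s$ by construction. Analyticity of $\pmap_\infty(-, \rat)$ follows from Lemma~\ref{lem:propoly_analytic_duality}(1), which states precisely that $D$ on $\propoly\op$ lands in $\f_\omega(\gr\op)$, with the polynomial filtration given by the $D(\qgr_s P_\zed)$.

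The only non-formal point is the assertion that $\pmap_\infty(-, \rat)$ is not polynomial. For this, I would argue by contradiction: if $\pmap_\infty(-, \rat)$ were polynomial of some degree $d$, then the analytic structure would force $\pmap_\infty(-, \rat) \cong D(\qgr_d P_\zed)$, hence (applying $D$ and using the adjunction of Proposition~\ref{prop:D_propoly_analytic_adjoint}, or simply duality of finite-dimensional values after unwinding) one would deduce that $\qgr_n P_\zed \cong \qgr_d P_\zed$ for all $n \geq d$, and consequently $P_\zed$ itself would be polynomial via the injection $P_\zed \hookrightarrow \compl(\qgr_\bullet P_\zed)$ of Corollary~\ref{cor:compl_qgr_unit}. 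This contradicts Example~\ref{exam:non_polynomiality}(1). The mildest technical point in this last step will be making the reduction clean; the cleanest way is to observe directly that the inclusions $D(\qgr_s P_\zed) \hookrightarrow D(\qgr_{s+1} P_\zed)$ are strict for all $s$ (equivalently, the surjections $\qgr_{s+1} P_\zed \twoheadrightarrow \qgr_s P_\zed$ have non-zero kernel $\widehat{\mathsf{q}_{s+1}^\gr} P_\zed$ for all $s$), which follows from Proposition~\ref{prop:passi_filt_polynomial} together with the identification of the associated graded as the tensor algebra $T(\A)$, each component of which is non-zero.
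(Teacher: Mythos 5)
Your proof is correct. The paper in fact gives no proof of Proposition \ref{prop:dualize_passi}, presenting it as an immediate consequence of the preceding material, and the reasoning you supply is precisely the intended one: the first isomorphism unwinds Definition \ref{defn:pmap} via Proposition \ref{prop:passi_filt_polynomial}, and the second follows by recognising the direct system for $\pmap_\infty$ as the one appearing in Lemma \ref{lem:propoly_analytic_duality}. Your ``cleanest way'' argument for non-polynomiality is the right one: the strictness of the inclusions $D(\qgr_s P_\zed) \hookrightarrow D(\qgr_{s+1} P_\zed)$ follows from the non-vanishing of $\qhat{s+1} P_\zed \cong \A^{\otimes (s+1)}$, and, since the polynomial filtration of $D(\qgr_\bullet P_\zed)$ is given by $\pgrop_s \big(D(\qgr_\bullet P_\zed)\big) = D(\qgr_s P_\zed)$ (as used in the proof of Proposition \ref{prop:D_propoly_analytic_adjoint}), this filtration never stabilises, so the functor cannot be polynomial.
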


\begin{rem}
Proposition \ref{prop:dualize_passi} generalizes to consider the duals of $\qgr_s \pbif$ and $\qgr_\bullet \pbif$ respectively, carrying over the bifunctoriality. If one neglects the contravariant functoriality of $\pbif$, for $n \in \nat$, the functor $D (\qgr_s P_{\zed^{\star n}})$ is isomorphic to $G \mapsto \pmap_s (G^{\times n}, \rat)$ and $D (\qgr_\bullet P_{\zed^{\star n}})$ is isomorphic to $G \mapsto \pmap _\infty (G^{\times n}, \rat)$.
\end{rem}

\begin{cor}
\label{cor:injectivity_pmap_infty}
For $n \in \nat$, the functor $G \mapsto \pmap_\infty (G^{\times n}, \rat)$ is injective in $\f_\omega (\gr\op;\rat)$. 
\end{cor}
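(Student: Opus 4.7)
The plan is to deduce this from the general principle that, under an exact contravariant adjunction, projectives go to injectives. All the necessary ingredients have already been set up in the excerpt.

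First, I would invoke the identification from the remark following Proposition \ref{prop:dualize_passi}: the functor $G \mapsto \pmap_\infty(G^{\times n}, \rat)$ is isomorphic to $D(\qgr_\bullet P_{\zed^{\star n}})$, where $D : \propoly\op \rightarrow \f_\omega(\gr\op)$ is the duality functor of Lemma \ref{lem:propoly_analytic_duality}. By Corollary \ref{cor:proj_gen_propoly}, the object $\qgr_\bullet P_{\zed^{\star n}}$ is projective in $\propoly$, so $\hom_{\propoly}(\qgr_\bullet P_{\zed^{\star n}}, -)$ is an exact functor.

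Next, I would apply the natural isomorphism supplied by Proposition \ref{prop:D_propoly_analytic_adjoint} (or equivalently Proposition \ref{prop:otimes_gr_propoly_analytic}) to rewrite, for any $F \in \ob \f_\omega(\gr\op)$,
\[
\hom_{\f_\omega(\gr\op)}\bigl(F, D(\qgr_\bullet P_{\zed^{\star n}})\bigr) \;\cong\; \hom_{\propoly}\bigl(\qgr_\bullet P_{\zed^{\star n}}, DF\bigr).
\]
This presents the contravariant functor $\hom_{\f_\omega(\gr\op)}(-, D(\qgr_\bullet P_{\zed^{\star n}}))$ as the composite of $D : \f_\omega(\gr\op)\op \rightarrow \propoly$ with $\hom_{\propoly}(\qgr_\bullet P_{\zed^{\star n}}, -)$.

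The last step is then to observe exactness of the two factors: $D : \f_\omega(\gr\op)\op \rightarrow \propoly$ is exact by Lemma \ref{lem:propoly_analytic_duality}, so it carries short exact sequences in $\f_\omega(\gr\op)$ to short exact sequences in $\propoly$, and the $\hom$ out of a projective is exact. The composite is therefore exact, which is precisely the statement that $D(\qgr_\bullet P_{\zed^{\star n}}) \cong \pmap_\infty((-)^{\times n}, \rat)$ is injective in $\f_\omega(\gr\op)$. Since every ingredient is cited from earlier in the paper, I do not anticipate a genuine obstacle; the only thing to keep straight is the directions of the contravariant functors, but the adjunction formula of Proposition \ref{prop:otimes_gr_propoly_analytic} disambiguates them cleanly.
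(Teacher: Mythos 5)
Your proposal is correct and follows exactly the same route as the paper's own proof: projectivity of $\qgr_\bullet P_{\zed^{\star n}}$ in $\propoly$ (Corollary \ref{cor:proj_gen_propoly}), the duality adjunction (Proposition \ref{prop:D_propoly_analytic_adjoint}), and exactness of $D$ (Lemma \ref{lem:propoly_analytic_duality}). You have merely spelled out the composition-of-exact-functors argument that the paper leaves implicit.
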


\begin{proof}
Since $\qgr_\bullet P_{\zed^{\star n}}$ is projective in $\propoly$, by Corollary \ref{cor:proj_gen_propoly}, this follows from the duality adjunction of Proposition \ref{prop:D_propoly_analytic_adjoint}, together with 
the exactness of the duality functors given by Lemma \ref{lem:propoly_analytic_duality}.
\end{proof}

\subsection{An explicit description of $\qgr_d P_\zed$}
\label{subsect:describe_qgr_d_PZ}

Fix $0< d \in \nat$ and work over $\rat$, for convenience. By Proposition \ref{prop:passi_filt_polynomial}, the associated graded of the polynomial filtration of $\qgr_d \pbar$ is:
\[
\grad( \qgr _d P_\zed) \cong \bigoplus_{i=0}^d \A^{\otimes i}.
\]
The right hand side is well-understood as a functor on $\gr$. However, this is not sufficient to describe the full structure of $\qgr_d P_\zed$.

Recall that $\pbar (G) \cong \aug G$, for $G \in \ob \gr$, and has basis $\{[g]- [e] \ | \ g \in G \backslash \{ e \} \}$. Moreover, one has the following standard relation between the product in $\aug G$ and that in $G$ (this has already been used implicitly in identifying the associated graded in Proposition \ref{prop:passi_filt_polynomial}):

\begin{lem}
\label{lem:aug_G_relations}
For $g, h \in G$, one has
\[
([g]- [e])
([h]- [e])
=
([gh]- [e])
-([g]- [e])
-([h]- [e]).
\]
In particular,
\[
([g]- [e])
([g^{-1}]- [e])
=
-([g]- [e])
-([g^{-1}]- [e])
=
([g^{-1}]- [e])([g]- [e]).
\]
Hence, for any $1 \leq t \in \nat$:
\[
 [g_i^{-1}] - [e] = \sum_{s=1}^t (-1)^s ([g_i] - [e]) ^s
 + (-1)^t  ([g_i] - [e]) ^t ([g^{-1}] - [e])
 .
 \]
\end{lem}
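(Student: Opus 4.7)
The plan is to verify each of the three assertions by direct formal manipulation inside the group ring $\kring G$, using only the defining multiplicative identity $[g]\cdot [h] = [gh]$ together with $\kring$-bilinearity; nothing from the preceding theory of polynomial filtrations is needed.

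For the first identity, I would simply expand the product:
\[
([g] - [e])([h] - [e]) \;=\; [gh] - [g] - [h] + [e],
\]
then regroup the right-hand side as $([gh] - [e]) - ([g] - [e]) - ([h] - [e])$. The second identity is the specialisation $h = g^{-1}$: since $gh = e$, the term $[gh] - [e]$ vanishes, and one reads off $([g]-[e])([g^{-1}]-[e]) = -([g]-[e]) - ([g^{-1}]-[e])$. The symmetric equality with the factors exchanged is obtained either by interchanging the roles of $g$ and $g^{-1}$ in the same computation, or by noting that the value just computed is symmetric in $g$ and $g^{-1}$.

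For the third identity, I would argue by induction on $t$, starting from the rearranged form of the second identity:
\[
[g_i^{-1}] - [e] \;=\; -([g_i]-[e]) \;-\; ([g_i]-[e])([g_i^{-1}]-[e]),
\]
which is precisely the case $t=1$. For the inductive step, assuming the identity at stage $t$, I would substitute this rearranged form into the trailing factor $([g_i^{-1}] - [e])$ of the remainder $(-1)^t ([g_i] - [e])^t ([g_i^{-1}] - [e])$; the two pieces produced are exactly the new summand $(-1)^{t+1}([g_i]-[e])^{t+1}$ and the new remainder $(-1)^{t+1}([g_i]-[e])^{t+1}([g_i^{-1}]-[e])$, which is the identity at stage $t+1$.

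There is no genuine obstacle: the statement is a formal consequence of the group ring structure. The only point requiring slight care is the bookkeeping of signs and of the factor $([g_i]-[e])^t$ in the inductive step, so I would present the induction by substituting directly into the remainder term rather than expanding the full sum, to keep the accounting transparent.
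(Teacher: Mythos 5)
Your proof is correct, and since the paper omits the proof entirely (treating these as ``standard relations''), the approach is as intended: direct expansion for the first identity, specialisation $h=g^{-1}$ for the second, and induction on $t$ by substituting the rearranged $t=1$ identity into the trailing factor of the remainder term, exactly as you describe. One small remark: the occurrence of $[g^{-1}]$ in the final displayed equation of the statement is a typo for $[g_i^{-1}]$, and your proof establishes the corrected version.
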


To proceed, we need to recall the structure of  $\gr$ (cf. \cite[Appendix A]{2021arXiv211001934P} for example). Consider the free group functor 
$
\mathrm{Free} : \finset \rightarrow \gr
$,  
where $\finset$ is the category of finite sets. This preserves coproducts, is faithful and is essentially surjective on objects. However, it is not full. 

Using the symmetric monoidal structure of $\gr$, the remaining morphisms can be obtained from:
\begin{eqnarray*}
p : \zed & \rightarrow & \{e \} \\
\chi : \zed & \rightarrow & \zed \\
\nabla : \zed & \rightarrow & \zed \star \zed
\end{eqnarray*}
where $\chi$ is the passage to the inverse $x \mapsto x^{-1}$ and $\nabla$ is the cogroup structure of $\zed$ given by sending the generator $x \in \zed$ to $x_1 x_2$, where $x_i$ are the generators of the copies of $\zed$ in the free product. 

For a $\rat$-vector space $V$, write $T^{\leq d} (V)$ for the quotient of the tensor algebra $T(V)$ obtained by killing the two-sided ideal generated by $V^{\otimes d+1}$, so that $V \mapsto T^{\leq d}(V)$ is a functor that takes values in associative algebras.

\begin{prop}
\label{prop:describe_qgr_Pzed}
For $1<d \in \nat$, $\qgr_d P_\zed$ is isomorphic to the functor with values in associative algebras 
\[
\mathrm{Free}(S) \mapsto T^{\leq d} (\rat S),
\]
where $\rat S$ is the $\rat$-vector space generated by the finite set $S$, such that:
\begin{enumerate}
\item
morphisms in the image of $\mathrm{free} : \finset \rightarrow \gr$ act via $\rat( -) \ : \ \finset \rightarrow \mathrm{mod}_\rat$ and the naturality of $T^{\leq d} (-)$; 
\item 
$p : \zed = \mathrm{Free} (x) \rightarrow \{ e \} = \mathrm{Free} (\emptyset)$ sends $x \in T^{\leq d} (\rat\{x\})$ to zero;
\item 
$\chi : \zed = \mathrm{Free} (x) \rightarrow \zed = \mathrm{Free}(x)$ sends $x \in T^{\leq d} (\rat\{x\})$ to 
$\sum_{s=1}^d (-1)^s x^s$; 
\item 
$\nabla : \zed = \mathrm{Free} (x) \rightarrow \zed \star \zed = \mathrm{Free} (x_1, x_2)$ acts via $x \mapsto x_1 + x_2 + x_1 x_2$. 
\end{enumerate}
\end{prop}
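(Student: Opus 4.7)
The plan is to prove the proposition by constructing, for each finite set $S$, an explicit natural isomorphism of associative algebras between $\rat\,\mathrm{Free}(S)/\aug^{d+1}\mathrm{Free}(S)$ (which Proposition \ref{prop:passi_filt_polynomial} identifies with $\qgr_d P_\zed(\mathrm{Free}(S))$ as an algebra) and the truncated tensor algebra $T^{\leq d}(\rat S)$, via the truncated Magnus embedding sending $[s] \mapsto 1+s$ (equivalently $[s]-[e] \mapsto s$ for $s \in S$). Once this natural isomorphism is in place, the four formulae of the statement reduce to elementary calculations controlled by Lemma \ref{lem:aug_G_relations}.

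First I would construct the map. The augmentation ideal of $T^{\leq d}(\rat S)$, which is generated by $\rat S$, is nilpotent of index $d+1$ by definition of $T^{\leq d}$, so for each $s \in S$ the element $1+s$ is a unit in $T^{\leq d}(\rat S)$ with inverse $\sum_{k=0}^d (-s)^k$. This determines a group homomorphism $\mathrm{Free}(S) \to T^{\leq d}(\rat S)^{\times}$, and hence by the universal property of the group ring a $\rat$-algebra map $\phi_S : \rat\,\mathrm{Free}(S) \to T^{\leq d}(\rat S)$ satisfying $[s]-[e] \mapsto s$. Since $\phi_S$ sends the augmentation ideal into the augmentation ideal of $T^{\leq d}(\rat S)$, and the latter is nilpotent of index $d+1$, $\phi_S$ vanishes on $\aug^{d+1}\mathrm{Free}(S)$ and descends to an algebra map $\overline{\phi}_S : \rat\,\mathrm{Free}(S)/\aug^{d+1} \to T^{\leq d}(\rat S)$, natural in $S$.

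Next I would show that $\overline{\phi}_S$ is an isomorphism. Both source and target carry bounded filtrations by powers of their augmentation ideals (the $(d{+}1)$st stage of each being zero), and $\overline{\phi}_S$ is filtration-preserving. By Proposition \ref{prop:passi_filt_polynomial}, the associated graded of $\rat\,\mathrm{Free}(S)/\aug^{d+1}$ is $\bigoplus_{n=0}^{d} (\rat S)^{\otimes n} = T^{\leq d}(\rat S)$ (with its obvious grading), and the induced graded map $\mathrm{gr}(\overline{\phi}_S)$ is the identity on the degree-one piece $\rat S$; since both graded algebras are generated in degree one as algebras, $\mathrm{gr}(\overline{\phi}_S)$ is an isomorphism, hence so is $\overline{\phi}_S$ by the standard comparison argument for bounded filtrations. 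I expect this filtered-to-graded comparison to be the main step requiring care; everything else reduces to bookkeeping.

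Finally, the four formulae are verified by direct computation under $\overline{\phi}_S$. Naturality along morphisms in the image of $\mathrm{Free}(-)$ is automatic since $[s]-[e] \mapsto [f(s)]-[e] \mapsto f(s) \in \rat S'$, giving clause (1). Clause (2) is immediate from $[x] \mapsto [e]$, whence $x = [x]-[e] \mapsto 0$. For clause (3), Lemma \ref{lem:aug_G_relations} applied with $t=d$ gives
\[
[x^{-1}] - [e] = \sum_{s=1}^{d} (-1)^s ([x]-[e])^s + (-1)^d ([x]-[e])^d ([x^{-1}]-[e]),
\]
and the residual term lies in $\aug^{d+1}$, hence vanishes in the quotient; applying $\overline{\phi}_S$ yields the stated formula $x \mapsto \sum_{s=1}^d (-1)^s x^s$. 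For clause (4), expanding $[x_1 x_2]-[e] = ([x_1]-[e])([x_2]-[e]) + ([x_1]-[e]) + ([x_2]-[e])$ via Lemma \ref{lem:aug_G_relations} and applying $\overline{\phi}_{\{x_1,x_2\}}$ gives $x \mapsto x_1 x_2 + x_1 + x_2$, as required.
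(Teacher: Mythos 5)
Your proposal is correct and follows essentially the same route as the paper: both rely on Proposition \ref{prop:passi_filt_polynomial} to identify the associated graded and on Lemma \ref{lem:aug_G_relations} to verify the action of $p$, $\chi$, $\nabla$. The only difference is cosmetic: you build the map $\rat\,\mathrm{Free}(S)/\aug^{d+1}\to T^{\leq d}(\rat S)$ via the universal property of the group ring (sending $[s]\mapsto 1+s$) and then run the filtered-to-graded comparison, whereas the paper builds the inverse map $T^{\leq d}(\rat S)\to \qgr_d P_\zed(\mathrm{Free}(S))$ via the universal property of the truncated tensor algebra (sending $s\mapsto [s]-[e]$) and cites Proposition \ref{prop:passi_filt_polynomial} directly for the isomorphism.
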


\begin{proof}
For a finite set $S$, the universal property of the truncated tensor algebra $T^{\leq d}(-)$ functor induces 
\[
T^{\leq d} (\rat S) 
\rightarrow 
\qgr_d P_\zed (\mathrm{Free}(S)) 
\]
by sending a generator corresponding to $x \in S$ to the class of $[x]-[e]$. This is clearly a natural transformation with respect to $\finset$ and is an isomorphism by Proposition \ref{prop:passi_filt_polynomial}.

It remains to check that it is a natural transformation with respect to $\gr$, using the structure on the domain defined in the statement. This follows immediately from the relations given in Lemma \ref{lem:aug_G_relations}. 
\end{proof}

\section{The Mal'cev functors}
\label{sect:malcev}

This section serves to introduce a convenient set of projective generators of the category $\propoly$, working over $\kring = \rat$. Namely, $\malcev$ is introduced as a Lie algebra in $\propoly$; the underlying object of $\propoly$ is projective   and the  tensor products $\malcev ^{\obar s}$, $s \in \nat$, provide a family of projective generators.

\subsection{Introducing $\malcev$}

There are various ways of introducing the Mal'cev Lie algebras; here we adapt the approach of Quillen \cite[Appendix A]{MR258031} (as reviewed in Section \ref{sect:group-ring}), who defined the Mal'cev Lie algebra of a group $G$ as $\prim \widehat{\rat G}$, the primitives of the completed group ring.

We restrict to the category $\gr$ and  work with the category $\propoly$ (equipped with its symmetric monoidal structure $\obar$) rather than completing. Recall from Proposition \ref{prop:P_zed_propoly} that $\qgr_\bullet P_\zed$ is a cocommutative Hopf algebra in $\propoly$.

\begin{defn}
\label{defn:malcev}
The Mal'cev functor $\malcev$ is the kernel (in $\propoly$) of the reduced coproduct: 
\[
\overline{\Delta} : \qgr_\bullet \pbar \rightarrow \qgr _\bullet \pbar \obar \qgr_\bullet \pbar.
\]
For $d \in \nat$, the $d$th Mal'cev functor is $\malcev _d \in \ob \f_d (\gr)$ (i.e., the $d$th polynomial component of $\malcev$).
\end{defn}

There are natural inclusions $\malcev \hookrightarrow \qgr_\bullet\pbar \subset \qgr_\bullet P_\zed$ and  $\malcev _d \hookrightarrow \qgr_d \pbar \subset \qgr_d P_\zed$, for $d \in \nat$.

\begin{prop}
\label{prop:malcev_first_properties}
\ 
\begin{enumerate}
\item 
The Mal'cev functor $\malcev$ is a Lie algebra in $\propoly$. 
\item 
The inclusion $\malcev \hookrightarrow \qgr_\bullet P_\zed$ is a morphism of Lie algebras, where $\qgr_\bullet P_\zed$ is equipped with the commutator Lie structure arising from its associative algebra structure given by Proposition \ref{prop:P_zed_propoly}. 
\item
The completion $\compl (\malcev)$ is naturally isomorphic to the underlying functor of $G \mapsto \prim \widehat{\rat G} $.
\end{enumerate}
\end{prop}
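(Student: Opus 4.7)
The plan is to handle (1) and (2) together by a standard piece of Hopf-algebra abstract nonsense adapted to the symmetric monoidal abelian category $(\propoly, \obar, \rat)$, and then to deduce (3) by applying the exact completion functor $\compl$ to the defining kernel sequence of $\malcev$.

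For (1) and (2): by Proposition \ref{prop:P_zed_propoly}, $\qgr_\bullet P_\zed$ is a cocommutative Hopf algebra in $(\propoly, \obar, \rat)$, and the augmentation identifies $\qgr_\bullet \pbar$ with the augmentation ideal, so $\malcev$ is literally the object of primitives. The argument is the familiar one, now carried out categorically: let $\mu$ and $\Delta$ denote the product and coproduct on $\qgr_\bullet P_\zed$, and let $[-,-] = \mu - \mu\circ\tau$ be the commutator, where $\tau$ is the symmetry of $\obar$. Using that $\Delta$ is an algebra map, cocommutativity of $\Delta$, and the definition of primitives via $\bar\Delta$, a short diagram chase shows that the composite
\[
\malcev \obar \malcev
\hookrightarrow
\qgr_\bullet P_\zed \obar \qgr_\bullet P_\zed
\xrightarrow{[-,-]}
\qgr_\bullet P_\zed
\xrightarrow{\bar\Delta}
\qgr_\bullet P_\zed \obar \qgr_\bullet P_\zed
\]
is zero; equivalently $[-,-]$ restricts to a morphism $\malcev\obar\malcev \to \malcev$. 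Antisymmetry is manifest, and the Jacobi identity follows from associativity of $\mu$ exactly as in the classical setting. This gives both (1) and (2): $\malcev$ is a Lie algebra and the inclusion into $\qgr_\bullet P_\zed$ is a morphism of Lie algebras (where the target carries the commutator bracket).

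For (3): apply $\compl$ to the defining left-exact sequence
\[
0 \to \malcev \to \qgr_\bullet \pbar \xrightarrow{\bar\Delta} \qgr_\bullet \pbar \obar \qgr_\bullet \pbar
\]
in $\propoly$. By Proposition \ref{prop:compl_exact}, $\compl$ is exact, so evaluation at $G \in \ob\gr$ yields a left-exact sequence in $\rat$-modules. Proposition \ref{prop:P_zed_propoly} identifies $\compl(\qgr_\bullet P_\zed)(G) \cong \widehat{\rat G}$ as complete Hopf algebras; restricting to augmentation ideals gives $\compl(\qgr_\bullet \pbar)(G) \cong \widehat{\aug G}$, and the map $\compl(\bar\Delta)(G)$ is identified with the reduced coproduct $\widehat{\aug G} \to \widehat{\aug G}\, \hat\otimes\, \widehat{\aug G}$. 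Taking the kernel therefore identifies $\compl(\malcev)(G)$ with $\prim \widehat{\rat G}$, naturally in $G$.

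The main obstacle is ensuring that the codomain of the reduced coproduct is handled correctly, i.e., that evaluating $\compl(\qgr_\bullet\pbar \obar \qgr_\bullet\pbar)$ at $G$ is genuinely the completed tensor product $\widehat{\aug G}\,\hat\otimes\,\widehat{\aug G}$ in a way compatible with the map $\compl(\bar\Delta)$. This compatibility is exactly what is packaged into the statement of Proposition \ref{prop:P_zed_propoly} that the isomorphism $\widehat{\rat G} \cong \compl(\qgr_\bullet P_\zed)(G)$ is one of \emph{complete} Hopf algebras; once this is invoked, the identification of kernels is automatic and (3) follows.
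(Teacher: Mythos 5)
Your proof is correct and follows the same route as the paper: parts (1) and (2) are the standard fact that primitives of a cocommutative Hopf algebra form a Lie subalgebra under the commutator bracket, here transported to $(\propoly,\obar,\rat)$, and part (3) comes from applying the exact functor $\compl$ to the defining kernel sequence and using that $\compl(\qgr_\bullet P_\zed \obar \qgr_\bullet P_\zed)$ gives $G \mapsto \widehat{\rat G}\,\hat\otimes\,\widehat{\rat G}$. You have merely spelled out in more detail the diagram chase and the compatibility that the paper leaves implicit.
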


\begin{proof}
The first two statements follow from the fact that $\malcev$ is defined as the primitives of the Hopf algebra structure of $\qgr_\bullet P_\zed$ given by Proposition \ref{prop:P_zed_propoly}. 

On applying the completion functor, one recovers $\prim \widehat{\rat G}$, using that $\compl (\qgr _\bullet P_\zed \obar \qgr_\bullet P_\zed)$ yields the functor $G \mapsto \widehat{\rat G} \hat{\otimes } \widehat {\rat G}$.
\end{proof}

The above structure is described at the level of the associated graded objects as follows (compare Proposition \ref{prop:passi_filt_polynomial}):

\begin{prop}
\label{prop:assoc_graded_malcev}
\ 
\begin{enumerate}
\item 
The associated graded of $\malcev$ is isomorphic (as a Lie algebra in $\f (\gr)$) to $\liealg (\A)$, the functor $\A$ composed with the free Lie algebra functor $\liealg(-)$.
\item 
The associated graded of the  inclusion $\malcev \hookrightarrow \qgr_\bullet P_\zed$ identifies as the canonical inclusion 
$ 
\liealg (\A) \hookrightarrow T (\A)
$ 
of Lie algebras in $\f (\gr)$.
\end{enumerate}
\end{prop}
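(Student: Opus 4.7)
The strategy is to exhibit an exact, symmetric monoidal ``associated graded'' functor $\grad : \propoly \to \f(\gr)$, apply it to the defining exact sequence of $\malcev$, and then identify $\grad(\malcev)$ with the primitives of the tensor Hopf algebra $T(\A)$ via the Milnor--Moore theorem applied pointwise. Both (1) and (2) will then follow simultaneously.

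First, I would set up $\grad : \propoly \to \f(\gr)$, $G_\bullet \mapsto \bigoplus_{n \in \nat} \qhat{n} G_n$, and check that it is exact and symmetric monoidal with respect to $(\propoly, \obar, \rat)$ and $(\f(\gr), \otimes, \rat)$. Using the natural isomorphism $\qhat{n} G_n \cong \alpha_n \cre_n G_n$ of Proposition~\ref{prop:qhat}, the functor $\grad$ factors as $\bigoplus_n \alpha_n \circ \fbcr(-)(\mathbf{n})$. Exactness then follows from the exactness of $\fbcr$ (Proposition~\ref{prop:properties_fbcr_propoly}) together with the exactness of $\alpha_n$ over $\rat$ (Proposition~\ref{prop:alpha_Q}(1)). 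Symmetric monoidality follows from the symmetric monoidality of $\fbcr$ with respect to Day convolution (Proposition~\ref{prop:properties_fbcr_propoly}) combined with the standard identification converting an induction product of $\sym_s \times \sym_t$-representations tensored over $\A^{\otimes (s+t)}$ into the ordinary tensor product of the corresponding homogeneous polynomial functors.

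Next, I would identify the cocommutative Hopf algebra $\grad(\qgr_\bullet P_\zed)$ in $\f(\gr)$ (it inherits such a structure because $\grad$ is symmetric monoidal and $\qgr_\bullet P_\zed$ is a cocommutative Hopf algebra in $\propoly$ by Proposition~\ref{prop:P_zed_propoly}). The underlying functor is $T(\A) = \bigoplus_{n} \A^{\otimes n}$ by Proposition~\ref{prop:passi_filt_polynomial}. Evaluating at $G \in \ob \gr$, Quillen's Theorem~\ref{thm:quillen_assoc_graded} (or equivalently Example~\ref{exam:complete_free_group}) identifies $\grad(\rat G)$ with $U(\liealg(G_\abel \otimes \rat))$ as a cocommutative Hopf algebra, with $\A(G) = G_\abel \otimes \rat$ sitting inside as primitives. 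Naturality in $G \in \gr$ promotes this to an isomorphism of cocommutative Hopf algebras in $\f(\gr)$ between $\grad(\qgr_\bullet P_\zed)$ and $T(\A)$ equipped with its standard Hopf algebra structure making $\A$ primitive.

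Applying the exact symmetric monoidal functor $\grad$ to the defining exact sequence $0 \to \malcev \to \qgr_\bullet \pbar \to \qgr_\bullet \pbar \obar \qgr_\bullet \pbar$ then yields an exact sequence $0 \to \grad(\malcev) \to \overline{T}(\A) \to \overline{T}(\A) \otimes \overline{T}(\A)$ in which the right-hand arrow is the reduced coproduct of the Hopf algebra $T(\A) = U(\liealg(\A))$. Over $\rat$, the Milnor--Moore theorem (applied pointwise and natural over $\gr$) identifies the primitives of $U(\liealg(\A))$ with $\liealg(\A)$; moreover, the commutator bracket of $T(\A)$ restricted to these primitives is the standard bracket on the free Lie algebra. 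Since $\malcev \hookrightarrow \qgr_\bullet P_\zed$ is a morphism of Lie algebras (Proposition~\ref{prop:malcev_first_properties}(2)) and $\grad$ is symmetric monoidal, the induced map $\grad(\malcev) \hookrightarrow T(\A)$ is a morphism of Lie algebras, which the above identifies as the canonical inclusion $\liealg(\A) \hookrightarrow T(\A)$. This establishes both (1) and (2). The main obstacle is the first step --- verifying exactness and strict symmetric monoidality of $\grad$ with enough care that the reduced coproduct on $\qgr_\bullet P_\zed$ is transported to the reduced coproduct on $T(\A)$; once this bookkeeping is in place, the remainder reduces to classical facts about tensor Hopf algebras.
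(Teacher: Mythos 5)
Your proposal is correct and follows essentially the same route as the paper's proof: both deduce exactness and symmetric monoidality of the associated graded functor from Proposition~\ref{prop:properties_fbcr_propoly}, identify $\grad(\qgr_\bullet P_\zed)$ with the primitively generated Hopf algebra $T(\A)$ via Propositions~\ref{prop:passi_filt_polynomial} and~\ref{prop:P_zed_propoly}, and then appeal to the definition of $\malcev$ as the kernel of the reduced coproduct together with the classical identification of the primitives of $T(V)$ with $\liealg(V)$. Your write-up simply spells out in more detail the steps that the paper compresses into two sentences (the factorization of $\grad$ through $\fbcr$ and $\alpha_n$, the explicit invocation of Milnor--Moore/Friedrichs, and the Lie-algebra compatibility for part (2)).
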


\begin{proof}
Proposition \ref{prop:properties_fbcr_propoly} implies that the passage to the associated graded is an exact, symmetric monoidal functor from $\propoly$ to $\f (\gr)$. 

Now, the associated graded of the Hopf algebra $\qgr_\bullet P_\zed$ in $\propoly$ is the tensor algebra $T (\A)$ (considered as a primitively-generated Hopf algebra), by Propositions \ref{prop:passi_filt_polynomial} and \ref{prop:P_zed_propoly}. The result thus follows from the definition of $\malcev$ in $\propoly$.
\end{proof}

The universal enveloping algebra construction applies in the symmetric monoidal category $\propoly$; it is defined as the left adjoint to the restriction from unital associative algebras in $\propoly$ to Lie algebras in $\propoly$ given by the commutator bracket. This is analogous to the classical case.

Hence, one can construct $U \malcev$ in $\propoly$, which is  a cocommutative Hopf algebra with primitives $\malcev$.

\begin{prop}
\label{prop:U_malcev}
The inclusion $\malcev \hookrightarrow \qgr_\bullet P_\zed$ induces an isomorphism $
U \malcev \stackrel{\cong}{\rightarrow} \qgr_\bullet P_\zed
$ of cocommutative Hopf algebras in $\propoly$.
\end{prop}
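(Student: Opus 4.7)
The plan is to construct the comparison $\Psi$ from the universal property of $U$ and then show it is an isomorphism by passage to the associated graded, using conservativity of $\fbcr$.

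First, I would apply Proposition \ref{prop:malcev_first_properties}: the inclusion $\malcev \hookrightarrow \qgr_\bullet P_\zed$ is a morphism of Lie algebras in $\propoly$, where the codomain carries the commutator bracket of its associative algebra structure. The universal property of $U$ (left adjoint to the commutator functor, applied in the symmetric monoidal abelian category $(\propoly, \obar, \rat)$) yields a unique morphism
\[
\Psi : U\malcev \to \qgr_\bullet P_\zed
\]
of unital associative algebras in $\propoly$ extending this inclusion. Since $\malcev$ sits in $\qgr_\bullet P_\zed$ as primitives and in $U\malcev$ as primitives, and $U\malcev$ is algebra-generated by $\malcev$, compatibility of $\Psi$ with the coproducts can be checked on these generators; hence $\Psi$ is automatically a morphism of cocommutative Hopf algebras.

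Next, I would apply the functor $\fbcr : \propoly \to \f(\fb)$ and invoke its conservativity (Proposition \ref{prop:fbcr_conservative}): it suffices to show $\fbcr(\Psi)$ is an isomorphism. By Proposition \ref{prop:properties_fbcr_propoly}, $\fbcr$ is exact and symmetric monoidal, so by presenting $U$ internally as a coequalizer built from the tensor algebra, one obtains a natural isomorphism $\fbcr(U\malcev) \cong U(\fbcr \malcev)$ intertwining $\fbcr(\Psi)$ with the canonical map $U(\fbcr \malcev) \to \fbcr(\qgr_\bullet P_\zed)$ induced by the inclusion of primitives. By Proposition \ref{prop:passi_filt_polynomial} (which identifies the associated graded of $\qgr_\bullet P_\zed$ with the tensor Hopf algebra on $\A$) together with Proposition \ref{prop:assoc_graded_malcev} (identifying $\grad \malcev$ with $\liealg(\A)$ and the inclusion with the canonical one), this map of Hopf algebras identifies with the canonical morphism
\[
U \liealg(\A) \to T(\A).
\]
Functorially in the vector-space argument, this is the classical Poincaré--Birkhoff--Witt isomorphism over $\rat$. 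Hence $\fbcr(\Psi)$ is an isomorphism, and by conservativity so is $\Psi$.

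The main technical point to verify carefully is the interchange $\fbcr \circ U \cong U \circ \fbcr$. Left adjoints do not in general commute with one another, so I would not deduce this abstractly; instead, in any $\rat$-linear symmetric monoidal abelian category with countable colimits, $U \mathfrak{g}$ can be presented as the coequalizer of the pair of maps $\mathfrak{g} \obar \mathfrak{g} \rightrightarrows T \mathfrak{g}$ given respectively by $x \obar y \mapsto xy - yx$ and $x \obar y \mapsto [x,y]$. An exact symmetric monoidal functor preserves both the tensor algebra (as a colimit of tensor powers) and this coequalizer, hence preserves $U$. This reduces the proof of the theorem to the classical PBW identification recalled above.
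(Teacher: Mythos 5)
Your proof follows essentially the same route as the paper's: construct $\Psi$ from the universal property of $U(-)$, then reduce to the classical isomorphism $U(\liealg(\A)) \to T(\A)$ by passing to the associated graded via the conservative, exact, symmetric monoidal functor $\fbcr$. Where you go beyond the paper is in flagging the interchange $\fbcr \circ U \cong U \circ \fbcr$ as a point needing justification, which the paper leaves tacit; that is a genuine improvement in rigour. One small slip, however: the coequalizer you write, $\mathfrak{g} \obar \mathfrak{g} \rightrightarrows T\mathfrak{g}$ with maps $x\obar y \mapsto xy - yx$ and $x\obar y \mapsto [x,y]$, computes the quotient of $T\mathfrak{g}$ by the \emph{image} (the span of the elements $xy-yx-[x,y]$), not by the two-sided \emph{ideal} they generate, so this coequalizer in the underlying category is strictly larger than $U\mathfrak{g}$. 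To make the argument work as stated you should either enlarge the source to $T\mathfrak{g} \obar \mathfrak{g}^{\obar 2} \obar T\mathfrak{g} \rightrightarrows T\mathfrak{g}$ (so that the image is the two-sided ideal), or take the coequalizer in the category of associative algebra objects and observe that, being reflexive, it is computed in the underlying category; either repair restores the claim that an exact symmetric monoidal colimit-preserving functor commutes with $U$.
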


\begin{proof}
The morphism is constructed using the universal property of $U(-)$ from the inclusion $\malcev \hookrightarrow \qgr_\bullet P_\zed$ and this is a morphism of cocommutative Hopf algebras in $\propoly$.

It remains to show that this is an isomorphism. By Proposition \ref{prop:fbcr_conservative}, it is sufficient to check this after passage to the associated graded, when it identifies with the isomorphism
 $
U (\liealg (\A)) \stackrel{\cong}{\rightarrow} T (\A)
 $
induced by the inclusion $\liealg (\A) \rightarrow T(\A)$ given in Proposition \ref{prop:assoc_graded_malcev}. 
\end{proof}

\subsection{The splitting}

Recall from Proposition \ref{prop:malcev_first_properties} that $\compl (\malcev)$ is isomorphic to the underlying functor of $G \mapsto \prim \widehat{\rat G}$. 

\begin{lem}
\label{lem:compl_malcev_zed}
There is an isomorphism $\compl (\malcev )(\zed) \cong \rat$, with generator given by $\log (x)$, for $x$ a generator of $\zed$.
\end{lem}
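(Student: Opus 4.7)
The plan is to reduce the computation to well-understood classical facts about the completed group ring $\widehat{\rat \zed}$ via Proposition \ref{prop:malcev_first_properties}(3), which identifies $\compl(\malcev)(G)$ with the primitives $\prim \widehat{\rat G}$ of the complete Hopf algebra $\widehat{\rat G}$, for any $G \in \ob \gr$. Specialising to $G = \zed$, it then suffices to show that $\prim \widehat{\rat \zed} \cong \rat$, generated by $\log(x)$.

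First I would invoke Example \ref{exam:complete_free_group} for the finite rank free group $\zed$. Since $\zed_\abel \otimes \rat \cong \rat$ is one-dimensional, the free Lie algebra $\liealg(\rat)$ is abelian and one-dimensional, concentrated in degree one. Hence its completion with respect to the bracket-length filtration is just $\rat$. The cited isomorphism of complete Hopf algebras
\[
\widehat{U}(\liealg(\zed_\abel \otimes \rat)) \stackrel{\cong}{\rightarrow} \widehat{\rat \zed}
\]
then yields $\widehat{\rat \zed} \cong \rat[[t]]$ (the completed tensor Hopf algebra on a one-dimensional primitively-generated vector space), with primitives $\prim \widehat{\rat \zed} \cong \rat \cdot t$.

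It remains to identify the generator with $\log(x)$. Under the above isomorphism the generator $x \in \zed$ maps to the grouplike element $\exp(t) \in \rat[[t]]$, so that $t$ corresponds to $\log(x) = ([x]-[e]) - \tfrac{1}{2}([x]-[e])^2 + \cdots$, interpreted using the $\aug \zed$-adic topology on $\widehat{\rat \zed}$. Equivalently, since $\zed$ is finitely-generated, torsion-free nilpotent, Proposition \ref{prop:torsion-free_nilp_prim} gives that $\prim \widehat{\rat \zed}$ is spanned by $\{\log(g) \mid g \in \zed\}$; since $\log(x^n) = n \log(x)$, the single element $\log(x)$ suffices.

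There is no serious obstacle; the only care needed is bookkeeping to confirm that the generator is $\log(x)$ and not merely a scalar multiple. A more direct alternative (avoiding completion of the group ring) would compute $\compl(\malcev)(\zed)$ from the inside of $\propoly$: by Proposition \ref{prop:assoc_graded_malcev} the associated graded is $\liealg(\A)$, and $\liealg(\A)(\zed) = \liealg(\rat)$ is one-dimensional in degree one and zero elsewhere, which immediately forces each $\malcev_n(\zed) \cong \rat$ for $n \geq 1$ with isomorphisms as structure maps (generated in degree one by the class of $[x]-[e] \in \A(\zed)$), and the inverse limit is then $\rat$, with generator lifting to $\log(x)$.
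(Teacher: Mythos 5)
Your proposal is correct, and its middle paragraph is exactly the paper's proof: invoke Proposition \ref{prop:torsion-free_nilp_prim} for the torsion-free nilpotent group $\zed$ to see that $\prim \widehat{\rat \zed}$ is spanned by $\{\log(g) \mid g \in \zed\}$, and conclude since $\log(x^n) = n\log(x)$. The two variants you offer around this core are also sound and are worth noting as a sanity check: the route via Example \ref{exam:complete_free_group} (identifying $\widehat{\rat \zed}$ with the completed tensor Hopf algebra on $\rat$, i.e.\ $\rat[[t]]$, whose primitives are $\rat\cdot t$) yields the same answer with an explicit model, while the internal $\propoly$ computation via Proposition \ref{prop:assoc_graded_malcev} — showing $\qhat{n}\malcev_n(\zed)$ vanishes for $n>1$ and is $\rat$ for $n=1$, so the tower $\malcev_\bullet(\zed)$ stabilises to $\rat$ — is a nice self-contained alternative that avoids Quillen's theory entirely except for identifying the generator with $\log(x)$. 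The paper simply cites Proposition \ref{prop:torsion-free_nilp_prim}; your additional detail is correct and harmless.
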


\begin{proof}
Since $\zed$ is a torsion-free nilpotent group, Proposition \ref{prop:torsion-free_nilp_prim} applies: hence $\compl (\malcev )(\zed)$ is generated as a $\rat$-vector space by the elements $\log (g)$, for $g \in \zed$. Since $\log (g)$ is always a scalar multiple of $\log (x)$, the result follows.
\end{proof}

Now, using the adjunction of Proposition \ref{prop:compl_right_adjoint}, the element $\log (x) \in \compl (\malcev) (\zed)$ gives the map 
$
\pi : \qgr_\bullet P_\zed \rightarrow \malcev$.

\begin{lem}
\label{lem:pi_surjective}
The map $\pi : \qgr_\bullet P_\zed \rightarrow \malcev$ is surjective.
\end{lem}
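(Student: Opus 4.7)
The plan is to exhibit the inclusion $\iota:\malcev\hookrightarrow\qgr_\bullet P_\zed$ as a section of $\pi$, i.e.\ to prove that $\pi\circ\iota=\id_\malcev$ in $\propoly$; this will make $\pi$ a split surjection. First I unwind the construction of $\pi$: under the adjunction $\qgr_\bullet\dashv\compl$ and Yoneda's lemma, the element $\log(x)\in\compl(\malcev)(\zed)$ corresponds to a natural transformation $P_\zed\to\compl(\malcev)$ sending $[g]\mapsto\log(g)$; since the target has polynomial approximations $\malcev_n$ of degree $n$, this transformation factors through $\qgr_\bullet P_\zed$ to give $\pi$, and concretely $\pi_n(G)\colon\qgr_n P_\zed(G)=\rat G/\aug^{n+1}G\to\malcev_n(G)$ sends $[g]$ to the truncated logarithm $[\log(g)]_n=\sum_{k=1}^n(-1)^{k+1}(g-1)^k/k$.

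Next I reduce to a single computation on $\zed$. Since $\pi\circ\iota-\id_\malcev$ is a morphism in $\propoly$, it is zero iff it vanishes on each $G\in\gr$ at each level $n$. Fix $G$ a finite-rank free group. By Example \ref{exam:magnus} the projection $G\twoheadrightarrow\bar G:=G/\gamma_{n+1}G$ satisfies the hypothesis of Proposition \ref{prop:passi_nilpotent}, yielding $\qgr_n P_\zed(G)\cong\qgr_n P_\zed(\bar G)$ and therefore $\malcev_n(G)\cong\malcev_n(\bar G)$. Since $\bar G$ is finitely-generated torsion-free nilpotent, Proposition \ref{prop:torsion-free_nilp_prim} says that $\prim\widehat{\rat\bar G}=\compl(\malcev)(\bar G)$ is $\rat$-spanned by $\{\log(\bar g)\mid\bar g\in\bar G\}$, and Lemma \ref{lem:surj_compl_eval} gives a surjection $\compl(\malcev)(\bar G)\twoheadrightarrow\malcev_n(\bar G)$. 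Hence $\malcev_n(\bar G)$ is spanned by $\{[\log(\bar g)]_n\}$, and naturality applied to the map $\zed\to\bar G$, $x\mapsto\bar g$, further reduces the claim to the single assertion
\[
\pi_n(\zed)\bigl([\log(x)]_n\bigr)=[\log(x)]_n\quad\text{in }\malcev_n(\zed).
\]

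Finally I carry out the combinatorial computation at $\zed$. Naturality with respect to the power map $\zed\to\zed$, $x\mapsto x^j$, combined with $\log(x^j)=j\log(x)$, gives $\pi_n([x^j])=j[\log(x)]_n$. Expanding $(x-1)^k=\sum_{j=0}^k\binom{k}{j}(-1)^{k-j}[x^j]$ and using $\rat$-linearity yields
\[
\pi_n\bigl((x-1)^k\bigr)=\Bigl(\sum_{j=0}^k j\binom{k}{j}(-1)^{k-j}\Bigr)[\log(x)]_n=\delta_{k,1}\,[\log(x)]_n,
\]
the binomial identity following by differentiating $(1+t)^k$ and evaluating at $t=-1$. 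Substituting into $[\log(x)]_n=\sum_{k=1}^n(-1)^{k+1}(x-1)^k/k$ gives $\pi_n([\log(x)]_n)=[\log(x)]_n$, as required.

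The main obstacle is the passage from the completed picture (where Proposition \ref{prop:torsion-free_nilp_prim} lives) to the finite Passi levels: one must know that the spanning by $\log$'s survives the truncation $\compl(\malcev)(\bar G)\twoheadrightarrow\malcev_n(\bar G)$, which is exactly what Lemma \ref{lem:surj_compl_eval} provides. Once that reduction is in place, the rest is the universal $\zed$-level computation above.
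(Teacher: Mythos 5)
Your proposal aims at the stronger assertion $\pi\circ\iota=\id_\malcev$ (which is Proposition \ref{prop:malcev_s=1}(1) rather than the lemma), and the closing $\zed$-level binomial computation showing $\pi_n\bigl([\log(x)]_n\bigr)=[\log(x)]_n$ is correct. The reduction to the spanning claim and thence to $\zed$ is also the right strategy. However, there is a genuine gap in the middle step, and it is worth noting that once the spanning claim is in hand the lemma is already proved, since $[\log(g)]_n=\pi_n(G)([g])$: the retraction computation is a bonus, not needed for surjectivity.

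The gap: you invoke Lemma \ref{lem:surj_compl_eval} to produce a surjection $\compl(\malcev)(\bar G)\twoheadrightarrow\malcev_n(\bar G)$ for $\bar G=G/\gamma_{n+1}G$, and you also appeal to naturality for the map $\zed\to\bar G$. Neither is licensed by the framework: $\malcev$ and $\compl(\malcev)$ are objects of $\propoly$ and $\f(\gr)$ respectively, i.e.\ (towers of) functors defined only on the category $\gr$ of finitely-generated \emph{free} groups, and $\bar G$ is nilpotent, not free, so neither evaluation at $\bar G$ nor the morphism $\zed\to\bar G$ makes sense in $\gr$. The paper's own proof routes around this precisely: it applies Lemma \ref{lem:surj_compl_eval} to the free group $G$ itself, giving a surjection $\prim\widehat{\rat G}=\compl(\malcev)(G)\twoheadrightarrow\malcev_n(G)$, so that $\malcev_n(G)$ is spanned by images of the $\log(\tilde g)$ for $\tilde g\in\gplike\widehat{\rat G}$; it then uses Proposition \ref{prop:passi_nilpotent} to identify $\qgr_n P_\zed(G)$ with $\rat\bar G/\aug^{n+1}\bar G$ and Proposition \ref{prop:torsion-free_nilp_prim} \emph{applied to the nilpotent group} $\bar G$ (a statement about completed group rings, not about $\malcev$) to conclude that the image of each $\log(\tilde g)$ in $\widehat{\rat\bar G}$ is a $\rat$-linear combination of images of $\log(g)$ for $g\in G$. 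This yields the spanning of $\malcev_n(G)$ by $\{[\log(g)]_n:g\in G\}$ without ever evaluating $\malcev$ or $\compl(\malcev)$ outside $\gr$. With the spanning claim established that way, surjectivity is immediate, and your naturality argument (with the morphism $\zed\to G$, $x\mapsto g$, in place of $\zed\to\bar G$) together with the $\zed$-computation then cleanly delivers the retraction of Proposition \ref{prop:malcev_s=1}.
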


\begin{proof}
We require to show that, for each $d\in \nat$, the corresponding map $\qgr_d P_\zed \rightarrow \malcev_d$ is surjective. 
 By adjunction, this map corresponds to a map $P_\zed \rightarrow \malcev_d$ and hence to an element of $\malcev_d (\zed)$. By construction, this element is the image of $\log (x)$ under the surjection $\compl (\malcev) (\zed) \twoheadrightarrow \malcev_d (\zed)$. Hence we require to prove that, for any $G \in \ob \gr$, $\malcev_d (G)$ is generated as a $\rat$-vector space by the images of the elements $\log (g)$, for $g \in G$. 

By  Quillen's results (reviewed in Section \ref{sect:group-ring}),  $\compl (\malcev) (G)$ is given by the set of elements $\log (\tilde{g})$, for $\tilde{g}$ in $\gplike \widehat{\rat G}$. Hence $\malcev_d (G)$ is certainly generated by the set of images of these elements. 

By Proposition \ref{prop:passi_nilpotent}, for $G$ a finitely-generated free group, the quotient $G \twoheadrightarrow G/ \gamma_{d+1} G$ induces an isomorphism:
\[
\qgr_d P_\zed (G) 
\cong 
\rat (G/\gamma_{d+1} G) / \aug^{d+1}( G/\gamma_{d+1} G). 
\]
Now, Proposition \ref{prop:torsion-free_nilp_prim} implies that, for $\tilde{g}$ in $\gplike \widehat{\rat G}$ the image of  $\log (\tilde{g})$ in $\widehat{\rat (G/\gamma_{d+1} G)}$ is a $\rat$-linear combination of images of elements of the form $\log (g)$ for $g \in G$. Hence, by using the above isomorphism, the result follows.
\end{proof}

Recall that there is a splitting $P_\zed \cong \rat \oplus \pbar$ and an isomorphism $\qgr_1 \pbar \cong \A$. The composite surjection $P_\zed \twoheadrightarrow \A$ corresponds (under the corepresenting property of $P_\zed$) to the generator of $\A (\zed)$ given by the chosen generator of $\zed$.

\begin{nota}
\label{nota:rho}
Write $\rho : \malcev \rightarrow \qgr_\bullet \A$ for the morphism of $\propoly$ given by the composite of the canonical inclusion $\malcev \subset \qgr_\bullet P_\zed$ with the morphism induced by the surjection $P_\zed \twoheadrightarrow \A$.
\end{nota}

On completion, $\rho$ gives $\compl (\rho) : \compl (\malcev) \rightarrow \A$ in $\f (\gr)$.

\begin{lem}
\label{lem:compl_rho_iso}
Evaluating $\compl (\rho) : \compl (\malcev) \rightarrow \A$ on $\zed$ gives an isomorphism 
$
\rat \cong \compl (\malcev)(\zed) \stackrel{\cong}{\rightarrow} \A (\zed) \cong \rat.
$
\end{lem}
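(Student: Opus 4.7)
The plan is to reduce the lemma to a surjection between two one-dimensional $\rat$-vector spaces. Since $\A$ is polynomial of degree $1$, Proposition \ref{prop:poly_propoly} gives a canonical identification $\compl(\qgr_\bullet \A) \cong \A$, and the canonical map $\compl(\qgr_\bullet \A) \to \qgr_1 \A = \A$ (from Lemma \ref{lem:surj_compl_eval}) is the identity. Consequently, under this identification, $\compl(\rho)$ factors as
\[
\compl(\malcev) \twoheadrightarrow \malcev_1 \xrightarrow{\rho_1} \qgr_1 \A = \A,
\]
where the first map is the canonical surjection of Lemma \ref{lem:surj_compl_eval} and the second is the degree-$1$ component of $\rho$.

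First I would identify $\malcev_1$ and $\rho_1$ explicitly. The evaluation functor $(-)_1 : \propoly \to \f_1(\gr)$ is exact by Proposition \ref{prop:d_evaluate_exact_f_to_propoly}, so $\malcev_1 = \ker(\overline{\Delta})_1$. The codomain of $(\overline{\Delta})_1$ is $(\qgr_\bullet \pbar \obar \qgr_\bullet \pbar)_1 = \qgr_1(\qgr_1 \pbar \otimes \qgr_1 \pbar) = \qgr_1(\A \otimes \A)$, which vanishes by Lemma \ref{lem:qgr_tensor} (taking $d=1 < 2 = n$). Hence $(\overline{\Delta})_1 = 0$ and $\malcev_1 \cong \qgr_1 \pbar \cong \A$ (using Example \ref{exam:qgr_PZ}). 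Furthermore, $\rho_1$ then identifies with the composite $\malcev_1 = \qgr_1 \pbar = \A \hookrightarrow \qgr_1 P_\zed = \A \oplus \rat \twoheadrightarrow \A$, where the last arrow is the projection of Example \ref{exam:qgr_PZ}; this composite is the identity on $\A$.

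Finally, evaluating at $\zed$: by Lemma \ref{lem:compl_malcev_zed} we have $\compl(\malcev)(\zed) \cong \rat$, and $\malcev_1(\zed) \cong \A(\zed) \cong \rat$. Lemma \ref{lem:surj_compl_eval} provides the canonical surjection $\compl(\malcev)(\zed) \twoheadrightarrow \malcev_1(\zed)$, which, being a surjection between one-dimensional $\rat$-vector spaces, is an isomorphism. Combined with $\rho_1(\zed) = \id_{\A(\zed)}$, this shows that $\compl(\rho)(\zed)$ is an isomorphism, as required.

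No substantial obstacle is expected: the only non-bookkeeping step is the vanishing $\qgr_1(\A \otimes \A) = 0$, which forces $\malcev_1 = \qgr_1 \pbar = \A$ and thereby pins down $\rho_1$ as the identity. Once this is in place, the conclusion is forced by combining Lemmas \ref{lem:compl_malcev_zed} and \ref{lem:surj_compl_eval}.
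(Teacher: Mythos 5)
Your proof is correct, but takes a genuinely different route from the paper's. The paper's proof is a one-step direct verification: it invokes Lemma \ref{lem:compl_malcev_zed} to identify the generator of $\compl(\malcev)(\zed)$ as $\log(x)$, and then checks by hand that $\compl(\rho)$ sends $\log(x)$ to the generator of $\A(\zed)$ given by the class of $x$. You instead factor $\compl(\rho)$ as the canonical surjection $\compl(\malcev) \twoheadrightarrow \malcev_1$ followed by $\rho_1 \colon \malcev_1 \to \A$, and you compute each factor structurally: the vanishing $\qgr_1(\A \otimes \A)=0$ (Lemma \ref{lem:qgr_tensor}) forces $(\overline{\Delta})_1 = 0$, hence $\malcev_1 = \qgr_1\pbar = \A$, and then $\rho_1$ unwinds to the identity on $\A$; combined with Lemma \ref{lem:compl_malcev_zed} and the one-dimensionality of both sides, a surjection between one-dimensional spaces is automatically an isomorphism. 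Your approach avoids unwinding the explicit identification of $\log(x)$ and replaces that computation with a formal dimension count, at the cost of a slightly longer argument; the paper's approach is shorter but relies on a direct (if ``straightforward'') tracing of generators. Both are valid, and both rest on Lemma \ref{lem:compl_malcev_zed} as the essential input identifying $\compl(\malcev)(\zed)\cong\rat$.
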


\begin{proof}
By Lemma \ref{lem:compl_malcev_zed}, a  generator of $\compl (\malcev)(\zed)$ is given by $\log (x)$ (for the chosen generator $x$ of $\zed$). Under the map $\compl (\rho)$, it is straightforward to check that this maps to the generator of $\A(\zed)$ given by the class of $x$.
\end{proof}

Recall that $\qgr_\bullet P_\zed$ is projective in $\propoly$, by Corollary \ref{cor:proj_gen_propoly}.

\begin{prop}
\label{prop:malcev_s=1}
\ 
\begin{enumerate}
\item 
The surjection $\pi : \qgr_\bullet P_\zed \twoheadrightarrow \malcev$ is a retract of the defining inclusion $\malcev \subset \qgr_\bullet P_\zed$.
\item 
$\malcev$ is projective in $\propoly$; it corepresents the functor 
$ 
G_\bullet \mapsto \cre_1 G_1
$. 
\end{enumerate}
\end{prop}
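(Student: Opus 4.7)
For Part~1, my approach is to reduce the identity $\pi \circ \iota = \id_\malcev$ to a check on associated gradeds. Since $\fbcr\colon \propoly \to \f(\fb)$ is conservative (Proposition~\ref{prop:fbcr_conservative}), it suffices to verify $\fbcr(\pi) \circ \fbcr(\iota) = \id$. By Proposition~\ref{prop:assoc_graded_malcev}, $\fbcr(\iota)$ is identified with the canonical inclusion of the free Lie algebra $\liealg(\A) \hookrightarrow T(\A)$ into the tensor Hopf algebra. The map $\pi$ is defined, via the adjunction $\qgr_\bullet \dashv \compl$, by the generator $\log(x) \in \compl(\malcev)(\zed) \cong \rat$ from Lemma~\ref{lem:compl_malcev_zed}; at the graded level, the $\log$ power series identifies $\fbcr(\pi)$ with the Dynkin--Eulerian idempotent $e\colon T(V) \twoheadrightarrow \liealg(V)$. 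The classical Dynkin--Specht--Wever identity that $e$ restricts to $\id_{\liealg(V)}$ on the Lie subalgebra then gives $\fbcr(\pi \circ \iota) = \id$, whence $\pi \circ \iota = \id_\malcev$ by conservativity.

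Part~2 is then immediate: $\malcev$ becomes a direct summand of $\qgr_\bullet P_\zed$, projective by Corollary~\ref{cor:proj_gen_propoly}, and retracts of projectives in an abelian category are projective. For Part~3, I will construct the natural transformation $\Psi_{G_\bullet}\colon \hom_\propoly(\malcev, G_\bullet) \to \cre_1 G_1$ sending $\phi$ to its degree~1 component $\phi_1\colon \malcev_1 = \A \to G_1$; the identification $\malcev_1 \cong \A$ comes directly from the definition of $\malcev$ since $\qgr_1(\A\otimes\A) = 0$ by Lemma~\ref{lem:qgr_tensor}. Both source and target are exact functors preserving direct sums on $\propoly$ -- the source by Part~2, the target via Proposition~\ref{prop:d_evaluate_exact_f_to_propoly} combined with the exactness of $\cre_1$ -- so it suffices to show $\Psi$ is an isomorphism on each projective generator $\qgr_\bullet P_{\zed^{\star n}}$, $n \in \nat$, of Corollary~\ref{cor:proj_gen_propoly}.

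On such a generator, $\cre_1 \qgr_1 P_{\zed^{\star n}} \cong \rat^n$ using $P_{\zed^{\star n}} \cong P_\zed^{\otimes n}$ together with Proposition~\ref{prop:qgr_tensor_Pzed}. The $n$ canonical inclusions $\zed \hookrightarrow \zed^{\star n}$ induce $n$ natural morphisms $\malcev \to \qgr_\bullet P_{\zed^{\star n}}$, carried by $\Psi$ to the standard basis of $\rat^n$. To conclude I must verify that these exhaust $\hom_\propoly(\malcev, \qgr_\bullet P_{\zed^{\star n}})$. Precomposing with the reduced part $\bar\pi\colon \qgr_\bullet \pbar \to \malcev$ of $\pi$, the hom-space embeds as a direct summand of $\hom_\propoly(\qgr_\bullet \pbar, \qgr_\bullet P_{\zed^{\star n}})$, which in turn embeds by adjunction into $\compl(\qgr_\bullet P_{\zed^{\star n}})(\zed) \cong \rat[[t_1, \ldots, t_n]]$. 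Using that $\qgr_\bullet P_{\zed^{\star n}}$ inherits a cocommutative Hopf algebra structure from $P_\zed^{\otimes n}$ and that $\pi$ corresponds to the primitive element $\log(1+t)$, the image lands in the $n$-dimensional primitive subspace spanned by $\log(1+t_1), \ldots, \log(1+t_n)$, matching $\rat^n$.

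The main obstacle is the injectivity step of Part~3, i.e.\ establishing that the image of $\hom_\propoly(\malcev, \qgr_\bullet P_{\zed^{\star n}})$ inside $\rat[[t_1, \ldots, t_n]]$ is \emph{exactly} the primitive subspace and not merely contained in it. This requires carefully tracking the naturality of $\compl(\phi)\colon \compl(\malcev) \to \compl(\qgr_\bullet P_{\zed^{\star n}})$ with respect to the cogroup operations on $\zed$ -- the counit $p$, inversion $\chi$, and comultiplication $\nabla$ of Subsection~\ref{subsect:describe_qgr_d_PZ} -- and uses the identification $\compl(\malcev) \cong \prim \widehat{\rat \cdot}$ from Proposition~\ref{prop:malcev_first_properties}(3) in an essential way to translate naturality constraints into primitivity.
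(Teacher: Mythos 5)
Your proof of Part~1 takes a genuinely different route from the paper's. The paper never touches associated gradeds: it first observes that, since $\pi$ is surjective (Lemma~\ref{lem:pi_surjective}), it suffices to show $\pi \circ \iota \circ \pi = \pi$; it then uses the adjunction $\qgr_\bullet \dashv \compl$ to see that a map $\qgr_\bullet P_\zed \to \malcev$ is determined by an element of $\compl(\malcev)(\zed) \cong \rat$, and (via Lemma~\ref{lem:compl_rho_iso}, that $\compl(\rho)$ is an isomorphism on $\zed$) reduces further to comparing $\rho\circ\pi\circ\iota$ with $\rho$, which is a direct check. Your route through $\fbcr$-conservativity instead pushes the whole problem into the free Lie algebra, which is attractive but loads the work onto the identification $\fbcr(\pi) = e^{(1)}$. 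That identification is \emph{not} supplied by what you write. What you can read off from the definition of $\pi$ via adjunction and Yoneda is that $\compl(\pi)_G\colon \widehat{\rat G} \to \prim \widehat{\rat G}$ sends $[g] \mapsto \log([g])$ on grouplikes and is linear; to conclude it coincides with the convolutional logarithm $\log^*(\mathrm{id})$ — and hence that the associated graded map is the first Eulerian idempotent on $T(\A)$, which restricts to the identity on primitives — you must argue that a continuous linear map on $\widehat{\rat G}$ is determined by its values on $\rat G$, and that the Eulerian idempotent commutes with passage to the associated graded. These steps are true and not deep, but they are precisely the content of the step you compress into ``the $\log$ power series identifies $\fbcr(\pi)$ with the Dynkin--Eulerian idempotent.'' As written this is a real gap: without it you only know $\fbcr(\pi)\circ\fbcr(\iota)$ is a natural endomorphism of $\liealg(\A)$ that is the identity in degree 1, and since $\lie(n)$ is not irreducible for $n\geq 4$ the degree-1 information alone does not force it to be the identity.

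For the corepresenting statement (your Part~3), you have correctly identified the hard point yourself — establishing that the image inside $\rat[[t_1,\dots,t_n]]$ is \emph{exactly} the primitive subspace — and you have not filled it. The paper closes this for general $s$ in Theorem~\ref{thm:proj_cover_malcev_s} by a cleaner, non-computational device: one truncates $G_\bullet$ at degree $s$ to get a short exact sequence $0 \to K \to G_\bullet \to \overline{G}_\bullet \to 0$ with $K_t = 0$ for $t\leq s$, computes $\hom_{\propoly}(\malcev^{\obar s}, \overline{G}_\bullet)\cong \cre_s G_s$ directly, and then shows $\hom_{\propoly}(\malcev^{\obar s}, K) = 0$ by a support/degree contradiction that ultimately rests on the vanishing $\hom_{\f(\gr)}(\pbar^{\otimes s}, \A^{\otimes \ell}) = 0$ for $\ell > s$ (Lemma~\ref{lem:qgr_tensor}). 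This sidesteps the explicit power-series computation and the naturality bookkeeping with respect to the cogroup operations that you flag as your main obstacle. I would recommend adopting the truncation/vanishing argument: it is shorter, it does not require tracking $p$, $\chi$, $\nabla$, and it is the argument that actually generalizes to $\malcev^{\obar s}$ for arbitrary $s$.
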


\begin{proof}
For the first statement, we require to show that the composite $\malcev \hookrightarrow \qgr_\bullet P_\zed \twoheadrightarrow \malcev$ is the identity. Since $\pi : \qgr_\bullet P_\zed \twoheadrightarrow \malcev$  is surjective by Lemma \ref{lem:pi_surjective}, it suffices to show that the composite 
\[
\qgr_\bullet P_\zed \stackrel{\pi}{\twoheadrightarrow} \malcev \hookrightarrow \qgr_\bullet P_\zed \twoheadrightarrow \malcev
\]
coincides with $\pi : \qgr_\bullet P_\zed \twoheadrightarrow \malcev$.

By adjunction, a map $\qgr_\bullet P_\zed \twoheadrightarrow \malcev$ corresponds to a map $P_\zed \rightarrow \compl (\malcev) $ and hence to an element of $\compl (\malcev ) (\zed)$. Now, by Lemma \ref{lem:compl_rho_iso}, $\compl (\rho)$ evaluated on $\zed$ is an isomorphism. Using this, one reduces to showing that the composite
\[
\malcev \hookrightarrow \qgr_\bullet P_\zed \twoheadrightarrow \malcev
\stackrel{\rho}\rightarrow \qgr_\bullet \A
\]
coincides with $\rho$. 

Now, the composite $\qgr_\bullet P_\zed \twoheadrightarrow \malcev
\stackrel{\rho}\rightarrow \qgr_\bullet \A$ can easily be seen to identify with $\qgr_\bullet (P_\zed \rightarrow \A)$, where the morphism $P_\zed \rightarrow \A$ is the surjection used in the construction of $\rho$. The result follows.
\end{proof}

\subsection{A convenient set of projective generators of $\propoly$}

Proposition \ref{prop:malcev_s=1} allows the construction of a useful set of projective generators for $\propoly$.

\begin{thm}
\label{thm:proj_cover_malcev_s}
For $s \in \nat$, $\malcev ^{\obar s}$ 
\begin{enumerate}
\item 
is projective in $\propoly$; 
\item 
is the projective cover of $\qgr_\bullet (\A^{\otimes s})$; 
\item 
corepresents the functor $G_\bullet \mapsto \cre_s G_s$, where the $\sym_s$-action on $\hom_{\propoly} (\malcev ^{\obar s},G_\bullet )$ is given by place permutations of the factors $\malcev$ with respect to  $\obar$. 
\end{enumerate}
\end{thm}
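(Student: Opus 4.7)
The plan is to prove the three statements in the order (1), (3), (2).

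For (1), Proposition~\ref{prop:malcev_s=1} exhibits $\malcev$ as a retract of $\qgr_\bullet P_\zed$; since $\obar$ is functorial in each variable, $\malcev^{\obar s}$ is a retract of $(\qgr_\bullet P_\zed)^{\obar s}$, which by Proposition~\ref{prop:qgr_tensor_Pzed} is naturally isomorphic to $\qgr_\bullet(P_\zed^{\otimes s}) = \qgr_\bullet P_{\zed^{\star s}}$, projective in $\propoly$ by Corollary~\ref{cor:proj_gen_propoly}. Hence $\malcev^{\obar s}$ is projective as a retract of a projective.

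For (3), I construct an explicit $\sym_s$-equivariant natural transformation $\eta\colon \hom_\propoly(\malcev^{\obar s},-)\to\cre_s((-)_s)$ and verify it is an isomorphism. The key computation is $\cre_s\bigl((\malcev^{\obar s})_s\bigr)$ via the exact, conservative, symmetric monoidal functor $\fbcr$ of Proposition~\ref{prop:properties_fbcr_propoly}. By Proposition~\ref{prop:assoc_graded_malcev}, $\fbcr(\malcev)(\mathbf{t}) = \cre_t\liealg^t(\A) = \lie(t)$, so $\fbcr(\malcev^{\obar s}) = \lie^{\odot s}$ as $\fb$-modules. Evaluating the Day convolution at $\mathbf{s}$, only the tuple $(1,\ldots,1)$ contributes (the other summands vanishing since $\malcev$ is reduced, so each slot carries polynomial degree $\geq 1$), giving $\cre_s((\malcev^{\obar s})_s)\cong\rat[\sym_s]$ with the natural place-permutation $\sym_s$-action. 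Under $\cre_s \cong \hom_{\f_s(\gr)}(\A^{\otimes s},-)$, the identity of $\rat[\sym_s]$ is a canonical $\sym_s$-equivariant morphism $\xi\colon \A^{\otimes s}\to(\malcev^{\obar s})_s$, and I set $\eta(f):=f_s\circ\xi$.

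To show $\eta$ is an isomorphism, both source and target are exact functors on $\propoly$ (source by projectivity from (1); target by exactness of $(-)_s$ from Proposition~\ref{prop:d_evaluate_exact_f_to_propoly} composed with exactness of $\cre_s$), so it suffices to verify $\eta$ on the projective generators $\qgr_\bullet P_{\zed^{\star n}}$ of Corollary~\ref{cor:proj_gen_propoly} and to extend by dévissage using the projective presentations. The verification on these generators is the main obstacle: via the PBW isomorphism $\qgr_\bullet P_\zed \cong U\malcev$ of Proposition~\ref{prop:U_malcev}, it reduces to the operadic statement that the canonical inclusion $\lie\hookrightarrow\uass$ induces at the level of $\fb$-modules the appropriate matching between $\lie^{\odot s}(\mathbf{s})$ and $\fbcr(\qgr_\bullet P_{\zed^{\star n}})(\mathbf{s})$, so that the retract structure of $\malcev$ inside $U\malcev$ is compatible with the PBW decomposition.

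Given (1) and (3), part (2) follows. The morphism $\rho^{\obar s}\colon\malcev^{\obar s}\twoheadrightarrow\qgr_\bullet\A^{\otimes s}$ corresponds under (3) to the identity of $\A^{\otimes s} = (\qgr_\bullet\A^{\otimes s})_s$; it is surjective since $\rho$ is levelwise surjective (factoring through $\malcev \twoheadrightarrow \malcev_1 = \A$) and $\obar$ is exact. For essentiality, any subobject $N\subseteq\malcev^{\obar s}$ with $\rho^{\obar s}(N) = \qgr_\bullet\A^{\otimes s}$ forces $\fbcr(N)(\mathbf{s}) = \rat[\sym_s]$; using that $\lie^{\odot s}$ is generated as an $\fb$-module from its value at $\mathbf{s}$ by the Day convolution structure, one deduces $\fbcr(N) = \fbcr(\malcev^{\obar s})$, whence $N = \malcev^{\obar s}$ by conservativity (Proposition~\ref{prop:fbcr_conservative}).
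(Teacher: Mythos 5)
Part (1) of your argument coincides with the paper's.

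For parts (2) and (3), your route diverges substantially from the paper's, and each divergence contains a gap.

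For part (3), you propose to check the explicit natural transformation $\eta$ on the projective generators $\qgr_\bullet P_{\zed^{\star n}}$ and then ``extend by d\'evissage.'' The d\'evissage step is circular: a general object of $\propoly$ has a presentation $P_1 \to P_0 \to G_\bullet \to 0$ with $P_i$ possibly \emph{infinite} direct sums of generators, and to conclude $\eta_{G_\bullet}$ is an isomorphism you need $\eta_{P_i}$ to be isomorphisms, i.e.\ you need $\hom_\propoly(\malcev^{\obar s},-)$ to commute with arbitrary direct sums. But that compactness is precisely equivalent to corepresenting $G_\bullet \mapsto \cre_s G_s$ (which does commute with direct sums), so you are assuming the conclusion. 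Note that the \emph{a priori} corepresentation you do have, namely that $\malcev^{\obar s}$ is a summand of $\qgr_\bullet P_{\zed^{\star s}}$ which corepresents $G_\bullet \mapsto (\compl G_\bullet)(\zed^{\star s})$, does \emph{not} help, since $\compl$ is an inverse limit and does not commute with direct sums. On top of this, the generator check itself is not carried out; it is only described as the ``main obstacle'' and reduced to a loose operadic assertion about $\lie \hookrightarrow \uass$.

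For part (2), the essentiality step contains an incorrect claim. You write that $\lie^{\odot s}$ is ``generated as an $\fb$-module from its value at $\mathbf{s}$ by the Day convolution structure,'' so that $\fbcr(N)(\mathbf{s}) = \rat[\sym_s]$ forces $\fbcr(N) = \fbcr(\malcev^{\obar s})$. An $\fb$-module is merely a sequence of symmetric-group representations; there is no sense in which a nonzero value in one arity ``generates'' the others, and indeed the sub-$\fb$-module of $\lie^{\odot s}$ that agrees in arity $s$ and vanishes in all higher arities is a perfectly good sub-$\fb$-module, falsifying the stated implication. What you actually need is a statement \emph{in $\propoly$}, not in $\f(\fb)$: namely, that $\malcev^{\obar s}$ admits no nonzero quotient $X$ with $X_t = 0$ for $t \leq s$. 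The paper obtains this from Lemma~\ref{lem:qgr_tensor} (i.e.\ $\qgr_d(\A^{\otimes n}) = 0$ for $d < n$, equivalently $\hom_{\f(\gr)}(\pbar^{\otimes s}, \A^{\otimes \ell}) = 0$ for $\ell > s$), using that $\malcev_\ell$ is a quotient of $\pbar$. This vanishing lemma is the technical heart of both the essentiality claim in (2) and the vanishing of $\hom_\propoly(\malcev^{\obar s}, K)$ that the paper uses to reduce (3) to the bounded tower case; nothing in your proposal supplies an equivalent input.

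The paper's route for (3) avoids d\'evissage entirely: it truncates $G_\bullet$ to $\overline{G}_\bullet$ (constant above level $s$), computes $\hom_\propoly(\malcev^{\obar s}, \overline{G}_\bullet) \cong \cre_s G_s$ directly, and kills the contribution of the kernel $K$ (which satisfies $K_t = 0$ for $t \leq s$) by the same vanishing argument. You may wish to look at Lemma~\ref{lem:qgr_tensor} and redo the essentiality and corepresentability steps along these lines.
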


\begin{proof}
The case $s=0$ is straightforward, so we suppose that $s>0$.

Since $\malcev ^{\obar s}$ is a direct summand of $(\qgr_\bullet P_\zed)^{\obar s}$, to establish projectivity, it suffices to show that the latter is projective. The exponential property of $\pbif$ gives the isomorphism $P_\zed ^{\otimes s} \cong P_{\zed ^{\star s}}$. 
Proposition \ref{prop:qgr_tensor_Pzed} thus gives the isomorphism $\qgr_\bullet P_{\zed ^{\star s}} \cong (\qgr_\bullet P_\zed)^{\obar s}$, hence  the latter is projective in $\propoly$, by Corollary \ref{cor:proj_gen_propoly}.

The surjection $\rho : \malcev \twoheadrightarrow \qgr_\bullet \A$ induces $\malcev ^{\obar s} \twoheadrightarrow (\qgr_\bullet \A)^{\obar s} \cong \qgr_\bullet (\A^{\otimes s})$, using the symmetric monoidal property of $\qgr_\bullet$ for the isomorphism.  Moreover, by construction, the kernel $K$ of this surjection in $\propoly$ has the property $K_t =0$ for $t \leq s$. 

To show that the surjection exhibits $\malcev ^{\obar s}$ as the projective cover of $\qgr_\bullet (\A^{\otimes s})$, we require to show that, if $Y \subset \malcev ^{\obar s}$ such that the composite surjects to $\qgr_\bullet (\A^{\otimes s})$, then $Y = \malcev^{\obar s}$. Suppose otherwise, so that $X:= \malcev^{\obar s} / Y \neq 0$. This has the property $X_t = 0$ for $t \leq s$ and there exists a minimal $\ell >s$ such that $X_\ell \neq 0$. Minimality implies that $\qgr_{\ell -1} X_\ell =0$, so that $X_\ell$ is homogeneous polynomial degree of $\ell$, so  that there is a non-zero map $X_\ell \rightarrow \A^{\otimes \ell}$. This gives a non zero-map $\malcev^{\obar s} \rightarrow  \qgr_\bullet (\A^{\otimes \ell})$, where $\ell > s$. 

We show that this is not possible, hence establishing a contradiction. One checks that it suffices to show that $\hom _{\f(\gr)} ((\malcev_\ell) ^{\otimes s}, \A^{\otimes \ell})$ is zero. Now $\malcev_\ell$ is a quotient of $\pbar$, hence it suffices to show that $\hom _{\f(\gr)} (\pbar ^{\otimes s}, \A^{\otimes \ell})$ is zero; this follows from Lemma \ref{lem:qgr_tensor}.

Finally, we show that that $\malcev ^{\obar s}$ corepresents the functor $G_\bullet \mapsto \cre_s G_s$.  
 As in the proof of Lemma \ref{lem:surj_compl_eval}, write $\overline{G}_\bullet$ for the quotient of $G_\bullet$ with  $\overline{G}_t$ equal to $G_t$ for $t \leq s$ and $G_s$ for $t \geq s$, with the obvious structure morphisms. Consider the associated short exact sequence: 
\[
0 
\rightarrow 
K 
\rightarrow 
G_\bullet 
\rightarrow 
\overline{G}_\bullet
\rightarrow 
0
\] 
in $\propoly$. By construction, $K_t =0$ for $t \leq s$.

 Arguing as above, one shows that $\hom_{\propoly} (\malcev^{\obar s}, \overline{G}_\bullet)$ is isomorphic to $\cre_s G_s$. Hence, to prove the result, it suffices to show that $\hom_{\propoly} (\malcev^{\obar s}, K)=0$. Suppose that such a non-zero  map exists and let $X$ be its image, a non-zero quotient of $\malcev^{\obar s}$ with $X_t =0$ for $t \leq s$. As above, this is not possible, establishing the contradiction.
\end{proof}

\begin{cor}
\label{cor:malcev_s_proj_generators}
The set $\{ \malcev^{\obar s} \ | \ s\in \nat\}$ is a set of projective generators for $\propoly$.
\end{cor}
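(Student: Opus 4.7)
The plan is to combine two results from the preceding subsections: Theorem \ref{thm:proj_cover_malcev_s}, which says that each $\malcev^{\obar s}$ is projective in $\propoly$ and corepresents the functor $G_\bullet \mapsto \cre_s G_s$, and Proposition \ref{prop:fbcr_conservative}, which says that the functor $\fbcr : \propoly \to \f (\fb)$ is conservative. Projectivity being already settled, the work consists solely in verifying that the family $\{ \malcev^{\obar s} \mid s \in \nat\}$ is \emph{generating}.

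First I would exhibit, for an arbitrary $G_\bullet \in \ob \propoly$, a canonical evaluation map. For each $s \in \nat$ and each $\sigma \in \cre_s G_s$, the corepresentation property gives a morphism $\phi_\sigma : \malcev^{\obar s} \to G_\bullet$, and assembling these yields
\[
\Psi \ : \ \bigoplus_{s \in \nat} \bigoplus_{\sigma \in \cre_s G_s} \malcev^{\obar s} \longrightarrow G_\bullet
\]
in $\propoly$ (using that $\propoly$ is cocomplete, by Proposition \ref{prop:abelian_propoly}). I would then argue that $\Psi$ is surjective. Denote its cokernel in $\propoly$ by $C$. Because each $\malcev^{\obar s}$ is projective, the functor $\hom_{\propoly}(\malcev^{\obar t}, -)$ is exact and sends $\Psi$ to the map $\bigoplus_{s} \bigoplus_{\sigma \in \cre_s G_s} \hom_{\propoly}(\malcev^{\obar t}, \malcev^{\obar s}) \to \cre_t G_t$ which, by the very construction of $\Psi$, has a section (sending $\sigma \in \cre_t G_t$ to the identity of the $(t,\sigma)$-component). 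Consequently $\hom_{\propoly}(\malcev^{\obar t}, C) = \cre_t C_t = 0$ for every $t \in \nat$, so $\fbcr(C) = 0$, and Proposition \ref{prop:fbcr_conservative} forces $C = 0$.

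There is no genuine obstacle: the argument is pure assembly of results already proved. The one point meriting a little attention is confirming that $\hom_{\propoly}(\malcev^{\obar t}, -)$ commutes with the direct sum defining the domain of $\Psi$ in the requisite way; this is handled not by a general compactness statement but simply by observing that $\Psi$ was defined componentwise so as to realize each $\sigma \in \cre_t G_t$ as the image of the identity on the corresponding summand.
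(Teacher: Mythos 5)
Your proof is correct and follows essentially the same route as the paper: build the same coproduct of copies of $\malcev^{\obar s}$ mapping to $G_\bullet$ (the paper writes it as $\bigoplus_s \cre_s G_s \otimes \malcev^{\obar s}$), and deduce surjectivity from the vanishing of all $\cre_t C_t$ for the cokernel $C$. The only cosmetic difference is that you invoke Proposition~\ref{prop:fbcr_conservative} as a black box, whereas the paper re-runs its minimal-$\ell$ argument inline.
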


\begin{proof}
Consider $G_\bullet \in \ob \propoly$. For each $s \in \nat$, using the corepresenting property of $\malcev ^{\obar s}$, one has the map
\[
\phi_s : \cre_s G_s \otimes \malcev ^{\obar s} \rightarrow G_\bullet
\]
that is adjoint to the identity on $\cre_s G_s$ (considered as a vector space). Here $\cre_s G_s \otimes \malcev ^{\obar s}$ is projective in $\propoly$.

Summing over $s$, the maps $\phi_s$ yield the map
$
 (\phi_s) : 
\bigoplus_{s \in \nat}
\big(
\cre_s G_s \otimes  \malcev ^{\obar s} 
\big)
\rightarrow 
G_\bullet
$. 
Again the domain of this map is projective in $\propoly$. Hence it suffices to show that the map is surjective. 

Suppose that $(\phi_s)$ is not surjective and write $C$ for its cokernel, so that there exists a minimal $\ell \in \nat$ such that $C_\ell \neq 0$. Minimality implies that $\cre_\ell C_\ell \neq 0$, so that the surjection $G_\ell \twoheadrightarrow C_\ell$ induces a non-zero surjection $\cre_\ell G_\ell \twoheadrightarrow \cre _\ell C_\ell$. This is in  contradiction with the construction of $\phi_\ell$. 
\end{proof}

\begin{rem}
For $d \in \nat$, $\{ (\malcev^{\obar s})_d \ |\ 0 \leq s \leq d \}$ is a set of projective generators of $\f_d (\gr)$. Heuristically, the set of projective generators $\{ \malcev^{\obar s} \ | \ s\in \nat\}$  is what one obtains `on passage to the limit' as $d \rightarrow \infty$.
\end{rem}

\subsection{Decomposing $\qgr_\bullet P_\zed$}

By Corollary \ref{cor:proj_gen_propoly}, $\qgr_\bullet P_\zed$ is projective in $\propoly$. The aim of this subsection is to give a decomposition of this functor using the projective generators $\malcev^{\otimes s}$, for $s \in \nat$. This 
 can be interpreted as a functorial form of (a weak version of) the Poincaré-Birkhoff-Witt theorem.

By construction, there is a monomorphism $\malcev \hookrightarrow \qgr_\bullet P_\zed$ in $\propoly$. 
 Moreover, $\qgr_\bullet P_\zed$ has an associative algebra structure in $\propoly$, so that one can form the morphisms $\mu_s$  (for each $s \in \nat$), where $\mu_s$ is given by the composite
 \[
 \malcev^{\obar s} \hookrightarrow (\qgr_\bullet P_\zed)^{\obar s} \rightarrow \qgr_\bullet P_\zed
 \]
 where the second map is the product. (For $s =0$, this is just the map induced by the canonical inclusion $\rat \hookrightarrow P_\zed$.)
 
As usual, these induce an increasing filtration of $\qgr_\bullet P_\zed$, $\filt_n (\qgr_\bullet P_\zed)$, $n \in \nat$, where $\filt_n (\qgr_\bullet P_\zed)$ is the image of 
\[
\bigoplus_{s=0}^n  \malcev^{\obar s} \rightarrow  \qgr_\bullet P_\zed
\]
given by the morphisms $\mu_0, \ldots, \mu_n$. (By convention one takes $\filt_{-1} (\qgr_\bullet P_\zed) =0$.)

\begin{lem}
\label{lem:filt_qgr_bullet_PZ}
The filtration $\filt_n (\qgr_\bullet P_\zed)$ is exhaustive, i.e., $\qgr_\bullet P_\zed \cong \lim_\rightarrow \filt_n (\qgr_\bullet P_\zed)$. Moreover, for each $n \in \nat$, there is an isomorphism in $\propoly$:
\[
\filt_n (\qgr_\bullet P_\zed)/\filt_{n-1} (\qgr_\bullet P_\zed)
\cong 
\triv_n \otimes_{\sym_n} \malcev^{\obar n},
\]
where $\triv_n$ is the trivial representation of $\sym_n$.
\end{lem}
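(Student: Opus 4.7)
Plan: By Proposition \ref{prop:U_malcev}, there is a canonical isomorphism $\qgr_\bullet P_\zed \cong U\malcev$ of cocommutative Hopf algebras in $\propoly$, and under this isomorphism the filtration $\filt_\bullet$ corresponds to the standard PBW filtration on the universal enveloping algebra. My plan is thus to establish a version of the Poincaré--Birkhoff--Witt theorem within the $\rat$-linear symmetric monoidal abelian category $(\propoly, \obar)$, splitting the work into three pieces: exhaustiveness, construction of the map, and injectivity.

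For exhaustiveness, $U\malcev$ is obtained from the tensor algebra $T(\malcev)$ by imposing the Lie relations, so there is a canonical surjection $T(\malcev) = \bigoplus_{s \in \nat} \malcev^{\obar s} \twoheadrightarrow U\malcev$ in $\propoly$. Since $\filt_n$ is by construction the image of $\bigoplus_{s \leq n} \malcev^{\obar s}$ under the iterated multiplication $\mu_s$, the union $\bigcup_n \filt_n$ coincides with the image of $T(\malcev)$, which is all of $U\malcev \cong \qgr_\bullet P_\zed$. Since $\propoly$ is cocomplete (Proposition \ref{prop:abelian_propoly}), this gives the required colimit identification.

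For the subquotient, I would first construct a natural morphism $\triv_n \otimes_{\sym_n} \malcev^{\obar n} \rightarrow \filt_n/\filt_{n-1}$. For any adjacent transposition $\sigma = (i, i+1) \in \sym_n$, the difference $\mu_n - \mu_n \circ \sigma : \malcev^{\obar n} \rightarrow \qgr_\bullet P_\zed$ factors as the application of the bracket $[-,-] : \malcev \obar \malcev \rightarrow \malcev$ in positions $i, i+1$ followed by the multiplication in $U\malcev$; this yields a morphism whose image lies in $\filt_{n-1}$. Hence $\mu_n$ becomes $\sym_n$-invariant modulo $\filt_{n-1}$, giving a well-defined surjection $\triv_n \otimes_{\sym_n} \malcev^{\obar n} \twoheadrightarrow \filt_n/\filt_{n-1}$.

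The hard part is injectivity. The plan is to apply the exact, symmetric monoidal, conservative functor $\fbcr : \propoly \rightarrow \f(\fb)$ provided by Propositions \ref{prop:properties_fbcr_propoly} and \ref{prop:fbcr_conservative}. Since $\fbcr$ is exact and monoidal, it preserves both the subobject $\filt_n$ and the tensor/coinvariants construction of the left-hand side, so it takes the candidate map to the analogous map built from the Lie algebra $\fbcr(\malcev)$ inside $\fbcr(\qgr_\bullet P_\zed)$. Concretely, passing to the associated graded (equivalently, applying $\fbcr$), $\malcev$ corresponds to $\liealg(\A)$ and $\qgr_\bullet P_\zed$ to $T(\A) \cong U\liealg(\A)$, with $\filt_\bullet$ becoming the classical PBW filtration. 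The claim then reduces to the classical PBW isomorphism in characteristic zero, $\triv_n \otimes_{\sym_n} \liealg(\A)^{\otimes n} \stackrel{\cong}{\rightarrow} \mathrm{gr}_n^{\mathrm{PBW}} U\liealg(\A)$, which is standard. Conservativity of $\fbcr$ then transports the isomorphism back to $\propoly$. The main obstacle I anticipate is verifying carefully that $\fbcr$ identifies the PBW subquotients correctly (rather than merely mapping between them); this should follow from exactness of $\fbcr$ on short exact sequences in $\propoly$ combined with its monoidality on the constituents $\malcev^{\obar n}$ and $\triv_n \otimes_{\sym_n} (-)$.
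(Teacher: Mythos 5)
Your proposal is correct and follows essentially the same approach as the paper: identify $\qgr_\bullet P_\zed$ with $U\malcev$ via Proposition \ref{prop:U_malcev}, identify $\filt_\bullet$ with the PBW filtration on the universal enveloping algebra, and invoke the classical Poincar\'e--Birkhoff--Witt theorem. The paper's proof is terse at that last step, simply citing the ``classical'' PBW theorem; your expansion --- using the exact, symmetric monoidal, conservative functor $\fbcr : \propoly \rightarrow \f(\fb)$ to transport the PBW isomorphism from the associated-graded setting (where $\malcev$ becomes $\liealg(\A)$ and $U\malcev$ becomes $T(\A)$) back to $\propoly$ --- is precisely the technique the paper itself uses in the proof of Proposition \ref{prop:U_malcev}, so it is the intended way to make the argument rigorous.
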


\begin{proof}
Proposition \ref{prop:U_malcev} gives that the inclusion $\malcev \hookrightarrow \qgr_\bullet P_\zed$ induces an isomorphism $
U \malcev \stackrel{\cong}{\rightarrow} \qgr_\bullet P_\zed
$ of Hopf algebras in $\propoly$.

The filtration $\filt_n (\qgr_\bullet P_\zed)$ corresponds under this isomorphism to the filtration of $U \malcev$  induced by the length filtration of the tensor algebra on $\malcev$. Using this, the result follows by using the classical Poincaré-Birkhoff-Witt theorem.
\end{proof}

\begin{rem}
\ 
\begin{enumerate}
\item 
The functor $\triv_n \otimes_{\sym_n} \malcev^{\obar n}$ can be considered as the $n$th symmetric power functor $S^n$  applied to $\malcev$ in $(\propoly, \obar, \rat)$, written $S^n (\malcev)$.
\item 
$S^n (\malcev)$  is a direct summand of $\malcev^{\obar n}$ (since we are working over $\rat$), hence is projective. More precisely, it is the projective cover of $\qgr_\bullet S^n (\A)$ ($S^n (\A)$ identifies with $\triv_n \otimes_{\sym_n} \A^{\otimes n}$). In particular, it is indecomposable.
\item 
$S^n (\malcev)$ has the property that $(S^n (\malcev))_\ell =0$ for $\ell <n$ and it is isomorphic to $S^n (\A)$ for $\ell=n$.
\end{enumerate}
\end{rem}

The canonical surjection $T^n \twoheadrightarrow S^n$  from the $n$th tensor power functor to the $n$th tensor power functor admits a unique section; this is given by $\prod_{i=1}^n x_i \mapsto \frac{1}{n!} \sum_{\sigma \in \sym_n}
 x_{\sigma(1)} \otimes \ldots \otimes x_{\sigma(n)}$. Hence, the projection $\malcev^{\obar n} 
 \twoheadrightarrow S^n (\malcev)$ has an induced section $ S^n (\malcev) \hookrightarrow \malcev^{\obar n}$ in $\propoly$.
 
 \begin{nota}
 For $n \in \nat$, write $\tilde{\mu}_n : S^n (\malcev) \rightarrow \qgr_\bullet P_\zed$ for the composite 
 \[
 S^n (\malcev) \hookrightarrow  \malcev^{\obar n} \stackrel{\mu_n}{\rightarrow}  \qgr_\bullet P_\zed.
 \]
 \end{nota}
 
 \begin{thm}
 \label{thm:pbw_decomposition}
 The morphisms $\tilde{\mu}_n$, $n \in \nat$, yield an isomorphism in $\propoly$
 \[
 \bigoplus_{n \in \nat} S^n (\malcev) 
 \stackrel{\cong}{\rightarrow} 
 \qgr_\bullet P_\zed.
 \]
This expresses $\qgr_\bullet P_\zed$ as a direct sum of indecomposable projectives in $\propoly$.
 \end{thm}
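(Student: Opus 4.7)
The plan is to use the filtration $\filt_n(\qgr_\bullet P_\zed)$ of Lemma \ref{lem:filt_qgr_bullet_PZ} and to show that the maps $\tilde\mu_n$ assemble into a compatible system of splittings of the short exact sequences
\[
0 \rightarrow \filt_{n-1}(\qgr_\bullet P_\zed) \rightarrow \filt_n(\qgr_\bullet P_\zed) \rightarrow S^n(\malcev) \rightarrow 0,
\]
so that on passage to the colimit over $n$ one obtains the desired direct sum decomposition.

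First I would observe that, by the very construction of the filtration via the morphisms $\mu_s$, each map $\tilde\mu_n$ factors through $\filt_n(\qgr_\bullet P_\zed)$, yielding a morphism $\bar\mu_n \colon S^n(\malcev) \rightarrow \filt_n(\qgr_\bullet P_\zed)$ in $\propoly$. The central point is then to verify that the composite
\[
S^n(\malcev) \stackrel{\bar\mu_n}{\longrightarrow} \filt_n(\qgr_\bullet P_\zed) \twoheadrightarrow \filt_n/\filt_{n-1} \stackrel{\cong}{\longrightarrow} S^n(\malcev)
\]
coincides with the identity, where the last isomorphism is that supplied by Lemma \ref{lem:filt_qgr_bullet_PZ}. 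Tracing through the proof of that lemma (which proceeds via the identification $U\malcev \cong \qgr_\bullet P_\zed$ of Proposition \ref{prop:U_malcev} and the classical Poincar\'e--Birkhoff--Witt theorem applied to the length filtration on $U\malcev$), the composite $\malcev^{\obar n} \stackrel{\mu_n}{\rightarrow} \filt_n \twoheadrightarrow \filt_n/\filt_{n-1} \cong S^n(\malcev)$ is precisely the canonical quotient $\malcev^{\obar n} \twoheadrightarrow \triv_n \otimes_{\sym_n} \malcev^{\obar n}$. Since $\tilde\mu_n = \mu_n \circ \iota$ with $\iota$ the symmetrization section of this canonical projection (well-defined because we work over $\rat$), the displayed composite is indeed the identity on $S^n(\malcev)$.

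With this in hand, $\bar\mu_n$ provides a splitting of the short exact sequence above, and an induction on $n$ shows that the morphisms $(\bar\mu_0, \ldots, \bar\mu_n)$ induce an isomorphism $\bigoplus_{k=0}^{n} S^k(\malcev) \stackrel{\cong}{\rightarrow} \filt_n(\qgr_\bullet P_\zed)$ in $\propoly$. Passing to the colimit over $n$ and using the exhaustiveness of the filtration given by Lemma \ref{lem:filt_qgr_bullet_PZ} then yields the required isomorphism $\bigoplus_{n \in \nat} S^n(\malcev) \stackrel{\cong}{\rightarrow} \qgr_\bullet P_\zed$, whose $n$th component is exactly $\tilde\mu_n$. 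The final assertion, that this exhibits $\qgr_\bullet P_\zed$ as a direct sum of indecomposable projectives, is already covered by the remarks preceding the statement. The main obstacle I anticipate is the second step: carefully unpacking the PBW-based identification of Lemma \ref{lem:filt_qgr_bullet_PZ} to confirm that the symmetrization really is a section of the projection onto the subquotient; once this is in place, the remaining argument is a standard splicing of split short exact sequences.
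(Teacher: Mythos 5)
Your proof is correct, and the overall skeleton coincides with the paper's: reduce to the filtration of Lemma \ref{lem:filt_qgr_bullet_PZ}, induct on filtration degree, and verify that the composite $S^n(\malcev) \rightarrow \filt_n/\filt_{n-1}$ is an isomorphism. The two proofs diverge precisely at this last verification. The paper invokes the fact that $S^N(\malcev)$ is the projective cover of $S^N(\A)$ (Theorem \ref{thm:proj_cover_malcev_s} plus the ensuing remark), so that an endomorphism of $S^N(\malcev)$ inducing the identity on $S^N(\A)$ is automatically an automorphism; it then checks a small commutative diagram that is easy to read off from the construction of $\tilde\mu_N$. You instead unpack the PBW-based identification $\filt_n/\filt_{n-1} \cong \triv_n \otimes_{\sym_n} \malcev^{\obar n}$ and observe that the composite $\malcev^{\obar n} \stackrel{\mu_n}{\rightarrow} \filt_n \twoheadrightarrow \filt_n/\filt_{n-1}$ is by construction the PBW map, hence equals the canonical quotient under that identification; precomposing with the symmetrization section then gives the identity on the nose. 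This works and in fact proves something slightly stronger (the composite is literally the identity, not merely an automorphism), but it requires being careful that the isomorphism supplied by Lemma \ref{lem:filt_qgr_bullet_PZ} is normalized exactly as PBW produces it; the paper's projective-cover reduction sidesteps this bookkeeping entirely by only testing the composite after projection to $S^N(\A)$. Both routes are valid; the paper's is a bit slicker, yours is more explicit about the PBW mechanics. Your final appeal to exhaustiveness and cocompleteness of $\propoly$ to pass to the colimit is also fine and matches what the paper does implicitly.
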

 
 \begin{proof}
 To establish the isomorphism, 
it suffices to prove that, for each $N \in \nat$, the map induced by $\tilde{\mu}_n$, for $n \in \{ 0, \ldots , N\}$
\[
\bigoplus_{n=0}^N S^n (\malcev) 
{\rightarrow} 
 \filt_N \qgr_\bullet P_\zed
\]
is an isomorphism. This is proved by induction upon $N$, the cases $N \in \{ 0, 1\}$ being clear. 

For the inductive step, by the five-lemma, it suffices to show that the composite map 
\[
 S^N (\malcev)
 \stackrel{\tilde{\mu}_N}{\rightarrow} 
 \filt_N \qgr_\bullet P_\zed
 \twoheadrightarrow 
 \filt_N \qgr_\bullet P_\zed/\filt_{N-1} \qgr_\bullet P_\zed
\]
is an isomorphism.

Since $S^N (\malcev)$ is the projective cover of $S^N (\A)$, it suffices to check behaviour after composing with the projection $S^N (\malcev) \twoheadrightarrow S^N (\A)$. 

By construction of $\tilde{\mu}_N$, one sees that the following diagram commutes:
\[
\xymatrix{
 S^N (\malcev)
 \ar[r]^{\tilde{\mu}_N}
 \ar@{->>}[d]
 &
  \filt_N \qgr_\bullet P_\zed
  \ar@{->>}[r]
  &
  \filt_N \qgr_\bullet P_\zed/\filt_{N-1} \qgr_\bullet P_\zed
  \ar[r]^(.6)\cong 
  &
  S^N (\malcev)
\ar@{->>}[d]
\\
S^N(\A) 
\ar[rrr]_=
&&&
S^N(\A),
}
\]
in which the  isomorphism in the top right is given by Lemma \ref{lem:filt_qgr_bullet_PZ}. This establishes the inductive step. 

Since  $S^n (\malcev)$ is an indecomposable projective in $\propoly$, the isomorphism exhibits a decomposition of $\qgr_\bullet P_\zed$ as a direct sum of indecomposable projectives. 
\end{proof}

\subsection{The dual Mal'cev functors}

Recall from Proposition \ref{prop:D_propoly_analytic_adjoint}
 that duality induces an adjunction: 
\[
D : \propoly \op \rightleftarrows \f_\omega (\gr\op) : D.
\]

Hence, one has the dual Mal'cev functor $D \malcev \in \ob \f_\omega (\gr\op)$, i.e. an analytic functor on $\gr\op$. 
Explicitly, this is constructed as the colimit
$
\lim_{\substack{\rightarrow \\ d}}
D \malcev_d$,  
where $D \malcev_d$ is a {\em finite} polynomial functor of degree $d$.

Likewise, for $s \in \nat$, one has the dual functor $D (\malcev^{\obar s})$.

\begin{lem}
\label{lem:dual_malcev_s}
For $s \in \nat$, there is a $\sym_s$-equivariant isomorphism:
\[
D (\malcev^{\obar s})
\cong 
(D \malcev)^{\otimes s}.
\]
\end{lem}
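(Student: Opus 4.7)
The strategy is to show that the contravariant $D : (\propoly,\obar)\op \to (\f_\omega(\gr\op),\otimes)$ is (symmetric) monoidal; the lemma then follows, and $\sym_s$-equivariance is immediate from naturality.

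First, I unwind both sides. By iterating Definition \ref{defn:obar} (associativity provided by Proposition \ref{prop:propoly_sym_mon}),
\[
(\malcev^{\obar s})_n \;\cong\; \qgr_n(\malcev_n^{\otimes s}),
\]
 and each $\malcev_n$ takes finite-dimensional values, being a subfunctor of $\qgr_n P_\zed$, which is finite by Proposition \ref{prop:qgr_dP_finite}. So by Lemma \ref{lem:propoly_analytic_duality}(1),
\[
D(\malcev^{\obar s}) \;\cong\; \lim_{\substack{\rightarrow\\ n}}\, D\qgr_n(\malcev_n^{\otimes s}).
\]
On the other hand, $D\malcev \cong \lim_{\rightarrow n} D\malcev_n$ by the same lemma; and since the pointwise tensor product in $\f(\gr\op)$ commutes with filtered colimits in each variable and the diagonal $\nat \hookrightarrow \nat^s$ is cofinal,
\[
(D\malcev)^{\otimes s} \;\cong\; \lim_{\substack{\rightarrow\\ n}}\, (D\malcev_n)^{\otimes s}.
\]

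Second, the key identification: for $F \in \f_d(\gr)$ with finite-dimensional values, there is a natural iso $D\qgr_n F \cong \pgrop_n DF$ in $\f_n(\gr\op)$. This is a Yoneda argument: for $G \in \f_n(\gr\op)$, using the adjointnesses defining $\qgr_n$, $\pgrop_n$, together with Lemma \ref{lem:duality_polynomial},
\[
\hom(G, D\qgr_n F) \cong \hom(\qgr_n F, DG) \cong \hom(F,DG) \cong \hom(G, DF) \cong \hom(G, \pgrop_n DF).
\]
Since duality is (contravariant) monoidal on finite-dimensional polynomial functors, $D(\malcev_n^{\otimes s}) \cong (D\malcev_n)^{\otimes s}$, and thus
\[
D\qgr_n(\malcev_n^{\otimes s}) \;\cong\; \pgrop_n\bigl((D\malcev_n)^{\otimes s}\bigr).
\]

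Third, I show $\lim_{\rightarrow n} \pgrop_n\bigl((D\malcev_n)^{\otimes s}\bigr) \;\cong\; \lim_{\rightarrow n}(D\malcev_n)^{\otimes s}$. Writing $E_n := (D\malcev_n)^{\otimes s}$, the monomorphisms $\pgrop_n E_n \hookrightarrow E_n$ give a mono of colimits; for surjectivity, $E_m$ has polynomial degree at most $sm$, so for every $n \geq sm$ the structure map $E_m \hookrightarrow E_n$ factors through $\pgrop_n E_n$ by maximality of $\pgrop_n$. Hence every $E_m$ maps into the colimit of the subsystem, giving the required iso and chaining all four identifications yields $D(\malcev^{\obar s}) \cong (D\malcev)^{\otimes s}$. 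Each of the isos above is natural in the tensor factors, so the place-permutation $\sym_s$-action on $\malcev^{\obar s}$ transports precisely to the place-permutation action on $(D\malcev)^{\otimes s}$.

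The main obstacle will be the cofinality/degree bookkeeping in the third step, and checking that the isos $D\qgr_n F \cong \pgrop_n DF$ assemble into a map of direct systems (compatibility with the structure maps of $\malcev^{\obar s}$ and the inclusions $D\malcev_m \hookrightarrow D\malcev_n$). Once this is cleanly in place, the $\sym_s$-equivariance is automatic from naturality.
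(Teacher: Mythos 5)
The paper's own proof of this lemma is a single sentence ("This is a straightforward verification using the definition of $\obar$ on $\propoly$"), so there is no detailed argument to compare against. Your proof is exactly the spelled-out version of that verification: you unwind $D$ and $\obar$, reduce to the levelwise statement, and pass to the colimit. The individual steps are all sound. In particular, the reduction $(\malcev^{\obar s})_n \cong \qgr_n(\malcev_n^{\otimes s})$ is justified because $\ker\big(\malcev_n^{\otimes j} \twoheadrightarrow \qgr_n(\malcev_n^{\otimes j})\big)$ has all composition factors of degree $>n$, so tensoring with the reduced polynomial functor $\malcev_n$ keeps the composition factors in degree $>n$ and $\qgr_n$ kills them (the same mechanism as Remark \ref{rem:obar_well_defined}); the Yoneda identification $D\qgr_n F \cong \pgrop_n DF$ is correct (and, as a small remark, requires no finite-dimensionality hypothesis — it uses only the duality adjunction, Lemma \ref{lem:duality_polynomial}, and the universal properties of $\qgr_n$ and $\pgrop_n$; finite-dimensionality is used only for $D(\malcev_n^{\otimes s}) \cong (D\malcev_n)^{\otimes s}$); and the cofinality/degree bookkeeping showing $\lim_n \pgrop_n E_n \cong \lim_n E_n$ with $E_n = (D\malcev_n)^{\otimes s}$ is correct since $E_m$ has degree $\leq sm$. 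The one point you rightly flag — compatibility of the isomorphisms $D\qgr_n(\malcev_n^{\otimes s}) \cong \pgrop_n D(\malcev_n^{\otimes s})$ with the structure maps of the towers — does hold: both sides are given by natural Yoneda arguments, and the structure maps on both sides are dual to the same surjections $\malcev_n^{\otimes s} \twoheadrightarrow \qgr_m(\malcev_m^{\otimes s})$, so the squares commute. So this is a correct proof by the same route the paper intends, with the details filled in.
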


\begin{proof}
This is a straightforward verification using the definition of $\obar$ on $\propoly$.
\end{proof}

\begin{thm}
\label{thm:Dmalcev_injectives_analytic}
For $s \in \nat$, $D (\malcev^{\obar s})$ is injective in $\f_\omega (\gr\op)$. It is the injective envelope of 
the functor $(\A^\sharp)^{\otimes s}$.
\end{thm}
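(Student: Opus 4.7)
\emph{Plan.}  The strategy is to dualise Theorem \ref{thm:proj_cover_malcev_s} using the duality adjunction $D : \propoly\op \rightleftarrows \f_\omega(\gr\op) : D$ of Proposition \ref{prop:D_propoly_analytic_adjoint}. For injectivity, the adjunction supplies
\[
\hom_{\f_\omega(\gr\op)}(-, D\malcev^{\obar s}) \cong \hom_{\propoly}(\malcev^{\obar s}, D(-));
\]
as a composite of the exact contravariant functor $D$ from Lemma \ref{lem:propoly_analytic_duality} with $\hom_{\propoly}(\malcev^{\obar s}, -)$, which is exact by the projectivity of $\malcev^{\obar s}$ given in Theorem \ref{thm:proj_cover_malcev_s}, this is exact, and hence $D\malcev^{\obar s}$ is injective in $\f_\omega(\gr\op)$.

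To identify the embedding, I would apply $D$ to the canonical surjection $\rho^{\obar s} : \malcev^{\obar s} \twoheadrightarrow \qgr_\bullet(\A^{\otimes s})$ constructed in the proof of Theorem \ref{thm:proj_cover_malcev_s}. Since $\A^{\otimes s}$ has polynomial degree $s$, the filtered colimit defining $D(\qgr_\bullet \A^{\otimes s})$ stabilises at $D \A^{\otimes s} \cong (\A^\sharp)^{\otimes s}$, producing the required inclusion in $\f_\omega(\gr\op)$. Using the facts extracted from the same proof that $(\malcev^{\obar s})_\ell = 0$ for $\ell < s$ and $(\malcev^{\obar s})_s \cong \A^{\otimes s}$, the polynomial filtration of the analytic functor $D \malcev^{\obar s}$ satisfies $\pgrop_\ell D\malcev^{\obar s} = D(\malcev^{\obar s})_\ell$, which vanishes for $\ell < s$ and equals $(\A^\sharp)^{\otimes s}$ at $\ell = s$; so the embedding of interest coincides with $\pgrop_s D\malcev^{\obar s} \hookrightarrow D\malcev^{\obar s}$.

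Essentiality is the substantive step. Let $W \subset D\malcev^{\obar s}$ be a nonzero subobject; since $W$ is analytic, there is a minimal $\ell_0 \in \nat$ with $\pgrop_{\ell_0} W \neq 0$, which necessarily satisfies $\ell_0 \geq s$ by the preceding identification. If $\ell_0 = s$, then $0 \neq \pgrop_s W \subseteq W \cap \pgrop_s D\malcev^{\obar s} = W \cap (\A^\sharp)^{\otimes s}$, as required. To exclude $\ell_0 > s$, the minimality of $\ell_0$ gives $\pgrop_{\ell_0-1} W = 0$, so $\pgrop_{\ell_0} W$ is homogeneous polynomial of degree $\ell_0$; correspondingly, $D \pgrop_{\ell_0} W \in \f(\gr)$ is homogeneous polynomial of degree $\ell_0$, and its image in $\propoly$ under Lemma \ref{lem:propoly_analytic_duality}(2) has $s$-th component $D(\pgrop_s \pgrop_{\ell_0} W) = 0$ since $s < \ell_0$. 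The adjunction then yields
\[
\hom_{\f_\omega(\gr\op)}(\pgrop_{\ell_0} W, D\malcev^{\obar s}) \cong \hom_{\propoly}(\malcev^{\obar s}, D \pgrop_{\ell_0} W),
\]
and Theorem \ref{thm:proj_cover_malcev_s}(3) evaluates the right-hand side as $\cre_s (D\pgrop_{\ell_0} W)_s = 0$, whereas the inclusion $\pgrop_{\ell_0} W \hookrightarrow D\malcev^{\obar s}$ provides a nonzero element of the left-hand side—a contradiction. The principal obstacle will be this last step: tracing the dualities and polynomial-filtration identifications carefully so that the homogeneity of $\pgrop_{\ell_0} W$ translates cleanly into the vanishing required for Theorem \ref{thm:proj_cover_malcev_s}(3) to force the contradiction.
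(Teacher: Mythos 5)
Your proposal is correct and follows the same strategy as the paper: injectivity via the duality adjunction of Proposition \ref{prop:D_propoly_analytic_adjoint} together with exactness of $D$ and projectivity of $\malcev^{\obar s}$, and the embedding obtained by dualizing the surjection $\malcev^{\obar s} \twoheadrightarrow \qgr_\bullet(\A^{\otimes s})$. The paper leaves the essentiality argument entirely to the reader (``Once again, this follows from the duality adjunction together with the fact that $\malcev^{\obar s}$ is the projective cover\ldots. Details are left to the reader.''); your minimal-$\ell_0$ argument, reducing via the adjunction and the corepresenting property $\hom_{\propoly}(\malcev^{\obar s}, -) \cong \cre_s(-)_s$ of Theorem \ref{thm:proj_cover_malcev_s}(3), is a clean and correct way of supplying those details, and has the small advantage of not needing any abstract claim that $D$ carries projective covers to injective envelopes (which would require care, since the duality of Proposition \ref{prop:D_propoly_analytic_adjoint} is an adjunction rather than an equivalence).
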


\begin{proof}
Since $\malcev^{\obar s}$ is projective in $\propoly$, the injectivity follows from the duality adjunction of Proposition \ref{prop:D_propoly_analytic_adjoint} together with the fact that $D : \f_\omega (\gr)\rightarrow \propoly$ is exact, by Lemma \ref{lem:propoly_analytic_duality}.

Moreover, the surjection $\malcev^{\obar s} \twoheadrightarrow \qgr_\bullet (\A^{\otimes s})$ induces the monomorphism
\[
(\A^\sharp)^{\otimes s} \hookrightarrow D(\malcev^{\obar s}).
\]
It remains to show that this exhibits $D (\malcev^{\obar s})$ as the injective envelope of $(\A^\sharp)^{\otimes s} $. Once again, this follows from the duality adjunction together with the fact that $\malcev^{\obar s}$ is the projective cover of $\qgr_\bullet (\A^{\otimes s})$. Details are left to the reader. 
\end{proof}

From this one deduces:

\begin{cor}
\label{cor:analytic_inj_cogen}
The set $\{ D (\malcev^{\obar s}) \ | \ s \in \nat \}$ is a set of injective cogenerators of $\f_\omega (\gr\op)$.
\end{cor}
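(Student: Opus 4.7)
The plan is to build on the work already done: injectivity of each $D(\malcev^{\obar s})$ is granted by Theorem \ref{thm:Dmalcev_injectives_analytic}, so the only remaining task is to verify the cogeneration property. That is, given any nonzero $F \in \ob\f_\omega(\gr\op)$, I must produce some $s \in \nat$ and a nonzero morphism $F \rightarrow D(\malcev^{\obar s})$.

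The key step is to transport the question across the duality adjunction of Proposition \ref{prop:D_propoly_analytic_adjoint}:
\[
\hom_{\f_\omega(\gr\op)}(F, D(\malcev^{\obar s})) \cong \hom_{\propoly}(\malcev^{\obar s}, DF).
\]
By the corepresenting property established in Theorem \ref{thm:proj_cover_malcev_s}(3), the right-hand side is naturally isomorphic to $\cre_s (DF)_s$. Hence it suffices to exhibit some $s$ for which $\cre_s (DF)_s \neq 0$, i.e., to show that $\fbcr(DF) \neq 0$ in $\f(\fb)$.

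To produce this, I would first check that $DF \neq 0$ in $\propoly$. Since $F$ is analytic, $F \neq 0$ forces $\pgrop_n F \neq 0$ for some $n \in \nat$; by definition of $D : \f_\omega(\gr\op)\op \rightarrow \propoly$ in Lemma \ref{lem:propoly_analytic_duality}, $(DF)_n = D(\pgrop_n F)$, and pointwise vector space duality over $\rat$ sends nonzero functors to nonzero functors, so $(DF)_n \neq 0$ and consequently $DF \neq 0$ in $\propoly$. Proposition \ref{prop:fbcr_conservative} (the conservativity of $\fbcr$) then guarantees that $\fbcr(DF) \neq 0$, so there exists $s$ with $\cre_s (DF)_s \neq 0$, which completes the argument.

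There is no serious obstacle here: the corollary is essentially a dualization of Corollary \ref{cor:malcev_s_proj_generators}, and the only mild point requiring care is verifying that the duality $D : \f_\omega(\gr\op)\op \rightarrow \propoly$ is faithful on nonzero objects, which reduces to the elementary fact that pointwise $\rat$-linear duality is faithful.
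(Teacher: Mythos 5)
Your proposal is correct. The paper gives no explicit proof of this corollary (it says only ``From this one deduces''), but your argument is exactly the right way to fill the gap: injectivity is Theorem \ref{thm:Dmalcev_injectives_analytic}, and cogeneration is obtained by passing through the adjunction of Proposition \ref{prop:D_propoly_analytic_adjoint} to reduce to the corepresenting property of $\malcev^{\obar s}$ (Theorem \ref{thm:proj_cover_malcev_s}) together with the conservativity of $\fbcr$ (Proposition \ref{prop:fbcr_conservative}). One small remark that strengthens the exposition: the condition you verify (every nonzero $F$ admits a nonzero map to some $D(\malcev^{\obar s})$) is a priori weaker than the full cogeneration condition (that the canonical map from $F$ into a product of copies of the $D(\malcev^{\obar s})$ is a monomorphism), but the two are equivalent here precisely because the $D(\malcev^{\obar s})$ are already known to be injective — it is worth saying this explicitly so the reader sees why injectivity is doing work beyond merely being part of the conclusion.
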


\subsection{Understanding the structure of $\malcev$}

As in Section \ref{subsect:describe_qgr_d_PZ}, one can describe  the structure of the functors $\malcev_d$, for $d \in \nat$, and hence of $\malcev$. This uses a similar approach to that in Proposition \ref{prop:describe_qgr_Pzed}, recalling that there is a canonical inclusion
$
\malcev_d \subset \qgr_d P_\zed$, 
by construction. 

Here, rather than working with the truncated tensor algebra $T^{\leq d}(-)$, one works with the truncated Lie algebra $\liealg_{\leq d}( -)$; this embeds naturally $\liealg_{\leq d} (-) \hookrightarrow T^{\leq d} (-)$ as a functor to Lie algebras.

\begin{nota}
Write $\mathsf{bch}_d(x_1, x_2)$ for the truncated Baker-Campbell-Hausdorff element of $\liealg_{\leq d} (\rat \{ x_1, x_2 \})$ corresponding to the truncation of $\log (\exp (x_1)\exp(x_2))$.  
\end{nota}

In the following statement, $p$, $\chi$ and $\nabla$ are as in Section \ref{subsect:describe_qgr_d_PZ}. 

\begin{prop}
\label{prop:determine_malcev_d}
For $1<d \in \nat$, $\malcev_d$ is isomorphic to the functor with values in Lie algebras 
\[
\mathrm{Free}(S) \mapsto \liealg_{\leq d} (\rat S),
\]
where $\rat S$ is the $\rat$-vector space generated by the finite set $S$, such that:
\begin{enumerate}
\item
morphisms in the image of $\mathrm{free} : \finset \rightarrow \gr$ act via $\rat( -) \ : \ \finset \rightarrow \mathrm{mod}_\rat$ and the naturality of $\liealg_{\leq d} (-)$; 
\item 
$p : \zed = \mathrm{Free} (x) \rightarrow \{ e \} = \mathrm{Free} (\emptyset)$ sends $x \in  \liealg_{\leq d}(\rat\{x\})$ to zero;
\item 
$\chi : \zed = \mathrm{Free} (x) \rightarrow \zed = \mathrm{Free}(x)$ sends the generator $x \in \liealg_{\leq d} (\rat\{x\})$ to 
$-x$; 
\item 
$\nabla : \zed = \mathrm{Free} (x) \rightarrow \zed \star \zed = \mathrm{Free} (x_1, x_2)$ acts via $x \mapsto \mathsf{bch}_d (x_1, x_2)\in \liealg_{\leq d} (\rat \{ x_1, x_2\})$. 
\end{enumerate}
\end{prop}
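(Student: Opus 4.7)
The plan is to combine the explicit description of $\qgr_d P_\zed$ from Proposition \ref{prop:describe_qgr_Pzed} with the interpretation of $\malcev_d \subset \qgr_d P_\zed$ as the polynomial-degree-$d$ component of the primitives of the Hopf algebra $\qgr_\bullet P_\zed$ furnished by Proposition \ref{prop:P_zed_propoly}. First, I would identify $\malcev_d(\mathrm{Free}(S))$ as a Lie subalgebra of $T^{\leq d}(\rat S)$, where the coproduct on the latter is transported from $\qgr_d P_\zed$ via the isomorphism of Proposition \ref{prop:describe_qgr_Pzed} that sends $s \in S$ to the class of $[s]-[e]$. Since $[s]$ is grouplike, the element $s \in T^{\leq d}(\rat S)$ is not primitive for this coproduct, but the truncated logarithm
\[
\log_d([s]) \ := \ \sum_{k=1}^{d} \frac{(-1)^{k-1}}{k} ([s]-[e])^{k} \ \in \ T^{\leq d}(\rat S)
\]
is. I would then argue that these elements, for $s \in S$, generate $\malcev_d(\mathrm{Free}(S))$ as a Lie subalgebra of $T^{\leq d}(\rat S)$, canonically isomorphic to $\liealg_{\leq d}(\rat S)$ with the generator $s \in \rat S$ corresponding to $\log_d([s])$.

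To establish this identification, I would pass to completion: by Proposition \ref{prop:malcev_first_properties}, $\compl\malcev(\mathrm{Free}(S)) \cong \prim \widehat{\rat G}$ for $G = \mathrm{Free}(S)$, and by Example \ref{exam:complete_free_group} this is isomorphic to the completed free Lie algebra $\widehat{\liealg}(\rat S)$ sitting inside $\widehat{\rat G} \cong \widehat{T}(\rat S)$ as its subspace of primitives. Taking the $d$-th term of the polynomial filtration recovers $\liealg_{\leq d}(\rat S) \subset T^{\leq d}(\rat S)$, and under the completion isomorphism the Mal'cev generator corresponding to $s \in S$ is $\log([s])$, whose truncation is $\log_d([s])$. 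Naturality with respect to $\finset$ is then automatic from the functoriality of the constructions: a morphism of finite sets induces the obvious linear map on $\rat S$, and the identifications above are compatible.

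It then remains to compute the action of $p$, $\chi$, and $\nabla$ on the generator $x \in \liealg_{\leq d}(\rat\{x\})$, which corresponds to $\log_d([x])$. The morphism $p$ sends $[x]$ to $[e]$, hence $\log_d([x])$ to $0$. For $\chi$, using that $\chi([x]) = [x^{-1}] = [x]^{-1}$ inside the grouplike elements of $\qgr_d P_\zed$, one has $\log_d(\chi([x])) = -\log_d([x])$, giving the action $x \mapsto -x$ on the generator. Finally, $\nabla$ sends $[x]$ to $[x_1][x_2]$, so $\log_d(\nabla [x]) = \log_d([x_1][x_2]) = \mathsf{bch}_d(\log_d([x_1]), \log_d([x_2]))$ by the very definition of $\mathsf{bch}_d$, giving $x \mapsto \mathsf{bch}_d(x_1, x_2)$ on generators. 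The main obstacle will be the first step, namely making precise the change of coordinates between the presentation $s \leftrightarrow [s]-[e]$ of Proposition \ref{prop:describe_qgr_Pzed} and the embedding $\liealg_{\leq d}(\rat S) \hookrightarrow T^{\leq d}(\rat S)$ in which the generator $s$ of the free Lie algebra corresponds to $\log_d([s])$; the completion argument provides a clean route using results already established in Section \ref{sect:group-ring}.
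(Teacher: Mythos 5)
Your proposal follows essentially the same route as the paper: build the map $\liealg_{\leq d}(\rat S) \to \malcev_d(\mathrm{Free}(S))$ via the universal property by sending a generator $s$ to the (truncated) logarithm $\log_d([s])$, verify it is an isomorphism using the completion picture from Section \ref{sect:group-ring}, and then read off the actions of $p$, $\chi$, and $\nabla$ from $\log(g^{-1})=-\log(g)$ and the defining property of the Baker--Campbell--Hausdorff series. The paper compresses the isomorphism verification to ``easily seen'' while you supply the completion argument explicitly, but the underlying ideas and the order of the steps are identical.
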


\begin{proof}
Using the universal property of $\liealg_{\leq d} (-)$, there is a natural transformation of functors from $\finset$ to Lie algebras defined on a finite set $S$
\[
\liealg_{\leq d} (\rat S) \rightarrow \malcev_d (S) 
\]
by sending a generator $y \in S$ to the image of $\log(y)$ in $\malcev_d (S)$. This is easily seen to be an isomorphism.  

As in the proof of Proposition \ref{prop:describe_qgr_Pzed}, it remains to check that this is a natural transformation of functors on $\gr$, using the specified structure on the domain. 

The behaviour of $\chi$ follows from the identity (for any $g \in G$), $\log (g^{-1}) = - \log (g)$. The behaviour of $\nabla$ is a consequence of the {\em definition} of the Baker-Campbell-Hausdorff series. Here it is essential that the natural transformation above has been constructed as a natural transformation of Lie algebras. 
\end{proof}

\section{Operadic structures}
\label{sect:opd}

This section reviews the relationship between operads in $\kring$-modules and $\kring$-linear categories. Here, $\kring$ can  usually be taken to be an arbitrary commutative, unital ring; for all applications it will be $\rat$ and the reader may prefer to assume this throughout. 

The most important example here is the operad $\opd = \lie$, which encodes Lie algebras in $\kmod$. Again, the reader may prefer to focus upon this case.

This material is well-known, and is mostly treated  in  \cite{MR1854112}, for example. (Most of this background material is also covered in \cite{2021arXiv211001934P}; it is included here so as to make the exposition self-contained.)

\subsection{The category associated to an operad}

The category $\f (\fb\op)$ (aka. $\kring \fb\op$-modules) underlies the category of $\kring$-linear operads (i.e., operads in the symmetric monoidal category $\kmod$): an operad $\opd$ has underlying $\kring\fb\op$-module  given by the family (for $n\in \nat$) of $\sym_n\op$-modules $\opd (n)$ (or $\opd (\mathbf{n})$ in the notation for $\kring\fb\op$-modules), the term of {\em arity} $n$.

The  identity operad $I$ has $I(n)$ that is zero, except for $n=1$, when $I(1) = \kring$. Each operad $I$ is equipped with the canonical unit morphism $I \rightarrow \opd$.

\begin{nota}
\label{nota:cat_opd}
For $\opd$ an operad, the associated $\kring$-linear category is denoted by $\cat \opd$, so that $\opd (n)= \cat \opd (n ,1)$. This has set of objects $\nat$. The morphisms are given explicitly by:
\[
\cat \opd (s,t) = \bigoplus_{f : \mathbf{s} \rightarrow \mathbf{t}} \bigotimes_{i=1}^t \opd (f^{-1} (i)),
\]
where the sum is over set maps $f : \mathbf{s} \rightarrow \mathbf{t}$.  Composition is defined using the composition of the operad.  
\end{nota}

\begin{rem}
There is more structure: $\cat \opd$ is naturally a PROP, i.e., it is symmetric monoidal with the structure corresponding to $(\nat , +)$ on objects.
\end{rem}

The construction $\opd \mapsto \cat \opd$ is functorial: a morphism of operads $\phi : \opd \rightarrow \ppd$ induces a $\kring$-linear functor $\cat \phi : \cat \opd \rightarrow \cat \ppd$.

\begin{exam}
\label{exam:catI}
For $I$ the identity operad,  $\cat I$ is equivalent to  $\kring \fb$, the $\kring$-linearization of $\fb$. Explicitly, $\cat I (m,n)$ is zero unless $m=n$, when $\cat I (n,n) \cong \kring \sym_n$ as associative $\kring$-algebras.

For any operad $\opd$, the unit $I \rightarrow \opd$ induces $\cat I \rightarrow \cat \opd$. In particular, for $n\in \nat$, this induces a morphism of $\kring$-algebras  
$
\kring \sym_n \rightarrow \cat \opd (n,n).
$ 
\end{exam}

In the cases of interest here, the following applies:

\begin{lem}
\label{lem:cat_opd_connexe}
Suppose that $\opd (0)=0$ and $\opd (1)=\kring$, corresponding to the identity of the operad.  Then, for $m, n \in \nat$:
\begin{enumerate}
\item 
$\cat \opd (n,n) \cong \kring \sym_n$ as associative algebras; 
\item 
$\cat \opd (m,n) =0$ if $n>m$.
\end{enumerate}

In particular, there is a natural $\nat$-grading of the morphism modules of $\cat \opd$, where $\cat \opd (m,n)$ is placed in degree $m-n$; this is compatible with composition in the obvious sense.
\end{lem}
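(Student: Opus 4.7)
The plan is to exploit the explicit formula for $\cat\opd(s,t)$ together with the two hypotheses $\opd(0)=0$ and $\opd(1)=\kring$, reducing everything to a fiber-counting argument.

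First I would address statement (2). For any set map $f: \mathbf{m}\rightarrow\mathbf{n}$ with $n>m$, the pigeonhole principle guarantees that some fiber $f^{-1}(i)$ is empty. The corresponding summand $\bigotimes_{i=1}^n \opd(f^{-1}(i))$ then contains $\opd(0)=0$ as a tensor factor and vanishes. Since every summand in the defining direct sum vanishes, $\cat\opd(m,n)=0$.

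For statement (1), the same fiber analysis applied to $f:\mathbf{n}\rightarrow\mathbf{n}$ shows that the summand $\bigotimes_{i=1}^n \opd(f^{-1}(i))$ vanishes unless every fiber is non-empty; since $\sum_i |f^{-1}(i)| = n$, this forces $|f^{-1}(i)|=1$ for every $i$, i.e.\ $f$ is a bijection. Each such summand then equals $\opd(1)^{\otimes n} \cong \kring$, so as a $\kring$-module $\cat\opd(n,n)$ is free on the set of bijections $\mathbf{n}\rightarrow\mathbf{n}$, giving an identification with the underlying module of $\kring\sym_n$. The algebra structure is then handled by naturality: the unit $I\rightarrow\opd$ induces $\cat I\rightarrow\cat\opd$ (see Example \ref{exam:catI}), and the above description shows that in the $(n,n)$ component this map is the identity of $\kring\sym_n$ onto $\cat\opd(n,n)$. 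Since $\cat I(n,n)\cong\kring\sym_n$ as associative algebras and $\cat\phi$ is a $\kring$-linear functor for any operad morphism $\phi$, the induced map is an algebra map, hence an algebra isomorphism.

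Finally, the grading is immediate from (2): placing $\cat\opd(m,n)$ in degree $m-n$ makes sense because only the entries with $m\geq n$ are non-zero, and a composable pair of morphisms in $\cat\opd(m,n)\otimes\cat\opd(n,p)$ lands in $\cat\opd(m,p)$, whose degree $m-p=(m-n)+(n-p)$ is the sum of the degrees of the factors. The main point requiring care is the verification of the algebra isomorphism in (1); but since this reduces to a naturality statement about the functor $\cat(-)$ applied to the unit $I\rightarrow\opd$, the obstacle is conceptual rather than computational.
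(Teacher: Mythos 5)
Your proposal is correct and is the natural argument; the paper states the lemma without proof, and the argument you give is precisely the one intended. The fiber-counting for (1) and (2), the reduction of the algebra statement to functoriality of $\cat(-)$ applied to the unit $I \rightarrow \opd$ (using Example \ref{exam:catI}), and the derivation of the grading from (2) are all sound.
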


For any operad $\opd$, one has the categories of left (respectively right) modules over $\cat \opd$:
\begin{enumerate}
\item 
a left $\cat \opd$-module is a $\kring$-linear functor $\cat \opd \rightarrow \kmod$; 
\item 
a right $\cat \opd$-module is a $\kring$-linear functor $(\cat \opd)\op \rightarrow \kmod$; 
\end{enumerate}
morphisms are $\kring$-linear natural transformations.

\begin{nota}
\label{nota:catopd_modules}
For an operad $\opd$,
\begin{enumerate}
\item 
 the category of right (respectively left) $\cat \opd$-modules is denoted $\rmod[\cat\opd]$ (resp. $\lmod [\cat\opd]$).
\item 
$\rmod[\cat\opd]^{< \infty} \subset \rmod[\cat \opd]$ (respectively $\lmod[\cat\opd]^{< \infty} \subset \lmod[\cat \opd]$) denote the full subcategory of modules such that the underlying $\kring\fb\op$-module (resp. $\kring\fb$-module) has finite support ($M$ has finite support if $M(n)=0$ for $n\gg 0$).
\end{enumerate}
\end{nota}

\begin{rem}
\label{rem:alt_right_mod}
Using restriction along $\cat I \rightarrow \cat \opd$ (see Example \ref{exam:catI}), a right $\cat \opd$-module has an underlying right $\kring \fb$-module structure. This allows the following alternative formulation of the definition of a right $\cat \opd$-module 
(and analogously for left $\cat \opd$-modules).

A right $\cat \opd$-module is a sequence $M(n)$ of right $\kring\sym_n$-modules, for $n \in \nat$ (i.e., a $\kring\fb \op$-module), equipped with structure morphisms:
\begin{eqnarray}
\label{eqn:M_right_cat_opd}
M (n) \otimes_{\sym_n} \cat \opd (m,n) \rightarrow M(m)
\end{eqnarray}
that are unital and associative in the obvious sense; in particular the morphism  is $\sym_m\op$-equivariant. Natural transformations are identified similarly. 
\end{rem}

Clearly one has:

\begin{prop}
\label{prop:abelian_lrmod}
The categories $\lmod[\cat\opd]$ and $\rmod [\cat \opd]$ are abelian $\kring$-linear categories.
\end{prop}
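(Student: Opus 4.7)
The statement is a standard fact about functor categories with values in an abelian category, so the plan is simply to reduce it to that general principle. The plan is to observe that $\lmod[\cat\opd]$ (respectively $\rmod[\cat\opd]$) is, by definition, the category of $\kring$-linear functors $\cat\opd \to \kmod$ (resp.\ $(\cat\opd)\op \to \kmod$) with $\kring$-linear natural transformations as morphisms, and $\cat\opd$ is essentially small (its set of objects is $\nat$).

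First I would recall that $\kmod$ itself is a $\kring$-linear abelian category, equipped pointwise with biproducts, kernels and cokernels. Then I would invoke the general fact that, for any essentially small $\kring$-linear category $\calc$ and any $\kring$-linear abelian category $\mathcal{E}$, the category $\f_\kring(\calc; \mathcal{E})$ of $\kring$-linear functors $\calc \to \mathcal{E}$ is itself a $\kring$-linear abelian category, with all the relevant structure (zero object, direct sums, kernels, cokernels, images) constructed objectwise from that of $\mathcal{E}$, and with natural transformations forming $\kring$-modules by pointwise addition. The verification that these objectwise constructions are in fact functorial (i.e.\ that the morphisms of $\cat\opd$ act on kernels, cokernels and biproducts) is immediate from the $\kring$-linearity of the action, and the verification of the abelian category axioms reduces to the corresponding axioms in $\kmod$ evaluated at each object of $\cat\opd$.

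Applying this to $\calc = \cat\opd$ yields that $\lmod[\cat\opd]$ is a $\kring$-linear abelian category, and applying it to $\calc = (\cat\opd)\op$ yields the same for $\rmod[\cat\opd]$. No step is really an obstacle here; the only mild point worth noting is that one must check that the pointwise $\kring$-module structure on $\hom$-sets between functors is compatible with composition, which follows from the fact that the morphisms in $\cat\opd$ act $\kring$-linearly. In the alternative formulation of Remark \ref{rem:alt_right_mod}, exactness of the structure maps (\ref{eqn:M_right_cat_opd}) in $M$ is automatic since $- \otimes_{\sym_n} \cat\opd(m,n)$ is only being applied as an abbreviation for the action, not as a functor that needs to be checked exact.
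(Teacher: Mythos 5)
The paper gives no proof: it introduces the statement with ``Clearly one has:'' and treats the result as a standard fact. Your argument --- reducing to the general principle that $\kring$-linear functors from an essentially small $\kring$-linear category into a $\kring$-linear abelian category form a $\kring$-linear abelian category, with all structure computed pointwise --- is correct and is precisely the routine verification the paper is declining to spell out, so there is no divergence of approach, only a difference in the level of detail.
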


\begin{exam}
\label{exam:atomic}
Using the identification of Example \ref{exam:catI}, the category of left $\cat I$-modules is  equivalent to $\f (\fb)$ and that of right $\cat I$-modules is equivalent to $\f(\fb\op) \cong \f( \fb)$.

Suppose that  $\opd (0)=0$ and $\opd (1)=\kring$, then $\opd$ is canonically augmented: there is a morphism of operads $\opd \rightarrow I$ that is an isomorphism in arity $1$. This induces the `augmentation' of $\kring$-linear categories $\cat \opd \rightarrow \cat I$. Hence any $\kring\fb$-module has a natural left $\cat \opd$-module structure (by restriction) and, likewise, any $\kring\fb\op$-module a natural right $\cat \opd$-module structure. 

In particular, for $n\in \nat$, a $\kring \sym_n$-module can be considered as an object of $\f (\fb)\cong \f(\fb\op)$ supported on $\mathbf{n}$ and hence as a left (respectively right) $\cat \opd$-module. 
\end{exam}

\begin{exam}
\label{exam:opd_right_cat_opd}
The $\kring \fb\op$-module underlying an operad $\opd$ has a canonical  {\em right} $\cat \opd$-module, with structure morphism 
$
\opd (n) \otimes_{\sym_n} \cat \opd (m, n)
\rightarrow 
\opd (m) 
$
given by the composition operation of the operad $\opd$.

This structure is natural in the following sense:  for $\phi : \opd \rightarrow \ppd$ a morphism of operads, $\opd$ is a right $\cat \opd$-module and $\ppd$ is a right $\cat \ppd$-module, hence a right $\cat \opd$-module by restriction along  $\cat \phi : \cat \opd \rightarrow \cat \ppd$. 
 With respect to these structures, the underlying morphism of $\phi : \opd \rightarrow \ppd$ is a morphism of right $\cat \opd$-modules. 
\end{exam}

\subsection{Canonical filtrations}
\label{subsect:canon_filt_catopd_mod}

Throughout this section, we suppose that $\opd (0)=0$ and $\opd (1) = \kring$, so that Lemma \ref{lem:cat_opd_connexe} applies. The following is then clear:

\begin{lem}
\label{lem:truncate_modules}
Let $d \in \nat$. 
\begin{enumerate}
\item 
There is an exact functor $(-)_{\leq d} : \lmod[\cat \opd] \rightarrow \lmod [\cat\opd]$ equipped with a canonical inclusion $(-)_{\leq d} \hookrightarrow \id$, where for a left $\cat \opd$-module $M$, $M_{\leq d}(n)=M(n)$ for $n \leq d$ and $0$ if $n>d$. 
\item 
There is an exact functor $(-)^{\leq d} : \rmod[\cat \opd] \rightarrow \rmod [\cat\opd]$ equipped with a canonical surjection  $\id \twoheadrightarrow (-)^{\leq d}$, where for a right $\cat \opd$-module $N$, $N^{\leq d}(t)=N(t)$ for $t \leq d$ and $0$ if $t>d$. 
\end{enumerate}
\end{lem}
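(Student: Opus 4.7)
The entire lemma rests on the vanishing $\cat\opd(m,n)=0$ for $n>m$ established in Lemma~\ref{lem:cat_opd_connexe}. Given this, the structure maps $M(\phi)\colon M(m)\to M(n)$ of a left $\cat\opd$-module (a covariant $\kring$-linear functor) can only be nonzero when $n\leq m$, and dually the action maps $N(\phi)\colon N(n)\to N(m)$ of a right $\cat\opd$-module likewise require $n\leq m$. My plan is to exploit this to realise $(-)_{\leq d}$ as a subfunctor construction and $(-)^{\leq d}$ as a quotient functor construction, at which point both functoriality and exactness are formal.

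For the left-module assertion, I would define $M_{\leq d}$ on arities as in the statement and take the candidate inclusion $M_{\leq d}\hookrightarrow M$ to be the identity in arities $\leq d$ and the zero map in arities $>d$. The only non-trivial verification is that this defines a sub-left-module: given $\phi\in\cat\opd(m,n)$ with $m\leq d$, the image $M(\phi)(M(m))$ must lie in $M_{\leq d}(n)$. This is automatic when $n\leq d$, while if $n>d$ then $n>m$ forces $\phi=0$ by Lemma~\ref{lem:cat_opd_connexe}. Functoriality of $M\mapsto M_{\leq d}$ and naturality of the inclusion are then immediate from the fact that any morphism of left $\cat\opd$-modules is compatible with the filtration by arity.

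The right-module assertion is categorically dual. I would realise $N^{\leq d}$ as the quotient of $N$ by the sub-right-module whose component in arity $n$ is $N(n)$ for $n>d$ and zero otherwise. Stability of this subobject under the action is the dual check: for $\phi\in\cat\opd(m,n)$ with $n>d$, the inequality $m\geq n>d$ forced by Lemma~\ref{lem:cat_opd_connexe} ensures that $N(\phi)$ lands in the arity-$m$ piece of the subobject. The canonical surjection $\id\twoheadrightarrow(-)^{\leq d}$ is then the projection onto this quotient. Exactness of both functors is clear, since a sequence in $\lmod[\cat\opd]$ or $\rmod[\cat\opd]$ is exact if and only if it is exact pointwise at each arity, and truncation either preserves the arity-$n$ component or replaces it by $0$, both exact operations on $\kmod$. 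There is no genuine obstacle here; the content of the lemma is simply the packaging of this structural consequence of the ``non-increasing arity'' property of $\cat\opd$, and the main step is correctly identifying the relevant vanishing.
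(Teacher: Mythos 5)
Your proof is correct and fills in exactly the routine verification that the paper deems ``clear'' (the paper gives no explicit argument for this lemma, beyond remarking afterward that $(-)_{\leq d}$ can also be realized as the right adjoint to the inclusion of the relevant full subcategory). The key observation in both cases — that the vanishing $\cat\opd(m,n)=0$ for $n>m$ from Lemma~\ref{lem:cat_opd_connexe} makes the arity-truncated graded pieces stable under the module structure maps — is correctly identified and applied symmetrically to the sub-module and quotient-module constructions, and pointwise exactness of truncation is handled properly.
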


The functor $(-)_{\leq d} : \lmod[\cat \opd] \rightarrow \lmod [\cat\opd]$ can be constructed as the right adjoint to the inclusion of the full subcategory of left $\cat \opd$-modules $M$ for which $M (t) =0$ for $t>d$. In particular, for 
 any left $\cat \opd$-module $M$, there is a canonical sequence of inclusions of subobjects of $M$ in $\lmod [\cat \opd]$:
\[
M_{\leq 0} \subset M_{\leq 1} \subset \ldots \subset M_{\leq d} \subset M_{\leq d+1} \subset \ldots  \subset M.
\]
Likewise, for a right $\cat \opd$-module $N$, there is a canonical tower under $N$ in $\rmod [\cat \opd]$:
\[
N \twoheadrightarrow \ldots \twoheadrightarrow N^{\leq d+1} \twoheadrightarrow N^{\leq d} \twoheadrightarrow \ldots 
\twoheadrightarrow N^{\leq 1} \twoheadrightarrow N^{\leq 0}.
\]

Moreover, the following is clear:

\begin{prop}
\label{prop:(co)lim_canon_filt}
\ 
\begin{enumerate}
\item 
For $M \in \ob \lmod[\cat \opd]$, the canonical morphism $\lim_{\substack{\rightarrow\\d}} M_{\leq d} \rightarrow M$ is an isomorphism. 
\item 
For $N \in \ob \rmod[\cat\opd]$, the canonical morphism $N \rightarrow \lim_{\substack{\leftarrow \\ d}} N^{\leq d}$ is an isomorphism.
\end{enumerate}
\end{prop}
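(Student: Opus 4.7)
The plan is to reduce both statements to a pointwise verification on each arity $n\in \nat$ and then observe that the direct (resp. inverse) system is eventually constant at arity $n$.

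First I would note that both $\lmod[\cat\opd]$ and $\rmod[\cat\opd]$ are cocomplete and complete, with (co)limits computed at the level of the underlying $\kring\fb$-module (resp. $\kring\fb\op$-module) pointwise in $\kmod$. This is standard for functor categories into $\kmod$: the pointwise (co)limit automatically carries the $\cat\opd$-action induced by the structure morphisms of the diagram, since the action of each $\cat\opd(s,t)$ is $\kring$-linear and commutes with (co)limits in $\kmod$. Consequently, it suffices to show that, for every $n \in \nat$, the map in (1) (resp. (2)) becomes an isomorphism after evaluation on $\mathbf{n}$.

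For statement (1), fix $n \in \nat$. By the construction of $M_{\leq d}$ in Lemma \ref{lem:truncate_modules}, we have $M_{\leq d}(n) = M(n)$ whenever $d \geq n$, and the canonical inclusion $M_{\leq d} \hookrightarrow M_{\leq d+1}$ restricts to the identity on arity $n$ for such $d$. Hence the direct system $(M_{\leq d}(n))_{d\in\nat}$ is eventually constant equal to $M(n)$ with eventual transition maps the identity, so its colimit is $M(n)$ and the canonical map to $M(n)$ is an isomorphism. Statement (2) is handled dually: for fixed $n$, the inverse system $(N^{\leq d}(n))_{d\in\nat}$ is eventually constant equal to $N(n)$ (with transition surjections equal to the identity for $d\geq n$), so its limit is $N(n)$ and the canonical morphism is an isomorphism.

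There is no real obstacle here: the result is a direct bookkeeping consequence of the definitions in Lemma \ref{lem:truncate_modules}, combined with the pointwise nature of (co)limits in module categories over $\kring$-linear categories. The only minor point to verify carefully is that the pointwise colimit (resp. limit) of a diagram of $\cat\opd$-modules is again a $\cat\opd$-module realizing the required universal property in $\lmod[\cat\opd]$ (resp. $\rmod[\cat\opd]$), which follows from the $\kring$-linearity of the action.
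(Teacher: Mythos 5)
Your proof is correct and supplies the natural argument: since (co)limits in $\lmod[\cat\opd]$ and $\rmod[\cat\opd]$ are computed arity-wise in $\kmod$, and for fixed arity $n$ the system $(M_{\leq d}(n))_d$ (resp. $(N^{\leq d}(n))_d$) is eventually constant at $M(n)$ (resp. $N(n)$) once $d\geq n$, the claimed isomorphisms follow immediately. The paper states this proposition without proof (``Moreover, the following is clear''), and the argument you have written out is exactly the bookkeeping that the authors are treating as obvious.
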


As well as the functor $(-)_{\leq d} : \lmod[\cat \opd] \rightarrow \lmod [\cat\opd]$, we shall also require the following:

\begin{nota}
\label{nota:left_adjoint_left_cat_opd}
For $d \in \nat$, write $(-)_{[\leq d]} : \lmod[\cat \opd] \rightarrow \lmod [\cat\opd]$ for the functor induced by the left adjoint to the inclusion of the full subcategory of left $\cat \opd$-modules $M$ for which $M (t) =0$ for $t>d$.
\end{nota}

\begin{rem}
\label{rem:left_adjoint_left_cat_opd}
Let $d \in \nat$. 
\begin{enumerate}
\item 
For $N \in \ob \lmod [\cat \opd]$, the canonical surjection $N \twoheadrightarrow N_{[\leq d]}$ is the universal morphism to an object in the full subcategory of left $\cat \opd$-modules $M$ for which $M (t) =0$ for $t>d$.
\item 
Explicitly, $N_{[\leq d]}(t)$ is  the cokernel of
\[
\bigoplus_{s> d} \bigoplus_{f \in \cat \opd (s,t)} N (s) \rightarrow N(t),
\]
where the morphisms are given by the $N(f)$. 
\item 
The composite morphism $N_{\leq d} \hookrightarrow N \twoheadrightarrow N_{[\leq d]}$ is a surjection but is not in general an isomorphism. For instance, taking $\opd = \lie$, $\cat \lie (s, -)_{\leq d}$ is non-zero if $d>0$, whereas $\cat \lie (s, -)_{[\leq d]}$ is zero whenever $s>d$, as follows from the Yoneda lemma.
\end{enumerate}
\end{rem}

\subsection{The tensor product over $\cat \opd$}

One has the tensor product 
\[
-\otimes_{\cat \opd} - \ : \ \rmod[\cat\opd] \times \lmod [\cat \opd] \rightarrow \kmod.
\] 
For $M$ a left $\cat \opd$-module and $N$ a right $\cat \opd$-module, $N \otimes_{\cat \opd} M$ is defined as the coequalizer of the diagram
\[
\bigoplus_{s,t}
\bigoplus_{f \in \cat \opd (s,t)} 
N(t) \otimes M(s) 
\rightrightarrows 
\bigoplus_u N(u) \otimes M(u),
\]
where the maps are given by the evident structure morphisms $N(s) \otimes M(s) \leftarrow N(t) \otimes M(s) \rightarrow N(t)\otimes M(t)$ induced by a morphism $f \in \cat \opd (s,t)$.

We note the following basic properties:

\begin{prop}
\label{prop:properties_otimes_cat_opd}
For $M$ a left $\cat \opd$-module and $N$ a right $\cat \opd$-module:
\begin{enumerate}
\item 
The filtration $(M_{\leq d})$ gives rise to a canonical isomorphism
 $$
 \lim_{\substack{\rightarrow \\ d}} \big( N \otimes_{\cat \opd} M_{\leq d}\big) \stackrel{\cong}{\rightarrow } N \otimes_{\cat \opd} M.
 $$
 \item 
 For $d\in \nat$, the canonical surjections  $N \twoheadrightarrow N^{\leq d}$  and $M \twoheadrightarrow M_{[\leq d]}$ induce isomorphisms:
 \[
 N^{\leq d} \otimes_{\cat\opd} M
\cong 
 N^{\leq d} \otimes_{\cat\opd} M_{[\leq d]}
 \cong 
 N \otimes _{\cat \opd} M_{[\leq d]}.  
 \]
\end{enumerate}
\end{prop}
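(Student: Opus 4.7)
The plan is to unwind the coequalizer definition of $-\otimes_{\cat\opd}-$ and exploit the explicit descriptions of $(-)_{\leq d}$ and $(-)_{[\leq d]}$ provided by Lemma \ref{lem:truncate_modules} and Remark \ref{rem:left_adjoint_left_cat_opd}, combined with the vanishing $\cat\opd(s,t)=0$ for $s<t$ given by Lemma \ref{lem:cat_opd_connexe}.

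For part (1), I would observe that the functor $N\otimes_{\cat\opd}-$ is defined as a coequalizer in $\kmod$ and therefore commutes with all colimits in its left $\cat\opd$-module variable. Since, by Proposition \ref{prop:(co)lim_canon_filt}(1), the canonical morphism $\lim_{\substack{\rightarrow\\ d}} M_{\leq d}\to M$ is an isomorphism in $\lmod[\cat\opd]$, applying $N\otimes_{\cat\opd}-$ produces the claimed isomorphism.

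For part (2), the two surjections $N\twoheadrightarrow N^{\leq d}$ and $M\twoheadrightarrow M_{[\leq d]}$ induce natural surjections relating the three tensor products, and my aim is to check that both are isomorphisms. For $N^{\leq d}\otimes_{\cat\opd} M \twoheadrightarrow N^{\leq d}\otimes_{\cat\opd} M_{[\leq d]}$: by Remark \ref{rem:left_adjoint_left_cat_opd}(2), the kernel of $M\twoheadrightarrow M_{[\leq d]}$ at arity $t\leq d$ is spanned by images $M(f)(m)$ with $f\in\cat\opd(s,t)$, $s>d$, $m\in M(s)$, whereas for $t>d$ one already has $N^{\leq d}(t)=0$. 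In the defining coequalizer, an element $n\otimes M(f)(m)$ with $n\in N^{\leq d}(t)$ is identified with $(n\cdot f)\otimes m\in N^{\leq d}(s)\otimes M(s)$ via the right action $N^{\leq d}(t)\otimes_{\sym_t}\cat\opd(s,t)\to N^{\leq d}(s)$, and this vanishes since $N^{\leq d}(s)=0$ for $s>d$. For the surjection $N\otimes_{\cat\opd} M_{[\leq d]}\twoheadrightarrow N^{\leq d}\otimes_{\cat\opd} M_{[\leq d]}$: since $M_{[\leq d]}(s)=0$ whenever $s>d$, both coequalizers are concentrated on arities $u\leq d$, on which $N(u)=N^{\leq d}(u)$; the two presentations agree termwise, yielding the isomorphism.

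No step is genuinely deep; the only thing requiring care is keeping track of the interplay between the covariant left action on $M$ and the contravariant right action on $N$ under the coequalizer relation, and noting that the support condition $\cat\opd(s,t)=0$ for $s<t$ forces the relevant arities into the appropriate truncation range. Thus the argument is essentially bookkeeping around supports rather than a substantive computation.
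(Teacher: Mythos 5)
Your proof is correct and follows exactly the route the paper indicates: part (1) by observing that $N \otimes_{\cat \opd} -$ commutes with colimits, and part (2) by unwinding the coequalizer definition of $\otimes_{\cat \opd}$ together with the support constraints on the truncated modules and the vanishing $\cat \opd(s,t)=0$ for $s<t$. The paper's own proof of (2) is terser, merely asserting it ``can be proved directly from the definition of $\otimes_{\cat \opd}$,'' so you are supplying precisely the bookkeeping the author leaves implicit.
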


\begin{proof}
The first statement follows from the fact that $N \otimes_{\cat \opd} -$ commutes with colimits. The second statement
is the analogue of Proposition \ref{prop:otimes_gr_poly}; it can be proved directly from the definition of $\otimes _{\cat \opd}$.
 \end{proof}


\subsection{The convolution product for right $\cat \opd$-modules}
\label{subsect:convolution_cat_opd}

Recall that Day convolution defines a symmetric monoidal structure $(\f (\fb\op), \odot, \kring)$,  where $\kring$ is concentrated in arity $0$. Here, for two $\kring\fb\op$-modules, $M \odot N$ is defined on objects by:
\[
(M \odot N) (S) = \bigoplus_{S_1 \amalg S_2 = S} M(S_1) \otimes N(S_2),
\]
where $S$ is a finite set and the sum is indexed by ordered decompositions of $S$ into two subsets.

This extends to $\rmod[\cat\opd]$: by \cite[Proposition 1.6.3]{MR1854112} (which references \cite{MR1617616}), $\odot$ makes the category of right $\cat \opd$-modules into a symmetric monoidal category $(\rmod [\cat\opd],\odot, \kring)$. 
(See also \cite[Section 6.1]{MR2494775}, where the structure is defined for the category of right modules over the operad (defined using $\circ$), which is equivalent to the category of right $\cat\opd$-modules.)

\begin{rem}
\label{rem:conv_left_catopd}
Analogously, the convolution product on $\f (\fb)$ extends to a convolution product on {\em left} $\cat\opd$-modules (see  \cite{2021arXiv211001934P}).
\end{rem}

\section{The $\cat \lie$-module $\underline{\malcev}$ and modelling $\propoly$}
\label{sect:endo_malcev}

The purpose of this section is to prove that $\propoly$ is equivalent to the category $\rmod[\cat\lie]$ (see Theorem \ref{thm:equiv_propoly_modcatlie}), working over $\kring = \rat$. This follows from  the fact that the projective generators $\malcev^{\obar s}$, for $s \in \nat$, assemble to form a left $\cat \lie$-module, and these structure morphisms provide all possible morphisms.

\subsection{The equivalence of categories}

By construction, $\malcev$ is a Lie algebra in  $(\propoly, \obar, \rat)$.

\begin{nota}
\label{nota:underline_malcev}
Denote by  $\underline{\malcev}$ the left $\cat \lie$-module $\underline{\malcev}$ given by 
$
\underline{\malcev} : s \mapsto \malcev ^{\obar s}$,
with $\cat \lie$ acting via the Lie algebra structure. 
\end{nota}

In particular, for $s, t \in \nat$, this gives the $\sym_s\op \times \sym_t$-equivariant morphism encoding this structure:
\begin{eqnarray}
\label{eqn:cat_lie_malcev}
\cat \lie (s, t) 
\rightarrow 
\hom_\propoly (\malcev^{\obar s}, \malcev^{\obar t}).
\end{eqnarray}
Here, $\cat \lie (s,t) =0$ if $s<t$, so we may assume without loss of generality that $s\geq t$.

\begin{prop}
\label{prop:malcev_full_subcat}
The full subcategory of $\propoly$ with objects $\malcev ^{\obar s}$, for $s\in \nat$, is equivalent to the $\rat$-linear category $\cat \lie$. 
\end{prop}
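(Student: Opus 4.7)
The plan is to establish that the natural map \eqref{eqn:cat_lie_malcev} is an isomorphism of $(\sym_s,\sym_t)$-bimodules for every pair $s,t \in \nat$. First, I would identify the target using the corepresentability provided by Theorem \ref{thm:proj_cover_malcev_s}: since $\malcev^{\obar s}$ corepresents $G_\bullet \mapsto \cre_s G_s$, with the stated $\sym_s$-action by place permutations, one obtains a natural isomorphism
\[
\hom_\propoly(\malcev^{\obar s}, \malcev^{\obar t}) \;\cong\; \cre_s\bigl((\malcev^{\obar t})_s\bigr) \;=\; \fbcr(\malcev^{\obar t})(\mathbf{s}),
\]
as $(\sym_s, \sym_t)$-bimodules (the right $\sym_t$-action coming from permutation of the $\obar$-factors of $\malcev^{\obar t}$).

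Next I would compute the right-hand side by exploiting that $\fbcr : \propoly \to \f(\fb)$ is exact and symmetric monoidal with respect to $(\obar, \rat)$ and $(\odot, \rat)$ (Proposition \ref{prop:properties_fbcr_propoly}). This yields $\fbcr(\malcev^{\obar t}) \cong \fbcr(\malcev)^{\odot t}$ in $(\f(\fb),\odot,\rat)$. To identify $\fbcr(\malcev)$, I would use Proposition \ref{prop:assoc_graded_malcev}: the associated graded of $\malcev$ is the functor $\liealg(\A)$, whose degree-$n$ homogeneous piece is $\alpha_n(\lie(n))$, hence $\fbcr(\malcev)(\mathbf{n}) = \lie(n)$ as a right $\sym_n$-module (the arity-$n$ term of the Lie operad). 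Unwinding the Day convolution and using $\lie(0)=0$ gives
\[
\lie^{\odot t}(\mathbf{s}) \;=\; \bigoplus_{f:\mathbf{s}\twoheadrightarrow\mathbf{t}}\bigotimes_{i=1}^{t}\lie\bigl(f^{-1}(i)\bigr),
\]
which is exactly $\cat\lie(s,t)$ by Notation \ref{nota:cat_opd}. The dimensions and bimodule structures thus coincide.

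To conclude that the resulting abstract isomorphism is really \eqref{eqn:cat_lie_malcev}, I would appeal to naturality of the whole construction with respect to the Lie algebra structure of $\malcev$. The functor $\fbcr$ sends the Lie algebra $\malcev$ in $(\propoly,\obar,\rat)$ to a Lie algebra in $(\f(\fb),\odot,\rat)$, and this Lie algebra is precisely $\lie$ with its tautological self-structure: indeed the induced bracket on the associated graded is the standard one of the free Lie algebra $\liealg(\A)$. Hence the map induced on morphism spaces between $\malcev^{\obar s}$ and $\malcev^{\obar t}$ by the Lie structure agrees, under the identification above, with the canonical presentation of $\cat\lie(s,t)$ as iterated Lie brackets. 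Bijectivity of \eqref{eqn:cat_lie_malcev} on each graded component of the polynomial filtration then propagates to an equality of natural transformations by the vanishing $\cat\lie(s,t)=0$ for $s<t$, and functoriality in $s$ and $t$.

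The main obstacle is the compatibility check in the last step: one must verify not merely that source and target are isomorphic as bimodules, but that the abstract isomorphism obtained by symmetric monoidal bookkeeping coincides with the structure map defined by the Lie operations on $\malcev$, and that composition in the full subcategory spanned by $\{\malcev^{\obar s}\}$ corresponds to the composition in $\cat\lie$. The cleanest route is to observe that $\cat\lie$ is universal in the sense that a Lie algebra $L$ in any symmetric monoidal $\rat$-linear category determines a symmetric monoidal functor $\cat\lie \to \mathrm{End}(L^{\obar\bullet})$; applied to $L=\malcev$ this produces exactly \eqref{eqn:cat_lie_malcev} together with the required composition compatibility, reducing the whole statement to the bijectivity established via $\fbcr$.
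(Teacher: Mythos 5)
Your proposal assembles the same three ingredients as the paper's proof (corepresentability of $\malcev^{\obar s}$ from Theorem \ref{thm:proj_cover_malcev_s}, the identification of the associated graded of $\malcev$ as $\liealg(\A)$ via Proposition \ref{prop:assoc_graded_malcev}, and the symmetric monoidal exactness of $\fbcr$ from Proposition \ref{prop:properties_fbcr_propoly}), and your Day-convolution computation of $\lie^{\odot t}(\mathbf{s})\cong \cat\lie(s,t)$ is the same linear algebra as the paper's Schur functor argument. The route is essentially the paper's, with the bookkeeping organized around $\fbcr$ rather than the Schur functor of $\liealg(\A)^{\otimes t}$.

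However, your final paragraph does not actually close the gap you correctly identify. The universal-property observation shows that \eqref{eqn:cat_lie_malcev} \emph{is} the canonical structure map, but ``reducing to the bijectivity established via $\fbcr$'' is circular: the $\fbcr$ computation only shows the two sides have the same dimension as $(\sym_s,\sym_t)$-bimodules; it does not yet show that the \emph{particular} map \eqref{eqn:cat_lie_malcev} is injective, and dimension equality alone does not force a canonical map to be bijective. This is precisely the step the paper handles separately: applying the associated graded (equivalently $\fbcr$) to \eqref{eqn:cat_lie_malcev} gives the composite $\cat\lie(s,t)\to\hom_{\f(\gr)}(\liealg(\A)^{\otimes s},\liealg(\A)^{\otimes t})$, which is the structure map of the left $\cat\lie$-module $\underline{\liealg(\A)}$ and is injective essentially tautologically because $\liealg(-)$ is the free Lie algebra functor; injectivity of \eqref{eqn:cat_lie_malcev} follows, and then the dimension count finishes. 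To repair your version in the same spirit, note that $\fbcr$ is faithful (it is exact and conservative by Propositions \ref{prop:properties_fbcr_propoly} and \ref{prop:fbcr_conservative}), so it suffices to check that the image of \eqref{eqn:cat_lie_malcev} under $\fbcr$, namely the structure map of the tautological Lie algebra $\lie$ in $(\f(\fb),\odot,\rat)$, is injective, which again comes down to freeness of $\liealg$; then conclude with the dimension count you already have.
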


\begin{proof}
We require to prove that, for all $s, t \in \nat$, the structure morphism (\ref{eqn:cat_lie_malcev}) is an isomorphism. We first establish that (\ref{eqn:cat_lie_malcev}) is injective. 
 
The associated graded of $\malcev$ is $\liealg (\A)$, by Proposition \ref{prop:assoc_graded_malcev}, as a Lie algebra. 
Since passage to the associated graded is symmetric monoidal (cf. Proposition \ref{prop:properties_fbcr_propoly}), the associated graded of $\malcev^{\obar s}$ is $\liealg (\A) ^{\otimes s}$.  Hence, passage to the associated graded gives the  $\sym_s\op \times \sym_t$-equivariant morphism
\[
\hom_\propoly (\malcev^{\obar s}, \malcev^{\obar t})
\rightarrow 
\hom_{\f(\gr)}(\liealg (\A)^{\otimes s}, \liealg (\A) ^{\otimes t}).
\]

Moreover, the composite $\cat \lie \rightarrow \hom_{\f(\gr)}(\liealg (\A)^{\otimes s}, \liealg (\A) ^{\otimes t})$ is the structure morphism for the left $\cat \lie$ module $\underline{\liealg (\A)}$ associated to the Lie algebra  $\liealg (\A)$ in $\f (\gr)$.  
 Since $\liealg (-)$ is the free Lie algebra functor, it is essentially tautological that this composite is injective. This gives the required injectivity of (\ref{eqn:cat_lie_malcev}).

It remains to establish surjectivity, refining the above argument. By Theorem \ref{thm:proj_cover_malcev_s}, there is a natural isomorphism:
\[
\hom_\propoly (\malcev^{\obar s}, \malcev^{\obar t})
\cong 
\cre_s (\malcev^{\obar t})_s.
\]
The right hand side is the $\rat \sym_s$-module that encodes the homogeneous degree $s$ part of $\malcev^{\obar t}$. 
 Equivalently, it encodes the homogeneous degree $s$ part of the associated graded of $\malcev^{\obar t}$, which identifies as 
 $\liealg (\A)^{\otimes t}$, as above. 

We claim that  $\cre_s (\liealg (\A)^{\otimes t})$ is isomorphic to $\cat \lie (s,t)$ as a vector space. To see this, consider the Schur functor $V \mapsto \bigoplus_s \cat \lie (s, t) \otimes_{\sym_s} V^{\otimes s}$. By the construction of $\cat \lie$, this is isomorphic to $\liealg (V) ^{\otimes t}$ and $\cat \lie (s,t)$ is recovered from the component of polynomial degree $s$ in $V$ by applying the appropriate cross-effect functor. This calculation is equivalent to that of $\cre_s (\liealg (\A)^{\otimes t})$.

Now $\cat \lie (s,t)$ has finite dimension; the morphism considered is a linear monomorphism between vector spaces of the same (finite) dimension, hence is an isomorphism, as required.
\end{proof}

This has the immediate consequence:

\begin{thm}
\label{thm:equiv_propoly_modcatlie}
The functor $\hom_{\propoly} (\underline{\malcev}, -) $ induces an equivalence of categories
\[
\hom_{\propoly} (\underline{\malcev}, -) \ : \ 
\propoly \stackrel{\cong}{\rightarrow} \rmod[\catlie].
\]

The inverse equivalence is given by:
\[
- \otimes _{\cat \lie} \underline{\malcev} \ : \ \rmod[\catlie] \rightarrow \propoly.
\]
\end{thm}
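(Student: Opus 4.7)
The plan is the standard Morita-style argument, exploiting that $\{\malcev^{\obar s} \mid s \in \nat\}$ is a set of small projective generators of $\propoly$ whose full subcategory has morphism structure identified with $\cat\lie$.

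First I would make the two functors precise and adjoint. The functor $\hom_\propoly(\underline{\malcev}, -)$ sends $G_\bullet$ to the sequence $s \mapsto \hom_\propoly(\malcev^{\obar s}, G_\bullet)$, which carries a right $\cat\lie$-module structure by precomposition using the left $\cat\lie$-structure of $\underline{\malcev}$. Dually, $- \otimes_{\cat\lie} \underline{\malcev}$ is the coend $N \mapsto \int^{s \in \cat\lie} N(s) \otimes \malcev^{\obar s}$, which exists in $\propoly$ since $\propoly$ is cocomplete (Proposition \ref{prop:abelian_propoly}). The adjunction $- \otimes_{\cat\lie} \underline{\malcev} \dashv \hom_\propoly(\underline{\malcev}, -)$ is then formal from the universal property of the coend.

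Next I would check the unit and counit on the respective sets of generators. For any $t \in \nat$, Proposition \ref{prop:malcev_full_subcat} gives a canonical isomorphism of right $\cat\lie$-modules $\hom_\propoly(\underline{\malcev}, \malcev^{\obar t}) \cong \cat\lie(-, t)$, and the coend Yoneda lemma yields $\cat\lie(-, t) \otimes_{\cat\lie} \underline{\malcev} \cong \malcev^{\obar t}$. Together these show that unit and counit are isomorphisms on the representables $\cat\lie(-,t)$ and on the generators $\malcev^{\obar t}$. To propagate to arbitrary objects, I would use that both functors are exact and preserve arbitrary coproducts: $-\otimes_{\cat\lie}\underline{\malcev}$ since it is a left adjoint; $\hom_\propoly(\underline{\malcev}, -)$ because each $\malcev^{\obar s}$ is projective in $\propoly$ (Theorem \ref{thm:proj_cover_malcev_s}) and because Theorem \ref{thm:proj_cover_malcev_s} identifies $\hom_\propoly(\malcev^{\obar s}, G_\bullet) \cong \cre_s G_s$, reducing preservation of coproducts to the corresponding (trivial) property of the cross-effect functor $\cre_s$. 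Since every $G_\bullet \in \propoly$ is a cokernel of a morphism between direct sums of $\malcev^{\obar s}$ (Corollary \ref{cor:malcev_s_proj_generators}), and every right $\cat\lie$-module is a cokernel of a morphism between direct sums of representables $\cat\lie(-,t)$, the natural transformations being isomorphisms on generators implies they are isomorphisms on all objects, yielding the claimed equivalence.

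The main point I expect to require care is verifying the commutation of $\hom_\propoly(\malcev^{\obar s}, -)$ with arbitrary colimits, since colimits in $\propoly$ are not computed pointwise on the underlying tower (the monoidal product $\obar$ involves truncation, and kernels/cokernels require the exactness of $\qgr_d$ supplied by Proposition \ref{prop:qhat}). However, the corepresenting description $\hom_\propoly(\malcev^{\obar s}, G_\bullet) \cong \cre_s G_s$ transports the question to $\f_s(\gr)$, where $\cre_s$ is exact and coproduct-preserving, which is what makes the Morita reconstruction go through.
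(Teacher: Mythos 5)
Your proposal is correct and follows essentially the same route as the paper: identify $\{\malcev^{\obar s}\}$ as small projective generators whose full subcategory is $\cat\lie$ (via Theorem \ref{thm:proj_cover_malcev_s}, Corollary \ref{cor:malcev_s_proj_generators} and Proposition \ref{prop:malcev_full_subcat}) and conclude a Morita-type equivalence. The only difference is that the paper invokes Freyd's theorem \cite[Theorem 3.1]{MR294454} as a black box, whereas you unpack its proof by setting up the coend/hom adjunction and checking unit and counit directly on generators, then propagating by exactness and coproduct preservation; the substance (in particular the crucial corepresentation $\hom_\propoly(\malcev^{\obar s},G_\bullet)\cong\cre_s G_s$, which supplies smallness) is identical.
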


\begin{proof}
By Theorem \ref{thm:proj_cover_malcev_s}, 
for $s \in \nat$, $\malcev^{\obar s}$ corepresents the functor $G_\bullet \mapsto \cre_s G_s$. This functor  commutes with colimits, hence $\malcev^{\otimes s}$ is a small projective in $\propoly$. Thus $\{ \malcev^{\otimes s} \ | \ s\in \nat\}$ is a generating set of small projectives. 

The result thus follows from Freyd's theorem (see \cite[Theorem 3.1]{MR294454} for a version over $\zed$).  
\end{proof}

\begin{rem}
\label{rem:identify_w_fbcr}
Recall that restriction along $\cat I \rightarrow \cat \lie$ induces the forgetful functor $\rmod[\cat \lie] \rightarrow \f (\fb\op)$ and that $\f (\fb\op)$ is isomorphic to $\f (\fb)$. Hence this gives the functor $\rmod[\cat \lie] \rightarrow \f (\fb)$.

Composing  $\hom_{\propoly} (\underline{\malcev}, -)$ with the restriction functor gives 
$
\propoly \rightarrow \f (\fb)
$. 
Using Theorem \ref{thm:proj_cover_malcev_s}, this identifies with the functor $\fbcr$ (cf. Notation \ref{nota:fbcr_propoly}) that encodes the associated graded of an object of $\propoly$. 

Thus Theorem \ref{thm:equiv_propoly_modcatlie} refines  $\fbcr$, retaining the necessary additional structure, namely the right action of $\cat \lie$.
\end{rem}

The equivalence of Theorem \ref{thm:equiv_propoly_modcatlie} restricts to the full subcategory of $\propoly$ corresponding to $\f_{<\infty}(\gr) $ (see Proposition \ref{prop:poly_propoly}) to give the following result, which strengthens the corresponding result of \cite{2021arXiv211001934P} (which restricted to functors in $\f_{< \infty}(\gr)$ with a finite composition series). 

\begin{cor}
\label{cor:finite_functors_equiv_rmod}
The category $\f_{< \infty} (\gr)$ is equivalent to the full subcategory $\rmod[\cat \lie]^{< \infty} \subset \rmod[\cat \lie]$ of right $\cat \lie$-modules $M$ such that $M(n) =0$ for $n \gg 0$. 
\end{cor}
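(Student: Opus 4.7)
The plan is to deduce the corollary from the equivalence $\hom_{\propoly}(\underline{\malcev}, -) : \propoly \stackrel{\cong}{\rightarrow} \rmod[\catlie]$ of Theorem \ref{thm:equiv_propoly_modcatlie}, combined with the identification of $\f_{<\infty}(\gr)$ as a full subcategory of $\propoly$ given by Proposition \ref{prop:poly_propoly}. It thus suffices to verify that, under the equivalence, this subcategory corresponds precisely to $\rmod[\catlie]^{<\infty}$.

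For $F_\bullet \in \ob \propoly$, write $M := \hom_{\propoly}(\underline{\malcev}, F_\bullet)$, so that $M(s) \cong \cre_s F_s$ by Theorem \ref{thm:proj_cover_malcev_s} (this is the identification already noted in Remark \ref{rem:identify_w_fbcr} as the functor $\fbcr$). Suppose first that $F_\bullet$ corresponds to a polynomial functor $F \in \f_d(\gr)$; then by Proposition \ref{prop:poly_propoly} the structure morphisms $F_{t+1} \rightarrow F_t$ are isomorphisms for $t \geq d$, so $F_s \cong F$ for all $s \geq d$. Since $F$ has polynomial degree $d$, the cross-effect $\cre_s F$ vanishes for $s > d$, giving $M(s) = 0$ for $s > d$, as required.

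Conversely, suppose that $M(s) = 0$ for all $s > d$. For each $t > d$, consider the short exact sequence
\[
0 \rightarrow \qhat{t} F_t \rightarrow F_t \rightarrow F_{t-1} \rightarrow 0
\]
coming from the definition of $\propoly$. By Proposition \ref{prop:qhat}, $\qhat{t} F_t$ is homogeneous polynomial of degree $t$, so the counit $\alpha_t \cre_t \qhat{t} F_t \twoheadrightarrow \qhat{t} F_t$ is an isomorphism; but $\cre_t \qhat{t} F_t \cong \cre_t F_t \cong M(t) = 0$ for $t > d$, hence $\qhat{t} F_t = 0$ and $F_t \stackrel{\cong}{\rightarrow} F_{t-1}$. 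Thus the tower $F_\bullet$ is eventually constant from degree $d$ onwards, so by Proposition \ref{prop:poly_propoly} it corresponds to a polynomial functor of degree $d$ in $\f_{<\infty}(\gr)$.

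Putting these two directions together with the equivalence of Theorem \ref{thm:equiv_propoly_modcatlie} (restricted to the corresponding full subcategories), one obtains the asserted equivalence $\f_{<\infty}(\gr) \simeq \rmod[\catlie]^{<\infty}$. The only step requiring care is the vanishing argument for $\qhat{t} F_t$, but this is immediate once one has Proposition \ref{prop:qhat}, so the argument is essentially a matter of matching up the finite-support condition on both sides of the equivalence.
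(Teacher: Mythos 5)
Your proposal is correct and follows essentially the same route as the paper: the paper's proof simply asserts that $F_\bullet \in \ob \propoly$ lies in $\f_{<\infty}(\gr)$ if and only if $\cre_s F_s = 0$ for $s \gg 0$, identifies this with the vanishing of $\hom_{\propoly}(\malcev^{\obar s}, F_\bullet)$, and concludes. What you have done is fill in the details of that asserted equivalence, and your verifications (vanishing of $\cre_s F$ for a degree-$d$ functor with $s>d$ via $\alpha_s \cre_s F \hookrightarrow F$ having cokernel $\qgr_{s-1}F = F$; and the converse via the conservativity of $\alpha_t$ and the short exact sequence $0 \to \qhat{t}F_t \to F_t \to F_{t-1}\to 0$) are exactly the right ones.
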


\begin{proof}
It suffices to check that the two functors given in Theorem \ref{thm:equiv_propoly_modcatlie} restrict as required. 

A functor  $F\in \ob \propoly$ is in $\f_{< \infty} (\gr)\subset \propoly$ if and only if $\cre_s F_s =0$ for $s \gg 0$. This is equivalent to $\hom_{\propoly} (\malcev^{\obar s}, F) =0$ for $s \gg 0$. The result follows.
\end{proof}

This can be made more precise by comparing the canonical filtration of right $\cat \lie$-modules with the canonical filtration of objects of $\propoly$. Recall that, for $d \in \nat$ and $F_\bullet$ an object of $\propoly$, $F_d$ denotes the $d$th term of $F_\bullet$, which has polynomial degree $d$. There is a canonical surjection in $\propoly$ 
\[
F_\bullet \twoheadrightarrow \qgr_\bullet (F_d)
\]
to an object arising from a functor of $\f_d (\gr)$. 

\begin{prop}
\label{prop:compare_filtrations}
For $d \in \nat$ and $N \in \ob \rmod[\cat \lie]$, the canonical surjection $N \twoheadrightarrow N^{\leq d}$
induces an isomorphism:
\[
\qgr_\bullet \big((N \otimes_{\cat \lie} \underline{\malcev})_d\big)
\cong  
N^{\leq d} \otimes_{\cat \lie} \underline{\malcev}.
\]

In particular, there is an isomorphism in $\f_d (\gr)$:
\[
(N \otimes_{\cat \lie} \underline{\malcev})_d
\cong  
(N^{\leq d} \otimes_{\cat \lie} \underline{\malcev})_d.
\]
\end{prop}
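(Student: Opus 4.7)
The plan is to transport the statement across the equivalence of Theorem \ref{thm:equiv_propoly_modcatlie}, reducing it to a transparent computation built on the corepresenting property of $\malcev^{\obar t}$ (Theorem \ref{thm:proj_cover_malcev_s}) and the vanishing of higher cross-effects on polynomial functors.

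Set $F := N \otimes_{\cat\lie}\underline{\malcev} \in \ob \propoly$. By Theorem \ref{thm:equiv_propoly_modcatlie} one recovers $N$ naturally as $\hom_{\propoly}(\underline{\malcev}, F)$, and Theorem \ref{thm:proj_cover_malcev_s} gives $N(t) \cong \cre_t F_t$. The first step is to exhibit a canonical morphism $F \to \qgr_\bullet F_d$ in $\propoly$: at level $t \leq d$ it is the isomorphism $F_t \cong \qgr_t F_d$ inherent in the definition of $\propoly$; at level $t > d$ it is the structure map $F_t \to F_d = \qgr_t F_d$ (here $F_d$ is already of degree $d \leq t$). Equivalently, this is the unit of the adjunction between evaluation at $d$ and the fully faithful inclusion $\f_d(\gr) \hookrightarrow \propoly$ provided by Proposition \ref{prop:poly_propoly}.

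Next I would verify that this canonical morphism corresponds, under $\hom_\propoly(\underline{\malcev}, -)$, to the truncation $N \twoheadrightarrow N^{\leq d}$. At level $t$ one obtains the map $\cre_t F_t \to \cre_t(\qgr_t F_d)$ induced by the $t$-th component $F_t \to \qgr_t F_d$ above. For $t \leq d$ this is the identity on $\cre_t F_t = N(t)$. For $t > d$ the target is $\cre_t F_d$, i.e.\ the $t$-th cross-effect of a functor in $\f_d(\gr)$ with $d < t$; this vanishes, which is the one non-formal ingredient. It follows by devissage from the short exact sequence of Proposition \ref{prop:alpha_Q} to the homogeneous case $\cre_t (\alpha_k M) \cong \cre_t (\A^{\otimes k}) \otimes_{\sym_k} M$ with $k<t$, and then from $\hom_{\f (\gr)}(\A^{\otimes t}, \A^{\otimes k}) = 0$, which holds because two homogeneous polynomial functors of distinct degrees are semisimple with disjoint sets of simple composition factors (again by Proposition \ref{prop:alpha_Q}). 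Thus the induced map of right $\cat\lie$-modules is exactly the truncation $N \twoheadrightarrow N^{\leq d}$, and applying the inverse equivalence $- \otimes_{\cat\lie}\underline{\malcev}$ transports the canonical morphism $F \to \qgr_\bullet F_d$ to an isomorphism $\qgr_\bullet\big((N \otimes_{\cat\lie}\underline{\malcev})_d\big) \cong N^{\leq d}\otimes_{\cat\lie}\underline{\malcev}$. The second isomorphism is obtained by evaluating at degree $d$, since $(\qgr_\bullet F_d)_d = F_d = (N \otimes_{\cat\lie}\underline{\malcev})_d$.

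The main obstacle is essentially bookkeeping: checking that the canonical morphism $F \to \qgr_\bullet F_d$ in $\propoly$ really is the one whose image under the equivalence is the truncation $N \to N^{\leq d}$. Everything else is either formal (naturality of the equivalence) or a direct appeal to the standard vanishing $\cre_t|_{\f_{<t}(\gr)} = 0$, which is already built into the theory recalled in Proposition \ref{prop:alpha_Q}.
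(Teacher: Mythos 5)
Your proof is correct and follows essentially the same route as the paper: both hinge on the observation that, for $t>d$, the $t$th cross-effect of the degree-$d$ truncation $(N\otimes_{\cat\lie}\underline{\malcev})_d$ vanishes (equivalently, $\cre_t$ kills $\f_{<t}(\gr)$), which is exactly what the paper invokes via Corollary \ref{cor:finite_functors_equiv_rmod} and the conservativity of $\fbcr$. Your write-up is somewhat more explicit in identifying the canonical morphism $F\to\qgr_\bullet F_d$ and checking level-by-level that it transports to the truncation $N\twoheadrightarrow N^{\leq d}$, while the paper constructs the surjection $\qgr_\bullet F_d \twoheadrightarrow N^{\leq d}\otimes_{\cat\lie}\underline{\malcev}$ and compares associated gradeds — but the mathematical content is the same.
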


\begin{proof}
The argument employed in the proof of Corollary \ref{cor:finite_functors_equiv_rmod} implies that $N^{\leq d} \otimes_{\cat \lie} \underline{\malcev}$ arises from a functor of $\f_d (\gr)$; explicitly:
\[
N^{\leq d} \otimes_{\cat \lie} \underline{\malcev} 
\cong 
\qgr_\bullet \big((N^{\leq d} \otimes_{\cat \lie} \underline{\malcev} )_d \big).
\]
This yields the surjection $\qgr_\bullet \big((N \otimes_{\cat \lie} \underline{\malcev})_d\big)
\twoheadrightarrow
N^{\leq d} \otimes_{\cat \lie} \underline{\malcev}$. Comparing the polynomial filtrations of both sides shows that this is an isomorphism.
\end{proof}

\begin{rem}
\label{rem:universal example}
It is instructive to consider the universal example for Proposition \ref{prop:compare_filtrations}, namely taking $N = \cat \lie$, so that $N^{\leq d}$ is $\cat ^{\leq d} \lie$. Then Proposition \ref{prop:properties_otimes_cat_opd} gives the isomorphism
 $
\cat ^{\leq d} \lie \otimes_{\cat \lie} \underline{\malcev}
\cong 
\underline{\malcev}_{[\leq d]}$.

Proposition \ref{prop:compare_filtrations} asserts that the $\cat \lie$-module $\underline{\malcev}_{[\leq d]}$ is given in arity $s$ by 
\[
\underline{\malcev}_{[\leq d]} (s) = \qgr_\bullet \big ( (\malcev^{\obar s})_d \big).
\]

That this holds can be seen by analysing the construction of $\underline{\malcev}_{[\leq d]}$. In arity $s$, this is given by 
the cokernel of 
\[
\bigoplus_{u >d} \bigoplus _{f \in \cat \lie (u, s) } \malcev^{\obar u} 
\rightarrow \malcev^{\obar s},
\]
as in Remark \ref{rem:left_adjoint_left_cat_opd}, where the action of $f$ on $\underline{\malcev}$ is induced by the Lie algebra structure of $\malcev$.

Combining Theorem \ref{thm:proj_cover_malcev_s} with Proposition \ref{prop:malcev_full_subcat}, one sees that the cokernel identifies as claimed.
\end{rem}
\subsection{The equivalence is symmetric monoidal}

Recall from Section \ref{subsect:convolution_cat_opd} that $\rmod[\catlie]$ is symmetric monoidal for the convolution product $\odot$ and that  we have the symmetric monoidal structure $(\propoly, \obar, \rat)$.

\begin{thm}
\label{thm:hom_underline_malcev_sym_monoidal}
The functor $\hom_{\propoly} (\underline{\malcev}, -)  : 
\propoly \stackrel{\cong}{\rightarrow} \rmod[\catlie]$ is symmetric monoidal.
\end{thm}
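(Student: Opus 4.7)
The plan is to construct an explicit natural transformation
\[
\tau_{F,G}\colon \hom_\propoly(\underline{\malcev},F)\odot \hom_\propoly(\underline{\malcev},G) \longrightarrow \hom_\propoly(\underline{\malcev},F\obar G)
\]
in $\rmod[\catlie]$ and to verify that it is an isomorphism satisfying the symmetric monoidal coherence axioms. The construction exploits $(\propoly,\obar,\rat)$: the associator of $\obar$ provides a canonical $\sym_s\times\sym_t$-equivariant isomorphism $\malcev^{\obar s}\obar \malcev^{\obar t}\cong \malcev^{\obar(s+t)}$, so that for $\alpha\colon\malcev^{\obar s}\to F$ and $\beta\colon\malcev^{\obar t}\to G$ one obtains
\[
\malcev^{\obar(s+t)}\cong \malcev^{\obar s}\obar \malcev^{\obar t}\xrightarrow{\alpha\obar\beta} F\obar G,
\]
an element of $\hom_\propoly(\malcev^{\obar(s+t)},F\obar G)$. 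Inducing up over $\sym_{s+t}$ and summing over decompositions $n=s+t$ produces $\tau_{F,G}$ in each arity $n$.

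To see that $\tau_{F,G}$ is an isomorphism, I would apply Theorem \ref{thm:proj_cover_malcev_s} to identify $\hom_\propoly(\malcev^{\obar s},F)\cong \cre_s F_s$, which exhibits the composite of $\hom_\propoly(\underline{\malcev},-)$ with the restriction $\rmod[\catlie]\to \f(\fb\op)$ as the functor $\fbcr$ of Notation \ref{nota:fbcr_propoly} (cf. Remark \ref{rem:identify_w_fbcr}). By the naturality of these corepresenting isomorphisms, the underlying $\rat\fb$-module morphism of $\tau_{F,G}$ agrees with the structural isomorphism $\fbcr F\odot \fbcr G\cong \fbcr(F\obar G)$ provided by the symmetric monoidality of $\fbcr$ in Proposition \ref{prop:properties_fbcr_propoly}. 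Since a morphism in $\rmod[\catlie]$ is an isomorphism if and only if it is one in each arity, $\tau_{F,G}$ is then an isomorphism in $\rmod[\catlie]$.

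The main content is verifying that $\tau_{F,G}$ is $\catlie$-equivariant. Let $f\in\catlie(m,n)$ act on a summand $\hom_\propoly(\malcev^{\obar s},F)\otimes\hom_\propoly(\malcev^{\obar t},G)$ of $(\hom_\propoly(\underline{\malcev},F)\odot\hom_\propoly(\underline{\malcev},G))(n)$ with $s+t=n$. By the definition of $\odot$ on $\rmod[\catlie]$, the decomposition $\mathbf{n}=\mathbf{s}\sqcup\mathbf{t}$ and the underlying set map $\mathbf{m}\to\mathbf{n}$ (together with its labelling Lie operations over the fibres) split $f$ into $f_1\in\catlie(m_1,s)$ and $f_2\in\catlie(m_2,t)$ with $m=m_1+m_2$, which act separately on the two tensor factors. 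Equivariance of $\tau_{F,G}$ then reduces to the identity
\[
\underline{\malcev}(f)\;=\;\underline{\malcev}(f_1)\obar\underline{\malcev}(f_2)
\]
under the symmetric monoidal identifications $\malcev^{\obar m}\cong \malcev^{\obar m_1}\obar\malcev^{\obar m_2}$ and $\malcev^{\obar n}\cong \malcev^{\obar s}\obar\malcev^{\obar t}$. This holds by inspection of the construction of $\catlie$ in Notation \ref{nota:cat_opd}: each Lie operation labelling the fibre over $i\in\mathbf{n}$ acts on the tensor factors of $\malcev^{\obar m}$ indexed by the preimage of $i$, so those attached to $i\in\mathbf{s}$ operate entirely within the first block and those attached to $i\in\mathbf{t}$ entirely within the second.

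The coherence conditions (associativity, symmetry, unit) for $\tau_{F,G}$ then follow formally from those of $(\propoly,\obar,\rat)$ and the naturality of the isomorphisms $\malcev^{\obar s}\obar\malcev^{\obar t}\cong\malcev^{\obar(s+t)}$; the unit condition reduces to the observation that $\hom_\propoly(\underline{\malcev},\rat)$ equals $\rat$ in arity $0$ and vanishes otherwise, since $\cre_s$ of a constant functor is zero for $s\geq 1$. The principal obstacle is the $\catlie$-equivariance of the preceding paragraph: while conceptually transparent, its rigorous verification requires careful bookkeeping of the set-theoretic decompositions underlying the morphisms of $\catlie$ as a PROP, together with the compatibility of the Lie bracket on $\malcev$ with the symmetric monoidal structure of $\propoly$.
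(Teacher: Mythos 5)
Your proposal is correct and takes essentially the same approach as the paper. Both arguments reduce the underlying $\rat\fb$-module isomorphism to the symmetric monoidality of $\fbcr$ from Proposition \ref{prop:properties_fbcr_propoly} (via the identification of Remark \ref{rem:identify_w_fbcr}), and then verify $\cat\lie$-equivariance by unwinding how morphisms of $\cat\lie$ interact with the decompositions defining the convolution product; your explicit splitting of $f\in\cat\lie(m,n)$ into $f_1\otimes f_2$ according to the preimages of $\mathbf{s}\sqcup\mathbf{t}$ is the same mechanism as the paper's reduction to precomposition by morphisms $\malcev^{\obar u}\obar\malcev^{\obar t}\to\malcev^{\obar s}\obar\malcev^{\obar t}$ and $\malcev^{\obar s}\obar\malcev^{\obar v}\to\malcev^{\obar s}\obar\malcev^{\obar t}$ acting on one factor at a time.
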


\begin{proof}
Using the identification given in Remark \ref{rem:identify_w_fbcr}, if we only retain the underlying $\rat \fb\op$-module structure, the result follows from Proposition \ref{prop:properties_fbcr_propoly}. 

It remains to check that this isomorphism is compatible with the right $\cat \lie$-module structures. This follows by analysing  the above construction  at the level of morphisms in $\propoly$.

 Consider two morphisms $f: \malcev^{\obar s} \rightarrow G_\bullet$ and $g :\malcev ^{\obar t} \rightarrow G'_\bullet$. One has  the tensor product 
\[
f \obar g :  \malcev ^{\obar s+t} \cong \malcev^{\obar s} \obar \malcev ^{\obar t} \rightarrow G_\bullet \obar G'_\bullet.
\]
This represents the element in $\cre_{s+t} (G_\bullet \obar G'_\bullet)_s$ given by forming the appropriate convolution product of $f \in \cre_s G_s$ and $g \in \cre_t G'_t$. 

To understand the action of the $\cat \lie$-morphisms, it suffices to consider the precomposition with morphisms of the form
\begin{eqnarray*}
\malcev^{\obar u} \obar \malcev ^{\obar t} & \rightarrow & \malcev^{\obar s} \obar \malcev ^{\obar t} \\
\malcev^{\obar s} \obar \malcev ^{\obar v} & \rightarrow & \malcev^{\obar s} \obar \malcev ^{\obar t}
\end{eqnarray*}
induced by morphisms in $\hom_\propoly (\malcev^{\obar u}, \malcev^{\obar s}) $ and $\hom_\propoly (\malcev^{\obar v}, \malcev^{\obar t}) $ respectively, where we may take $ u \geq s$ and $v \geq t$. 

These composites are determined  by the $\cat \lie$-module structures of  $\hom_\propoly (\underline{\malcev}, G_\bullet)$ and of  $\hom_\propoly (\underline{\malcev}, G'_\bullet)$ respectively. One checks that the composites  correspond to the right $\cat \lie$-module structure on the convolution product $\hom_\propoly (\underline{\malcev}, G_\bullet) \odot \hom_\propoly (\underline{\malcev}, G'_\bullet)$.
\end{proof}

\section{Relating left $\cat\lie$-modules to functors on  $\gr\op$}
\label{sect:catlie}

This section revisits the relationship between left  $\cat \lie$-modules and $\f (\gr\op)$  that was established in \cite{2021arXiv211001934P}. The equivalence of categories is restated here as Theorem \ref{thm:analytic_grop}, using the exponential functor $\Phi U \lie$ in right $\cat \lie$-modules that is introduced below.

\begin{rem}
In  Section \ref{sect:compare} we analyse the relationship between this result and the analogous result for $\propoly$, Theorem \ref{thm:equiv_propoly_modcatlie}. 
\end{rem}

The operad $\uass$ encodes unital, associative algebras; it is related to the Lie operad $\lie$ by the morphism of operads $\la :  \lie \rightarrow \uass$ that encodes the underlying commutator algebra of an associative algebra (this factorizes across the associative operad $\ass$ as $\lie \rightarrow \ass \rightarrow \uass$, where $\ass \rightarrow \uass$ corresponds to forgetting the unit of an associative, unital algebra). The operad $\ucom$ encodes unital, commutative associative algebras, so that there is a morphism of operads $\uass \rightarrow \ucom$ that corresponds to forgetting about commutativity.

The results of \cite{2021arXiv211001934P} are based on the fact that the structure of the  projective generators of the category $\f_\omega (\gr\op)$ are encoded in the functor  $\gr \op \rightarrow \rmod[\cat \lie]$ given on objects by 
\[
\zed^{\star n} \mapsto \cat \uass (-, n), 
\]
where  $\cat \uass $ is considered as a right $\cat \lie$-module via the restriction of the regular right module structure along $\cat \la : \cat \lie \rightarrow \cat \uass$. In particular, this asserts that the naturality with respect to $\gr\op$ commutes with the right $\cat \lie$-module structure. A further crucial ingredient is that $\cat \uass$ is free as a right $\cat \lie$-module, a consequence of the Poincaré-Birkhoff-Witt theorem.

Here we provide a model for $\cat \uass$, together with its right $\cat \lie$-module structure,  exploiting the symmetric monoidal structure $(\rmod[\cat \opd], \odot, \rat)$ for an operad $\opd$.

\begin{prop}
\label{prop:opd_alg_right_mod}
The operad $\opd$ is an $\opd$-algebra in the category of right $\cat \opd$-modules.

If $\phi : \opd \rightarrow \ppd$ is a morphism of operads, then $\phi$ is a morphism of $\opd$-algebras in right $\cat\opd$-modules, where $\ppd$ is equipped with the respective restricted structures: i.e., considered as a right $\cat \opd$-module by restriction of the canonical right $\cat \ppd$-module structure; considered as a $\opd$-algebra by restriction of the canonical $\ppd$-algebra structure.
\end{prop}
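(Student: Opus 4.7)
The plan is to construct the $\opd$-algebra structure on $\opd$ directly from its own operadic composition, and then to verify the compatibility with the right $\cat\opd$-module structure. Recall that an $\opd$-algebra in $(\rmod[\cat\opd], \odot, \rat)$ consists of $\sym_n$-equivariant morphisms $\mu_n : \opd(n) \otimes \opd^{\odot n} \to \opd$ of right $\cat\opd$-modules, one for each $n \in \nat$, satisfying the standard associativity and unit axioms (with $\opd(n)$ viewed as a $\sym_n$-module acting via the $\kring$-linear structure). Using the description of the convolution product on $\kring\fb\op$-modules, on a finite set $S$ one has
\[
\opd^{\odot n}(S) \cong \bigoplus_{S = S_1 \sqcup \ldots \sqcup S_n} \opd(S_1) \otimes \ldots \otimes \opd(S_n),
\]
the sum being over ordered decompositions of $S$ (with possibly empty parts). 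I would define $\mu_n$ on the component indexed by such a partition as
\[
\nu \otimes (\theta_1, \ldots, \theta_n) \mapsto \gamma_\opd(\nu; \theta_1, \ldots, \theta_n) \in \opd(S),
\]
where $\gamma_\opd$ denotes the operadic composition of $\opd$. The $\sym_n$-equivariance is exactly the equivariance of $\gamma_\opd$ under the action permuting the $n$ inputs and, simultaneously, the $n$ tensor factors together with their indexing subsets of $S$.

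The main content is then to verify that $\mu_n$ is a morphism of right $\cat\opd$-modules. The right $\cat\opd$-action on $\opd$ and on each factor of $\opd^{\odot n}$ is itself given by operadic composition, but at the \emph{inputs} of operations, whereas $\mu_n$ composes at the \emph{outputs}. The required compatibility is thus the (nested) associativity of $\gamma_\opd$: the two ways of either first acting by a $\cat\opd$-morphism on the arguments of the $\theta_i$ and then applying $\mu_n$, or first applying $\mu_n$ and then acting on the inputs of the resulting operation in $\opd(S)$, coincide. In the same way, associativity of $(\mu_n)$ as an $\opd$-algebra structure reduces to the associativity axiom of the operad $\opd$, and unitality follows from the operad unit. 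Equivalently, via the universal property of the endomorphism operad $\mathrm{End}_{\rmod[\cat\opd]}(\opd)$ in the symmetric monoidal category $(\rmod[\cat\opd], \odot, \rat)$, the $\mu_n$ assemble into a morphism of operads $\opd \to \mathrm{End}_{\rmod[\cat\opd]}(\opd)$.

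For the second statement, a morphism of operads $\phi : \opd \to \ppd$ is by definition compatible with operadic composition and units, and it follows from Example~\ref{exam:opd_right_cat_opd} that $\phi$ is a morphism of right $\cat\opd$-modules when $\ppd$ is restricted along $\cat\phi$. The restriction of the $\ppd$-algebra structure of $\ppd$ along $\phi$ yields structure maps $\opd(n) \otimes \ppd^{\odot n} \to \ppd$; commutativity of the square comparing these with the maps $\opd(n) \otimes \opd^{\odot n} \to \opd$ (using $\phi^{\odot n}$ on the left factor) is precisely the compatibility of $\phi$ with $\gamma$ on both sides. The main obstacle throughout is purely notational bookkeeping: every verification ultimately collapses to a standard operad identity, but one must carefully track the ordered partitions in the convolution product and the multiple simultaneous applications of $\gamma_\opd$.
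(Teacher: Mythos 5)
Your proof is correct, and the underlying content — the $\opd$-algebra structure on $\opd$ is given by operadic composition $\gamma_\opd$, and the verification that $\mu_n$ is a morphism of right $\cat\opd$-modules reduces to nested associativity of $\gamma_\opd$ — is the same as the paper's. The paper's argument, however, packages this more efficiently: it first records the isomorphism of right $\cat\opd$-modules $\cat\opd(-,n)\cong\opd(-)^{\odot n}$ (the convolution description you write out is exactly this identification), so that the structure map $\opd(n)\otimes_{\sym_n}\opd(-)^{\odot n}\to\opd(-)$ becomes $\opd(n)\otimes_{\sym_n}\cat\opd(-,n)\to\cat\opd(-,1)$, i.e.\ composition in the category $\cat\opd$. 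Once phrased this way, the right $\cat\opd$-module compatibility is immediate from associativity of composition in $\cat\opd$ (right and left composition commute), with no further unwinding of the operadic composition required. Your direct verification via repeated application of the operad axioms is correct but does by hand what the Yoneda-type identification gives for free; the trade-off is that your version makes the role of $\gamma_\opd$ fully explicit, while the paper's makes the right-module property transparent. The treatment of the second statement is the same in both.
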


\begin{proof}
This result is stated in \cite[Observation 9.1.3]{MR2494775} (in terms of right $\opd$-modules, defined in the operadic sense). For the convenience of the reader, a proof is outlined here.

For $n \in \nat$, by construction, there is an isomorphism of right $\cat \opd$-modules: 
\[
\cat \opd (-, n) \cong \opd(-)^{\odot n}
\]
and this is $\sym_n$-equivariant with respect to the canonical action on the left hand side and the `place permutation' action on the right hand side. The operad structure then induces the structure map:
\[
\opd(n) \otimes_{\sym_n} \opd(-)^{\odot n} \rightarrow \opd (-)
\]
in right $\cat \opd$-modules. This gives the natural $\opd$-algebra structure in right $\cat \opd$-algebras, as required.
\end{proof}

\begin{exam}
\label{exam:la}
By Proposition \ref{prop:opd_alg_right_mod},  $\lie$ is a Lie algebra in right $\cat \lie$-modules and $\uass$ is a unital, associative algebra in right $\cat \uass$-modules. Thus, by restriction along $\la : \lie \rightarrow \uass $ and its associated functor $\cat \la$, $\uass$ is a Lie algebra in right $\cat \lie$-modules and, with respect to this structure, $\lie \rightarrow \uass$ is a morphism  of Lie algebras in right $\cat\lie$-modules.
\end{exam}

The universal enveloping algebra functor $U$ from Lie algebras to unital associative algebras can be applied in the category of right $\cat \lie$-modules with respect to the structure $(\cat \lie , \odot, \rat)$. It is the left adjoint to the forgetful functor corresponding to restriction along $\la : \lie \rightarrow \uass$.

Applied to $\lie$, considered as a Lie algebra in $\rmod[\cat\lie]$, this yields $U \lie$, which is an $\uass$-algebra in right $\cat \lie$-modules. 

\begin{prop}
\label{prop:U_right_cat_Lie_modules}
The morphism $\la : \lie \rightarrow \uass$ induces 
$ 
U \lie \rightarrow \uass
$ 
that is an isomorphism of $\uass$-algebras in right $\cat \lie$-modules, where $\uass$ is equipped with the restricted module structure. 
\end{prop}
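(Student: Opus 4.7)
The plan is to construct $\phi : U\lie \to \uass$ via the universal property of $U$, then to verify that it is an isomorphism by passage through the Schur functor, thereby reducing the statement to the classical Poincaré--Birkhoff--Witt theorem.

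By Example \ref{exam:la}, $\la : \lie \to \uass$ is a morphism of Lie algebras in the symmetric monoidal category $(\rmod[\catlie], \odot, \rat)$. The universal enveloping algebra functor $U$ in this category is by construction left adjoint to the forgetful functor from unital associative algebras to Lie algebras, so $\la$ extends uniquely to the required morphism $\phi : U\lie \to \uass$ of $\uass$-algebras in $\rmod[\catlie]$. The content of the proposition is that this $\phi$ is an isomorphism.

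To show $\phi$ is an isomorphism in $\rmod[\catlie]$, it suffices to check that the underlying morphism of $\rat\fb\op$-modules is an isomorphism, since the inverse of such an isomorphism is automatically $\catlie$-linear. For this, apply the Schur functor $\mathcal{S}$ that sends a $\rat\fb\op$-module $M$ to the functor $V \mapsto \bigoplus_{n \in \nat} M(n) \otimes_{\sym_n} V^{\otimes n}$ on $\rat$-vector spaces. This functor is symmetric monoidal (taking $\odot$ to the pointwise tensor product of functors) and conservative (the multilinear part of $\mathcal{S}_M(V)$ recovers $M(n)$ as a $\sym_n$-representation). Being symmetric monoidal, $\mathcal{S}$ commutes with the formation of tensor algebras, free Lie algebras and universal enveloping algebras, yielding identifications $\mathcal{S}_\lie(V) \cong \liealg(V)$, $\mathcal{S}_\uass(V) \cong T(V)$ and $\mathcal{S}_{U\lie}(V) \cong U(\liealg(V))$. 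Under these, $\mathcal{S}(\phi)_V$ becomes the canonical morphism $U(\liealg(V)) \to T(V)$ induced by the inclusion $\liealg(V) \hookrightarrow T(V)$, which is an isomorphism by classical Poincaré--Birkhoff--Witt, valid since $\rat$ has characteristic zero.

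The main technical point is the identification $\mathcal{S}_{U\lie}(V) \cong U(\liealg(V))$: one must check that the ``internal'' universal enveloping algebra constructed in $(\rmod[\catlie], \odot, \rat)$ corresponds, via $\mathcal{S}$, to the classical $U$ applied pointwise. This reduces to $U$ being characterized as the left adjoint to a forgetful functor built from the monoidal structure, together with the fact that $\mathcal{S}$ is symmetric monoidal and so transports left adjoints of this form. Once this identification is secured, the proof is immediate from classical PBW.
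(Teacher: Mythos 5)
Your proposal is correct and follows essentially the same route as the paper: construct the morphism by the universal property of $U$, reduce to the underlying $\rat\fb\op$-module, pass to Schur functors (which are conservative and symmetric monoidal over $\rat$), identify the image with $U(\liealg(V)) \to T(V)$, and conclude by classical Poincaré--Birkhoff--Witt. Your write-up is somewhat more explicit about the two subsidiary points — conservativity of the Schur functor and the compatibility of the internal universal enveloping algebra with $\mathcal{S}$ — which the paper delegates to a citation and to a ``likewise'', but the argument is the same.
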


\begin{proof}
The morphism is given by the universal property of the universal enveloping algebra construction. 
 To prove that it is an isomorphism, it suffices to establish that the underlying morphism of $\rat\fb\op$-modules is an isomorphism. Since we are working over $\rat$, as exploited in \cite{2021arXiv211001934P},  it is sufficient to show that it induces an isomorphism on the associated Schur functors. 

We use the following notation for the Schur functor construction, following \cite{2021arXiv211001934P}. For $V$ a finite-dimensional vector space, let $\underline{V}$ be the $\rat\fb$-module with $\underline{V}(\mathbf{n}) = V^{\otimes n}$, the $\sym_n$-action given by place permutations of the tensor factors; this construction is natural in $V$. The  Schur functor construction then identifies as $- \otimes_\fb \underline{V}$ (naturally in $V$). This is an exact functor from $\rat\fb\op$-modules to functors from finite-dimensional $\rat$-vector spaces to $\rat$-vector spaces. 

Now,  $\uass \otimes_\fb \underline{V}$ is naturally isomorphic to the tensor algebra functor $T(V)$, since the latter is the free unital associative algebra on $V$. Likewise,  $U \lie \otimes_\fb \underline{V}$ is naturally isomorphic to the universal enveloping algebra $U \liealg (V)$ on the free Lie algebra on $V$ and the map $ (U \lie \rightarrow \uass) \otimes_\fb \underline{V}$ identifies with the canonical isomorphism 
$
U \liealg (V) \stackrel{\cong}{\rightarrow} T(V)$.
 This concludes the proof.
\end{proof}

More is true:

\begin{prop}
\label{prop:Hopf_Ulie}
The right $\cat \lie$-module $U \lie$ has a unique cocommutative Hopf algebra structure extending the associative, unital algebra structure, for which $\lie \subset U \lie$ lies in the primitives. 
\end{prop}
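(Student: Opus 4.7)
The plan is to apply the standard construction of the Hopf algebra structure on a universal enveloping algebra, transported from $\kring$-modules to the $\rat$-linear symmetric monoidal category $(\rmod[\cat\lie], \odot, \rat)$, and then extract uniqueness from the universal property of $U$ as a left adjoint.

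First, I would construct the coproduct. The object $U\lie \odot U\lie$ inherits a unital associative algebra structure in $\rmod[\cat\lie]$ from the algebra structures on each factor, via the symmetry of $\odot$ and the standard middle-four interchange. Consider the morphism $\delta : \lie \to U\lie \odot U\lie$ given by $x \mapsto x \odot 1 + 1 \odot x$, using the unit $\eta : \rat \to U\lie$ and the unitors. A direct calculation with the symmetric braiding shows that $\delta$ is a Lie algebra morphism, where $U\lie \odot U\lie$ carries the commutator bracket. By the universal property of $U$ as left adjoint to the forgetful functor from unital associative algebras to Lie algebras in $(\rmod[\cat\lie], \odot, \rat)$, there is a unique unital algebra morphism $\Delta : U\lie \to U\lie \odot U\lie$ extending $\delta$. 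The counit $\epsilon : U\lie \to \rat$ is defined analogously as the unique algebra morphism extending the zero map on $\lie$, and the antipode $S : U\lie \to U\lie$ as the unique algebra morphism to $(U\lie)^{\mathrm{op}}$ extending $-\id : \lie \to \lie \subset U\lie$.

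The remaining Hopf algebra axioms are all verified by the same type of universal-property argument. Coassociativity follows because $(\Delta \odot \id)\Delta$ and $(\id \odot \Delta)\Delta$ are both unital algebra morphisms $U\lie \to (U\lie)^{\odot 3}$ extending the same Lie algebra morphism $x \mapsto x \odot 1 \odot 1 + 1 \odot x \odot 1 + 1 \odot 1 \odot x$; cocommutativity follows because $\tau_{U\lie, U\lie} \circ \Delta$ is another unital algebra morphism extending $\delta$; the counit and antipode axioms reduce likewise to equalities between algebra morphisms agreeing on $\lie$. For uniqueness, any cocommutative Hopf algebra structure extending the given associative algebra structure and making $\lie$ primitive must satisfy $\Delta|_\lie = \delta$, $\epsilon|_\lie = 0$, and $S|_\lie = -\id$; since $\Delta$, $\epsilon$, $S$ are constrained to be unital algebra (anti)morphisms, and the universal property of $U$ asserts that such a morphism is determined by its restriction to $\lie$, the entire Hopf algebra structure is forced to coincide with the one just constructed.

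The main obstacle I expect is the purely categorical bookkeeping around the convolution monoidal structure $(\rmod[\cat\lie], \odot, \rat)$: confirming concretely that $U\lie \odot U\lie$ carries the expected algebra structure in this category, and that $\delta$ is a morphism of Lie algebras with respect to it. Once one is satisfied that the convolution symmetric monoidal structure, as invoked from \cite[Proposition 1.6.3]{MR1854112} in Section \ref{subsect:convolution_cat_opd}, supports the standard formal manipulations, the rest is a direct transcription of the classical construction in $\kring$-modules into an arbitrary $\rat$-linear symmetric monoidal category.
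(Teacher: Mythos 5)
Your proposal is correct and follows essentially the same route as the paper's own proof: define $\Delta$, $\epsilon$, $S$ on $\lie$ by the diagonal, zero, and $-\id$ respectively, check these are Lie algebra morphisms, extend by the universal property of $U$ as a left adjoint to unital algebra (anti)morphisms, and deduce all axioms and uniqueness from the fact that algebra morphisms out of $U\lie$ are determined by their restrictions to $\lie$. The paper's version is more compressed (it leaves the axiom verifications and the uniqueness argument implicit), but the underlying argument is the same.
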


\begin{proof}
This generalizes the classical construction of the Hopf algebra structure on the universal enveloping algebra  of a Lie algebra. Namely, restricted to $\lie \subset U \lie$, the coproduct is defined to be 
\[
\lie \rightarrow \lie \oplus \lie \cong \lie \odot \rat \ \oplus \   \rat \odot \lie 
\rightarrow 
U \lie \odot U \lie,
\]
where the first map is the diagonal and the final map is induced by the canonical inclusions of $\lie $ and $\rat$ in $U\lie$. This is clearly a morphism of Lie algebras in right $\cat \lie$-modules, where $U \lie \odot U \lie$ is considered as the Lie algebra associated to the $\odot$-product associative algebra structure.  

Hence, this induces the coproduct $U \lie \rightarrow U\lie \odot U \lie$, a morphism of $\uass$-algebras in right $\cat\lie$-modules. Similarly, the conjugation is obtained as the anti-automorphism of algebras in right $\cat \lie$-modules extending $\lie \stackrel{ (-1)\times}{\rightarrow} \lie \subset U \lie$. This gives the required cocommutative Hopf algebra structure on $U \lie$ in $\rmod[\cat \lie]$. 
\end{proof}

\begin{rem}
\ 
\begin{enumerate}
\item 
Upon passing to the associated Schur functor $U \lie \otimes_\fb \underline{V}$ as in the proof of Proposition \ref{prop:U_right_cat_Lie_modules}, this recovers the primitively-generated Hopf algebra structure on $U \liealg (V)$ with primitives $\liealg (V) \subset U \liealg(V)$.
\item
The augmentation of the Lie operad $\lie \rightarrow I$ can be considered as a morphism of Lie algebras in $\rmod[\cat \lie]$, where the structure of $I$ is induced by restriction along the augmentation. On applying the universal enveloping algebra functor this gives a morphism of cocommutative Hopf algebras in right $\cat \lie$-modules $
U \lie \rightarrow U I$. 
Here, the underlying right $\cat \lie$-module of $U I$ is isomorphic to $\ucom$ (with `trivial' right $\cat \lie$-module structure) and, via the isomorphism of Proposition \ref{prop:U_right_cat_Lie_modules}, the underlying morphism of right $\cat \lie$-modules is the morphism 
$
\uass \rightarrow \ucom$.

On passage to Schur functors, this corresponds to the symmetrization map $T (V) \rightarrow S(V)$, where $S(V)$ is the symmetric algebra, which is a morphism of primitively-generated Hopf algebras (naturally with respect to $V$).
\end{enumerate}
\end{rem}

The exponential functor construction $\Phi$ can be applied (working in $(\cat \lie, \odot, \rat)$) to $U\lie$. 
Essentially by construction, one has: 

\begin{prop}
\label{prop:Phi_U_L_model}
The exponential functor $\Phi U \lie$ is a functor from $\gr\op$ to right $\cat \lie$-modules. This is isomorphic 
to $\cat \uass$, equipped with the structure of \cite{2021arXiv211001934P}.
\end{prop}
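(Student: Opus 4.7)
The first assertion is immediate: by Proposition \ref{prop:Hopf_Ulie}, $U\lie$ is a cocommutative Hopf algebra in the symmetric monoidal category $(\rmod[\cat\lie], \odot, \rat)$, so the exponential functor construction of Notation \ref{nota:Phi}, applied with $\calm = \rmod[\cat\lie]$, yields a functor $\Phi U\lie : \gr\op \rightarrow \rmod[\cat\lie]$.

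For the identification with $\cat\uass$, the plan is to transport structure along the isomorphism of Proposition \ref{prop:U_right_cat_Lie_modules}. Since $U\lie \stackrel{\cong}{\rightarrow} \uass$ is an isomorphism of $\uass$-algebras in $\rmod[\cat\lie]$, it transports the cocommutative Hopf algebra structure on $U\lie$ to a cocommutative Hopf algebra structure on $\uass$ in $\rmod[\cat\lie]$ for which the canonical inclusion $\lie \hookrightarrow \uass$ lies in the primitives (using Proposition \ref{prop:Hopf_Ulie}). The naturality of the $\Phi$ construction with respect to morphisms of cocommutative Hopf algebras (recorded just after Notation \ref{nota:Phi}) then produces a natural isomorphism $\Phi U\lie \cong \Phi \uass$ of functors $\gr\op \rightarrow \rmod[\cat\lie]$.

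On values, Proposition \ref{prop:opd_alg_right_mod} gives, for each $n \in \nat$, a natural $\sym_n$-equivariant isomorphism $\cat\uass(-, n) \cong \uass^{\odot n} = (\Phi \uass)(\zed^{\star n})$ in $\rmod[\cat\uass]$, and hence in $\rmod[\cat\lie]$ after restriction along $\cat\la$. To promote this to an isomorphism of functors on $\gr\op$, I would use the universal property of $\gr\op$ as the free symmetric monoidal category on a cocommutative Hopf monoid (the object $\zed$): a symmetric monoidal functor $\gr\op \rightarrow \rmod[\cat\lie]$ is determined up to canonical natural isomorphism by its value on $\zed$ together with the induced cocommutative Hopf algebra structure there. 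Both $\Phi \uass$ and the construction of \cite{2021arXiv211001934P} evaluate at $\zed$ to give $\uass$, and it suffices to match the induced Hopf algebra structures.

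The main obstacle is therefore to verify that the Hopf algebra structure on $\uass \in \rmod[\cat\lie]$ arising from \cite{2021arXiv211001934P}—where the coproduct is induced by the cogroup comultiplication $\nabla : \zed \rightarrow \zed \star \zed$ together with the operad composition of $\uass$—agrees with the primitively generated Hopf algebra structure transported from $U\lie$ via Proposition \ref{prop:U_right_cat_Lie_modules}. This reduces to checking that, under both structures, the generator of $\lie(1) \subset \uass(1)$ is primitive; unitality, counitality and the treatment of the antipode (from $p$ and $\chi$ respectively) then follow formally, and by Proposition \ref{prop:Hopf_Ulie} the Hopf algebra structure on $U\lie$ is uniquely characterized by the requirement that $\lie$ be primitive. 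A direct inspection of the action of $\nabla$ on $\lie \subset \uass$ in \cite{2021arXiv211001934P} then concludes the proof.
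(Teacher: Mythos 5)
Your proposal is correct and amounts to a careful elaboration of what the paper leaves implicit: the paper contents itself with the phrase ``Essentially by construction, one has:'' immediately before the statement, relying on Proposition \ref{prop:U_right_cat_Lie_modules}, Proposition \ref{prop:Hopf_Ulie}, the isomorphism $\cat\opd(-,n)\cong\opd^{\odot n}$ recorded in the proof of Proposition \ref{prop:opd_alg_right_mod}, and the definition of $\Phi$ via the universal property of $\gr\op$. Your reduction of the comparison to the uniqueness clause of Proposition \ref{prop:Hopf_Ulie}, and the further reduction of ``$\lie$ is primitive'' to a check on the generator of $\lie(1)\cong\cat\lie(-,1)(1)$ (since $\lie$ is the free right $\cat\lie$-module on $\lie(1)$ and the coproduct is a $\cat\lie$-module morphism), is the right way to make the final verification explicit; one should also record, as you do implicitly, that the algebra structures on $\uass$ agree because the product of the Hopf structure of \cite{2021arXiv211001934P} is by construction the operad composition of $\uass$, which is what $U\lie\cong\uass$ identifies with the $U\lie$ product.

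One small point worth making explicit if you write this up fully: to invoke the universal property of $\gr\op$ to compare $\Phi\uass$ with the functor of \cite{2021arXiv211001934P}, you need the latter to be symmetric monoidal in the first place; this holds because the identifications $\cat\uass(-,m+n)\cong\cat\uass(-,m)\odot\cat\uass(-,n)$ are compatible with the $\gr\op$-action, but it should be stated rather than assumed, since the reduction to ``match Hopf structures at $\zed$'' only applies to symmetric monoidal functors.
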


\begin{rem}
\label{rem:identifications_Phi_ULie}
In arity $s \in \nat$, the functor $(\Phi U \lie )(s)$ (evaluated with respect to the right $\cat \lie$-module structure) is isomorphic to the functor on $\gr\op$: 
\[
\zed^{\star t} \mapsto \cat \uass (s, t),
\]
with the right $\cat \lie$-module structure of $\Phi U \lie$ corresponding to the right $\cat \lie$-action on $\cat \uass$. 

Moreover, by the results of  \cite{2021arXiv211001934P},  $(\Phi U \lie)(s)$ is polynomial on $\gr\op$ of degree $s$ and there is a surjection 
\[
(\Phi U \lie) (s) \twoheadrightarrow (\A^{\sharp})^{\otimes s} 
\] 
that exhibits $(\Phi U \lie )(s)$ as the projective cover of $(\A^{\sharp})^{\otimes s} $ in $\f_\omega (\gr\op)$. 

It follows that the dual $D (\Phi U \lie )(s)$ is the injective envelope of $\A^{\otimes s}$ in $\propoly$. 
\end{rem}

Hence, the main result of \cite{2021arXiv211001934P} can be rewritten as follows, where $\f_\omega (\gr\op)$ is the full subcategory of analytic functors:

\begin{thm}
\label{thm:analytic_grop}
The functor $\Phi U \lie \otimes_{\cat \lie} - $ induces an equivalence of categories
\[
\Phi U \lie \otimes_{\cat \lie} - : \lmod[\cat \lie] \rightarrow \f_\omega (\gr\op).
\] 

This restricts to an equivalence of categories:
\[
\Phi U \lie \otimes_{\cat \lie} - : \lmod[\cat \lie]^{< \infty} \rightarrow \f_{<\infty} (\gr\op),
\] 
where $\lmod[\cat \lie]^{< \infty} \subset \lmod [\cat \lie]$ is the full subcategory of left $\cat \lie$-modules $M$ such that $M (n)=0$ for $n \gg 0$.
\end{thm}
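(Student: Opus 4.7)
The plan is to deduce this from the main equivalence established in \cite{2021arXiv211001934P}, via the identification of $\Phi U \lie$ with $\cat \uass$ provided by Proposition \ref{prop:Phi_U_L_model}. The theorem is essentially a reformulation, so the substantive work has already been done; what is new here is the conceptually cleaner description of the fundamental functor as an exponential functor on the Hopf algebra $U\lie$ in right $\cat\lie$-modules.

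First, I would recall the form of the result from \cite{2021arXiv211001934P}. There, the fundamental object is the functor $\gr\op \rightarrow \rmod[\cat\lie]$ given on objects by $\zed^{\star n} \mapsto \cat\uass(-,n)$. For each $s \in \nat$, the functor $\cat\uass(s,-) \in \ob \f_\omega(\gr\op)$ is polynomial of degree $s$ and is the projective cover of $(\A^\sharp)^{\otimes s}$; the collection $\{\cat\uass(s,-) \mid s \in \nat\}$ forms a set of small projective generators of $\f_\omega(\gr\op)$, the morphism spaces between them assemble to recover $\cat\lie$ (with the appropriate variance), and a Freyd-type argument yields the equivalence $\cat\uass \otimes_{\cat\lie} - : \lmod[\cat\lie] \stackrel{\cong}{\rightarrow} \f_\omega(\gr\op)$.

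Next I would invoke Proposition \ref{prop:Phi_U_L_model}, which asserts an isomorphism $\Phi U\lie \cong \cat\uass$ as functors from $\gr\op$ to $\rmod[\cat\lie]$. Substituting this identification into the displayed equivalence yields the first assertion. The one point requiring verification is that the right $\cat\lie$-module structure on $\Phi U\lie$, obtained from the cocommutative Hopf algebra structure on $U\lie$ in $\rmod[\cat\lie]$ (Proposition \ref{prop:Hopf_Ulie}) combined with the exponential functor construction, coincides with the structure considered in \cite{2021arXiv211001934P}. This follows from the isomorphism $U\lie \cong \uass$ of $\uass$-algebras in $\rmod[\cat\lie]$ given by Proposition \ref{prop:U_right_cat_Lie_modules}, together with the naturality of the exponential functor construction $\Phi$ with respect to symmetric monoidal functors of symmetric monoidal target categories.

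For the restriction to the finite-support subcategories: since $(\Phi U\lie)(s)$ is polynomial of degree $s$ by Remark \ref{rem:identifications_Phi_ULie}, if $M$ is supported in degrees $\leq d$, then $\Phi U\lie \otimes_{\cat\lie} M$ is a quotient of $\bigoplus_{s \leq d} (\Phi U\lie)(s) \otimes_{\sym_s} M(s)$ and so lies in $\f_{\leq d}(\gr\op) \subset \f_{<\infty}(\gr\op)$. Conversely, an analytic functor $F$ of polynomial degree $d$ corresponds under the equivalence to the left $\cat\lie$-module $\hom_{\f_\omega(\gr\op)}(\Phi U\lie, F)$, whose value at $s$ vanishes for $s > d$ since $(\Phi U\lie)(s)$ is the projective cover of $(\A^\sharp)^{\otimes s}$ and $F$ has no non-zero quotients of polynomial degree $>d$. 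The main obstacle is essentially bookkeeping: one must track the identifications across Propositions \ref{prop:U_right_cat_Lie_modules}, \ref{prop:Hopf_Ulie}, and \ref{prop:Phi_U_L_model} carefully enough to see that the right $\cat\lie$-module structures really do agree, which is a matter of comparing the two constructions of the composition in $\cat\uass$.
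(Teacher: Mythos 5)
Your proposal is correct and takes essentially the same approach as the paper: the theorem is presented there without a formal proof, as a reformulation of the main result of \cite{2021arXiv211001934P}, with the substance residing in Propositions \ref{prop:opd_alg_right_mod}, \ref{prop:U_right_cat_Lie_modules}, \ref{prop:Hopf_Ulie} and \ref{prop:Phi_U_L_model}, which identify $\Phi U\lie$ with $\cat\uass$ (with its right $\cat\lie$-module structure) as a functor from $\gr\op$ to $\rmod[\cat\lie]$. You correctly identify this as the key input and correctly treat the finite-support restriction via the polynomial-degree bounds coming from Remark \ref{rem:identifications_Phi_ULie}. One small stylistic remark: in the converse direction of the finite-support argument, the phrase ``$F$ has no non-zero quotients of polynomial degree $>d$'' is an indirect route; the cleaner statement, parallel to the paper's proof of Corollary \ref{cor:finite_functors_equiv_rmod}, is that $\hom_{\f_\omega(\gr\op)}(\Phi U\lie(s),F)$ identifies with the $s$-th cross-effect of $F$, which vanishes when $F$ has polynomial degree $d < s$.
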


\begin{exam}
For a Lie algebra $\g$, one has the associated left $\cat \lie$-module $\underline{\g}$ given by $\mathbf{n} \mapsto \g^{\otimes n}$, with morphisms of $\cat \lie$ acting via the Lie algebra structure of $\g$.  As proved in \cite{2021arXiv211001934P},  one has the natural isomorphism  
$
\Phi U \lie \otimes_{\cat \lie} \underline{\g} \cong \Phi (U \g).
$ 
as functors in $\f_\omega (\gr\op)$.

One can also consider $\lie$ as a right $\cat \lie$-module and form $\lie \otimes_{\cat \lie} \underline{\g}$. Since $\lie$ identifies with $\cat \lie (-, 1)$ as a right $\cat \lie$-module, which is projective, using a Yoneda lemma type argument, the above isomorphism `restricts' to 
$
\lie \otimes_{\cat \lie} \underline{\g}
\cong 
\g$,
and this is an isomorphism of Lie algebras, where the Lie algebra structure on the left is induced by considering $\lie$ as a Lie algebra in right $\cat \lie$-modules.

The naturality of these isomorphisms implies that they apply to $\malcev$, considered as a Lie algebra in $\propoly$. Hence there are natural isomorphisms:
\begin{eqnarray*}
\Phi U \lie \otimes_{\cat \lie} \underline{\malcev} &\cong & \Phi (U \malcev) \\
 \lie \otimes_{\cat \lie} \underline{\malcev } & \cong & \malcev.
\end{eqnarray*}
\end{exam}

\section{Relating the covariant and contravariant cases}
\label{sect:compare}

The purpose of this section is to make explicit the relationship between Theorem \ref{thm:equiv_propoly_modcatlie} and Theorem \ref{thm:analytic_grop}. This is based on the duality between $\propoly $ and $\f_\omega (\gr\op)$.

This involves relating $\underline{\malcev}$, which is a left $\cat \lie$-module in $\propoly$, and $\Phi U\lie$, a right $\cat \lie$-module in $\f (\gr\op)$. Theorem \ref{thm:otimes_gr_PhiUL_malcev} gives a concrete `duality' statement. Theorem \ref{thm:equiv_Umalcev_PhiUlie} explains {\em why} $\Phi U \lie$ arises naturally in the theory.

Essentially this section serves to verify that the theories behave as they should.

\subsection{Analysing $\Phi U \lie$ via $\propoly$}

Theorem \ref{thm:equiv_propoly_modcatlie} gives the equivalence of categories:
\[
\hom_{\propoly} (\underline{\malcev}, - ) : \propoly \rightarrow \rmod[\cat \lie].
\]
Underlying this equivalence is the fact that $\hom_{\propoly} (\underline{\malcev}, \malcev )$ is naturally isomorphic to $\lie$ as a Lie algebra in $\rmod [\cat \lie]$, by Proposition \ref{prop:malcev_full_subcat}.

Now, the functor $\pbif : \Gamma \mapsto P_\Gamma$ is an exponential functor, via the natural equivalence $P_{\zed^{\star s} } \cong P_\zed^{\otimes s}$. This translates to $\propoly$:

\begin{lem}
\label{lem:expo_propoly}
The functor $\Gamma \mapsto \qgr_\bullet P_\Gamma$ from $\gr\op$ to $\propoly$ is exponential. 

Namely, $\qgr_\bullet \pbif \cong \Phi (\qgr_\bullet  P_\zed)$, where the exponential functor construction $\Phi$ is applied in $(\propoly, \obar, \rat)$.
\end{lem}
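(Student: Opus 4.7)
The plan is to bootstrap from the corresponding statement for $\pbif$ in $\f(\gr)$ (Proposition \ref{prop:P_expo}), which says that $\pbif \cong \Phi P_\zed$ as functors from $\gr\op$ to $\f(\gr)$, upgrading the isomorphism through the functor $\qgr_\bullet : \f(\gr) \to \propoly$. The core issue is that $\qgr_\bullet$ is only known to be symmetric monoidal on $\f_{<\infty}(\gr)$ (Proposition \ref{prop:propoly_sym_mon}), whereas $P_\zed$ and its tensor powers are not polynomial; so a direct appeal to symmetric monoidality is unavailable.

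First, by Proposition \ref{prop:P_zed_propoly}, $\qgr_\bullet P_\zed$ is a cocommutative Hopf algebra in $(\propoly, \obar, \rat)$, so the exponential functor $\Phi(\qgr_\bullet P_\zed) : \gr\op \to \propoly$ is well-defined (Notation \ref{nota:Phi}). On objects, for $s \in \nat$, it sends $\zed^{\star s}$ to $(\qgr_\bullet P_\zed)^{\obar s}$. Next, combine the natural isomorphism $P_{\zed^{\star s}} \cong P_\zed^{\otimes s}$ from the exponential property of $\pbif$ with Proposition \ref{prop:qgr_tensor_Pzed}, which provides a natural, $\sym_s$-equivariant isomorphism
\[
\qgr_\bullet P_{\zed^{\star s}} \;\cong\; \qgr_\bullet (P_\zed^{\otimes s}) \;\cong\; (\qgr_\bullet P_\zed)^{\obar s}.
\]
This gives the required isomorphism at the level of objects.

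The main remaining task is naturality in $\gr\op$, namely that this object-level isomorphism is compatible with the action of morphisms of $\gr\op$ on each side. By standard reductions (as in Section \ref{subsect:describe_qgr_d_PZ}), morphisms in $\gr\op$ are generated under $\star$ by: morphisms in the image of $\mathrm{Free} : \finset \to \gr$, together with $p$, $\chi$, and $\nabla$. For the morphisms coming from $\finset$ and for $p$ and $\chi$, the compatibility is automatic, since these act by permuting tensor factors, projecting onto unit/counit components, or applying the conjugation of $P_\zed$; the Hopf structure on $\qgr_\bullet P_\zed$ was constructed in Proposition \ref{prop:P_zed_propoly} precisely as the image of the Hopf structure on $P_\zed$ under $\qgr_\bullet$. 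The key case is $\nabla : \zed \to \zed \star \zed$, which acts on $P_\zed$ via the coproduct $P_\zed \to P_\zed \otimes P_\zed$; one must check that the map
\[
\qgr_\bullet P_\zed \longrightarrow \qgr_\bullet(P_\zed \otimes P_\zed) \;\cong\; \qgr_\bullet P_\zed \obar \qgr_\bullet P_\zed
\]
induced by $\qgr_\bullet$ agrees with the coproduct of $\qgr_\bullet P_\zed$ as a Hopf algebra in $\propoly$. This is exactly how the latter coproduct was defined in Proposition \ref{prop:P_zed_propoly}, via the universal property of $\qgr_\bullet$ as a left adjoint applied to the coproduct of $P_\zed$.

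The main obstacle, then, is purely bookkeeping: keeping track of the fact that on the left-hand side one takes $\qgr_\bullet$ of the $s$-fold tensor in $\f(\gr)$, while on the right-hand side one uses $\obar$ in $\propoly$; Proposition \ref{prop:qgr_tensor_Pzed} is exactly what makes this translation natural, and the structure morphisms of $\Phi(\qgr_\bullet P_\zed)$ are by definition the image under this identification of the Hopf operations of $P_\zed$ after applying $\qgr_\bullet$. Assembling these observations yields the natural isomorphism $\qgr_\bullet \pbif \cong \Phi(\qgr_\bullet P_\zed)$ of functors $\gr\op \to \propoly$, as claimed.
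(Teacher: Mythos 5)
Your argument is correct and follows the paper's route: the paper's proof of this lemma is a single sentence citing Proposition~\ref{prop:qgr_tensor_Pzed}, and you have essentially unpacked why that proposition suffices — identifying $\qgr_\bullet P_{\zed^{\star s}}\cong(\qgr_\bullet P_\zed)^{\obar s}$ at the object level and then observing that naturality in $\gr\op$ follows because the Hopf operations on $\qgr_\bullet P_\zed$ (Proposition~\ref{prop:P_zed_propoly}) were \emph{defined} by applying $\qgr_\bullet$ to those of $P_\zed$ through exactly this identification.

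One small slip worth correcting: on the $\gr\op$ variable the morphism $\nabla:\zed\to\zed\star\zed$ of $\gr$ induces $P_\zed^{\otimes 2}\to P_\zed$, i.e.\ it acts via the \emph{product} of the Hopf algebra $P_\zed$ (not the coproduct as you wrote); it is the fold map coming from $\finset$ (the set map $\{1,2\}\to\{1\}$) that induces $P_\zed\to P_\zed^{\otimes 2}$, i.e.\ the \emph{coproduct}. This does not affect your argument's validity — the same reasoning (that $\qgr_\bullet$ applied to a given Hopf operation of $P_\zed$ is, by construction, the corresponding operation of $\qgr_\bullet P_\zed$) handles both product and coproduct uniformly — but the operations are swapped in your exposition.
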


\begin{proof}
The result follows from Proposition \ref{prop:qgr_tensor_Pzed}.  
\end{proof}

\begin{rem}
One would have liked to prove Lemma \ref{lem:expo_propoly} by appealing to the  `symmetric monoidality' of $\qgr_\bullet$ given  by Proposition \ref{prop:propoly_sym_mon}; unfortunately the latter  is only established restricted to $\f_{<\infty} (\gr)$, considered as a full subcategory of $\propoly$. Proposition \ref{prop:qgr_tensor_Pzed} allows this problem to be circumvented.
\end{rem}

\begin{thm}
\label{thm:equiv_Umalcev_PhiUlie}
Under the equivalence of Theorem \ref{thm:equiv_propoly_modcatlie}, $\qgr_\bullet \pbif $ corresponds to $\Phi U \lie$, considered as a functor from $\gr\op$ to $\rmod [\cat\lie]$.
\end{thm}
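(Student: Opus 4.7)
The plan is to transport the structural identification of $\qgr_\bullet \pbif$ as an exponential functor across the symmetric monoidal equivalence of Theorem \ref{thm:equiv_propoly_modcatlie}, exploiting the naturality of $\Phi$ with respect to symmetric monoidal functors.

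First, I will combine Lemma \ref{lem:expo_propoly}, which yields $\qgr_\bullet \pbif \cong \Phi(\qgr_\bullet P_\zed)$ as functors $\gr\op \to \propoly$ with $\Phi$ computed in $(\propoly, \obar, \rat)$, with the isomorphism $\qgr_\bullet P_\zed \cong U\malcev$ of cocommutative Hopf algebras in $\propoly$ supplied by Proposition \ref{prop:U_malcev}. This gives
\[
\qgr_\bullet \pbif \cong \Phi(U\malcev)
\]
in $\f(\gr\op; \propoly)$, and reduces the theorem to computing the image of $\Phi(U\malcev)$ under $\hom_\propoly(\underline{\malcev}, -)$.

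Next, Theorem \ref{thm:hom_underline_malcev_sym_monoidal} makes $\hom_\propoly(\underline{\malcev}, -) : \propoly \to \rmod[\catlie]$ symmetric monoidal from $(\propoly, \obar, \rat)$ to $(\rmod[\catlie], \odot, \rat)$. By the naturality of $\Phi$ with respect to symmetric monoidal functors recorded in Section \ref{subsect:expo}, this yields a natural isomorphism
\[
\hom_\propoly(\underline{\malcev}, \Phi(U\malcev)) \cong \Phi\bigl(\hom_\propoly(\underline{\malcev}, U\malcev)\bigr)
\]
in $\f(\gr\op; \rmod[\catlie])$. Moreover, the universal enveloping algebra construction $U$ is defined by a left adjoint internal to the ambient symmetric monoidal structure, hence is preserved by any symmetric monoidal equivalence of abelian categories, giving a further isomorphism
\[
\hom_\propoly(\underline{\malcev}, U\malcev) \cong U\bigl(\hom_\propoly(\underline{\malcev}, \malcev)\bigr)
\]
of cocommutative Hopf algebras in $\rmod[\catlie]$.

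It will then remain to identify $\hom_\propoly(\underline{\malcev}, \malcev)$ with $\lie$ as a Lie algebra in $\rmod[\catlie]$, where $\lie$ is equipped with the canonical Lie structure described in Example \ref{exam:la} and Proposition \ref{prop:opd_alg_right_mod}. The underlying right $\catlie$-module identification follows from Theorem \ref{thm:proj_cover_malcev_s} combined with Proposition \ref{prop:malcev_full_subcat}: one obtains $\hom_\propoly(\malcev^{\obar s}, \malcev) \cong \catlie(s,1) = \lie(s)$, the right $\catlie$-action by precomposition corresponding to the canonical right action on $\catlie(-, 1) \cong \lie$. The main obstacle will be verifying that the Lie bracket induced on this right $\catlie$-module by the bracket of $\malcev$ in $\propoly$ coincides with the canonical operadic bracket on $\lie$; I plan to handle this by unwinding the definition of $\underline{\malcev}$ (Notation \ref{nota:underline_malcev}), whose structure maps are by construction those encoding the bracket of $\malcev$ via composition in $\catlie$ and, by Proposition \ref{prop:malcev_full_subcat}, correspond to composition in $\propoly$ on the full subcategory spanned by the $\malcev^{\obar s}$. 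Assembling the displays above then delivers the desired isomorphism $\hom_\propoly(\underline{\malcev}, \qgr_\bullet \pbif) \cong \Phi U\lie$ in $\f(\gr\op; \rmod[\catlie])$.
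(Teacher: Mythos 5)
Your proposal is correct and reaches the same conclusion as the paper, but the key internal step differs. Both you and the paper begin identically: Lemma \ref{lem:expo_propoly} and Proposition \ref{prop:U_malcev} reduce to $\qgr_\bullet \pbif \cong \Phi(U\malcev)$, and the symmetric monoidality of $\hom_\propoly(\underline{\malcev}, -)$ (Theorem \ref{thm:hom_underline_malcev_sym_monoidal}) then commutes this functor past $\Phi$. The divergence is in how the isomorphism $\hom_\propoly(\underline{\malcev}, U\malcev) \cong U\lie$ of Hopf algebras in $\rmod[\catlie]$ is established. You invoke the abstract fact that $U$ is a left adjoint defined internally to a symmetric monoidal category, so it is transported along any symmetric monoidal \emph{equivalence}; this commutes $\hom_\propoly(\underline{\malcev}, -)$ past $U$ directly. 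The paper instead proceeds concretely: it uses the universal property of $U$ together with the inclusion of Lie algebras $\lie \cong \hom_\propoly(\underline{\malcev}, \malcev) \hookrightarrow \hom_\propoly(\underline{\malcev}, U\malcev)$ to build a Hopf algebra morphism $U\lie \to \hom_\propoly(\underline{\malcev}, U\malcev)$, then verifies it is an isomorphism by passing to the associated graded and forgetting the $\catlie$-action, mirroring the proof of Proposition \ref{prop:U_malcev}. Your route is cleaner and more categorical, buying you a one-line reduction at the cost of implicitly relying on the fact that $\hom_\propoly(\underline{\malcev}, -)$ is an \emph{equivalence} (not merely symmetric monoidal), so that uniqueness of adjoints applies; the paper's route is more self-contained and exhibits the map explicitly, which makes the verification on associated gradeds transparent. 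Both approaches rest on the same final input, namely that $\hom_\propoly(\underline{\malcev}, \malcev) \cong \lie$ as Lie algebras in $\rmod[\catlie]$ via Proposition \ref{prop:malcev_full_subcat}; you correctly flag that matching the Lie brackets requires unwinding the definition of $\underline{\malcev}$, a point the paper passes over equally briskly.
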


\begin{proof}
The functor $\hom_{\propoly} (\underline{\malcev}, -) $ is symmetric monoidal, by Theorem \ref{thm:hom_underline_malcev_sym_monoidal}. In particular, it commutes with the exponential functor construction $\Phi$, up to natural isomorphism.  Since $\Gamma \mapsto \qgr_\bullet P_\Gamma$ is naturally isomorphic to $\Phi (\qgr_\bullet P_\zed)$ by Lemma \ref{lem:expo_propoly}, it suffices to show that there is an isomorphism 
\[
\hom_{\propoly} (\underline{\malcev}, \qgr_\bullet P_\zed) \cong U \lie
\]
as Hopf algebras in $\rmod[\cat \lie]$, where the Hopf algebra structure of the domain is induced by that of $\qgr _\bullet P_\zed$ in $\propoly$.

Proposition \ref{prop:U_malcev} gives the isomorphism $U \malcev \cong \qgr_\bullet P_\zed$ of Hopf algebras in $\propoly$. Thus, we require to establish the isomorphism of Hopf algebras
\[
\hom_{\propoly} (\underline{\malcev}, U \malcev) \cong U \lie.
\] 

Now, $\hom_{\propoly} (\underline{\malcev}, \malcev) \cong \lie$ as Lie algebras in $\rmod [\cat \lie]$, by Proposition \ref{prop:malcev_full_subcat} and the inclusion $\malcev \hookrightarrow U \malcev$ induces an inclusion of Lie algebras $\lie \hookrightarrow \hom_{\propoly} (\underline{\malcev}, U \malcev)$ in $\rmod[\cat \lie]$. This induces a morphism of Hopf algebras in $\rmod [\cat \lie]$:
\[
U \lie \rightarrow \hom_{\propoly} (\underline{\malcev}, U \malcev).
\]
To check that this is an isomorphism, one proceeds as in the proof of Proposition \ref{prop:U_malcev}, i.e., by reducing to considering the associated graded of $U \malcev$  and forgetting the right $\cat \lie$-module structure.
\end{proof}

By construction, the Passi functors are encoded in $\qgr_\bullet P_\zed$, considered as an object of $\propoly$. Combining Proposition \ref{prop:U_right_cat_Lie_modules} with the identification established above in the proof of  Theorem \ref{thm:equiv_Umalcev_PhiUlie}, one has:

\begin{cor}
\label{cor:passi_uass}
The Passi functors $\qgr_\bullet P_\zed$, considered as forming a unital associative algebra in $\propoly$, are the image under the equivalence of Theorem \ref{thm:equiv_propoly_modcatlie} of $\uass$, considered as a unital associative algebra in right $\cat \lie$-modules.
\end{cor}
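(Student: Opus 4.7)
The plan is to deduce this corollary almost immediately from results already at hand, by combining two Hopf algebra identifications and restricting to the underlying unital associative algebra structures. The essential work has been done in Theorem \ref{thm:equiv_Umalcev_PhiUlie} and Proposition \ref{prop:U_right_cat_Lie_modules}.

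First, I would evaluate the identification of Theorem \ref{thm:equiv_Umalcev_PhiUlie} at the generating object $\zed = \zed^{\star 1} \in \ob \gr\op$. Since $\qgr_\bullet \pbif$ evaluated at $\zed$ is $\qgr_\bullet P_\zed$, and $\Phi U\lie$ evaluated at $\zed$ is $U \lie$, this yields an isomorphism
\[
\hom_\propoly (\underline{\malcev}, \qgr_\bullet P_\zed) \cong U \lie
\]
in $\rmod[\cat\lie]$. Moreover, this isomorphism is one of Hopf algebras: this was established explicitly within the proof of Theorem \ref{thm:equiv_Umalcev_PhiUlie} by combining the identification $U \malcev \cong \qgr_\bullet P_\zed$ of Hopf algebras in $\propoly$ (Proposition \ref{prop:U_malcev}) with the Hopf algebra isomorphism $U \lie \cong \hom_\propoly(\underline{\malcev}, U \malcev)$ extending the Lie algebra identification $\lie \cong \hom_\propoly(\underline{\malcev}, \malcev)$ of Proposition \ref{prop:malcev_full_subcat}.

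Next, I would invoke Proposition \ref{prop:U_right_cat_Lie_modules}, which provides an isomorphism $U \lie \cong \uass$ of unital associative algebras in $\rmod[\cat\lie]$, where the right $\cat\lie$-module structure on $\uass$ is restricted along $\cat\la : \cat \lie \rightarrow \cat \uass$ from the canonical right $\cat\uass$-module structure. Composing with the previous isomorphism and forgetting the comultiplication gives
\[
\hom_\propoly (\underline{\malcev}, \qgr_\bullet P_\zed) \cong \uass
\]
as unital associative algebras in right $\cat\lie$-modules, which is precisely what the corollary asserts.

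To close the loop on the phrase \emph{image under the equivalence}, one notes that the equivalence of Theorem \ref{thm:equiv_propoly_modcatlie} is symmetric monoidal by Theorem \ref{thm:hom_underline_malcev_sym_monoidal}, so it transports algebra structures in $(\propoly, \obar, \rat)$ to algebra structures in $(\rmod[\cat\lie], \odot, \rat)$; in particular, the unital associative algebra structure on $\qgr_\bullet P_\zed$ underlying its Hopf algebra structure (Proposition \ref{prop:P_zed_propoly}) is sent to the unital associative algebra structure on $\uass$ identified above. There is no real obstacle here: the statement is essentially a bookkeeping consequence of previously proven results, with the only point requiring attention being the compatibility of the two Hopf algebra identifications, both of which have already been verified.
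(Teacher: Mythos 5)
Your proposal is correct and follows essentially the same route as the paper: the paper's one-line justification is precisely to combine the Hopf algebra isomorphism $\hom_\propoly(\underline{\malcev}, \qgr_\bullet P_\zed) \cong U\lie$ from the proof of Theorem \ref{thm:equiv_Umalcev_PhiUlie} with the algebra isomorphism $U\lie \cong \uass$ of Proposition \ref{prop:U_right_cat_Lie_modules}. The only difference is that you make explicit the (correct, and implicitly used) observation that the symmetric monoidal equivalence of Theorem \ref{thm:hom_underline_malcev_sym_monoidal} is what transports the algebra structure, which is a harmless elaboration.
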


\subsection{Relating $\underline{\malcev}$ and $\Phi U \lie$}

The functor $\malcev^{\obar s}$ in $\propoly$ is the projective cover of $\A^{\otimes s}$, for $s \in \nat$, by Theorem \ref{thm:proj_cover_malcev_s}.  By Proposition \ref{prop:malcev_full_subcat}, these assemble to form the left $\cat \lie$-module $\underline{\malcev}$, where the $\cat\lie$-module structure encodes the morphisms between such functors. 

In the contravariant setting, as explained in Remark \ref{rem:identifications_Phi_ULie}, the functor $(\Phi U \lie)(s)$ is the projective cover in $\f_\omega (\gr\op)$ of $(\A^{\sharp})^{\otimes s}$ and these assemble to give the right $\cat \lie$-module $\Phi U \lie$. 

The purpose of this section is to relate these structures, showing that they are `dual' by using $\otimes_\gr$.

Recall from  Proposition \ref{prop:qgr_tensor_Pzed} that there is a $\sym_t$-equivariant isomorphism $
\qgr_\bullet (P_{\zed^{\star t}}) \cong (\qgr _\bullet P_\zed)^{\obar t}
$, 
for $t \in \nat$. In particular,  via this isomorphism one deduces that $\qgr_\bullet P_\zed$ is a unital, associative algebra in $\propoly$. This gives:

\begin{lem}
\label{lem:qgr_PZ_catuass}
The association $t \mapsto \qgr_\bullet (P_{\zed^{\star t}}) \cong (\qgr _\bullet P_\zed)^{\obar t}$ defines a left $\cat \uass$-module in $\propoly$, denoted $(\qgr_\bullet P_\zed)^{\obar *}$.
\end{lem}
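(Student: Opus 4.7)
The plan is to deduce this from the general principle that a unital associative algebra in a symmetric monoidal $\rat$-linear category $\calm$ canonically gives rise to a left $\cat\uass$-module in $\calm$ (that is, a $\rat$-linear functor $\cat\uass \to \calm$), sending $t$ to the $t$-fold tensor power of the algebra. Since $\qgr_\bullet P_\zed$ has been equipped with a unital associative algebra structure in $(\propoly, \obar, \rat)$ in Proposition \ref{prop:P_zed_propoly}, applying this principle will immediately yield a $\rat$-linear functor $\cat\uass \to \propoly$ with $t \mapsto (\qgr_\bullet P_\zed)^{\obar t}$.

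First, I would recall the explicit description of $\cat\uass(s,t)$ as
\[
\cat\uass(s,t) = \bigoplus_{\phi : \mathbf{s} \to \mathbf{t}} \bigotimes_{i=1}^t \uass(\phi^{-1}(i)),
\]
and describe how a given generator, associated to a set map $\phi$ together with an element of each $\uass(\phi^{-1}(i))$, acts: on the factor $(\qgr_\bullet P_\zed)^{\obar s}$, regroup the tensor factors according to the fibres of $\phi$ (using the symmetry of $\obar$), and then apply the iterated product of $\qgr_\bullet P_\zed$ in each fibre to land in $(\qgr_\bullet P_\zed)^{\obar t}$. That this yields a well-defined $\rat$-linear functor is then exactly the statement that $\qgr_\bullet P_\zed$ is a unital associative algebra in the symmetric monoidal category $(\propoly, \obar, \rat)$: unitality of the algebra implements the identity morphisms of $\cat\uass$, associativity implements composition along composable set maps, and the symmetric monoidal axioms together with $\sym_t$-equivariance implement the $\sym_s\op \times \sym_t$-equivariance built into $\cat\uass(s,t)$.

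To match the formulation in the lemma, I would then invoke the $\sym_t$-equivariant isomorphism $\qgr_\bullet (P_{\zed^{\star t}}) \cong (\qgr_\bullet P_\zed)^{\obar t}$ of Proposition \ref{prop:qgr_tensor_Pzed}; this provides the identification used to name the $\cat\uass$-module as $(\qgr_\bullet P_\zed)^{\obar *}$, while ensuring that the symmetric group actions entering the definition of $\cat\uass$ agree with those induced from $\gr\op$.

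There is no real obstacle here, only a bookkeeping task: one must check that the assignment just described respects composition in $\cat\uass$, but this is nothing more than the associativity and unitality of $\qgr_\bullet P_\zed$ rewritten at the level of PROPs. Consequently, the proof reduces to pointing to Proposition \ref{prop:P_zed_propoly} for the algebra structure, to Proposition \ref{prop:qgr_tensor_Pzed} for the $\sym_t$-equivariant identification, and then to the standard fact that a unital associative algebra in a symmetric monoidal $\rat$-linear category is the same datum as a $\rat$-linear functor from $\cat\uass$ sending $t$ to its $t$-fold tensor power.
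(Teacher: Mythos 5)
Your proposal is correct and follows the same route as the paper: the paper also derives the $\cat\uass$-module structure directly from the unital associative algebra structure of $\qgr_\bullet P_\zed$ in $(\propoly, \obar, \rat)$ (established via Proposition \ref{prop:P_zed_propoly} and the discussion preceding the lemma) together with the $\sym_t$-equivariant identification of Proposition \ref{prop:qgr_tensor_Pzed}. You merely spell out the standard fact that a unital associative algebra in a symmetric monoidal $\rat$-linear category yields a left $\cat\uass$-module via tensor powers, which the paper leaves implicit.
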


On forming the tensor product over $\gr$ (using $\ogr$ of Proposition \ref{prop:otimes_gr_propoly_analytic}), one has
$
\Phi U \lie \ogr (\qgr_\bullet P_\zed)^{\obar *}$,
which has the structure of a right $\cat \lie$, left $\cat \uass$ bimodule, with the actions derived respectively from $\Phi U \lie$ and $(\qgr_\bullet P_\zed)^{\obar *}$. Explicitly, in bi-arity $(s,t)$ (where $s$ corresponds to the $\cat \lie$-structure and $t$ to the $\cat \uass$-structure), this identifies as:
\[
(\Phi U \lie \ogr (\qgr_\bullet P_\zed)^{\obar *})(s,t)
=
\Phi U  \lie (s) \ogr (\qgr_\bullet P_\zed)^{\obar t}.
\] 

\begin{rem}
Since $\Phi U \lie (s)$ is polynomial, in particular lies in  $\f_\omega (\gr\op)$,  $\Phi U  \lie (s) \ogr (\qgr_\bullet P_\zed)^{\obar t}$ is naturally isomorphic to  $\Phi U  \lie (s) \otimes_\gr P_\zed^{\otimes  t} \cong \Phi U  \lie (s) \otimes_\gr P_{\zed^{\star  t}}$, by Proposition \ref{prop:compatibility_otimes_gr}. 
\end{rem}

Recall that $P_\zed$ splits as $\pbar \oplus \rat$, hence $\Phi U \lie \ogr (\qgr_\bullet P_\zed)^{\obar *}$ contains $\Phi U \lie \ogr (\qgr_\bullet \pbar)^{\obar *}$ as a direct summand. The canonical surjection $\pbar \twoheadrightarrow \A$ induces $(\qgr_\bullet \pbar)^{\obar s} \twoheadrightarrow \qgr_\bullet \A^{\otimes s}$, for $s \in \nat$. Together with the surjection 
$  (\Phi U \lie ) (s) \twoheadrightarrow (\A^\sharp)^{\otimes s}$ in $\f_{< \infty} (\gr\op)$, this induces
\[
(\Phi U \lie) (s) \ogr (\qgr_\bullet \pbar)^{\obar s}
\twoheadrightarrow 
 (\A^\sharp)^{\otimes s} \otimes_\gr \qgr_\bullet \A^{\otimes s} 
 \cong 
 \hom_{\f_{<\infty} (\gr)} (\A^{\otimes s} , \A^{\otimes s}).
 \]
 
\begin{lem}
\label{lem:iso_pbar}
For $s \in \nat$, the map $(\Phi U \lie) (s) \ogr (\qgr_\bullet \pbar)^{\obar s}
\twoheadrightarrow 
 \hom_{\f_{<\infty} (\gr)} (\A^{\otimes s} , \A^{\otimes s})$ is an isomorphism of $\sym_s$-bimodules.
\end{lem}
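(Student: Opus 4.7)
\textbf{The plan} is to reduce both sides of the asserted isomorphism to the regular $\sym_s$-bimodule $\rat\sym_s$ and then conclude by matching dimensions, given that the map is surjective.

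\textbf{Reducing both sides.} Since $(\Phi U \lie)(s)$ has polynomial degree $s$ on $\gr\op$ (Remark \ref{rem:identifications_Phi_ULie}), the directed system defining $\ogr$ in Proposition \ref{prop:otimes_gr_propoly_analytic} stabilizes at level $s$, giving $(\Phi U \lie)(s) \ogr G_\bullet \cong (\Phi U \lie)(s) \otimes_\gr G_s$ for any $G_\bullet \in \propoly$. Taking $G_\bullet = (\qgr_\bullet \pbar)^{\obar s}$, an argument analogous to Proposition \ref{prop:qgr_tensor_Pzed} identifies $G_s \cong \qgr_s(\pbar^{\otimes s})$, and Proposition \ref{prop:otimes_gr_poly} then absorbs the outer $\qgr_s$:
\[
(\Phi U \lie)(s) \ogr (\qgr_\bullet \pbar)^{\obar s} \cong (\Phi U \lie)(s) \otimes_\gr \pbar^{\otimes s}.
\]
The same reasoning reduces the target $(\A^\sharp)^{\otimes s} \otimes_\gr \qgr_\bullet \A^{\otimes s}$ to $(\A^\sharp)^{\otimes s} \otimes_\gr \A^{\otimes s}$, and Proposition \ref{prop:otimes_gr_duality} combined with $\cre_s \A^{\otimes s} \cong \rat\sym_s$ (the regular representation, via the cross-effect equivalence for homogeneous polynomial functors of degree $s$) identifies this with $\hom_{\f(\gr)}(\A^{\otimes s}, \A^{\otimes s}) \cong \rat\sym_s$ as a $\sym_s$-bimodule.

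\textbf{Computing the LHS.} By Proposition \ref{prop:Phi_U_L_model} together with Remark \ref{rem:identifications_Phi_ULie}, $(\Phi U \lie)(s)(\zed^{\star t}) \cong \cat \uass(s, t) = \bigoplus_{f: \mathbf{s} \to \mathbf{t}} \bigotimes_{i=1}^t \uass(f^{-1}(i))$. Applying Example \ref{exam:otimes_gr} to the exponential isomorphism $P_\zed^{\otimes s} \cong P_{\zed^{\star s}}$ yields $(\Phi U \lie)(s) \otimes_\gr P_\zed^{\otimes s} \cong \cat \uass(s, s)$. Under the splitting $P_\zed \cong \rat \oplus \pbar$, the $\rat$-summand of the $i$-th tensor factor corresponds on the $\cat\uass$-side to the arity-zero piece $\uass(0)=\rat$, i.e., to basis elements with $f^{-1}(i) = \emptyset$; this can be checked by computing $(\Phi U\lie)(s) \otimes_\gr \rat = (\Phi U\lie)(s)(\{e\}) = \cat\uass(s, 0) = 0$ for $s>0$. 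Hence the $\pbar^{\otimes s}$-summand of $\cat\uass(s, s)$ consists precisely of the basis elements indexed by surjective set maps $f: \mathbf{s} \to \mathbf{s}$, which are bijections; each bijection contributes $\uass(1)^{\otimes s} = \rat$, and the two $\sym_s$-actions (by source and target permutation, i.e., pre- and post-composition) exhibit $(\Phi U \lie)(s) \otimes_\gr \pbar^{\otimes s} \cong \rat \sym_s$ as a $\sym_s$-bimodule.

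\textbf{Conclusion.} The map of the lemma is induced by the tensor product of the two surjections $(\Phi U \lie)(s) \twoheadrightarrow (\A^\sharp)^{\otimes s}$ and $(\qgr_\bullet \pbar)^{\obar s} \twoheadrightarrow \qgr_\bullet \A^{\otimes s}$, hence is itself surjective by right exactness of $\otimes_\gr$ (using the reduction of the first paragraph). Being a $\sym_s$-bimodule surjection from $\rat\sym_s$ onto $\rat\sym_s$, it is an isomorphism. The main technical obstacle is the combinatorial identification in the second paragraph: one has to track carefully how the decomposition $P_\zed = \rat \oplus \pbar$ of tensor factors matches the exponential formula for $\cat\uass(s, s)$, and verify that the resulting vector-space isomorphism respects the two $\sym_s$-actions coming from the source (via $\Phi U\lie(s)$) and the target (via the tensor factors of $\pbar^{\otimes s}$).
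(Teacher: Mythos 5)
Your proof is correct, but it takes a more computational route than the paper's. The paper's argument is conceptual and very short: the kernel of $(\Phi U \lie)(s) \twoheadrightarrow (\A^\sharp)^{\otimes s}$ has polynomial degree $< s$, and the exact functor $- \ogr (\qgr_\bullet \pbar)^{\obar s}$ annihilates functors of degree $< s$; hence that surjection becomes an isomorphism after tensoring, and the remaining leg (tensoring $(\A^\sharp)^{\otimes s}$ with $(\qgr_\bullet \pbar)^{\obar s} \twoheadrightarrow \qgr_\bullet \A^{\otimes s}$) becomes an isomorphism because $(\A^\sharp)^{\otimes s}$ sees only $\qgr_s$, where both sides agree with $\A^{\otimes s}$. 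You instead evaluate both sides explicitly: after reducing $\ogr$ to the ordinary $\otimes_\gr$ at level $s$, you identify $(\Phi U \lie)(s) \otimes_\gr P_\zed^{\otimes s}$ with $\cat\uass(s,s)$ via Yoneda and the exponential structure, then trace through the splitting $P_\zed \cong \rat \oplus \pbar$ to extract the $\pbar^{\otimes s}$-summand as the span of bijections $f : \mathbf{s} \to \mathbf{s}$, giving $\rat\sym_s$ as a bimodule; comparing with $\hom_{\f(\gr)}(\A^{\otimes s}, \A^{\otimes s}) \cong \rat\sym_s$ and invoking surjectivity completes the argument. The combinatorial bookkeeping in your middle step is essentially a hands-on re-derivation of the PBW-type identification of $\cat\uass(s,s)$ and is the part that the paper's degree argument sidesteps entirely. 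What your approach buys is an explicit description of the isomorphism and a cross-check that both sides are the regular bimodule $\rat\sym_s$; what the paper's approach buys is brevity and independence from the explicit basis of $\cat\uass$ (it generalizes to situations where such a clean description is unavailable). Both are valid.
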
 
 
\begin{proof}
By construction, the map is a morphism of  $\sym_s$-bimodules, hence it suffices to show that it is an isomorphism. 
 
Now, $(\Phi U \lie) (s)$ has polynomial degree $s$ with respect to $\gr\op$ and the kernel of $(\Phi U \lie ) (s) \twoheadrightarrow (\A^\sharp)^{\otimes s}$ has polynomial degree less than $s$. The functor $- \ogr (\qgr_\bullet \pbar)^{\obar s}$ is exact and vanishes on functors of polynomial degree less than $s$; the result follows.
\end{proof}

\begin{nota}
For $s \in \nat$, let $\widetilde{\id_s} \in (\Phi U \lie \ogr (\qgr_\bullet P_\zed)^{\obar *})(s,s)$ be the element corresponding to the identity morphism of $\hom_{\f_{<\infty} (\gr)} (\A^{\otimes s} , \A^{\otimes s})$ via the isomorphism of Lemma \ref{lem:iso_pbar} and the inclusion $\Phi U \lie \ogr (\qgr_\bullet \pbar)^{\obar *}
\subset \Phi U \lie \ogr (\qgr_\bullet P_\zed)^{\obar *}$.
\end{nota}

Restricting along $\cat \lie \rightarrow \cat \uass$, induced by $\lie \rightarrow \uass$, $\cat \uass$ can be considered as a right $\cat \lie$,  left $\cat \uass$ bimodule. Proposition \ref{prop:Phi_U_L_model} then leads to:

\begin{prop}
\label{prop:iso_cat_uass}
There is an isomorphism of right $\cat \lie$, left $\cat \uass$ bimodules
\[
\cat \uass \stackrel{\cong}{\rightarrow} \Phi U \lie \ogr (\qgr_\bullet P_\zed)^{\obar *}.
\]

This is determined by the property that, for $s \in \nat$, the identity morphism of $\cat \uass (s,s)$ is sent to 
$\widetilde{\id_s} \in (\Phi U \lie \otimes_\gr (\qgr_\bullet P_\zed)^{\obar *})(s,s)$.
\end{prop}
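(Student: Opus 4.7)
The plan is to construct $\Psi: \cat\uass \to X := \Phi U\lie \ogr (\qgr_\bullet P_\zed)^{\obar *}$ via a composite of natural isomorphisms coming from the co-Yoneda lemma and earlier identifications, and then verify that $\Psi$ is a morphism of right $\cat\lie$, left $\cat\uass$ bimodules sending each $\id_s$ to $\widetilde{\id_s}$. Concretely, for each $s, t \in \nat$, I would define $\Psi_{s,t}$ as the composite
\[
\cat\uass(s, t) \stackrel{\cong}{\longrightarrow} \Phi U\lie(s)(\zed^{\star t}) \stackrel{\cong}{\longrightarrow} \Phi U\lie(s) \otimes_\gr P_{\zed^{\star t}} \stackrel{\cong}{\longrightarrow} \Phi U\lie(s) \ogr (\qgr_\bullet P_\zed)^{\obar t},
\]
using Remark \ref{rem:identifications_Phi_ULie} for the first arrow, the co-Yoneda lemma for $\otimes_\gr$ with the covariant representable $P_{\zed^{\star t}}$ for the second, and Proposition \ref{prop:compatibility_otimes_gr} combined with $P_{\zed^{\star t}} \cong P_\zed^{\otimes t}$ and Proposition \ref{prop:qgr_tensor_Pzed} for the third. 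Each step is a natural isomorphism, so $\Psi_{s,t}$ is an isomorphism of $\rat$-vector spaces.

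Right $\cat\lie$-compatibility (the $s$-index) is essentially built in: Proposition \ref{prop:Phi_U_L_model} identifies $\Phi U\lie$ with $\cat\uass$ as functors $\gr\op \to \rmod[\cat\lie]$, so the right $\cat\lie$-action on $\Phi U\lie(s)$ matches precomposition in $\cat\uass(s, -)$, and the co-Yoneda and $\ogr$-steps transport this naturally. For the normalization $\Psi_{s,s}(\id_s) = \widetilde{\id_s}$, I would trace $\id_s \in \cat\uass(s, s) = \Phi U\lie(s)(\zed^{\star s})$: using the exponential identification $\Phi U\lie \cong \Phi(U\lie)$ and the inclusion $\lie \subset U\lie$ of the primitives, the element representing $\id_s$ is built from the generators of the $s$ factors of $\zed^{\star s}$, and hence lies in the reduced summand $\Phi U\lie(s) \otimes_\gr \pbar^{\otimes s}$. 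Under the isomorphism of Lemma \ref{lem:iso_pbar}, this reduced element corresponds precisely to $\id_{\A^{\otimes s}}$, which is the definition of $\widetilde{\id_s}$.

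The main obstacle is verifying left $\cat\uass$-compatibility (the $t$-index). The left $\cat\uass$-action on $X(s, -)$ is induced from the left $\cat\uass$-structure of $(\qgr_\bullet P_\zed)^{\obar *}$ constructed in Lemma \ref{lem:qgr_PZ_catuass}, which encodes the unital associative algebra structure of $\qgr_\bullet P_\zed$ in the symmetric monoidal category $(\propoly, \obar)$; by contrast, the left $\cat\uass$-action on $\cat\uass(s, -)$ is simply composition. To show these correspond under $\Psi$, I would invoke Corollary \ref{cor:passi_uass}, which states that $\qgr_\bullet P_\zed$ corresponds to $\uass \cong U\lie$ as an associative algebra under the symmetric monoidal equivalence of Theorems \ref{thm:equiv_propoly_modcatlie} and \ref{thm:hom_underline_malcev_sym_monoidal}; combined with Lemma \ref{lem:expo_propoly}, this matches $\obar$-multiplication of $\qgr_\bullet P_\zed$ with $\odot$-multiplication of $\uass$. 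The delicate step is aligning these monoidal structures with the pairing $\ogr$ so that iterated multiplication in $\qgr_\bullet P_\zed$ translates, through the co-Yoneda identification, to iterated composition in $\cat\uass(s, -)$; once this is done, uniqueness of a left $\cat\uass$-module map $\cat\uass(s, -) \to X(s, -)$ sending $\id_s$ to $\widetilde{\id_s}$ (as $\cat\uass(s, -) \cong \uass^{\odot s}$ is the free left $\cat\uass$-module on $\id_s$ by Yoneda) ensures $\Psi$ is the claimed bimodule isomorphism and is uniquely characterized by the stated normalization.
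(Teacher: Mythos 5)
Your proof takes essentially the same route as the paper: both establish the chain of isomorphisms $\cat\uass(s,t) \cong \Phi U\lie(\zed^{\star t})(s) \cong \Phi U\lie(s) \ogr (\qgr_\bullet P_{\zed^{\star t}}) \cong \Phi U\lie(s) \ogr (\qgr_\bullet P_\zed)^{\obar t}$ via (co-)Yoneda and Proposition \ref{prop:qgr_tensor_Pzed}, then appeal to the free-module/Yoneda argument for the $\widetilde{\id_s}$ characterization, and both leave the bimodule-compatibility verification as a sketch. The only minor difference is in how left $\cat\uass$-compatibility is justified: the paper appeals directly to Proposition \ref{prop:U_right_cat_Lie_modules} and naturality of the exponential construction, whereas you route through Corollary \ref{cor:passi_uass} (which lives on the $\propoly \cong \rmod[\cat\lie]$ equivalence rather than the $\ogr$ pairing), making the ``aligning $\obar$, $\odot$ and $\ogr$'' step you flag as delicate somewhat more indirect than it needs to be.
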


\begin{proof}
For $t \in \nat$, one has the isomorphism $ (\qgr_\bullet P_\zed)^{\obar t} \cong \qgr_\bullet P_{\zed^{\star t}}$. Hence, for $s \in \nat$:
\[
(\Phi U \lie \ogr (\qgr_\bullet P_\zed)^{\obar *})(s,t) 
\cong 
\Phi U \lie(s) \ogr (\qgr_\bullet P_{\zed^{\star t}})
\cong 
\Phi U \lie (\zed^{\star t})(s) ,
\]
where the second isomorphism follows from the Yoneda lemma. 

From the construction, one identifies 
$
\Phi U \lie (\zed^{\star t}) \cong (U \lie)^{\odot t}
$, 
where the convolution product is formed in right $\cat \lie$-modules. In bi-arity $(s,t)$, this is isomorphic to $\cat \uass (s,t)$, by construction of the latter, using the isomorphism $U \lie \cong \uass$ of Proposition \ref{prop:U_right_cat_Lie_modules}, as in Proposition  \ref{prop:Phi_U_L_model}. 

To complete the proof of the first statement it remains to check that this corresponds to an isomorphism of bimodules. This is deduced from Proposition \ref{prop:U_right_cat_Lie_modules} together with  the properties of the exponential functor construction. 

Since the isomorphism is one of left $\cat \uass$-modules, by Yoneda it is uniquely determined by the images of the identity elements  in $\cat \uass (s,s)$, for $s \in \nat$. To conclude, one checks from the above identifications that these images are the elements $\widetilde{\id_s}$, as required.
\end{proof}

This Proposition is the key input in the following result that relates $\underline{\malcev}$ and $\Phi U \lie$.

\begin{thm}
\label{thm:otimes_gr_PhiUL_malcev}
There is a unique isomorphism of $\cat \lie$-bimodules 
\[
\cat \lie \stackrel{\cong}{\rightarrow} \Phi U \lie \ogr \underline{\malcev}
\]
that fits into the commutative diagram:
\begin{eqnarray}
\label{eqn:diag_PhiULie_malcev}
\xymatrix{
\cat \lie 
\ar[r]^(.4)\cong 
\ar@{^(->}[d] 
&
\Phi U \lie \ogr \underline{\malcev}
\ar@{^(->}[d]
\\
\cat \uass 
\ar[r]_(.4)\cong 
&
\Phi U \lie \ogr (\qgr_\bullet P_\zed)^{\obar *},
}
\end{eqnarray}
in which the left hand vertical map is induced by $\lie \hookrightarrow \uass$, the right hand vertical map by the canonical inclusion $\malcev \hookrightarrow \qgr_\bullet P_\zed$, and the lower horizontal isomorphism is given by 
 Proposition \ref{prop:iso_cat_uass}.
\end{thm}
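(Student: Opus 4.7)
The plan is to realize the desired isomorphism as the restriction to $\Phi U \lie \ogr \underline{\malcev}$ of the isomorphism of Proposition \ref{prop:iso_cat_uass}. By Proposition \ref{prop:malcev_first_properties}, the inclusion $\malcev \hookrightarrow \qgr_\bullet P_\zed$ is a morphism of Lie algebras in $\propoly$ (the target carrying its commutator bracket), so it upgrades to a monomorphism of left $\cat\lie$-modules in $\propoly$
\[
\underline{\malcev} \hookrightarrow (\qgr_\bullet P_\zed)^{\obar *},
\]
where the target is viewed as a left $\cat\lie$-module by restricting its $\cat\uass$-module structure (Lemma \ref{lem:qgr_PZ_catuass}) along $\cat\la$. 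Applying the exact functor $\Phi U\lie \ogr -$ and composing with Proposition \ref{prop:iso_cat_uass}, one obtains a monomorphism of $\cat\lie$-bimodules
\[
\iota \ : \ \Phi U\lie \ogr \underline{\malcev} \hookrightarrow \Phi U\lie \ogr (\qgr_\bullet P_\zed)^{\obar *} \stackrel{\cong}{\longrightarrow} \cat\uass.
\]
Uniqueness in the statement is immediate: a $\cat\lie$-bimodule map out of $\cat\lie$ is determined by the images of the identities $\id_s \in \cat\lie(s,s)$, and commutativity of the diagram forces these images to be the $\widetilde{\id_s}$.

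To prove that the image of $\iota$ equals $\cat\lie \subset \cat\uass$, I would first check that $\cat\lie$ is contained in $\mathrm{image}(\iota)$. Since $\cat\lie$ is bimodule-generated by the $\id_s$, this reduces to showing that the element $\widetilde{\id_s} \in \Phi U\lie(s) \ogr (\qgr_\bullet P_\zed)^{\obar s}$ already lies in the subspace $\Phi U\lie(s) \ogr \malcev^{\obar s}$. The key input is Theorem \ref{thm:pbw_decomposition}, which yields
\[
(\qgr_\bullet \pbar)^{\obar s} \cong \bigoplus_{(n_1, \ldots, n_s),\ n_i \geq 1} S^{n_1}\malcev \obar \cdots \obar S^{n_s}\malcev.
\]
Since $S^{n_i}\malcev$ is concentrated in polynomial degrees $\geq n_i$, a summand can contribute to polynomial degree $s$ only when $\sum n_i \leq s$, which forces each $n_i = 1$. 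Hence $((\qgr_\bullet \pbar)^{\obar s})_s = (\malcev^{\obar s})_s = \A^{\otimes s}$, so that the inclusion $\Phi U\lie(s) \ogr \malcev^{\obar s} \hookrightarrow \Phi U\lie(s) \ogr (\qgr_\bullet \pbar)^{\obar s}$ is an isomorphism. Since $\widetilde{\id_s}$ is defined (via Lemma \ref{lem:iso_pbar}) inside the larger space, it lies in the smaller, as required.

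For the reverse inclusion I would argue by a bi-arity-wise dimension count. By Proposition \ref{prop:otimes_gr_poly}, $\Phi U\lie(s) \ogr \malcev^{\obar t} \cong \Phi U\lie(s) \otimes_\gr (\malcev^{\obar t})_s$, whose linear dual is $\hom_{\f_s(\gr)}((\malcev^{\obar t})_s, J_s)$ with $J_s := D\Phi U\lie(s)$ the injective envelope of $\A^{\otimes s}$ (Remark \ref{rem:identifications_Phi_ULie}). Applying $\hom(-,J_s)$ to the short exact sequence of Proposition \ref{prop:alpha_Q} for $F := (\malcev^{\obar t})_s$, and noting that $\hom_{\f_s(\gr)}(\qgr_{s-1}F, J_s) = 0$ -- because the essential socle of $J_s$ is the homogeneous-degree-$s$ functor $\A^{\otimes s}$, which admits no nonzero map from a functor of polynomial degree $s-1$ -- the adjunction $\alpha_s \dashv \cre_s$ gives
\[
\dim \bigl( \Phi U\lie(s) \ogr \malcev^{\obar t} \bigr) = \dim \cre_s (\malcev^{\obar t})_s.
\]
By Theorem \ref{thm:proj_cover_malcev_s} together with Proposition \ref{prop:malcev_full_subcat}, the right-hand side equals $\dim \cat\lie(s,t)$, which is finite. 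So the inclusion $\cat\lie(s,t) \hookrightarrow \mathrm{image}(\iota)(s,t)$ obtained in the previous paragraph is between spaces of the same finite dimension, hence an equality.

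The principal obstacle is the polynomial-degree-$s$ analysis of $(\qgr_\bullet \pbar)^{\obar s}$ used to locate $\widetilde{\id_s}$ in the smaller subspace $\Phi U\lie(s) \ogr \malcev^{\obar s}$: this requires combining the PBW decomposition of Theorem \ref{thm:pbw_decomposition} with careful bookkeeping of the polynomial filtration underlying the definition of $\obar$ on $\propoly$. Once this step is in place, the duality and dimension computation in the third paragraph is essentially formal.
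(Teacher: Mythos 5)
Your proposal is correct and follows the same overall strategy as the paper's proof: construct the map by showing that each $\widetilde{\id_s}$ already lies in the subspace $\Phi U\lie \ogr \underline{\malcev}$, obtain injectivity from the direct summand property of $\malcev \subset \qgr_\bullet P_\zed$, and then conclude by a bi-arity-wise finite-dimensional count identifying $\dim\bigl(\Phi U\lie \ogr \underline{\malcev}\bigr)(s,t)$ with $\dim \cat\lie(s,t)$. The differences are in how two auxiliary facts are established. For locating $\widetilde{\id_s}$, you invoke the PBW decomposition of Theorem \ref{thm:pbw_decomposition} and do explicit degree bookkeeping on $(\qgr_\bullet\pbar)^{\obar s}$; the paper instead observes directly that the projection $\qgr_\bullet\pbar \twoheadrightarrow \qgr_\bullet\A$ factors through the retract $\qgr_\bullet\pbar\twoheadrightarrow\malcev$, so the complement of $\malcev$ in $\qgr_\bullet\pbar$ has bottom polynomial degree $\geq 2$ and contributes nothing in polynomial degree $s$ after applying $\Phi U\lie(s)\ogr -$. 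Your route is heavier but makes the degree bookkeeping fully explicit; the paper's is leaner, using only Proposition \ref{prop:malcev_s=1} and Lemma \ref{lem:iso_pbar}. For the dimension count, you dualize to the injective $J_s = D\Phi U\lie(s)$, kill $\hom(\qgr_{s-1}F, J_s)$ via the socle, and apply $\alpha_s \dashv \cre_s$; the paper simply records that $- \ogr \malcev^{\obar t}$ computes the $t$th cross-effect on finite-dimensional analytic functors on $\gr\op$ (dualizing the corepresenting property of $\malcev^{\obar t}$ from Theorem \ref{thm:proj_cover_malcev_s}), then uses Proposition \ref{prop:malcev_full_subcat}. Again, your version is more pedestrian but a complete expansion of what the paper compresses; both arguments are sound.
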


\begin{proof}
The inclusion of $\rat$-linear categories $\cat \lie \hookrightarrow \cat \uass$ can be considered as a morphism of $\cat \lie$-bimodules, using the restricted structure on $\cat \uass$. Likewise, since $\malcev \hookrightarrow \qgr_\bullet P_\zed$ is a morphism of Lie algebras in $\propoly$ (for the commutator bracket structure on $\qgr_\bullet P_\zed$), the  map $\Phi U \lie \ogr \underline{\malcev} \rightarrow \Phi U \lie \ogr (\qgr_\bullet P_\zed)^{\obar *}$ is a morphism of $\cat \lie$-bimodules. It is injective since, as an object of $\propoly$, $\malcev$ is a direct summand of $\qgr_\bullet P_\zed$.

Hence, to construct the morphism of $\cat \lie$-bimodules, it suffices to show that the composite around the bottom of the  diagram under construction (\ref{eqn:diag_PhiULie_malcev}) factors across $\Phi U \lie \otimes_\gr \underline{\malcev}$. By the characterization given in Proposition \ref{prop:iso_cat_uass}, this reduces to showing that, for each $s \in \nat$, the element $\widetilde{\id_s}$, lies in the image of $\Phi U \lie \ogr \underline{\malcev} \rightarrow \Phi U \lie \ogr (\qgr_\bullet P_\zed)^{\obar *}$ in bi-arity $(s,s)$.

Now, $\malcev$ is a subobject of $\qgr_\bullet \pbar \subset \qgr_\bullet P_\zed$.  The inclusion $\malcev \hookrightarrow \qgr_\bullet \pbar $ induces an isomorphism 
\[
(\Phi U \lie \ogr \underline{\malcev}) (s,s) 
\stackrel{\cong}{\rightarrow} 
(\Phi U \lie \ogr (\qgr_\bullet \pbar)^{\obar *}) (s,s)
\] 
since  the projection $\qgr_\bullet \pbar \twoheadrightarrow \qgr_\bullet \A$ factors across the retract $\qgr_\bullet \pbar \twoheadrightarrow \malcev$. Hence, by the construction of the elements $\widetilde{\id_s}$, one obtains the required factorization. 

The resulting map $\cat \lie \rightarrow \Phi U \lie \ogr \underline{\malcev}$ is clearly injective. To conclude, it suffices to observe that the underlying bigraded vector spaces are of finite type (i.e.,  finite dimensional in each bi-arity) and isomorphic. This follows from Theorem \ref{thm:analytic_grop} together with the fact that $- \ogr \malcev^{\obar t}$ is equivalent to the $t$th cross-effect functor when restricted to objects of $\f_\omega (\gr\op)$ that take finite-dimensional values. The latter fact follows by duality from the corepresenting property of $\malcev^{\obar t}$ given in  Theorem \ref{thm:proj_cover_malcev_s}.
\end{proof}

Theorem \ref{thm:otimes_gr_PhiUL_malcev} has the following useful Corollary:

\begin{cor}
\label{cor:otimes_compatibility}
For $M \in \ob \lmod[\cat \lie]$ and $N \in \ob \rmod [\cat \lie]$, there is a canonical isomorphism:
\[
(\Phi U \lie \otimes _{\cat \lie} M) \ogr (N \otimes_{\cat \lie} \underline{\malcev})
\cong 
N \otimes_{\cat \lie} M.
\]
\end{cor}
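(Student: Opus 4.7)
The plan is to reduce the statement to the bimodule isomorphism $\cat\lie \cong \Phi U\lie \ogr \underline{\malcev}$ of Theorem \ref{thm:otimes_gr_PhiUL_malcev}, combined with the standard coend presentations
\[
M \cong \int^s M(s) \otimes \cat\lie(s,-), \qquad N \cong \int^t N(t) \otimes \cat\lie(-,t).
\]

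First I would establish that the bifunctor
\[
(M,N) \mapsto (\Phi U\lie \otimes_{\cat\lie} M) \ogr (N \otimes_{\cat\lie} \underline{\malcev})
\]
is cocontinuous in each variable. The functors $\Phi U\lie \otimes_{\cat\lie}(-)$ and $(-)\otimes_{\cat\lie}\underline{\malcev}$ are equivalences of categories by Theorems \ref{thm:analytic_grop} and \ref{thm:equiv_propoly_modcatlie} respectively, hence preserve all colimits. For $\ogr$, the defining formula of Proposition \ref{prop:otimes_gr_propoly_analytic} expresses it as a filtered colimit of tensor products $\pgrop_n F \otimes_\gr G_n$; cocontinuity in each variable then follows from the cocontinuity of $\otimes_\gr$ (Remark \ref{rem:otimes_gr_colimits}), the exactness of evaluation at $n$ in $\propoly$ (Proposition \ref{prop:d_evaluate_exact_f_to_propoly}), and the fact that under the equivalence of Theorem \ref{thm:analytic_grop} the functor $\pgrop_n$ corresponds to the exact, cocontinuous truncation $(-)_{\leq n}$ of Lemma \ref{lem:truncate_modules}.

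Granted cocontinuity, one unfolds both sides term by term. Using the representable evaluations $\Phi U\lie \otimes_{\cat\lie} \cat\lie(s,-) \cong \Phi U\lie(s)$ and $\cat\lie(-,t)\otimes_{\cat\lie} \underline{\malcev} \cong \malcev^{\obar t}$, together with the $\rat$-bilinearity of $\ogr$, one obtains
\[
(\Phi U\lie \otimes_{\cat\lie} M) \ogr (N \otimes_{\cat\lie} \underline{\malcev}) \cong \int^{s,t} M(s) \otimes N(t) \otimes \big(\Phi U\lie(s) \ogr \malcev^{\obar t}\big).
\]
Theorem \ref{thm:otimes_gr_PhiUL_malcev}, specialized to bi-arity $(s,t)$, identifies $\Phi U\lie(s)\ogr \malcev^{\obar t}$ with $\cat\lie(s,t)$ naturally as a $\cat\lie$-bimodule; substituting gives
\[
\int^{s,t} M(s)\otimes N(t) \otimes \cat\lie(s,t) \cong N \otimes_{\cat\lie} M,
\]
where the final isomorphism is the coend Yoneda lemma applied to the bimodule structure on $\cat\lie$. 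This yields the required right hand side.

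The main obstacle I expect is verifying the cocontinuity of $\ogr$ cleanly, since its definition mixes a filtered colimit with the polynomial-truncation functors $\pgrop_n$ and $(-)_n$. The crucial point is that under both equivalences of categories these truncation functors correspond to the elementary combinatorial truncations $(-)_{\leq n}$ and $(-)^{\leq n}$ on $\cat\lie$-modules, which are manifestly exact and commute with colimits; this is made precise on the covariant side by Proposition \ref{prop:compare_filtrations}, and the contravariant analogue follows from the same reasoning applied to Theorem \ref{thm:analytic_grop}. Once this bookkeeping is settled, the coend manipulation above goes through without difficulty and respects the naturality of Theorem \ref{thm:otimes_gr_PhiUL_malcev} in the composition structure of $\cat\lie$.
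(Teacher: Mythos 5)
Your proposal is correct and takes essentially the same approach as the paper: both proofs reduce the statement to the bimodule isomorphism $\cat\lie \cong \Phi U\lie \ogr \underline{\malcev}$ of Theorem \ref{thm:otimes_gr_PhiUL_malcev} and then regroup the tensor factors, the only difference being that the paper compresses the regrouping into a one-line appeal to ``reversing the order of the tensor factors and then using associativity,'' whereas you unwind that associativity explicitly as a coend computation and verify the cocontinuity needed for it to go through.
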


\begin{proof}
The left hand side is naturally isomorphic to 
\[
(N \otimes_{\cat \lie} \underline{\malcev})
\widetilde{\otimes}_{\gr\op}
(\Phi U \lie \otimes _{\cat \lie} M)
\cong 
N \otimes_{\cat \lie} (\underline{\malcev}
\otimes_{\gr\op}
\Phi U \lie) \otimes _{\cat \lie} M,
\]
reversing the order of the tensor factors and then using associativity. (Here the tensor product $\widetilde{\otimes}_{\gr\op} : \propoly \times \f_\omega (\gr\op) \rightarrow \kmod$ is  as in Proposition \ref{prop:otimes_gr_propoly_analytic}, up to the change of order of the factors.)

Theorem \ref{thm:otimes_gr_PhiUL_malcev} gives that $\underline{\malcev}
\widetilde{\otimes}_{\gr\op}
\Phi U \lie$ is isomorphic to $\cat \lie$ as a $\cat \lie$-bimodule. The result follows, since 
$N \otimes_{\cat \lie} \cat \lie \otimes _{\cat \lie} M$ is naturally isomorphic to $N \otimes_{\cat \lie} M$.
\end{proof}

\begin{rem}
\label{rem:order_otimes_catlie}
In the isomorphism of Corollary \ref{cor:otimes_compatibility}, on the left hand side $M$ appears on the left of  $\ogr$ whereas, on the right hand side, it is on the right of $\otimes_{\cat \lie}$. This reflects the fact that left $\cat \lie$-modules correspond to analytic functors on $\gr\op$, which can be considered as {\em right} $\rat \gr$-modules, whereas right $\cat \lie$-modules are related to {\em left} $\rat \gr$-modules.
\end{rem}

\section{Bimodules and Bifunctors}
\label{sect:bifunctors}

The purpose of this section is to put together the relations between left $\cat \lie$-modules and $\f (\gr\op)$ and right $\cat \lie$-modules and $\f (\gr)$ so as to explain the relationship between $\cat \lie$-bimodules and bifunctors, i.e., objects of $\f(\gr\op \times \gr)$.   

Due to the {\em analyticity} arising in considering functors on $\gr\op$ and the {\em pro-polynomiality} for functors of $\gr$, the general case has an unavoidable technical aspect. If one restricts to {\em polynomial bifunctors}, then the situation is much simpler. On first reading, the reader is encouraged to focus upon this case, for which the results are immediate consequences of the earlier ones.  

The general case can be modelled by passing to the appropriate category of ind-pro objects. Rather than follow this formal approach, we phrase the construction in terms of analytic functors on $\gr\op$ with values in $\propoly$.

Throughout, $\kring= \rat$.
\subsection{$\cat \lie$-bimodules}
\label{subsect:catlie_bimodules}

By definition, a $\cat \lie$-bimodule is an object of $\f (\fb\op \times \fb)$ equipped with commuting left and right $\cat \lie$-actions. Explicitly, a $\cat \lie$-bimodule is a bigraded $\rat$-module, given by  $M (s,t)$, for $s , t \in \nat$, equipped with structure morphisms:
\begin{eqnarray*}
&&\cat \lie (t ,u) \otimes M (s,t) \rightarrow M (s,u) \\
&& M (s,t) \otimes  \cat \lie (r ,s)\rightarrow M (r,t)
\end{eqnarray*}
that commute and satisfy the unital and associativity constraints. (This fixes our conventions for denoting the bi-arities: for fixed $s$, $M(s,-)$ is a {\em left} $\cat\lie$-module and, for fixed $t$, $M(-,t)$ is a {\em right} $\cat \lie$-module.)

Then, extending Notation \ref{nota:catopd_modules}:

\begin{nota}
\label{nota:cat_lie_bimod}
Denote by 
\begin{enumerate}
\item 
$\bimod[\cat \lie]$ the category of $\cat \lie$-bimodules; 
\item 
$\bimod[\cat \lie]^{<\infty} \subset \bimod[\cat \lie]$ the full subcategory of $\cat \lie$-bimodules with finite support (i.e., such that $M(s,t)=0$ for $s+t \gg 0$).
\end{enumerate}
\end{nota}

One has the following standard identification:

\begin{lem}
\label{lem:bimodules_left_right}
The category $\bimod[\cat \lie]$ is equivalent to both the following
\begin{enumerate}
\item
the category of left $\cat \lie$-modules in $\rmod[\cat \lie]$; 
\item 
the category of right $\cat \lie$-modules in $\lmod[\cat \lie]$.
\end{enumerate}
\end{lem}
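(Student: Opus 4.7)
The plan is to show that both claimed equivalences follow from a standard unpacking of the definition of a $\cat\lie$-bimodule, using the fact that both $\lmod[\cat\lie]$ and $\rmod[\cat\lie]$ are abelian $\rat$-linear (cocomplete) categories, so that one can make sense of left/right $\cat\lie$-modules internal to them. The two equivalences are symmetric, so I would prove one in detail and then observe that the other is formally identical.

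For the first equivalence, I would first construct a functor $\Psi : \bimod[\cat\lie] \to (\cat\lie\text{-modules in }\rmod[\cat\lie])$. Given $M \in \ob \bimod[\cat\lie]$, for each $t \in \nat$ set $\Psi(M)(t) := M(-,t)$, which is a right $\cat\lie$-module by means of the right action of $\cat\lie$ on $M$ in the first variable. The left $\cat\lie$-action provides, for each pair $t,u$, a structure map $\cat\lie(t,u) \otimes M(s,t) \to M(s,u)$, which is natural in $s$ in the appropriate sense precisely because the two $\cat\lie$-actions on $M$ commute; hence this exhibits $\Psi(M)$ as a $\rat$-linear functor $\cat\lie \to \rmod[\cat\lie]$, i.e.\ a left $\cat\lie$-module object in $\rmod[\cat\lie]$. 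Morphisms of bimodules clearly transport to natural transformations of such functors.

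For the inverse, given a left $\cat\lie$-module $N : \cat\lie \to \rmod[\cat\lie]$, define $\Phi(N)(s,t) := N(t)(s)$. By hypothesis, for each $t$, $N(t)$ carries a right $\cat\lie$-action giving $\Phi(N)(-,t)$ the structure of a right $\cat\lie$-module. Functoriality of $N$ provides the commuting left $\cat\lie$-action $\cat\lie(t,u) \otimes \Phi(N)(s,t) \to \Phi(N)(s,u)$, and the fact that each $N(f)$ for $f \in \cat\lie(t,u)$ is a morphism of right $\cat\lie$-modules translates exactly into the commutativity of the two actions required of a bimodule. It is then a routine verification that $\Psi$ and $\Phi$ are mutually inverse on objects and morphisms, yielding the equivalence with the category of left $\cat\lie$-modules in $\rmod[\cat\lie]$.

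The symmetric argument — swapping the roles of the two variables of $M$ — produces the equivalence with the category of right $\cat\lie$-modules in $\lmod[\cat\lie]$. I do not expect any real obstacle here: the only nontrivial point is the reformulation of the bimodule compatibility as the statement that a morphism in $\cat\lie$ acts by a morphism in the opposite module category, which is exactly the commutativity of the two actions. The restriction to the finite-support subcategory $\bimod[\cat\lie]^{<\infty}$ (should one wish to track it) is also preserved under these functors, since finite support in the bigraded sense corresponds to finite support in each slice of either module-valued functor.
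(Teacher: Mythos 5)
Your proposal is correct. The paper does not actually provide a proof of this lemma --- it is introduced as ``the following standard identification'' and the definitional unpacking is left to the reader. Your argument supplies exactly the expected proof: the functor $\Psi(M)(t) := M(-,t)$ (a right $\cat\lie$-module for each fixed $t$, with functoriality in $t$ encoded by the left action and its commutativity with the right action) and the evident inverse $\Phi(N)(s,t) := N(t)(s)$ are mutually quasi-inverse, and the symmetric argument gives the second equivalence. This matches what the authors are taking for granted.
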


The canonical filtrations of Section \ref{subsect:canon_filt_catopd_mod} for left (respectively right) $\cat \lie$-modules induce the increasing filtration corresponding to the subobjects $M_{\leq d} \hookrightarrow M$ and the decreasing filtration corresponding to the quotients $M \twoheadrightarrow M^{\leq e}$
in  $\bimod[\cat \lie]$.

As usual, the tensor product $\otimes_{\cat \lie}$ induces a monoidal structure on bimodules:

\begin{prop}
\label{prop:monodial_bimodules}
There is a monoidal structure $(\bimod[\cat \lie], \otimes_{\cat \lie} , \cat \lie)$. 
\end{prop}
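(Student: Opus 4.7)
The plan is to mimic the classical construction of the tensor product of bimodules over a ring (or, more generally, a $\rat$-linear category), adapted to the setting of $\cat\lie$-bimodules. The key point is that the coequalizer defining $\otimes_{\cat\lie}$ (as recalled in Section 6.3) only uses the right action on the first factor and the left action on the second, so any remaining actions survive to the tensor product.

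First I would define, for $M, N \in \ob\bimod[\cat\lie]$, the object $M \otimes_{\cat\lie} N$ by forming the tensor product over $\cat\lie$ bi-arity by bi-arity: using Lemma \ref{lem:bimodules_left_right}, I regard $M$ as a right $\cat\lie$-module in $\lmod[\cat\lie]$ and $N$ as a left $\cat\lie$-module in $\rmod[\cat\lie]$; then $M \otimes_{\cat\lie} N$ is the object of $\f(\fb\op \times \fb)$ defined on bi-arity $(r,u)$ by the coequalizer
\[
\bigoplus_{s,t} \bigoplus_{f \in \cat\lie(s,t)} M(r,t) \otimes N(s,u) \rightrightarrows \bigoplus_v M(r,v) \otimes N(v,u),
\]
exactly as in Section 6.3 but carried out in the $(r,u)$-indexed families. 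Since the left $\cat\lie$-action on $M$ (in the first variable) and the right $\cat\lie$-action on $N$ (in the second) commute with the actions used in the coequalizer, they descend to give $M \otimes_{\cat\lie} N$ the structure of a $\cat\lie$-bimodule; and this construction is plainly functorial in both arguments.

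Next I would exhibit $\cat\lie$ as a two-sided unit. Viewed as its regular bimodule, $\cat\lie$ acts on itself on both sides by composition, so for any $\cat\lie$-bimodule $M$ the standard Yoneda-type arguments give natural isomorphisms $M \otimes_{\cat\lie} \cat\lie \xrightarrow{\cong} M$ and $\cat\lie \otimes_{\cat\lie} M \xrightarrow{\cong} M$; concretely, these are induced by the right (resp. left) action $M(r,t) \otimes \cat\lie(s,r) \to M(s,t)$ (resp. $\cat\lie(t,u) \otimes M(r,t) \to M(r,u)$) and one checks directly from the coequalizer presentation that the action map factors through an isomorphism with the tensor product. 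Associativity is the usual calculation: for $\cat\lie$-bimodules $L, M, N$, both iterated tensor products identify with the triple coequalizer computing $L \otimes_{\cat\lie} M \otimes_{\cat\lie} N$, and the canonical associators are constructed from this common description. The pentagon and triangle axioms reduce to equalities of maps between coequalizers, and are verified on representatives.

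The main (minor) obstacle is purely bookkeeping: one must check that every structural map in the coequalizer is compatible with the surviving left $\cat\lie$-action on the first factor and right $\cat\lie$-action on the second, so that the resulting bimodule structure is well defined, and that the unit and associativity isomorphisms are themselves maps of $\cat\lie$-bimodules. None of this requires anything beyond the defining unital/associative constraints of the bimodule structures and the fact that tensor products over a $\rat$-linear category form a monoidal structure on the corresponding bimodule category; however, care is needed because actions are indexed over the bi-arities $(r,u)$ rather than a single arity. Once this is verified, the monoidal structure $(\bimod[\cat\lie], \otimes_{\cat\lie}, \cat\lie)$ is established.
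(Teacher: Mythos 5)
The paper offers no proof of this proposition, introducing it with ``As usual''; it is a standard fact about bimodules over a $\rat$-linear category. Your plan — define $\otimes_{\cat\lie}$ bi-arity by bi-arity from the coequalizer of Section~6.3, observe that the surviving outer actions descend to give a bimodule structure, get unitality from a Yoneda-type argument, and associativity and coherence from the common triple-coequalizer description — is exactly the standard argument and is conceptually sound.

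However, the coequalizer formula you wrote does not type-check with the paper's conventions. The paper specifies that $M(r,-)$ is a \emph{left} (hence covariant) $\cat\lie$-module and $N(-,u)$ a \emph{right} (contravariant) $\cat\lie$-module. In the template of Section~6.3, for $f\in\cat\lie(s,t)$ the right-module factor appears at the target $t$ and the left-module factor at the source $s$. Your term $M(r,t)\otimes N(s,u)$ places $M(r,-)$ in the right-module slot and $N(-,u)$ in the left-module slot; neither of the two structural maps out of this term then exists. The corrected source term (for your choice of external variables $(r,u)$) is $N(t,u)\otimes M(r,s)$, coequalizing to $\bigoplus_v N(v,u)\otimes M(r,v)$. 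Relatedly, your formula for the unit isomorphism (induced by $M(r,t)\otimes\cat\lie(s,r)\to M(s,t)$) matches $M$'s \emph{first} variable with the internal one, which is inconsistent with your own coequalizer formula, where $M$'s \emph{second} variable is internal. Finally, note that the convention implicit in your coequalizer (external variables $r$ from $M$, $u$ from $N$) is the opposite of the one the paper uses in practice — compare Lemma~\ref{lem:composition_ajac}, where the composition $\pjac(t,u)\otimes\pjac(s,t)\to\pjac(s,u)$ gives $(\pjac_m\otimes_{\cat\lie}\pjac_n)(s,u)=\int^t\pjac_m(t,u)\otimes\pjac_n(s,t)$, i.e.\ the first tensor factor contributes the \emph{second} external variable. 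Both conventions yield a valid monoidal structure (one is the opposite of the other), so the proposition holds either way, but the one you set up should be reconciled with the paper's later usage and with the opposite structure $\otimes_{\cat\lie}\op$ introduced before Theorem~\ref{thm:monoidal_otimes_catlie_gr}. These are bookkeeping issues, not gaps in the idea.
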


\subsection{The convolution product on $\cat \lie$-bimodules}

The Day convolution $\odot$ extends to a symmetric monoidal structure on $\lmod [\cat \lie]$ and on $\rmod [\cat \lie]$, as recalled in Section \ref{subsect:convolution_cat_opd}. To consider bimodules, we  start with the Day convolution product for $\rat\fb$-bimodules: 

\begin{lem}
\label{lem:biconvolution_fb}
There is a symmetric monoidal structure $(\f (\fb\op \times \fb), \circledcirc , \rat) $, where $\rat$ is considered as a $\rat\fb$-bimodule concentrated in arity $(0,0)$. The convolution product $\circledcirc$ is given on objects $B_1$, $B_2$ by 
\[
(B_1 \circledcirc B_2) (m,n) 
:= \bigoplus_{i+j =m} 
\bigoplus_{s+t=n}
\big (B_1 (i,s) \otimes B_2 (j,t)\big) \uparrow_{\sym_i\op \times \sym_j\op }^{\sym_m\op} \uparrow _{\sym_s \times \sym_t}^{\sym_n}. 
\]
\end{lem}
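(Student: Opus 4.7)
The plan is to deduce the lemma from the general Day convolution machinery applied to the symmetric monoidal structure on the indexing category $\fb\op \times \fb$. First I would observe that $(\fb\op \times \fb, \amalg, (\emptyset, \emptyset))$ is symmetric monoidal with respect to componentwise disjoint union: $(A_1, B_1) \amalg (A_2, B_2) := (A_1 \amalg A_2, B_1 \amalg B_2)$, inheriting the symmetric monoidal structures $(\fb, \amalg, \emptyset)$ and $(\fb\op, \amalg, \emptyset)$ (the latter since the opposite of a symmetric monoidal category is symmetric monoidal).

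Next, I would invoke the general fact that, for any essentially small symmetric monoidal category $(\calc, \odot, e)$, the functor category $\f(\calc) = [\calc, \kmod]$ acquires a symmetric monoidal structure via Day convolution, namely the coend
\[
(F_1 \circledcirc F_2)(X) = \int^{X_1, X_2 \in \calc} \calc(X_1 \odot X_2, X) \otimes F_1 (X_1) \otimes F_2 (X_2),
\]
with unit $\calc(e, -)$. Applied to $\calc = \fb\op \times \fb$, and using that $\fb\op \times \fb$ is a groupoid (so Hom-sets reduce to sets of bijections and the coend decomposes as a sum over decompositions up to the action of the relevant symmetric groups), this yields the symmetric monoidal structure on $\f(\fb\op \times \fb)$.

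It then remains to check that the formula produced by the coend matches the one in the statement. Unpacking, a coend over a groupoid indexed by $(X_1, Y_1), (X_2, Y_2)$ with $X_1 \amalg X_2 \cong X$ and $Y_1 \amalg Y_2 \cong Y$ is computed, after choosing the skeleton $\{\mathbf{n} \mid n \in \nat\}$, as the direct sum over ordered decompositions $m = i+j$ and $n = s+t$ of the tensor products $B_1 (\mathbf{i}, \mathbf{s}) \otimes B_2 (\mathbf{j}, \mathbf{t})$, modulo the joint action of $\sym_i \times \sym_j$ (on the $\fb\op$-side) and $\sym_s \times \sym_t$ (on the $\fb$-side), with the outer $\sym_m\op \times \sym_n$-action furnished by the induction $\uparrow_{\sym_i\op \times \sym_j\op}^{\sym_m\op} \uparrow_{\sym_s \times \sym_t}^{\sym_n}$. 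The unit of this Day convolution is $(\fb\op \times \fb)((\emptyset, \emptyset), -)$, which is $\rat$ concentrated in bi-arity $(0,0)$, as claimed.

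The main obstacle is purely bookkeeping: verifying that the induction-from-Young-subgroup description agrees with the formal coend expression and that associativity/symmetry constraints obtained from Day convolution are compatible with this indexing. Since everything is essentially formal once the symmetric monoidal structure on $\fb\op \times \fb$ is identified, I would not belabour this verification.
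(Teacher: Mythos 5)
Your proof is correct, and the route you take (Day convolution on the symmetric monoidal category $\fb\op \times \fb$ equipped with componentwise disjoint union) is the standard one. The paper actually states this lemma without a proof; the only commentary it offers is Remark \ref{rem:biconvolution}, which views the same product iteratively, as the Day convolution on $\f(\fb\op; \f(\fb))$ with respect to $\fb\op$, using the existing $\odot$ on $\f(\fb)$ pointwise. Your coend argument over the product category and the paper's iterated description are two ways of organizing the same computation: the outer direct sum over ordered decompositions with the double induction from Young subgroups. Your version has the mild advantage of invoking the general Day machinery once rather than twice; the paper's version makes the relation to the already-established convolution on $\f(\fb)$ explicit, which it then uses in the proof of Proposition \ref{prop:convolution_bimodules}. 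Either is acceptable here.
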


\begin{rem}
\label{rem:biconvolution}
This is related to the Day convolution product for $\f (\fb\op)$ and for $\f (\fb)$ as follows. 
 The category $\f(\fb\op \times \fb)$ is equivalent to $\f (\fb\op; \f(\fb))$. Here $\f(\fb)$ is equipped with the Day convolution product $\odot$. Then 
\[
(B_1 (-,*) \circledcirc B_2 (-,*)) (m) =    \bigoplus_{i+j =m} B_1 (i, *) \odot B_2 (j,*) \uparrow_{\sym_i\op \times \sym_j\op }^{\sym_m\op} 
\]
and the right hand side is simply the convolution product in $\f (\fb\op; \f (\fb))$ with respect to $\fb\op$ (using the convolution symmetric monoidal structure $\odot$ on $\f (\fb)$).
\end{rem}

\begin{prop}
\label{prop:convolution_bimodules}
The convolution product provides a symmetric monoidal structure $(\bimod[\cat \lie], \circledcirc , \rat)$. 
\end{prop}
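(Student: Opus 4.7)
The strategy is to leverage the convolution monoidal structures recalled in Section~\ref{subsect:convolution_cat_opd} (for $\rmod[\cat\lie]$) and Remark~\ref{rem:conv_left_catopd} (for $\lmod[\cat\lie]$), together with the formulation of $\circledcirc$ given in Remark~\ref{rem:biconvolution} as an ``iterated'' Day convolution.

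First, I would invoke Lemma~\ref{lem:bimodules_left_right} to identify $\bimod[\cat\lie]$ with the category of right $\cat\lie$-modules taking values in the symmetric monoidal category $(\lmod[\cat\lie], \odot, \rat)$. The general construction of the convolution symmetric monoidal product on right $\cat\opd$-modules (as in \cite[Proposition 1.6.3]{MR1854112} and \cite[Section 6.1]{MR2494775}) is carried out over a cocomplete symmetric monoidal base; applying it with the base category $(\kmod, \otimes, \kring)$ replaced by $(\lmod[\cat\lie], \odot, \rat)$ yields a symmetric monoidal structure on $\bimod[\cat\lie]$, with monoidal unit the bimodule concentrated in bi-arity $(0,0)$ with value $\rat$.

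Second, I would verify that on underlying $\rat\fb$-bimodules, this enriched convolution product coincides with $\circledcirc$ of Lemma~\ref{lem:biconvolution_fb}. This is a direct computation using Remark~\ref{rem:biconvolution}: the enriched convolution formula, applied at the $\rat\fb$-bimodule level, becomes a Day convolution over $\fb\op$ of objects of $\f(\fb)$, where the inner tensor product is itself the Day convolution $\odot$ on $\f(\fb)$. Unwinding the two nested induction-and-sum formulas recovers precisely the formula for $\circledcirc$.

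The main technical point to verify is that the left $\cat\lie$-action and the right $\cat\lie$-action of a $\cat\lie$-bimodule interact compatibly with $\circledcirc$, producing a well-defined bimodule structure on convolution products. Heuristically this is transparent: the left action operates on the $\fb\op$-grading (the outer, convolution-indexing variable) while the right action operates on the $\fb$-grading (the inner, target variable), so the two act on ``disjoint'' aspects of the structure and commute with the convolution formula by a Fubini-type interchange. Concretely, this commutation is built into the bi-enriched formulation of the convolution construction, and the coherence isomorphisms (associator and braiding) are inherited from those of $\odot$ on $\lmod[\cat\lie]$ via the same general machinery, so no new verification is required beyond what is done in the cited references.
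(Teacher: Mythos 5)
Your proposal is correct and follows essentially the same strategy as the paper's (quite terse) proof: identify $\bimod[\cat\lie]$ with modules over one copy of $\cat\lie$ enriched in the convolution monoidal category of modules over the other copy, invoke the general enriched convolution construction, and observe that it unwinds to $\circledcirc$ via Remark~\ref{rem:biconvolution}. The only difference is cosmetic: you use the identification with right $\cat\lie$-modules in $(\lmod[\cat\lie],\odot,\rat)$, while the paper uses the mirror identification with left $\cat\lie$-modules in $(\rmod[\cat\lie],\odot,\rat)$ — both are furnished by Lemma~\ref{lem:bimodules_left_right} and lead to the same structure. (A small slip in your heuristic paragraph: per the conventions fixed in Section~\ref{subsect:catlie_bimodules}, the \emph{right} action sits on the first variable, whose underlying structure is $\fb\op$, and the \emph{left} action on the second, whose underlying structure is $\fb$ — you stated it the other way around, but this does not affect the argument.)
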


\begin{proof}
The proof generalizes the analysis of $\circledcirc$ on $\f (\fb\op \times \fb)$ given in Remark \ref{rem:biconvolution}, using that $\bimod[\cat \lie]$ is equivalent to the category of left $\cat \lie$-modules in $\rmod[\cat \lie]$, by Lemma \ref{lem:bimodules_left_right}. One forms the convolution product in left $\cat \lie$-modules with respect to the symmetric monoidal structure on $\rmod [\cat \lie]$ given by the convolution product. The details are left to the reader.
\end{proof}

\subsection{Bifunctors}
\label{subsect:bimod_functors}

We now  consider bifunctors, i.e., the category $\f (\gr\op \times \gr)$. For a bifunctor $F$ and $d,e \in \nat$, there are canonical inclusions 
 $
\pgrop _d F \hookrightarrow \pgrop _{d+1} F \hookrightarrow F
$ 
and canonical surjections $
F \twoheadrightarrow \qgr_{e+1} F \twoheadrightarrow \qgr_e F,
$ 
given by using the respective polynomial filtrations with respect to $\gr\op$ and $\gr$ respectively.

\begin{lem}
\label{lem:compare_pq}
For a bifunctor $F$ and $d, e \in \nat$, there is a natural morphism $ \qgr_e \pgrop _d F \rightarrow \pgrop_d \qgr_e F  $. 
\end{lem}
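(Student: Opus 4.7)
The plan is to construct the required morphism via the universal property of $\pgrop_d \qgr_e F$ as the largest subobject of $\qgr_e F$ that is polynomial of degree $d$ with respect to $\gr\op$. First, applying $\qgr_e$ to the canonical inclusion $\pgrop_d F \hookrightarrow F$ yields a natural morphism
\[
\qgr_e \pgrop_d F \rightarrow \qgr_e F
\]
in $\f (\gr\op \times \gr)$ (note that $\qgr_e$, being right exact as a left adjoint, need not preserve the inclusion, but this is immaterial for the construction).

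The key step is to verify that $\qgr_e \pgrop_d F$ remains polynomial of degree $d$ with respect to $\gr\op$. For this, I would use the equivalence $\f(\gr\op \times \gr) \cong \f(\gr\op; \f(\gr))$. Under this equivalence, the endofunctor $\qgr_e$ of Lemma \ref{lem:qgr} corresponds to postcomposition with the $\qgr_e$ endofunctor of $\f (\gr)$, whereas $\dgrop$ is induced from an operation on the source category $\gr\op$ and so acts by precomposition. Endofunctors of these two kinds commute formally, and hence $\dgrop \qgr_e \cong \qgr_e \dgrop$ as endofunctors of $\f (\gr\op \times \gr)$. Since $\pgrop_d F$ is polynomial of degree $d$ with respect to $\gr\op$ by construction, one obtains
\[
(\dgrop)^{d+1} \qgr_e \pgrop_d F \cong \qgr_e (\dgrop)^{d+1} \pgrop_d F = 0,
\]
so that $\qgr_e \pgrop_d F \in \f_d (\gr\op; \f(\gr))$.

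By the universal property of $\pgrop_d$ as the right adjoint to the inclusion of bifunctors polynomial of degree $d$ with respect to $\gr\op$, the morphism $\qgr_e \pgrop_d F \rightarrow \qgr_e F$ factors uniquely through $\pgrop_d \qgr_e F \hookrightarrow \qgr_e F$, yielding the required natural transformation. Naturality in $F$ is automatic, since every step of the construction is natural. I do not anticipate any substantial obstacle: the whole argument rests on the fact that the two polynomial filtrations act on independent variables of a bifunctor and therefore commute, after which the universal property does the rest of the work.
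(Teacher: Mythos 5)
Your proof is correct, but it is the mirror image of the paper's argument. The paper applies $\pgrop_d$ to the canonical surjection $F \twoheadrightarrow \qgr_e F$ to obtain $\pgrop_d F \rightarrow \pgrop_d \qgr_e F$, observes that the codomain is a subobject of $\qgr_e F$ and so lies in $\f_e(\gr)$ for each fixed contravariant argument (polynomiality of degree $e$ is closed under subobjects), and then factors through $\qgr_e \pgrop_d F$ via the universal property of the \emph{left} adjoint $\qgr_e$. You instead apply $\qgr_e$ to the canonical inclusion $\pgrop_d F \hookrightarrow F$ and factor through $\pgrop_d \qgr_e F$ via the universal property of the \emph{right} adjoint $\pgrop_d$. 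Your route requires the additional verification that $\qgr_e \pgrop_d F$ is polynomial of degree $d$ with respect to $\gr\op$, which you supply by observing that $\dgrop$ (reindexing the contravariant variable by $\Gamma \mapsto \Gamma \star \zed$ together with an additive splitting) and $\qgr_e$ (postcomposition with the additive functor $\qgr_e$ on $\f(\gr)$, acting pointwise in $\Gamma$) commute. That commutation argument is sound: $\tgrop$ and $\qgr_e$ manifestly commute since they act on independent variables, and the splitting $\tgrop \cong \id \oplus \dgrop$ is preserved by the additive functor $\qgr_e$, hence $\qgr_e \dgrop \cong \dgrop \qgr_e$. The two constructions produce the same morphism: both factorizations sit inside the commuting square formed by the inclusion $\pgrop_d F \hookrightarrow F$ and the natural surjection $\id \twoheadrightarrow \qgr_e$, and the universal properties pin the factoring map down uniquely. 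The paper's version is slightly more economical because the needed polynomiality check (subobject of a degree-$e$ functor is degree $e$) is a one-liner; your version takes a couple more lines but makes explicit a useful commutation $\dgrop \qgr_e \cong \qgr_e \dgrop$ that the paper leaves implicit.
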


\begin{proof}
Applying the functor $\pgrop_d$ to the canonical surjection $F \twoheadrightarrow \qgr_e F$ gives $\pgrop_d F \rightarrow \pgrop_d  \qgr_e F$. Since the codomain has polynomial degree $e$ with respect to $\gr$, this factorizes naturally as required.
\end{proof}

\begin{rem}
\label{rem:analytic_bifunctor}
This gives the notion of a bifunctor that is analytic with respect to $\gr\op$: namely, a bifunctor $F$ for which the canonical morphism
$
\lim_{\substack {\rightarrow \\ d} } \pgrop_d F \rightarrow F
$ 
is an isomorphism. This is equivalent to the condition that, for each $\Gamma \in \ob \gr$, the functor $F (- \times \Gamma)$ in $\f (\gr\op)$ is analytic. 

This can be placed in a more general framework: for any abelian category $\cala$, the above definition generalizes to give the full subcategory $\f_\omega (\gr\op; \cala) \subset \f (\gr\op; \cala)$ of analytic functors. Then, using the natural equivalence 
$\f (\gr\op \times \gr) \cong \f (\gr\op ; \f(\gr))$, the category of analytic bifunctors introduced above is the full subcategory
\[
\f_\omega (\gr\op; \f(\gr)) \subset \f (\gr\op; \f(\gr))\cong \f (\gr\op \times \gr).
\]
\end{rem}

The difference functors $\dgr$ and $\dgrop$  act on $\f (\gr\op \times \gr)$ and commute, so that one can adopt the following definition of polynomiality for bifunctors:

\begin{defn}
\label{defn:poly_bifunctor}
A functor $F \in \ob \f (\gr\op \times \gr)$ has polynomial degree $d$ if  $(\dgr)^i (\dgrop)^j F =0$ for all $i, j \in \nat$ such that $i+j>d$.
\end{defn}

\begin{nota}
The full subcategory of polynomial bifunctors of degree $d$ is denoted $\f_d (\gr\op \times \gr)$ and $\f_{<\infty} (\gr\op \times \gr)$ denotes $\bigcup_d  \f_d (\gr\op \times \gr)$. 
\end{nota}

Basic examples of polynomial bifunctors are given by using the exterior tensor product:

\begin{lem}
\label{lem:poly_bifunctors_exterior_tensor}
For $i, j\in \nat$, the exterior tensor product $\f (\gr\op) \times \f (\gr) \stackrel{\boxtimes}{\rightarrow} \f (\gr\op \times \gr)$ restricts to  $\f_i (\gr\op) \times \f_j (\gr) \stackrel{\boxtimes}{\rightarrow} \f_{i+j} (\gr\op \times \gr)$.
\end{lem}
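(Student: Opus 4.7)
The plan is quite short because the result reduces to a bookkeeping exercise with the difference functors. First I would verify the compatibility of $\boxtimes$ with the shift/difference functors: directly from the definition of $\boxtimes$ and the fact that $\dgr$ acts on the $\gr$-variable while $\dgrop$ acts on the $\gr\op$-variable (they act on disjoint variables, hence commute on bifunctors), one gets natural isomorphisms
\[
\dgr (F_1 \boxtimes F_2) \cong F_1 \boxtimes \dgr F_2, \qquad \dgrop (F_1 \boxtimes F_2) \cong \dgrop F_1 \boxtimes F_2,
\]
for $F_1 \in \ob \f (\gr\op)$ and $F_2 \in \ob \f (\gr)$. This is immediate since $\tgr$ and $\tgrop$ precompose with $-\star \zed$ on the respective variable.

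Next, I would iterate: for any $a,b \in \nat$, one obtains a natural isomorphism
\[
(\dgr)^a (\dgrop)^b (F_1 \boxtimes F_2) \cong (\dgrop)^b F_1 \boxtimes (\dgr)^a F_2.
\]
Now take $F_1 \in \ob \f_i (\gr\op)$ and $F_2 \in \ob \f_j (\gr)$, and suppose $a+b > i+j$. Then one cannot have both $a \leq j$ and $b \leq i$, so either $a \geq j+1$, in which case $(\dgr)^a F_2 =0$ by Definition \ref{defn:polynomial}, or $b \geq i+1$, in which case $(\dgrop)^b F_1 =0$. In either case, the exterior tensor product on the right hand side vanishes, so $(\dgr)^a (\dgrop)^b (F_1 \boxtimes F_2)=0$. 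This gives $F_1 \boxtimes F_2 \in \ob \f_{i+j} (\gr\op \times \gr)$ by Definition \ref{defn:poly_bifunctor}.

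There is essentially no obstacle; the only subtlety is confirming that $\dgr$ and $\dgrop$ on bifunctors really are the componentwise difference functors of Definition \ref{defn:poly_bifunctor} (this is the definition implicit in the preceding discussion) and that they commute with $\boxtimes$ in the claimed componentwise fashion. Once this is in place, the pigeonhole argument on $(a,b)$ concludes the proof.
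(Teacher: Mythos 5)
Your proof is correct, and the paper in fact omits a proof of this lemma entirely (it is stated without argument, presumably regarded as straightforward). Your verification---first checking that $\dgr$ and $\dgrop$ pass through $\boxtimes$ component-by-component, then applying the pigeonhole argument to $(a,b)$ with $a+b > i+j$ against Definition \ref{defn:poly_bifunctor}---is exactly the expected bookkeeping and is complete; the commutation $\tgr(F_1 \boxtimes F_2) \cong F_1 \boxtimes \tgr F_2$ (and its $\gr\op$-analogue) follows directly from $\tgr$ being precomposition with $-\star\zed$ in the $\gr$-variable alone, and the splitting of $\tgr$ as $\dgr \oplus \id$ is natural, so it is preserved under $F_1 \boxtimes(-)$.
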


Under the hypothesis of polynomiality, one has the following, which ensures that forming the respective polynomial filtrations behaves as expected.

\begin{prop}
\label{prop:compare_pq_polynomial}
For $F \in \ob \f_{< \infty} (\gr\op \times \gr)$ and $d, e \in \nat$, the  natural morphism $ \qgr_e \pgrop _d F \rightarrow \pgrop_d \qgr_e F  $ is an isomorphism.
\end{prop}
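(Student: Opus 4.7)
The plan is to establish both inclusions $\qgr_e\pgrop_d F \subseteq \pgrop_d\qgr_e F$ and the reverse inside $\qgr_e F$. First I would record the necessary exactness: a bifunctor in $\f_{<\infty}(\gr\op\times\gr)$ takes values in $\f_{<\infty}(\gr)$ at each $\Gamma$ and in $\f_{<\infty}(\gr\op)$ at each $H$, so Proposition~\ref{prop:qhat} together with its $\gr\op$-analogue (Remark~\ref{rem:polynomial_filt_grop}) gives that $\qgr_e$ and $\pgrop_d$ are both exact as endofunctors of $\f_{<\infty}(\gr\op\times\gr)$. Moreover $\dgr$ and $\dgrop$ commute on $\f(\gr\op\times\gr)$, so $\qgr_e$ commutes with $\dgrop$ and $\pgrop_d$ commutes with $\dgr$; in particular, since $\pgrop_d F$ has $\gr\op$-polynomial degree $\leq d$, so does $\qgr_e\pgrop_d F$ because $(\dgrop)^{d+1}\qgr_e\pgrop_d F = \qgr_e(\dgrop)^{d+1}\pgrop_d F = 0$. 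Applying exact $\qgr_e$ to the inclusion $\pgrop_d F \hookrightarrow F$ thus yields a monomorphism $\qgr_e\pgrop_d F \hookrightarrow \qgr_e F$ whose image is $\gr\op$-polynomial of degree $\leq d$, hence factors through $\pgrop_d\qgr_e F$; one checks that this factorization coincides with the natural map of Lemma~\ref{lem:compare_pq}.

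For the reverse inclusion, applying exact $\qgr_e$ to $0 \to \pgrop_d F \to F \to F/\pgrop_d F \to 0$ produces
\[
0 \to \qgr_e\pgrop_d F \to \qgr_e F \to \qgr_e(F/\pgrop_d F) \to 0,
\]
so it suffices to establish $\pgrop_d\qgr_e(F/\pgrop_d F) = 0$. Since $\f_d(\gr\op)$ is closed under extensions, $\pgrop_d(F/\pgrop_d F) = 0$, so the problem reduces to the following claim: if $G \in \f_{<\infty}(\gr\op\times\gr)$ satisfies $\pgrop_d G = 0$, then $\pgrop_d\qgr_e G = 0$.

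To prove this claim, I use that $G$ is polynomial, so the $\gr\op$-polynomial filtration $0 = \pgrop_d G \subseteq \pgrop_{d+1} G \subseteq \cdots \subseteq \pgrop_N G = G$ is finite, with successive quotients $S_i := \pgrop_i G/\pgrop_{i-1} G$ (for $d < i \leq N$) homogeneous of $\gr\op$-degree exactly $i$. The bifunctor version of Proposition~\ref{prop:alpha_Q}, obtained by applying the $\gr\op$-analogue pointwise in $\gr$ and using naturality, identifies $S_i \cong (\A^\sharp)^{\otimes i}\otimes_{\sym_i} M_i$ for some $M_i \in \f(\gr)$ equipped with a right $\sym_i$-action. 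Since $\qgr_e$ acts only on the $\gr$-variable, it commutes with $(\A^\sharp)^{\otimes i}\otimes_{\sym_i}(-)$, giving $\qgr_e S_i \cong (\A^\sharp)^{\otimes i}\otimes_{\sym_i}\qgr_e M_i$, which is either zero or still homogeneous of $\gr\op$-degree $i > d$. Hence $\pgrop_d\qgr_e S_i = 0$, and exactness of $\qgr_e$ and $\pgrop_d$ together with induction on the length of the filtration forces $\pgrop_d\qgr_e G = 0$, completing the proof.

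The main obstacle is the bifunctor analogue of Proposition~\ref{prop:alpha_Q}, identifying $\gr\op$-homogeneous bifunctors of degree $i$ with those of the form $(\A^\sharp)^{\otimes i}\otimes_{\sym_i} M_i$ for $M_i \in \f(\gr)$. This upgrade from functors to bifunctors is essentially formal: evaluating at each $H \in \ob\gr$ gives a homogeneous polynomial functor on $\gr\op$ in the sense of \cite{2021arXiv211001934P}, hence of the stated form with coefficient $M_i(H)$, and naturality in $H$ assembles the $M_i(H)$ into a functor $M_i \in \f(\gr)$. The essential input is semisimplicity over $\rat$, which guarantees the splitting of the adjunction counit for homogeneous objects both pointwise and naturally.
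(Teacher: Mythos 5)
Your proof is correct, and it rests on the same fundamental input as the paper's (the exactness of $\qgr_e$ and $\pgrop_d$ on $\f_{<\infty}(\gr\op\times\gr)$), but it is considerably more elaborate than necessary. The paper works with the kernel $F'$ of $F \twoheadrightarrow \qgr_e F$: exactness of $\qgr_e$ forces $\qgr_e F' = 0$, hence $\qgr_e \pgrop_d F' = 0$ since $\pgrop_d F' \subseteq F'$, and a short diagram chase (applying $\qgr_e$ to the short exact sequence $0 \to \pgrop_d F' \to \pgrop_d F \to \pgrop_d \qgr_e F \to 0$) finishes. You instead work with the cokernel $F/\pgrop_d F$ and reduce to the claim that $\pgrop_d G = 0$ implies $\pgrop_d \qgr_e G = 0$. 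That claim is correct, but you prove it via a rather heavy argument — a finite $\gr\op$-polynomial filtration with homogeneous layers, a bifunctor analogue of Proposition~\ref{prop:alpha_Q}, and an induction — when in fact it is immediate from the very exactness you recorded at the outset: $\pgrop_d$ is exact, hence right exact, so it carries the surjection $G \twoheadrightarrow \qgr_e G$ to a surjection $\pgrop_d G \twoheadrightarrow \pgrop_d \qgr_e G$, which must be zero. So while nothing is wrong, the last paragraph of your proof can be replaced by a single line, and the kernel-side route the paper takes avoids the structure-theoretic detour entirely.
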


\begin{proof}
Restricted to $\f_{< \infty}(\gr\op \times \gr)$, the functors $\pgrop_d $ and $\qgr_e$ are exact. Using this, one checks that the natural transformation of Lemma \ref{lem:compare_pq} is an isomorphism, as follows. 
 Write $F'$ for the kernel of the natural surjection $F \twoheadrightarrow \qgr_e F$. Then, applying the natural surjection $\pgrop_d \twoheadrightarrow \qgr_e \pgrop_d$ gives a commutative diagram:
\[
\xymatrix{
\pgrop_d F' 
\ar@{->>}[d]
\ar[r]
&
\pgrop_d F 
\ar@{->>}[d]
\ar[r]
&
\pgrop_d \qgr_e F 
\ar@{->>}[d]|{\cong}
\\
\qgr_e \pgrop_d F' 
\ar[r]
&
\qgr_e \pgrop_d F
\ar[r]
&
\qgr_e\pgrop_d \qgr_e F 
}
\] 
in which the rows are short exact.

The result is equivalent to showing that the bottom right horizontal map is an isomorphism. This is equivalent to the vanishing of $ \qgr_e \pgrop_d F' $. Now $\qgr_e F'=0$, by construction; applying $\qgr_e$ to the inclusion $\pgrop_d F' \hookrightarrow F'$ therefore gives $\qgr_e \pgrop_d F' =0$, by exactness of $\qgr_e$, as required.
\end{proof}

\subsection{Mixing analyticity and pro-polynomiality}

Our aim is to use $\bimod[\cat \lie]$ to model bifunctors. Theorem \ref{thm:analytic_grop} leads to an {\em analyticity} condition with respect to $\gr\op$, hence one should expect to restrict to analytic bifunctors, $\f _\omega (\gr\op; \f(\gr))$, as in Remark \ref{rem:analytic_bifunctor}. 

The relationship between $\rmod [\cat \lie]$ and $\f (\gr)$ as made precise by Theorem \ref{thm:equiv_propoly_modcatlie}, makes clear that one should not expect to model all analytic bifunctors on the nose. Namely, the target category should be replaced by $\propoly$, so that one can consider the abelian category $\f (\gr\op; \propoly)$ and its full subcategory $\f_\omega(\gr\op; \propoly)$ of analytic functors  (defined as in  Remark \ref{rem:analytic_bifunctor}). 

\begin{rem}
The above reflects the choice to work with $\f_\omega(\gr\op; \propoly)$, based upon the equivalence $\f (\gr\op \times \gr) \cong \f (\gr\op ; \f (\gr))$. One could equally well have used the equivalence $\f (\gr\op \times \gr) \cong \f (\gr ; \f (\gr\op))$. In this case, one is lead to study the appropriate category of pro-polynomial functors on $\gr$ with values in analytic functors on $\gr\op$. 
\end{rem}

\begin{lem}
\label{lem:include_finite_bifunctors}
The functor $\qgr_\bullet : \f (\gr)\rightarrow \propoly$ induces a functor $\f (\gr\op \times \gr) \cong \f (\gr\op ; \f(\gr)) 
\rightarrow \f(\gr\op;\propoly)$. 

This induces an exact functor $\f_{< \infty} (\gr\op \times \gr) \rightarrow \f_\omega (\gr\op ; \propoly)$ that is the inclusion of a full subcategory. 
\end{lem}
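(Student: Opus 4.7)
The plan is to unpack each clause of the statement and reduce it to results already available in the paper. The functor in question is simply post-composition with $\qgr_\bullet$: writing objects of $\f(\gr\op \times \gr)$ as functors $\gr\op \rightarrow \f(\gr)$, we post-compose with $\qgr_\bullet: \f(\gr) \rightarrow \propoly$ to land in $\f(\gr\op; \propoly)$. This is manifestly functorial.

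Next, I would verify that on $\f_{<\infty}(\gr\op \times \gr)$ the functor factors through $\f_\omega(\gr\op; \propoly)$. If $F$ has polynomial degree $d$ in the sense of Definition \ref{defn:poly_bifunctor}, then in particular $(\dgrop)^{d+1} F = 0$, so for each $H \in \ob \gr$ the functor $F(-,H)$ is polynomial of degree at most $d$ on $\gr\op$, and similarly $F(\Gamma, -)$ is polynomial of degree at most $d$ on $\gr$ for each $\Gamma$. Thus $F$ takes values in $\f_{<\infty}(\gr)$, so post-composition with $\qgr_\bullet$ makes sense, and its values remain polynomial (hence a fortiori analytic) of bounded degree on $\gr\op$, since post-composition by any functor preserves the vanishing of $(\dgrop)^{d+1}$.

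For exactness, I would use Proposition \ref{prop:d_evaluate_exact_f_to_propoly}, which states that $\qgr_\bullet : \f_{<\infty}(\gr) \rightarrow \propoly$ is exact. Since (co)kernels in functor categories are computed pointwise, post-composition with an exact functor is exact, so the restriction to $\f_{<\infty}(\gr\op \times \gr)$ is exact as required.

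The final and most substantive claim is that the induced functor is the inclusion of a full subcategory, i.e.\ fully faithful. Here I would invoke Proposition \ref{prop:poly_propoly}: the functor $\qgr_\bullet$ restricts to an equivalence between $\f_{<\infty}(\gr)$ and the full subcategory of $\propoly$ consisting of towers that are eventually the identity. In particular, $\qgr_\bullet: \f_{<\infty}(\gr) \rightarrow \propoly$ is fully faithful. Fullness and faithfulness are both preserved under post-composition in functor categories: a natural transformation $\qgr_\bullet \circ F \Rightarrow \qgr_\bullet \circ G$ in $\f(\gr\op; \propoly)$ consists of components in $\hom_\propoly(\qgr_\bullet F(\Gamma), \qgr_\bullet G(\Gamma))$, which by full faithfulness correspond bijectively and naturally (in $\Gamma$) to components in $\hom_{\f(\gr)}(F(\Gamma), G(\Gamma))$, giving the required natural transformation $F \Rightarrow G$. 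The main potential subtlety is checking that the bijections on morphism sets are compatible with the $\gr\op$-naturality, but this follows from the functoriality of the bijections of Proposition \ref{prop:poly_propoly}, so no real obstacle arises.
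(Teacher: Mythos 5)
Your proof is correct and follows essentially the same line as the paper's (terse) argument: post-composition, exactness via Proposition \ref{prop:d_evaluate_exact_f_to_propoly}, analyticity because post-composing with the additive functor $\qgr_\bullet$ preserves the vanishing of $(\dgrop)^{d+1}$, and full faithfulness via Proposition \ref{prop:poly_propoly}. You fill in precisely the details the paper leaves as ``one checks easily'' and ``straightforward''.
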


\begin{proof}
The first statement is immediate. The second follows using the fact that the restriction of $\qgr_\bullet$ to $\f_{<\infty} (\gr)$ is exact; one checks easily that the given functor takes values in analytic functors with respect to $\gr\op$.  The identification of the essential image is straightforward.
\end{proof}

\begin{prop}
\label{prop:complete_bifunctors}
\ 
\begin{enumerate}
\item 
The category $\f_\omega (\gr\op; \propoly)$ is abelian and the inclusion functor $$\f_\omega (\gr\op; \propoly)
\hookrightarrow \f(\gr\op; \propoly)$$ is exact.
\item 
The completion functor $\compl : \propoly \rightarrow \f(\gr)$ induces an exact functor $\f(\gr\op; \propoly) \rightarrow \f (\gr\op; \f(\gr)) \cong \f (\gr\op \times \gr)$. 
\item 
The composite gives an exact functor $\f _\omega (\gr\op; \propoly) \rightarrow \f (\gr\op \times \gr)$.
\item 
The composite of $\f _\omega (\gr\op; \propoly) \rightarrow \f (\gr\op \times \gr)$ with $\f_{< \infty} (\gr\op \times \gr) \rightarrow \f_\omega (\gr\op ; \propoly)$ of Lemma \ref{lem:include_finite_bifunctors} is naturally equivalent to the inclusion $\f_{< \infty} (\gr\op \times \gr) \subset \f (\gr\op \times \gr)$.
\end{enumerate}
\end{prop}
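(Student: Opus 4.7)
The plan is to address the four assertions in order, with part (1) carrying the bulk of the argument; parts (2)--(4) will follow by formal considerations.

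For (1), the first step is to record that $\propoly$ is a Grothendieck category: it is abelian and cocomplete by Proposition \ref{prop:abelian_propoly}, has a set of projective generators by Corollary \ref{cor:proj_gen_propoly}, and satisfies AB5 since filtered colimits in $\propoly$ are computed entrywise in $\f(\gr)^{\nat\op}$ (using that each $\qgr_d$, being a left adjoint, commutes with colimits). Hence $\f(\gr\op; \propoly)$ is also Grothendieck. The main step is then to show that $\f_\omega(\gr\op; \propoly)$ is closed under both subobjects and quotients in $\f(\gr\op; \propoly)$; granting this, closure under kernels and cokernels is automatic, giving both abelianness of $\f_\omega(\gr\op; \propoly)$ and exactness of the inclusion. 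For a monomorphism $H \hookrightarrow G$ with $G$ analytic, I use the universal property of $\pgrop_d$ as the largest polynomial degree $d$ subfunctor to obtain $\pgrop_d H = H \cap \pgrop_d G$; since filtered colimits commute with finite intersections in a Grothendieck category, $\lim_\rightarrow \pgrop_d H = H \cap \lim_\rightarrow \pgrop_d G = H \cap G = H$, so $H$ is analytic. For an epimorphism $G \twoheadrightarrow Q$ with $G$ analytic, the image $I_d$ of $\pgrop_d G$ in $Q$ is polynomial of degree $d$, hence $I_d \subset \pgrop_d Q$; since images commute with filtered colimits, $\lim_\rightarrow I_d = Q$, forcing $\lim_\rightarrow \pgrop_d Q = Q$, so $Q$ is analytic.

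Part (2) is immediate from Proposition \ref{prop:compl_exact}: exactness of $\compl$ transfers to the induced pointwise functor between diagram categories, since (co)kernels in $\f(\gr\op; \mathcal{E})$ are computed pointwise. Part (3) is then the composition of two exact functors.

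For (4), take $F \in \ob \f_{<\infty}(\gr\op \times \gr)$ of polynomial degree $d$ in the sense of Definition \ref{defn:poly_bifunctor}. Specializing the defining condition to $(i,j)=(d+1,0)$ gives $(\dgr)^{d+1}F = 0$, so for each $\Gamma \in \ob \gr\op$ the functor $F(\Gamma,-) \in \f(\gr)$ has polynomial degree at most $d$. Consequently, the structure maps of the tower $\qgr_\bullet F(\Gamma,-)$ are isomorphisms for indices $\geq d$, and the completion $\compl(\qgr_\bullet F(\Gamma,-))$ is canonically isomorphic to $F(\Gamma,-)$, naturally in $\Gamma$; this is exactly the claimed natural equivalence with the inclusion $\f_{<\infty}(\gr\op\times\gr) \hookrightarrow \f(\gr\op\times\gr)$. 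The principal obstacle is part (1): verifying that analyticity survives kernel and cokernel formation. This rests on the compatibility of $\pgrop_d$ with intersections (via its universal property) and of $\lim_\rightarrow \pgrop_d$ with images, both of which require the Grothendieck structure of the ambient functor category. Once this is in hand, the remaining parts reduce to the exactness of $\compl$ already established and a routine stabilization argument for finite polynomial bifunctors.
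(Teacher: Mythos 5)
Your proof is correct and follows the route the paper leaves implicit; the paper dismisses part (1) as ``clear'' and part (4) as ``analysed directly from the definitions,'' while you spell out the Serre-subcategory argument ($\f_\omega$ is closed under subobjects and quotients, using the identity $\pgrop_d H = H \cap \pgrop_d G$ for monomorphisms and the image comparison for epimorphisms, both leaning on AB5 in the Grothendieck category $\f(\gr\op; \propoly)$). The one point worth flagging for completeness is that the natural isomorphism in (4) is precisely the adjunction unit $F(\Gamma,-) \rightarrow \compl(\qgr_\bullet F(\Gamma,-))$ of Proposition \ref{prop:compl_right_adjoint}, which is an isomorphism because the tower $\qgr_\bullet F(\Gamma,-)$ stabilizes once $F(\Gamma,-)$ is polynomial; identifying the isomorphism as the unit makes the naturality in $\Gamma$ automatic rather than something to be argued.
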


\begin{proof}
The first statement is clear; the second follows from the exactness of the completion functor given by Proposition \ref{prop:compl_exact}. The restriction to polynomial bifunctors is analysed directly from the definitions.
\end{proof}

The following is immediate:

\begin{prop}
\label{prop:sym_mon_odot_bifunctors}
The symmetric monoidal structure $(\propoly, \odot, \rat)$ induces a symmetric monoidal structure on $\f (\gr\op; \propoly)$.
 This restricts to a symmetric monoidal structure on the full subcategory $\f_\omega (\gr\op; \propoly)$.

Restricted to the subcategory $\f_{<\infty} (\gr\op \times \gr)\hookrightarrow \f_\omega (\gr\op; \propoly)$, this symmetric monoidal structure is equivalent to that induced by $\otimes$ on $\f (\gr\op \times \gr)$.
\end{prop}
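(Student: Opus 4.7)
My plan is to treat the three assertions in turn. The first is a formal consequence of the general principle (Section \ref{subsect:nota_conv}) that for any small category $\calc$ and symmetric monoidal $\calm$, the functor category $\f(\calc;\calm)$ inherits a pointwise symmetric monoidal structure; I would apply this with $\calc = \gr\op$ and $\calm=(\propoly,\obar,\rat)$, yielding $(F_1 \obar F_2)(\Gamma) := F_1(\Gamma)\obar F_2(\Gamma)$ with unit the constant functor at $\rat$.

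For the second assertion, I would show closure of $\f_\omega(\gr\op;\propoly)$ under this pointwise product via two steps. The first step is a degree-additivity statement: if $F_i$ is polynomial of degree $d_i$ with respect to $\gr\op$, then $F_1 \obar F_2$ is polynomial of degree at most $d_1+d_2$ with respect to $\gr\op$. The argument uses the standard splitting $\tgrop F_i \cong F_i \oplus \dgrop F_i$ combined with the fact that $\obar$ on $\propoly$ is bilinear with respect to direct sums (which follows from the additivity of $\qgr_n$ and the bilinearity of $\otimes$ on $\f(\gr)$). Expanding $\tgrop(F_1 \obar F_2)(\Gamma) = F_1(\Gamma \star \zed)\obar F_2(\Gamma \star \zed)$ yields
\[
\dgrop(F_1\obar F_2)\cong (\dgrop F_1\obar F_2)\oplus (F_1\obar \dgrop F_2)\oplus (\dgrop F_1\obar \dgrop F_2),
\]
and iterating $k$ times decomposes $(\dgrop)^k(F_1\obar F_2)$ as a finite direct sum of terms $(\dgrop)^i F_1\obar (\dgrop)^j F_2$ with $i+j\geq k$, all vanishing once $k > d_1+d_2$. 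The second step is that $\obar$ on $\propoly$ preserves colimits in each variable (colimits being computed levelwise in $\propoly$, with both $\qgr_n$ and $\otimes$ on $\f(\gr)$ cocontinuous), so the pointwise $\obar$ on $\f(\gr\op;\propoly)$ inherits this property. Writing analytic $F_1, F_2$ as $\lim_{\substack{\rightarrow\\d}}\pgrop_d F_i$ then gives
\[
F_1 \obar F_2\cong \lim_{\substack{\rightarrow\\d,e}}(\pgrop_d F_1\obar \pgrop_e F_2),
\]
a filtered colimit of bifunctors of polynomial degree at most $d+e$ with respect to $\gr\op$, hence analytic.

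For the third assertion, Lemma \ref{lem:include_finite_bifunctors} identifies the embedding $\f_{<\infty}(\gr\op\times\gr)\hookrightarrow \f_\omega(\gr\op;\propoly)$ with post-composition by $\qgr_\bullet:\f_{<\infty}(\gr)\to\propoly$, and Proposition \ref{prop:propoly_sym_mon} states that $\qgr_\bullet$ is symmetric monoidal. Applied pointwise in $\Gamma \in \ob \gr\op$, together with the fact that $\otimes$ on $\f(\gr\op\times\gr)$ is itself computed pointwise from $\otimes$ on $\kmod$, this gives natural isomorphisms $\qgr_\bullet B_1\obar \qgr_\bullet B_2\cong \qgr_\bullet(B_1\otimes B_2)$ in $\f(\gr\op;\propoly)$ for polynomial bifunctors $B_1, B_2$, providing the sought compatibility of symmetric monoidal structures. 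The principal obstacle is the degree-additivity claim in step one of the second assertion, which rests on the combinatorial bookkeeping of iterated difference functors via the splitting $\tgrop \cong \id \oplus \dgrop$; the remainder is then formal.
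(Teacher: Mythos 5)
Your proof is correct. The paper itself records Proposition \ref{prop:sym_mon_odot_bifunctors} as ``immediate'' and gives no argument, so there is nothing to compare against; your write-up simply spells out the natural verification (pointwise product on $\f(\gr\op;\propoly)$, closure of analyticity via the iterated splitting $\dgrop(F_1\obar F_2)\cong (\dgrop F_1\obar F_2)\oplus (F_1\obar \dgrop F_2)\oplus (\dgrop F_1\obar \dgrop F_2)$ together with cocontinuity of $\obar$, and the compatibility with $\qgr_\bullet$ on polynomial bifunctors via Proposition \ref{prop:propoly_sym_mon}) and this matches what the author evidently has in mind.
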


\subsection{Modelling bifunctors}
We combine the equivalence of Theorem \ref{thm:equiv_propoly_modcatlie} with that of Theorem \ref{thm:analytic_grop} to treat bifunctors.

\begin{thm}
\label{thm:equivalence_bimodules}
The functor $\Phi U \lie \otimes_{\cat \lie} (-) \otimes_{\cat \lie} \underline{\malcev}$ induces an equivalence of categories:
\[
\bimod[\cat \lie] \stackrel{\cong}{\rightarrow} \f_\omega (\gr\op; \propoly).
\]

This restricts to an equivalence of categories:
\[
\bimod[\cat \lie]^{<\infty}  \stackrel{\cong}{\rightarrow}  \f_{< \infty} (\gr\op \times \gr) 
\]
fitting into the commutative (up to natural isomorphism) diagram: 
\[
\xymatrix{
\bimod[\cat \lie]^{<\infty} 
\ar[r]^\cong
\ar@{^(->}[d]
&
\f_{< \infty} (\gr\op \times \gr) 
\ar@{^(->}[d]
\\
\bimod[\cat \lie]
\ar[r]_(.4)\cong 
&
\f_\omega (\gr\op; \propoly),
}
\]
in which the right hand vertical functor is as in Proposition \ref{prop:complete_bifunctors}.
\end{thm}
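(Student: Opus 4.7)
The strategy is to factor the functor as a composition of two previously established equivalences, applied in succession to the two `sides' of the bimodule structure.

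First, by Lemma \ref{lem:bimodules_left_right}, a $\cat\lie$-bimodule $B$ is equivalent to a left $\cat\lie$-module with values in $\rmod[\cat\lie]$, explicitly the functor $t \mapsto B(-,t)$. Applying Theorem \ref{thm:equiv_propoly_modcatlie} coefficientwise, via postcomposition with $- \otimes_{\cat\lie} \underline{\malcev}$, converts this into a $\rat$-linear functor $\cat\lie \to \propoly$, i.e., a left $\cat\lie$-module in $\propoly$, given by $t \mapsto B(-,t) \otimes_{\cat\lie} \underline{\malcev}$. Since Theorem \ref{thm:equiv_propoly_modcatlie} is a $\rat$-linear equivalence, this step is itself an equivalence between $\bimod[\cat\lie]$ and the category of left $\cat\lie$-modules in $\propoly$.

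Second, I would extend Theorem \ref{thm:analytic_grop} to $\propoly$-valued coefficients, establishing that
\[
\Phi U\lie \otimes_{\cat\lie} - \ : \ \{\text{left } \cat\lie\text{-modules in } \propoly\} \stackrel{\cong}{\longrightarrow} \f_\omega(\gr\op; \propoly).
\]
The tensor product is well-defined in the target because $\propoly$ is abelian and cocomplete (Proposition \ref{prop:abelian_propoly}); the coend may be formed objectwise. Analyticity is automatic: since $\Phi U\lie(s)$ has polynomial degree $s$ with respect to $\gr\op$ (Remark \ref{rem:identifications_Phi_ULie}), the polynomial filtration of the tensor product $\Phi U\lie \otimes_{\cat\lie} N$ (for a $\cat\lie$-module $N$ in $\propoly$) is computed via the canonical filtration $N_{\leq d}$ of Lemma \ref{lem:truncate_modules}, exactly as in \cite{2021arXiv211001934P}. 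The argument of loc. cit., which relies only on abstract properties (existence of the correct projective generators and colimits, Freyd's theorem), carries over to the $\propoly$-enriched setting. Composing the two equivalences and unwinding the definitions yields the desired functor $\Phi U\lie \otimes_{\cat\lie} (-) \otimes_{\cat\lie} \underline{\malcev}$.

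For the restriction to finite support: a bimodule $B \in \bimod[\cat\lie]^{<\infty}$ satisfies $B(s,t) = 0$ for $s+t$ large, equivalently $B(-,t) = 0$ for $t$ large and each non-zero $B(-,t) \in \rmod[\cat\lie]^{<\infty}$. By Corollary \ref{cor:finite_functors_equiv_rmod}, under the first equivalence above this corresponds to a functor $\cat\lie \to \propoly$ of finite support whose values lie in $\f_{<\infty}(\gr) \subset \propoly$ (via Proposition \ref{prop:poly_propoly}). The analogous finite-support restriction of Theorem \ref{thm:analytic_grop} then produces an object of $\f_{<\infty}(\gr\op;\f_{<\infty}(\gr))$, which is precisely a finite polynomial bifunctor in the sense of Definition \ref{defn:poly_bifunctor}. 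Commutativity of the square is a direct consequence of the identifications in Lemma \ref{lem:include_finite_bifunctors} and Proposition \ref{prop:complete_bifunctors}.

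The main obstacle is the second step: rigorously establishing the $\propoly$-enriched version of Theorem \ref{thm:analytic_grop}. While the result is morally formal once one recognizes that $\propoly$ shares the key categorical properties of $\kmod$ exploited in \cite{2021arXiv211001934P} (abelian, cocomplete, with a suitable family of small projective generators indexed by $\nat$), care is required to verify that analyticity is detected and preserved correctly in the enriched setting and that the projective generators in $\f_\omega(\gr\op; \propoly)$ are obtained as $\Phi U\lie(s) \boxtimes P$ for $P$ running over projective generators of $\propoly$, rather than only over $\rat$-modules.
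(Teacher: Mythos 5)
Your proposal is correct and follows essentially the same route as the paper: both reduce to Lemma \ref{lem:bimodules_left_right} and then combine the two established equivalences, applying a coefficient-enriched version of Theorem \ref{thm:analytic_grop} in one factor and postcomposing with the equivalence of Theorem \ref{thm:equiv_propoly_modcatlie} in the other. The paper applies $\Phi U\lie \otimes_{\cat\lie}-$ first (so the enriched step has $\rmod[\cat\lie]$-coefficients) and then $-\otimes_{\cat\lie}\underline{\malcev}$ by postcomposition, which agrees with your composite by associativity; the paper is similarly terse about the enriched extension you identify as the main obstacle, dispatching it with ``one checks''.
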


\begin{proof}
The category $\bimod[\cat \lie]$ is equivalent to the category of left $\cat \lie$-modules in $\rmod[\cat \lie]$ by Lemma \ref{lem:bimodules_left_right}.

One checks that  Theorem \ref{thm:analytic_grop} implies that $\Phi U \lie \otimes_{\cat \lie} -$ induces an equivalence of categories
\[
\bimod[\cat \lie] \rightarrow \f_\omega (\gr\op; \rmod [\cat \lie]).
\]
Indeed, $\Phi U \lie \otimes_{\cat \lie} -$ clearly induces an exact functor $\bimod[\cat \lie] \rightarrow \f (\gr\op; \rmod [\cat \lie])$ and this takes values in $\f_\omega (\gr\op; \rmod [\cat \lie])$. Theorem \ref{thm:analytic_grop} implies that this yields the stated equivalence.

Post-composing with the equivalence $- \otimes _{\cat \lie} \underline{\malcev} : \rmod[\cat \lie] \rightarrow \propoly$, this gives the equivalence 
\[
\bimod[\cat \lie] \stackrel{\cong}{\rightarrow} \f_\omega (\gr\op; \propoly).
\]

On restriction to $\bimod[\cat \lie]^{<\infty}$, one checks that the equivalence restricts as stated.
\end{proof}

The category $\bimod[\cat \lie]$ has the symmetric monoidal structure $(\bimod[\cat \lie], \circledcirc, \rat)$ by Proposition \ref{prop:convolution_bimodules} and $\f_\omega (\gr\op; \propoly)$ the symmetric monoidal structure of Proposition  \ref{prop:sym_mon_odot_bifunctors} (which is induced by the tensor product on $\f (\gr\op \times \gr)$). 

The equivalence of Theorem \ref{thm:equivalence_bimodules} is symmetric monoidal with respect to these:

\begin{thm}
\label{thm:bifunctors_sym_monoidal}
The functor 
\[
\Phi U \lie \otimes_{\cat \lie}( - )\otimes_{\cat \lie} \underline{\malcev} \ : \ 
\bimod[\cat \lie] \stackrel{\cong}{\rightarrow} \f_\omega (\gr\op; \propoly).
\]
is a symmetric monoidal equivalence.
\end{thm}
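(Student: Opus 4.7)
The plan is to factor the equivalence of Theorem \ref{thm:equivalence_bimodules} as the composition of two individually symmetric monoidal equivalences, namely
$$
\bimod[\cat\lie] \xrightarrow{\Phi U\lie \otimes_{\cat\lie} -} \f_\omega(\gr\op; \rmod[\cat\lie]) \xrightarrow{(-\otimes_{\cat\lie} \underline{\malcev})_*} \f_\omega(\gr\op; \propoly),
$$
where the middle term carries the pointwise symmetric monoidal structure induced from $(\rmod[\cat\lie], \odot, \rat)$, and the second functor is pointwise post-composition with $-\otimes_{\cat\lie} \underline{\malcev}$. Since the composition of symmetric monoidal equivalences is again a symmetric monoidal equivalence, this reduces the task to treating each factor separately.

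For the second factor, Theorem \ref{thm:hom_underline_malcev_sym_monoidal} together with the inverse equivalence (Theorem \ref{thm:equiv_propoly_modcatlie}) yields a symmetric monoidal equivalence $-\otimes_{\cat\lie}\underline{\malcev}: (\rmod[\cat\lie], \odot, \rat) \to (\propoly, \obar, \rat)$. Both $\f_\omega(\gr\op; \rmod[\cat\lie])$ and $\f_\omega(\gr\op; \propoly)$ carry the pointwise symmetric monoidal structures inherited from their coefficient categories (cf. Proposition \ref{prop:sym_mon_odot_bifunctors}), so applying the equivalence pointwise preserves these structures, yielding the desired symmetric monoidal equivalence.

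The first factor is the main obstacle. Via Lemma \ref{lem:bimodules_left_right} and Remark \ref{rem:biconvolution}, $(\bimod[\cat\lie], \circledcirc, \rat)$ identifies with the category of left $\cat\lie$-modules in $(\rmod[\cat\lie], \odot, \rat)$ equipped with the convolution induced from $\odot$ on the coefficient variable; under this identification, $\Phi U\lie \otimes_{\cat\lie} -$ is the coefficient-enriched analogue of the equivalence of Theorem \ref{thm:analytic_grop}. The task is to produce a natural isomorphism
$$
\Phi U\lie \otimes_{\cat\lie} (B_1 \circledcirc B_2) \cong (\Phi U\lie \otimes_{\cat\lie} B_1) \,\widetilde{\odot}\, (\Phi U\lie \otimes_{\cat\lie} B_2)
$$
in $\f_\omega(\gr\op; \rmod[\cat\lie])$, where $\widetilde{\odot}$ denotes the pointwise application of $\odot$. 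Evaluating at $\Gamma \in \gr\op$ and appealing to the exponential property $\Phi U\lie(\Gamma_1 \star \Gamma_2) \cong \Phi U\lie(\Gamma_1) \odot \Phi U\lie(\Gamma_2)$, which holds since $U\lie$ is a cocommutative Hopf algebra in $(\rmod[\cat\lie], \odot, \rat)$ by Proposition \ref{prop:Hopf_Ulie} (see Section \ref{subsect:expo}), the verification reduces to the case $\Gamma = \zed$, i.e., to exhibiting a natural isomorphism
$$
U\lie \otimes_{\cat\lie} (B_1 \circledcirc B_2) \cong (U\lie \otimes_{\cat\lie} B_1) \odot (U\lie \otimes_{\cat\lie} B_2).
$$
Since both sides are cocontinuous in $B_1$ and $B_2$, this reduces further to the case of representable bimodules of the form $\cat\lie(-,s) \boxtimes \cat\lie(t,-)$, where it becomes a direct unpacking of the definitions of $\circledcirc$ on bimodules and $\odot$ on $\rmod[\cat\lie]$. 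The coherence conditions for a symmetric monoidal functor then follow from naturality, and one verifies $F(\rat) \cong \rat$ by a direct computation on the arity $(0,0)$ bimodule.
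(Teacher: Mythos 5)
Your overall plan matches the paper exactly: the paper also proves Theorem~\ref{thm:bifunctors_sym_monoidal} by factoring through $\f_\omega(\gr\op; \rmod[\cat\lie])$, handling the $-\otimes_{\cat\lie}\underline{\malcev}$ factor via Theorem~\ref{thm:hom_underline_malcev_sym_monoidal}, and handling the $\Phi U\lie\otimes_{\cat\lie}-$ factor by citing the symmetric monoidality of $\Phi U\lie\otimes_{\cat\lie}-: (\lmod[\cat\lie],\odot) \to (\f_\omega(\gr\op),\otimes)$ established in \cite{2021arXiv211001934P}, enriched (implicitly) to $\rmod[\cat\lie]$-coefficients. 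Your treatment of the second factor is correct.

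Your re-derivation of the first factor's monoidality, however, contains a genuine gap at the \emph{reduction to $\Gamma = \zed$} step. The object $\Phi U\lie\otimes_{\cat\lie}(B_1\circledcirc B_2)$ is \emph{not} an exponential functor of $\Gamma$: evaluating at $\Gamma_1\star\Gamma_2$ and applying the exponential property of $\Phi U\lie$ gives
\[
\bigl(\Phi U\lie(\Gamma_1)\odot\Phi U\lie(\Gamma_2)\bigr)\otimes_{\cat\lie}(B_1\circledcirc B_2),
\]
which does not split as a $\odot$-product of the evaluations at $\Gamma_1$ and $\Gamma_2$, since $\otimes_{\cat\lie}$ does not commute with $\odot$ in the requisite Fubini-like way. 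Similarly, the right-hand side $(\Phi U\lie\otimes_{\cat\lie}B_1)\,\widetilde{\odot}\,(\Phi U\lie\otimes_{\cat\lie}B_2)$ is not exponential in $\Gamma$. So knowing an isomorphism at $\Gamma=\zed$ does not determine, let alone verify, the isomorphism at $\zed^{\star t}$ for $t>1$; one must construct the natural transformation at all $\Gamma$ and check compatibility with the Hopf-algebraic $\gr\op$-functoriality. This is precisely what the cited result of \cite{2021arXiv211001934P} supplies. The correct move here is to invoke that result and note that the isomorphism it constructs is visibly compatible with the additional right $\cat\lie$-action that appears once the left $\cat\lie$-modules are promoted to bimodules, rather than attempting to re-derive it by a reduction that does not hold.
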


\begin{proof}
This follows from the fact that $\Phi U \lie \otimes_{\cat \lie} - $ is symmetric monoidal for the convolution product $\odot$ on $\lmod[\cat \lie]$ and the usual tensor product on $\f_{\omega}(\gr\op)$, by \cite{2021arXiv211001934P} and the corresponding result for $- \otimes _{\cat \lie} \underline{\malcev}$, which follows from Theorem \ref{thm:hom_underline_malcev_sym_monoidal}. 
\end{proof}

\begin{exam}
Under the equivalence of Theorem \ref{thm:equivalence_bimodules}, the bifunctors $\rat \boxtimes \A $ and $\A^{\sharp} \boxtimes \rat$  in $ \f_{< \infty} (\gr\op \times \gr)$ correspond  respectively to the $\cat \lie $-bimodules $\rat (1,0) $ and $\rat (0,1)$ given by $\rat$ concentrated in the indicated bi-arities. 

Using the convolution $\circledcirc$ for bimodules, $\rat (1,0) ^{\circledcirc s} \circledcirc \rat (0,1) ^{\circledcirc t} $ identifies as the bimodule $\rat \sym_t \boxtimes \rat \sym_s$ concentrated in bi-arity $(s,t$), with left action of $\sym_t$ and right action of $\sym_s$ (morphisms of $\cat \lie$ that do not preserve the arity necessarily act by zero).
 The associated bifunctor is $(\rat \boxtimes \A)^{\otimes s} \otimes (\A^{\sharp} \boxtimes \rat)^{\otimes t}$, which identifies as $(\A^\sharp) ^{\otimes t} \boxtimes \A^{\otimes s}$, which is isomorphic to the bifunctor obtained by applying $\Phi U \lie \otimes_{\cat \lie} (-) \otimes_{\cat \lie} \underline{\malcev}$ to $\rat \sym_t \boxtimes \rat \sym_s$.

This analysis can be extended  to treat all `homogeneous polynomial bifunctors' (i.e., those corresponding to a $\cat \lie$-bimodule supported on a single bi-arity).
\end{exam}

\subsection{Comparing $\otimes_{\cat \lie}$ and  $\otimes_\gr$}

By Proposition \ref{prop:gr_bifunctors_monoidal}, $\otimes_\gr$ gives a monoidal structure on $\f (\gr \op \times \gr)$.
Analogously to the induced
\[
\ogr : \f_\omega (\gr\op) \times \propoly \rightarrow \kmod
\]
given in Proposition \ref{prop:otimes_gr_propoly_analytic}, there is an induced monoidal structure on $\f_\omega (\gr\op; \propoly)$.

\begin{prop}
\label{prop:otimes_gr_analytic_coanalytic}
The functor $\ogr$ induces a  monoidal structure  $(\f_\omega(\gr\op; \propoly), \ogr , \qgr_\bullet \pbif)$.
\end{prop}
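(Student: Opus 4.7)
The plan is to define $\ogr$ on $\f_\omega(\gr\op;\propoly)$ by a levelwise formula extending Proposition \ref{prop:otimes_gr_propoly_analytic}, verify that it takes values in the correct category, and check monoidality by exhibiting $\qgr_\bullet \pbif$ as the unit. For $F, G \in \f_\omega(\gr\op; \propoly)$ and $d \in \nat$, the level-$d$ component $G_d : \gr\op \to \f_d(\gr)$ of $G$ is analytic in $\gr\op$ and polynomial of degree $d$ in the target $\gr$-variable. I would set
\[
(F \ogr G)_d(\Gamma)(H) := G_d(-,H) \ogr F(\Gamma),
\]
where $G_d(-,H) \in \f_\omega(\gr\op)$ and $F(\Gamma) \in \propoly$; the $\ogr$ on the right is that of Proposition \ref{prop:otimes_gr_propoly_analytic}. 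The tower structure maps come from those of $G_{d+1} \to G_d$, and analyticity in the free $\gr\op$-variable $\Gamma$ follows because $F$ is analytic and $\ogr$ is computed as a filtered colimit over the analytic filtration $\pgrop_\bullet F$.

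Next I would verify that, for each $\Gamma$, the tower $((F \ogr G)_d(\Gamma))_{d \in \nat}$ is an object of $\propoly$. Polynomial degree $d$ in $H$ is immediate from the polynomiality of $G_d$. The defining isomorphism $\qgr_{d-1}(F \ogr G)_d \cong (F \ogr G)_{d-1}$ reduces, using the short exact sequence $0 \to \qhat{d} G_d \to G_d \to G_{d-1} \to 0$ and the exactness of $\qgr_{d-1}$ (Proposition \ref{prop:qhat}), to the vanishing of $\qgr_{d-1}$ applied to a tensor product one of whose factors is homogeneous polynomial of degree $d$---the same mechanism as in Remark \ref{rem:obar_well_defined}.

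On the subcategory $\f_{<\infty}(\gr\op \times \gr) \hookrightarrow \f_\omega(\gr\op; \propoly)$ of Lemma \ref{lem:include_finite_bifunctors}, the formula collapses via Proposition \ref{prop:compatibility_otimes_gr} to the ordinary $\otimes_\gr$ of Proposition \ref{prop:gr_bifunctors_monoidal}, so $\ogr$ extends the standard monoidal structure on polynomial bifunctors. For the unit, $\qgr_\bullet \pbif$ is levelwise $\qgr_d \pbif$, and Proposition \ref{prop:pbif_bipolynomiality} gives $F_d \otimes_\gr \qgr_d \pbif \cong F_d$ and $\qgr_d \pbif \otimes_\gr G_d \cong G_d$ for polynomial bifunctors of the appropriate degree; combining these levelwise with the analytic filtration in $\gr\op$ (and using Proposition \ref{prop:otimes_gr_propoly_analytic}) yields natural isomorphisms $F \ogr \qgr_\bullet \pbif \cong F \cong \qgr_\bullet \pbif \ogr F$ in $\f_\omega(\gr\op; \propoly)$.

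The main obstacle is associativity. Verifying $(F_1 \ogr F_2) \ogr F_3 \cong F_1 \ogr (F_2 \ogr F_3)$ requires a Fubini-type identity for iterated coends over $\gr$ while simultaneously tracking the analytic filtration in the remaining $\gr\op$-variable and the pro-polynomial tower in the $\propoly$-target. Concretely, both sides are computed at level $d$ as a double coend, and one must commute the filtered colimits defining $\ogr$ (via the $\pgrop_e$'s) with the coends and the truncations $\qgr_d$. This reduces, on finite approximations coming from the polynomial subcategory, to the associativity of $\otimes_\gr$ established in Proposition \ref{prop:gr_bifunctors_monoidal}; the exactness statements of Section \ref{sect:propoly}---notably Propositions \ref{prop:qhat}, \ref{prop:d_evaluate_exact_f_to_propoly}, and \ref{prop:compl_exact}---guarantee that passing to the limit preserves the associator and coherence data. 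The triangle identity for the unit and naturality in all variables are then formal.
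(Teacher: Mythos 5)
Your proposal follows essentially the same route as the paper's (brief) proof: extend $\ogr$ of Proposition \ref{prop:otimes_gr_propoly_analytic} levelwise, check the tower lands in $\propoly$ using the homogeneity argument as in Remark \ref{rem:obar_well_defined}, identify the unit via Proposition \ref{prop:pbif_bipolynomiality} together with the evaluation isomorphisms of Example \ref{exam:otimes_gr}, and reduce associativity to that of $\otimes_\gr$ on the polynomial subcategory and pass to the limit. One small caution: as written, your levelwise formula $(F \ogr G)_d(\Gamma)(H) := G_d(-,H) \ogr F(\Gamma)$ puts the free $\gr\op$-variable on $F$ and the free $\gr$-variable on $G$, which is the reverse of the convention in Proposition \ref{prop:gr_bifunctors_monoidal} (there the covariant $\gr$-variable of $F_1 \otimes_\gr F_2$ comes from $F_1$); this still yields a correct monoidal structure with the stated unit, but to match the paper you should swap the roles of $F$ and $G$, and also note that no truncations $\qgr_d$ are actually needed in your formula (contrary to what your associativity paragraph suggests) since only one tensor factor carries the free $\gr$-dependence.
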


\begin{proof}
One first checks that $\otimes_\gr : \f (\gr\op \times \gr) \times \f (\gr\op \times \gr) \rightarrow \f (\gr\op \times \gr)$ induces 
\[
\ogr : \f_\omega (\gr\op; \propoly) \times \f_\omega (\gr\op ; \propoly) \rightarrow \f_\omega (\gr\op; \propoly),
\]
as in  Proposition \ref{prop:otimes_gr_propoly_analytic} for $\ogr : \f_\omega (\gr\op) \times \propoly \rightarrow \kmod$. This is associative, by construction.

It remains to check that $\qgr _\bullet \pbif$ is the unit. This corresponds to the fact that $\pbif$ is the unit for $\otimes_\gr$ on $\f (\gr\op \times \gr)$, which follows from the behaviour exhibited in Example \ref{exam:otimes_gr}.
\end{proof}

We seek to compare this structure with the monoidal structure on $\bimod[\cat \lie]$ induced by $\otimes_{\cat \lie}$. Due to the behaviour stressed in Remark \ref{rem:order_otimes_catlie}, the {\em opposite} monoidal structure is used, defined for bimodules $M_1$, $M_2$ by 
\[
M_1 \otimes_{\cat \lie}\op M_2 := 
M_2 \otimes_{\cat \lie} M_1.
\]

\begin{thm}
\label{thm:monoidal_otimes_catlie_gr}
The functor $\Phi U \lie \otimes_{\cat \lie} (-) \otimes_{\cat \lie} \underline{\malcev} : 
\bimod[\cat \lie] \stackrel{\cong}{\rightarrow} \f_\omega (\gr\op; \propoly)$  is  monoidal for the structures $(\bimod[\cat \lie], \otimes_{\cat \lie}\op, \rat)$ and $(\f_\omega(\gr\op; \propoly), \ogr , \qgr_\bullet \pbif)$. 

In particular, for bimodules $M_1$ and $M_2$, there is a canonical isomorphism
\begin{eqnarray*}
&&\Phi U \lie \otimes_{\cat \lie} (M_1 \otimes_{\cat \lie} M_2) \otimes_{\cat \lie} \underline{\malcev}
\cong 
\\
&&
\quad \quad
(\Phi U \lie \otimes_{\cat \lie} M_2 \otimes_{\cat \lie} \underline{\malcev})
\ogr
(\Phi U \lie \otimes_{\cat \lie} M_1 \otimes_{\cat \lie} \underline{\malcev}).
\end{eqnarray*}
\end{thm}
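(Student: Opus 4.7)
The strategy is to combine Theorem~\ref{thm:otimes_gr_PhiUL_malcev}, which furnishes an isomorphism $\underline{\malcev} \, \tilde{\otimes}_{\gr\op}\, \Phi U \lie \cong \cat\lie$ of $\cat\lie$-bimodules, with a Fubini-type interchange of coends running in parallel with the derivation of Corollary~\ref{cor:otimes_compatibility}. This will yield both the desired natural isomorphism and its coherence with the two monoidal structures.

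First I verify the unit. The image of the unit bimodule $\cat\lie$ is $\Phi U \lie \otimes_{\cat\lie} \cat\lie \otimes_{\cat\lie} \underline{\malcev} \cong \Phi U \lie \otimes_{\cat\lie} \underline{\malcev}$. By Theorem~\ref{thm:equiv_Umalcev_PhiUlie}, $\qgr_\bullet \pbif$ corresponds to $\Phi U \lie$ under the equivalence of Theorem~\ref{thm:equiv_propoly_modcatlie} applied pointwise in $\gr\op$; post-composing with $-\otimes_{\cat\lie} \underline{\malcev} : \rmod[\cat\lie] \to \propoly$ produces the natural isomorphism $\qgr_\bullet \pbif \cong \Phi U \lie \otimes_{\cat\lie} \underline{\malcev}$ in $\f_\omega(\gr\op; \propoly)$, which is the required identification with the unit of $\ogr$.

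For the main isomorphism, set $B_i := \Phi U \lie \otimes_{\cat\lie} M_i \otimes_{\cat\lie} \underline{\malcev}$. In the product $B_2 \ogr B_1$, the $\ogr$ operation contracts the $\propoly$-output of $B_2$, encoded by its rightmost factor $\underline{\malcev}$, with the $\gr\op$-input of $B_1$, encoded by its leftmost factor $\Phi U \lie$. Using the cocontinuity of $\ogr$ in both variables (an analogue of Remark~\ref{rem:otimes_gr_colimits}) together with the cocontinuity of $\otimes_{\cat\lie}$, interchange the coends to obtain
\[
B_2 \ogr B_1 \cong \Phi U \lie \otimes_{\cat\lie} M_2 \otimes_{\cat\lie} \bigl( \underline{\malcev} \, \tilde{\otimes}_{\gr\op} \, \Phi U \lie \bigr) \otimes_{\cat\lie} M_1 \otimes_{\cat\lie} \underline{\malcev}.
\]
Applying Theorem~\ref{thm:otimes_gr_PhiUL_malcev} to collapse the bracketed factor to $\cat\lie$, and using that $\cat\lie$ is the unit for $\otimes_{\cat\lie}$, this simplifies to $\Phi U \lie \otimes_{\cat\lie} (M_2 \otimes_{\cat\lie} M_1) \otimes_{\cat\lie} \underline{\malcev}$, which is the image of $M_1 \otimes_{\cat\lie}\op M_2 = M_2 \otimes_{\cat\lie} M_1$ under the functor. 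Naturality in $M_1, M_2$ is immediate from naturality of each step; the associativity and unit coherences follow from the naturality of Theorem~\ref{thm:otimes_gr_PhiUL_malcev} combined with the associativity and unit axioms already satisfied by $\otimes_{\cat\lie}$ and $\ogr$.

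The main obstacle is a careful justification of the Fubini interchange in the enriched setting where the bifunctors take values in $\propoly$ (rather than in $\kmod$ as for Corollary~\ref{cor:otimes_compatibility}) and the source involves analyticity on $\gr\op$. The cleanest route is to first reduce to bimodules of finite support. For these, the bifunctors $B_i$ lie in $\f_{<\infty}(\gr\op \times \gr)$ by Theorem~\ref{thm:equivalence_bimodules}, the filtered colimit defining $\ogr$ stabilizes, and Proposition~\ref{prop:compare_pq_polynomial} ensures that the $\gr$ and $\gr\op$ polynomial filtrations interact compatibly; the coend manipulations then reduce to finite sums and go through exactly as in the proof of Corollary~\ref{cor:otimes_compatibility}. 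The general case is obtained by writing each $M_i$ as the filtered colimit of its canonical sub-bimodules of bounded bi-grading (extending the constructions of Section~\ref{subsect:canon_filt_catopd_mod} to bimodules) and invoking that every operation involved commutes with filtered colimits in each of its arguments.
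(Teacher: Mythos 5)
The overall strategy is the same as the paper's: the paper simply cites Corollary~\ref{cor:otimes_compatibility}, and your argument is essentially an unpacking of that corollary to the bifunctor setting, which is a sensible and welcome expansion.

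However, there is a concrete error in the contraction direction, and it propagates to a wrong intermediate conclusion. You state that in $B_2 \ogr B_1$, the $\ogr$ operation ``contracts the $\propoly$-output of $B_2$ \ldots with the $\gr\op$-input of $B_1$.'' The paper's convention, fixed in Proposition~\ref{prop:gr_bifunctors_monoidal} and consistently used throughout (e.g., in the construction of the composition for $\qgr_d\kring\gr$ in Section~\ref{subsect:tower_kgr}), is the opposite: in $F_1 \otimes_\gr F_2$ one uses the $\gr\op$-structure of $F_1$ and the $\gr$-structure of $F_2$. So in $B_2 \ogr B_1$ it is the $\Phi U\lie$ of $B_2$ and the $\underline{\malcev}$ of $B_1$ that get paired, not the other way round. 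With the correct pairing, applying Theorem~\ref{thm:otimes_gr_PhiUL_malcev} collapses the middle factor to $\cat\lie$ and yields
\[
B_2 \ogr B_1 \cong \Phi U\lie \otimes_{\cat\lie} M_1 \otimes_{\cat\lie} M_2 \otimes_{\cat\lie}\underline{\malcev},
\]
i.e.\ the image of $M_1 \otimes_{\cat\lie} M_2 = M_2 \otimes_{\cat\lie}^{\op} M_1$, which matches the displayed isomorphism in the theorem and exhibits the functor as monoidal for $\otimes_{\cat\lie}^{\op}$. Your version produces $\Phi U\lie\otimes_{\cat\lie}(M_2\otimes_{\cat\lie}M_1)\otimes_{\cat\lie}\underline{\malcev}$, which is $F(M_1\otimes_{\cat\lie}^{\op}M_2) \cong F(M_2)\ogr F(M_1)$; since $\ogr$ is only monoidal (not symmetric), this is \emph{not} the coherence datum for a monoidal functor from $(\bimod[\cat\lie],\otimes_{\cat\lie}^{\op})$. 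This is a genuine gap: the bookkeeping has to be redone with the correct pairing. The $\op$ is not optional here — it reappears in Theorem~\ref{thm:tower_isomorphism} (``corresponds to $(\qgr_d\rat\gr)^{\op}$''), which would not be consistent with your version.

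As a secondary point, the reduction to bimodules of finite support is a reasonable way to control the interchange, but note that the canonical filtration of Section~\ref{subsect:canon_filt_catopd_mod} applied in the left variable gives \emph{sub}objects $M_{\leq d}$ while the right-variable truncation $M^{\leq e}$ gives \emph{quotients}, so ``sub-bimodules of bounded bi-grading'' requires a little more care than your sketch suggests (though for $\cat\lie$-bimodules the vanishing $\cat\lie(s,t)=0$ for $s<t$ does help). The paper does not make this reduction explicit; it is a fair point that making it precise is nontrivial.
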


\begin{proof}
This follows from Corollary \ref{cor:otimes_compatibility}.
\end{proof}

 \section{Modelling the tower of categories $\qgr_\bullet \rat \gr$}
 \label{sect:model_qgr_pbif}

As a first application of the relationship between $\cat \lie$-bimodules and bifunctors, in this section we exhibit a model for the tower of categories $\qgr_\bullet \rat \gr$.
 
\subsection{The tower of categories $\cat ^{\leq d} \lie$}

For $d \in \nat$, as in Section \ref{subsect:catlie_bimodules}, let $\cat ^{\leq d} \lie$ be the quotient $\cat \lie$-bimodule obtained by applying the truncation functor $(-)^{\leq d}$ with respect to the right $\cat \lie$-module structure. 
 Explicitly, 
\[
\cat^{\leq d} \lie (s, t) = 
\left\{ 
\begin{array}{ll}
\cat \lie (s,t) & s \leq d \\
0 &\mbox{otherwise.}
\end{array}
\right. 
\] 

\begin{rem}
\label{rem:catlie_trunc_finite}
Since $\cat \lie (s,t) =0$ if $s <t$, one has $\cat^{\leq d}  \lie (s,t)=0$ if $t>d$ or if $s>d$;  it follows that the bimodule $\cat ^{\leq d} \lie$ belongs to $\bimod[\cat \lie]^{< \infty}$. 
\end{rem}

We have the following analogue of Proposition \ref{prop:pbif_bipolynomiality}, using the functor $(-)_{[\leq d]}$ on left $\cat\lie$-modules introduced in Notation \ref{nota:left_adjoint_left_cat_opd}:

\begin{prop}
\label{prop:catlie_left_adjoint}
For $d \in \nat$ the  morphism $(\cat \lie)_{[\leq d]} \rightarrow \cat^{\leq d} \lie$ induced by the surjection $\cat \lie \twoheadrightarrow \cat^{\leq d} \lie$ is an isomorphism.
\end{prop}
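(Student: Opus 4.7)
The plan is to verify the isomorphism bi-arity by bi-arity, using the explicit cokernel description of $(-)_{[\leq d]}$ recalled in Remark \ref{rem:left_adjoint_left_cat_opd}(2), with $\cat\lie$ treated as a left $\cat\lie$-module in $\rmod[\cat\lie]$ via Lemma \ref{lem:bimodules_left_right}. First I would check that the morphism in the statement actually exists. By the explicit formula for $\cat^{\leq d}\lie$, a nonzero value $\cat^{\leq d}\lie(r,t)$ forces $r \leq d$, whence $\cat\lie(r,t) \neq 0$ forces $t \leq r \leq d$ by Lemma \ref{lem:cat_opd_connexe}. In particular $\cat^{\leq d}\lie$ vanishes in left $\cat\lie$-module arity greater than $d$, so by the universal property of $(-)_{[\leq d]}$, the canonical surjection $\cat\lie \twoheadrightarrow \cat^{\leq d}\lie$ factors uniquely as $\cat\lie \twoheadrightarrow (\cat\lie)_{[\leq d]} \to \cat^{\leq d}\lie$, producing the morphism of the statement.

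To show that this induced morphism is an isomorphism, I would fix the right arity $r$, view $\cat\lie(r,-)$ as a left $\cat\lie$-module, and compute
\[
(\cat\lie)_{[\leq d]}(r,t) = \mathrm{coker}\Bigl(\bigoplus_{s>d}\bigoplus_{f\in\cat\lie(s,t)} \cat\lie(r,s) \longrightarrow \cat\lie(r,t)\Bigr), \qquad (f,x) \longmapsto f \circ x,
\]
following Remark \ref{rem:left_adjoint_left_cat_opd}(2). The analysis then splits in two cases. If $r \leq d$, then every $s > d$ satisfies $s > r$, so $\cat\lie(r,s) = 0$ by Lemma \ref{lem:cat_opd_connexe}; the cokernel is therefore $\cat\lie(r,t) = \cat^{\leq d}\lie(r,t)$, and the induced map is the identity. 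If $r > d$, I would take the distinguished index $s = r$, which lies in the range $s > d$; then the identity $\mathrm{id}_r \in \cat\lie(r,r)$ maps to $f \circ \mathrm{id}_r = f$ under the map indexed by any $f \in \cat\lie(r,t)$, so every element of $\cat\lie(r,t)$ lies in the image, forcing the cokernel to vanish, in agreement with $\cat^{\leq d}\lie(r,t) = 0$.

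Combining the two cases, the induced morphism is an isomorphism in each bi-arity, hence an isomorphism of $\cat\lie$-bimodules. There is no substantial obstacle here; the crux is the observation that, for $r > d$, one may choose the intermediate arity $s = r$ itself, so the identity of $r$ provides a tautological factorization of every morphism $r \to t$ through a degree strictly greater than $d$. In effect, the triangular support condition $\cat\lie(r,t) = 0$ for $r < t$ collapses the two a priori different truncation conditions (on the source and on the target) into the single condition $r \leq d$.
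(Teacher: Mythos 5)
Your proof is correct and matches the paper's direct argument: the decisive step in both is to use the identity morphism $\mathrm{id}_r\in\cat\lie(r,r)$ for $r>d$ to show that everything in $\cat\lie(r,t)$ is killed by the truncation, with the triangularity $\cat\lie(r,t)=0$ for $t>r$ (Lemma \ref{lem:cat_opd_connexe}) handling the other case. You phrase this as an explicit cokernel computation via Remark \ref{rem:left_adjoint_left_cat_opd}(2) where the paper phrases it via the Yoneda lemma and a factorization of surjections, but these are the same observation; the paper also indicates a second, more formal route (mirroring Proposition \ref{prop:pbif_bipolynomiality} and Proposition \ref{prop:properties_otimes_cat_opd}) that you did not pursue, but that is not an omission.
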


\begin{proof}
This can be proved by the same formal argument as used for  Proposition \ref{prop:pbif_bipolynomiality}.

It can also be proved directly, by exhibiting an inverse as follows. By definition, the canonical surjection $\cat \lie \twoheadrightarrow (\cat \lie)_{[\leq d]}$ sends $\cat \lie (s, t)$ to zero if $t >d$. In particular, if $s >d$, it sends $\cat \lie (s,s)$ to zero. It follows, by the Yoneda Lemma, that it sends $\cat \lie (s,t)$ to zero for all $s>d$ (and any $t$). This shows that this canonical surjection factorizes over $\cat \lie \twoheadrightarrow \cat^{\leq d} \lie$, whence the result.
\end{proof}

This helps motivate the following counterpart of Theorem \ref{thm:tower}. 

\begin{prop}
\label{prop:tower_cat_lie}
For $d \in\nat$, the composition in $\cat \lie$ induces a $\rat$-linear category structure on $\cat ^{\leq d} \lie$ such that 
the canonical surjection $\cat \lie \twoheadrightarrow \cat^{\leq d} \lie$ gives a $\rat$-linear functor  that is the identity on objects and the canonical surjection on morphisms. 

Moreover, these form a tower of $\rat$-linear categories under $\cat \lie$:
\[
\xymatrix{
\cat \lie
\ar@{.>}[d]
\ar[rd]
\ar[rrd]
\\
\ar@{.>}[r]
&
\cat^{\leq d+1}  \lie
\ar[r]
&
\cat^{\leq d}  \lie 
\ar[r]
&\ldots 
}
\]
\end{prop}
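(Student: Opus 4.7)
The plan is to mimic the proof of Theorem \ref{thm:tower}, replacing $\otimes_\gr$ and $\qgr_d \pbif$ by $\otimes_{\cat\lie}$ and $\cat^{\leq d}\lie$. The key observation is Lemma \ref{lem:cat_opd_connexe}: since $\cat\lie(s,t)=0$ when $s<t$, the bimodule $\cat^{\leq d}\lie$ is non-zero only in bi-arities $(s,t)$ with $t\leq s\leq d$. Consequently, given composable morphisms $g \in \cat^{\leq d}\lie(t,u)$ and $f \in \cat^{\leq d}\lie(s,t)$, non-triviality forces $s \leq d$ and $t \leq d$, so $u \leq t \leq d$; the composite in $\cat\lie(s,u)$ therefore automatically lies in $\cat^{\leq d}\lie(s,u)$. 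This already yields a candidate composition by simple restriction of that of $\cat\lie$.

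More conceptually, one can derive the composition from Proposition \ref{prop:catlie_left_adjoint}, which identifies $\cat^{\leq d}\lie$ with $(\cat\lie)_{[\leq d]}$. The composition morphism $\cat\lie \otimes_{\cat\lie} \cat\lie \to \cat\lie$ descends through the universal property of $(-)_{[\leq d]}$ to give $\cat^{\leq d}\lie \otimes_{\cat\lie} \cat^{\leq d}\lie \to \cat^{\leq d}\lie$, using the analogue of the argument in Proposition \ref{prop:pbif_bipolynomiality}/Remark \ref{rem:monoidal_polynomial_bifunctors} that $\cat^{\leq d}\lie$ is the unit of a monoidal structure on the full subcategory of bi-truncated bimodules. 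Either route produces the same composition law.

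The remaining verifications are straightforward. Associativity and the unit law for $\mathrm{id}_s$ with $s \leq d$ are inherited from $\cat\lie$. The only subtlety — and the place I expect the main bookkeeping obstacle — concerns objects $s>d$: since $\cat^{\leq d}\lie(s,s) = 0$ the identity there must be zero, which looks problematic. However, Lemma \ref{lem:cat_opd_connexe} forces $\cat^{\leq d}\lie(r,s)=0$ for all $r$ when $s>d$ (if $r>d$ by definition; if $r\leq d<s$ because $\cat\lie(r,s)=0$), and likewise $\cat^{\leq d}\lie(s,t)=0$ by definition. Thus the objects $s>d$ are "zero objects" of the $\rat$-linear category and the identity-zero phenomenon is harmless; equivalently, one may restrict the object set to $\{0,1,\ldots,d\}$ without losing information.

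With the composition in place, the surjection $\cat\lie \twoheadrightarrow \cat^{\leq d}\lie$ is a $\rat$-linear functor by the very construction of composition on the target. Finally, the tower structure is immediate: for $d' \geq d$ the canonical bimodule surjection $\cat^{\leq d'}\lie \twoheadrightarrow \cat^{\leq d}\lie$ (identity on $\cat\lie(s,t)$ for $s\leq d$, zero for $d<s\leq d'$) respects composition, is the identity on objects, and makes the required diagram under $\cat\lie$ commute.
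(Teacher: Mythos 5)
Your proposal is correct and matches the paper's proof, which likewise offers both routes: mimicking Theorem \ref{thm:tower} and the direct verification based on $\cat^{\leq d}\lie(s,t)$ vanishing outside $t\leq s\leq d$ (which the paper phrases via Proposition \ref{prop:catlie_left_adjoint}). Your extra remark that objects $s>d$ become zero objects, so the vanishing of $\id_s$ in $\cat^{\leq d}\lie(s,s)$ is harmless, is a useful clarification the paper leaves implicit.
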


\begin{proof}
This can be proved by the reasoning applied for Theorem \ref{thm:tower}. It can also be seen directly, as follows.

The key  is that $\cat^{\leq d}\lie (s,t) =0$ if $t > d$ (this is encoded in Proposition \ref{prop:catlie_left_adjoint}). Thus non-trivial compositions are understood by restricting to $s\leq d $ and $t \leq d$, where it is given by that of $\cat \lie$.
\end{proof}

\subsection{Passage to bifunctors}

By Remark \ref{rem:catlie_trunc_finite}, for $d \in \nat$, the bimodule $\cat^{\leq d}\lie$ belongs to $\bimod[\cat \lie]^{< \infty}$. It follows that applying the functor $\Phi U \lie \otimes_{\cat \lie} (-) \otimes_{\cat \lie} \underline{\malcev}$ of Theorem \ref{thm:equivalence_bimodules} yields a  bifunctor in $\f_{< \infty} (\gr\op \times \gr)$. One has the key identification:

\begin{prop}
\label{prop:catlie_trunc_d}
For $d \in \nat$, the bifunctor $\Phi U \lie \otimes_{\cat \lie} \cat ^{\leq d} \lie \otimes_{\cat \lie} \underline{\malcev}$
is isomorphic to $\qgr_d \pbif$.
\end{prop}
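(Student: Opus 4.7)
The plan is to compute $\Phi U \lie \otimes_{\cat \lie} \cat^{\leq d}\lie \otimes_{\cat \lie} \underline{\malcev}$ pointwise in $\Gamma \in \ob\gr\op$ and identify the resulting object of $\propoly$ with $\qgr_d P_\Gamma$, naturally in $\Gamma$, so as to conclude via the embedding $\f_{<\infty}(\gr\op \times \gr) \hookrightarrow \f_\omega(\gr\op; \propoly)$.

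The first step is the Yoneda-type identity: for any right $\cat\lie$-module $N$,
\[
N \otimes_{\cat\lie} \cat^{\leq d}\lie \cong N^{\leq d}
\]
as right $\cat\lie$-modules. The bimodule $\cat^{\leq d}\lie$ is the truncation of $\cat\lie$ with respect to the right $\cat\lie$-module structure on its first variable, and $\otimes_{\cat\lie}$ pairs the right action of $N$ with the left action of $\cat^{\leq d}\lie$ on its second variable $t$, leaving behind the right action on $s$. For $s \leq d$, Lemma~\ref{lem:cat_opd_connexe} forces $\cat\lie(s,t) = 0$ whenever $t > s$, so $\cat^{\leq d}\lie(s,-) = \cat\lie(s,-)$ as left $\cat\lie$-modules, and the identity $N \otimes_{\cat\lie} \cat\lie(s,-) \cong N(s)$ yields the claim; for $s > d$ the relevant bimodule vanishes.

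Applied naturally in $\Gamma$ to $N = \Phi U\lie(\Gamma)$, this identifies $\Phi U\lie \otimes_{\cat\lie} \cat^{\leq d}\lie$ with $(\Phi U\lie)^{\leq d}$ as functors $\gr\op \to \rmod[\cat\lie]$. Then Proposition~\ref{prop:compare_filtrations}, with $N = \Phi U\lie(\Gamma)$, gives
\[
(\Phi U\lie(\Gamma))^{\leq d} \otimes_{\cat\lie} \underline{\malcev} \cong \qgr_\bullet\bigl((\Phi U\lie(\Gamma) \otimes_{\cat\lie} \underline{\malcev})_d\bigr).
\]
Theorem~\ref{thm:equiv_Umalcev_PhiUlie} identifies $\Phi U\lie(\Gamma) \otimes_{\cat\lie} \underline{\malcev}$ with $\qgr_\bullet P_\Gamma$ in $\propoly$, naturally in $\Gamma$, so the right-hand side becomes $\qgr_\bullet(\qgr_d P_\Gamma)$. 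By Proposition~\ref{prop:poly_propoly}, this is precisely the image of $\qgr_d P_\Gamma \in \ob \f_d(\gr)$ inside $\propoly$; and since the embedding of Lemma~\ref{lem:include_finite_bifunctors} acts by post-composition with $\qgr_\bullet$, this matches $\qgr_d \pbif$ evaluated at $\Gamma$. Assembling the isomorphisms naturally in $\Gamma$ yields the required identification in $\f_\omega(\gr\op; \propoly)$.

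The main obstacle is the bookkeeping in the first step. The truncation defining $\cat^{\leq d}\lie$ is performed on the right-action variable $s$ of the bimodule $\cat\lie$, whereas the tensor product $\otimes_{\cat\lie}$ with the right $\cat\lie$-module $N$ absorbs the complementary left-action variable $t$. One must be careful to verify that the two variables interact as claimed, and in particular that the fact $\cat\lie(s,t)=0$ for $t>s$ is what makes the right-module truncation on $s$ also cut off the relevant left-module part on $t$. Once that identification is secured, the remainder is a direct application of Proposition~\ref{prop:compare_filtrations} and Theorem~\ref{thm:equiv_Umalcev_PhiUlie}, both already established.
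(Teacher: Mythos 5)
Your proof is correct and follows essentially the same route as the paper: the heart of both arguments is the identification $\Phi U\lie \otimes_{\cat\lie} \cat^{\leq d}\lie \cong (\Phi U\lie)^{\leq d}$, followed by Theorem~\ref{thm:equiv_Umalcev_PhiUlie} and Proposition~\ref{prop:compare_filtrations} to translate the truncation $(-)^{\leq d}$ into $\qgr_\bullet\pbif \twoheadrightarrow \qgr_\bullet(\qgr_d\pbif)$. The only deviation is cosmetic: you establish the first isomorphism directly from the co-Yoneda identity $N \otimes_{\cat\lie} \cat\lie(s,-) \cong N(s)$ applied arity-by-arity, whereas the paper routes through Proposition~\ref{prop:catlie_left_adjoint} (identifying $(\cat\lie)_{[\leq d]} \cong \cat^{\leq d}\lie$) and the two isomorphisms of Proposition~\ref{prop:properties_otimes_cat_opd}; your version is slightly more economical. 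One small remark on your ``obstacle'' paragraph: the vanishing $\cat\lie(s,t)=0$ for $t>s$ is not actually what makes the Yoneda step work --- for $s\leq d$ one has $\cat^{\leq d}\lie(s,-)=\cat\lie(s,-)$ by the very definition of the truncation, and co-Yoneda then gives $N(s)$ unconditionally, while for $s>d$ the bimodule vanishes identically. That vanishing condition \emph{is} what the paper uses (to observe $\cat^{\leq d}\lie(s,t)=0$ for $t>d$ in its first step), so it plays a role in the paper's version of the argument but is logically superfluous in yours.
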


\begin{proof}
By Proposition \ref{prop:properties_otimes_cat_opd},
\[
\Phi U \lie \otimes_{\cat \lie} \cat ^{\leq d} \lie 
\cong 
(\Phi U \lie)^{\leq d} \otimes _{\cat \lie} \cat ^{\leq d} \lie 
\]
since $\cat^{\leq d} \lie (s,t)=0$ for $t>d$.  

Now, Proposition \ref{prop:catlie_left_adjoint}  gives the isomorphism $(\cat \lie)_{[\leq d]}\cong \cat^{\leq d}\lie$. Hence, 
Proposition \ref{prop:properties_otimes_cat_opd} implies that the surjection $\cat \lie \twoheadrightarrow \cat^{\leq d} \lie$ induces an isomorphism
\[
(\Phi U \lie)^{\leq d} \otimes _{\cat \lie} \cat  \lie
\stackrel{\cong}{\rightarrow} 
(\Phi U \lie)^{\leq d} \otimes _{\cat \lie} \cat ^{\leq d} \lie. 
\]

Putting these together gives the isomorphisms:
\[
\Phi U \lie \otimes_{\cat \lie} \cat ^{\leq d} \lie 
\cong 
(\Phi U \lie)^{\leq d} \otimes _{\cat \lie} \cat  \lie
\cong 
(\Phi U \lie)^{\leq d} 
.
\]
Hence $\Phi U \lie \otimes_{\cat \lie} \cat ^{\leq d} \lie \otimes_{\cat \lie} \underline{\malcev}$ is isomorphic to
$
(\Phi U \lie)^{\leq d} \otimes _{\cat \lie} \malcev.
$

By Theorem \ref{thm:equiv_propoly_modcatlie}, $- \otimes_{\cat \lie}  \underline{\malcev}$ induces an equivalence of categories 
$\rmod[\cat \lie] \stackrel{\cong}{\rightarrow} \propoly$.
 Under this equivalence, $\Phi U \lie$ corresponds to $\Phi U \malcev$, arguing as in Theorem \ref{thm:equiv_Umalcev_PhiUlie}, and the latter is isomorphic to $\qgr_\bullet \pbif$, by the Theorem.  Proposition \ref{prop:compare_filtrations} shows that applying the truncation $(-)^{\leq d}$ corresponds to  forming the quotient $\qgr_\bullet  \pbif \twoheadrightarrow \qgr_\bullet ( \qgr_d \pbif)$. The result follows. 
\end{proof}

Composition in $\cat ^{\leq d } \lie$ gives the morphism of $\cat \lie$-bimodules:
\[
\cat ^{\leq d} \lie \otimes_{\cat \lie} \cat ^{\leq d} \lie \rightarrow \cat ^{\leq d} \lie.
\]
This is unital and associative in the obvious sense. (More precisely, by Proposition \ref{prop:properties_otimes_cat_opd}, it is an isomorphism, corresponding to the fact that $\cat ^{\leq d} \lie$ is the unit for the restriction of $\otimes_{\cat \lie}$ to the full subcategory of bimodules that vanish whenever one of the arities is greater than $d$.)

\begin{thm}
\label{thm:tower_isomorphism}
For $d\in \nat$, 
under the equivalence of categories of Theorem \ref{thm:equivalence_bimodules}, the category $\cat^{\leq d}\lie$ corresponds to 
$\big( \qgr_d \rat \gr \big)\op$.

Under this identification, the  $\rat$-linear functor $\cat ^{\leq d+1} \lie \rightarrow \cat^{\leq d} \lie$ of Proposition \ref{prop:tower_cat_lie} corresponds to the functor
$
\big( \qgr_{d+1} \rat \gr \big)\op
\rightarrow 
\big( \qgr_d \rat \gr \big)\op
$  
of Theorem \ref{thm:tower}. 
\end{thm}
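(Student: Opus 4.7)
The plan is to combine Proposition~\ref{prop:catlie_trunc_d} (the object-level identification of the bimodule with the bifunctor) with Theorem~\ref{thm:monoidal_otimes_catlie_gr} (the monoidal compatibility of the equivalence) to transport the $\rat$-linear category structures, tracking the factor-swap carefully so as to extract the \emph{opposite} composition.

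First, both $\cat^{\leq d}\lie$ and $\qgr_d \rat\gr$ are $\rat$-linear categories, so it is equivalent (and more convenient) to regard them as monoid objects in $(\bimod[\cat\lie], \otimes_{\cat\lie})$ and $(\f_\omega(\gr\op; \propoly), \ogr)$ respectively, via the composition maps $\cat^{\leq d}\lie \otimes_{\cat\lie} \cat^{\leq d}\lie \to \cat^{\leq d}\lie$ of Proposition~\ref{prop:tower_cat_lie} and $\qgr_d\pbif \ogr \qgr_d\pbif \to \qgr_d\pbif$ of Theorem~\ref{thm:tower}. By Proposition~\ref{prop:catlie_trunc_d}, the equivalence of Theorem~\ref{thm:equivalence_bimodules} already matches the underlying objects, sending $\cat^{\leq d}\lie$ to $\qgr_d\pbif$. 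It remains to match the monoid multiplications (and units).

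Applying the equivalence to the composition morphism of $\cat^{\leq d}\lie$ and invoking Theorem~\ref{thm:monoidal_otimes_catlie_gr}, which states that the equivalence intertwines $\otimes_{\cat\lie}\op$ with $\ogr$, one obtains a monoid multiplication $\qgr_d\pbif \ogr \qgr_d\pbif \to \qgr_d\pbif$, but with the order of the two tensor factors exchanged relative to the direct translation. Equivalently, this is the opposite of some monoid structure on $\qgr_d\pbif$ for $\ogr$. To identify this opposite with the composition of Theorem~\ref{thm:tower}, I would use the universal characterisations: the composition on $\cat^{\leq d}\lie$ is the unique factorisation of that of $\cat\lie$ through the surjection $\cat\lie \twoheadrightarrow \cat^{\leq d}\lie$ (Proposition~\ref{prop:tower_cat_lie}), while the composition on $\qgr_d\pbif$ is the unique factorisation of the composition of $\rat\gr$ through $\pbif \twoheadrightarrow \qgr_d\pbif$ (diagram~(\ref{eqn:diag_comp})). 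Combining Theorem~\ref{thm:equiv_Umalcev_PhiUlie} with the naturality of Proposition~\ref{prop:catlie_trunc_d}, the equivalence sends the surjection $\cat\lie \twoheadrightarrow \cat^{\leq d}\lie$ to $\qgr_\bullet\pbif \twoheadrightarrow \qgr_d\pbif$, so the two universal factorisations correspond, modulo the factor-swap. This swap turns the composition of $\qgr_d\rat\gr$ into its opposite, yielding $(\qgr_d\rat\gr)\op$.

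For the tower statement, the connecting maps $\cat^{\leq d+1}\lie \to \cat^{\leq d}\lie$ are $\rat$-linear functors induced by the canonical bimodule surjections of the canonical filtration (Lemma~\ref{lem:truncate_modules}); by functoriality of the equivalence and naturality of Proposition~\ref{prop:catlie_trunc_d}, they correspond to the canonical surjections $\qgr_{d+1}\pbif \twoheadrightarrow \qgr_d\pbif$, which are exactly the tower maps of Theorem~\ref{thm:tower}. The main obstacle is the identification in the previous paragraph: while the isomorphism of underlying bifunctors is immediate, verifying that the transported multiplication agrees with the opposite of $\qgr_d\rat\gr$'s composition (and not $\qgr_d\rat\gr$ itself) demands attentive bookkeeping of the bi-arities and of the domain/codomain conventions implicit in the swap of Theorem~\ref{thm:monoidal_otimes_catlie_gr}; once this bookkeeping is done, the equality of the two factorisations reduces to the uniqueness clauses in Proposition~\ref{prop:tower_cat_lie} and Theorem~\ref{thm:tower}.
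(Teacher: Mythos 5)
Your proof is correct and takes essentially the same approach as the paper, which gives only the terse indication that the result follows from Proposition~\ref{prop:catlie_trunc_d} and Theorem~\ref{thm:monoidal_otimes_catlie_gr}, ``paying attention to the variance''; you have correctly identified that one should view the $\rat$-linear category structures as monoids for $\otimes_{\cat\lie}$ and $\ogr$ respectively, transport them via the monoidal equivalence of Theorem~\ref{thm:monoidal_otimes_catlie_gr}, and track the $\otimes_{\cat\lie}\op$/variance bookkeeping that produces the opposite category, with the universal-factorisation characterisations (Proposition~\ref{prop:tower_cat_lie}, diagram~(\ref{eqn:diag_comp})) and Theorem~\ref{thm:equiv_Umalcev_PhiUlie} pinning down the identification.
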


\begin{proof}
This follows from Proposition \ref{prop:catlie_trunc_d} combined with Theorem \ref{thm:monoidal_otimes_catlie_gr}, paying attention to the variance. 
\end{proof}

\section{The Casimir PROP associated to $\lie$ viewed as a cyclic operad}
\label{sect:modular}

The purpose of this section is to extend the analysis of the tower of categories $\qgr_\bullet \rat \gr$ in terms of the tower $\cat ^{\leq \bullet } \lie$ to the case of the modular (or Casimir) PROP associated to the Lie operad. 

This is inspired by the work of Habiro and Massuyeau \cite{MR4321214}, who introduced a category $\mathbf{A}$ (constructed from Jacobi diagrams in handlebodies) in their study of the Kontsevich integral for bottom tangles in handlebodies. The category $\mathbf{A}$ can be defined over $\rat$; it is $\nat$-graded and $\mathbf{A}_0$ is equivalent to $\rat \gr\op$. The positive part of $\mathbf{A}$ arises due to the {\em Casimir} structure. 

Part of the structure of $\mathbf{A}$ has been analysed by Katada \cite{2021arXiv210206382K};  she considers the structure of $\mathbf{A}(\mathbf{0}, -)$, restricting to the action of automorphism groups of finite rank free groups. Vespa \cite{2022arXiv220210907V}  has analysed part of the structure of $\mathbf{A}$ by using {\em beaded} Jacobi diagrams.

\subsection{From $\lie$ to the modular operad $\jac$ and the PROP $\pjac$}

The material of this subsection is inspired (and covered) by the work of Hinich and Vaintrob \cite{MR1913297}; the reader is referred to their paper for further details. 

The starting point is the fact that the operad $\lie$ underlies a  cyclic operad, $\cyclie$. In particular,  the cyclic operad $\cyclie$ has  underlying $\rat\fb\op$-module such that $\cyclie (\mathbf{0}) = \cyclie (\mathbf{1})=0$ and $ \cyclie (\mathbf{n+1}) \downarrow ^{\sym_{n+1}}_{\sym_n}\cong \lie (\mathbf{n}) $ as a $\sym_n$-module. 

One can go further and consider the associated {\em modular operad} $\jac$ and then the PROP $\pjac$ constructed from $\jac$;  these are constructed from {\em (open) Jacobi diagrams}.

Recall that a Jacobi diagram is a vertex-oriented, uni-trivalent graph (we will always suppose that the graph is finite); it is {\em open} if each connected component has at least one univalent vertex (see \cite[Chapter 5]{MR2962302} for details on this and other standard material, such as the AS and IHX relations). The vertex orientation is a cyclic order of  the half edges at a trivalent vertex; equivalently, this can be specified by giving a planar embedding, so that one has the following two possibilities at a trivalent vertex:

\begin{tikzpicture}[scale = .5]
 \draw (-1,0) -- (1,0);
 \node [left] at (-1,0) {$1$};
 \draw (1,0) -- (2, -1.4);
 \node [right] at (2,-1.4) {$2$}; 
\draw  (1,0) -- (2, 1.4);
\node [right] at (2,1.4) {$3$}; 
 \draw (4,0) -- (6,0);
  \node [left] at (4,0) {$1$};
 \draw (6,0) .. controls (6.5,1).. (7, -1.4);
  \node [right] at (7,-1.4) {$2$}; 
  \fill[white] (6.75,0) circle(.3);
\draw  (6,0) .. controls (6.5, -1).. (7, 1.4);
\node [right] at (7,1.4) {$3$}; 
\end{tikzpicture}

\noindent
indexing the half edges by $\{1, 2, 3 \}$ and taking the convention that the trivalent vertex inherits the standard orientation from the plane.

\begin{enumerate}
\item
For a (non-empty)  finite set $X$, $\jac (X)$ is the $\rat$-vector space generated by connected Jacobi diagrams equipped with a bijection  between the univalent vertices and $X$, modulo the following relations:
\begin{enumerate}
\item 
a generator corresponding to a closed (i.e., not open) Jacobi diagram is equivalent to zero; 
\item 
 the AS (antisymmetry) and IHX (Jacobi) relations.
 \end{enumerate}
The automorphisms $\mathrm{Aut} (X)$ act by relabelling. 
\item 
The underlying cyclic operad has composition operation defined as follows: for finite sets $X$, $Y$ and elements $x \in X$, $y \in Y$, the composition $\circ_{x,y} : \jac (X) \otimes \jac (Y)\rightarrow \jac ((X \amalg Y) \backslash \{x, y\})$ is induced by the glueing operation on uni-trivalent graphs that glues together the univalent vertices labelled by $x$ and $y$ respectively.
\item 
The contraction operation for the modular structure is defined similarly: for a  subset $\{x_1, x_2 \}\subset X$, the contraction $c_{x_1, x_2} : \jac (X) \rightarrow \jac (X \backslash \{ x_1, x_2 \}) $ is given by glueing together the univalent vertices labelled by $x_1$ and $x_2$ respectively.
\end{enumerate}

\begin{rem}
\
\begin{enumerate}
\item 
The composition and contraction operations on uni-trivalent graphs may give rise to {\em closed} diagrams; these are then identified to zero. 
\item 
There is an inclusion of cyclic operads $\cyclie \hookrightarrow \jac$ (forgetting the modular structure of $\jac$); the image corresponds to the sub cyclic operad represented by trivalent planar trees (i.e. connected, vertex-oriented uni-trivalent graphs that are simply-connected). 
\end{enumerate}
\end{rem}   

The {\em degree} of a uni-trivalent graph is defined as $\frac{1}{2} (\sharp \mathrm{vertices})$, where $\sharp \mathrm{vertices}$ is the total number of vertices; the degree  is a positive integer. This induces an $\nat_+$-grading of the underlying $\fb\op$-module of $\jac$ (however, composition is not additive with respect to this grading). 

The PROP  $\pjac$ is constructed from $\jac$ as follows, requiring  that each term from $\jac$ contributes at least one exit:
\begin{enumerate}
\item 
For finite sets $X$, $Y$:
\[
\pjac (X,Y) = \bigoplus_{\substack{X = \amalg_{i \in \mathcal{I}}X_i \\ Y = \amalg_{i \in \mathcal{I}} Y_i \\ Y_i \neq \emptyset}} \bigotimes _{i \in \mathcal{I}} \jac (X_i \amalg Y_i),
\]
where the sum is over all possible pairs of decompositions (indexed by the same set $\mathcal{I}$) $(X = \amalg_{i \in \mathcal{I}}X_i , Y = \amalg_{i \in \mathcal{I}} Y)$, subject to the condition that each $Y_i$ is non-empty. In particular, $\pjac (X, \emptyset) =0$ if $X\neq \emptyset$, whereas $\pjac (\emptyset, \emptyset) =\rat$, by convention.
\item 
Composition is induced by the modular operad structure of $\jac$, giving 
\[
\pjac (Y, Z) \otimes_{\mathrm{Aut} (Y)} \pjac (X, Y) \rightarrow \pjac (X, Z).
\]
\end{enumerate}

\begin{rem}
\ 
\begin{enumerate}
\item 
One can take $X= \emptyset$ in $\pjac (X,Y)$; there is a non-trivial composition from the left  $$\pjac (Y, Z) \otimes_{\mathrm{Aut} (Y)} \pjac (\emptyset, Y) \rightarrow \pjac (\emptyset, Z).$$
\item 
There is an inclusion of PROPs $\cat \lie \hookrightarrow \pjac$ induced by the inclusion of cyclic operads $\cyclie \hookrightarrow \jac$: namely, $\cat \lie(X,Y)$ identifies as the following subspace of $\pjac (X,Y)$: 
\[
\bigoplus_{\substack{X = \amalg_{i \in \mathcal{I}}X_i \\ Y = \amalg_{i \in \mathcal{I}} Y_i \\ |Y_i|=1 }} \bigotimes _{i \in \mathcal{I}} \cyclie (X_i \amalg Y_i)
\subset 
\bigoplus_{\substack{X = \amalg_{i \in \mathcal{I}}X_i \\ Y = \amalg_{i \in \mathcal{I}} Y_i \\ Y_i \neq \emptyset}} \bigotimes _{i \in \mathcal{I}} \jac (X_i \amalg Y_i),
\]
(note the condition $|Y_i|=1$ on the left hand side).
\end{enumerate}
\end{rem}

\begin{rem}
Hinich and Vaintrob \cite{MR1913297} place the above construction in a general framework.  Namely they exhibit a functor $(-)^C $ from cyclic operads to PROPs that sends the cyclic operad $\opd$ to the PROP $\opd^C$ for Casimir $\opd$-algebras. Applied to the cyclic operad $\cyclie$, this yields $\pjac$.

Recall (see \cite[Section 7]{MR4321214} or \cite{MR1913297}) that a Casimir Lie algebra is a Lie algebra $\g$ equipped with a Casimir element, i.e.,  $c \in \Gamma^2 (\g)\subset \g^{\otimes 2}$ that is $\mathrm{ad}$-invariant (explicitly, writing $c = c_i \otimes c'_i$ with implicit summation, then $c_i \otimes c'_i= c'_i \otimes c_i$ and, for any $x \in \g$, $[x, c_i] \otimes c'_i + c_i \otimes [x, c'_i]=0$).

Casimir Lie algebras form a category with morphisms those morphisms of Lie algebras that are compatible with the Casimir elements. There is an obvious forgetful functor from Casimir Lie algebras to Lie algebras. Hinich and Vaintrob observed (see \cite[Lemma 3.1.7]{MR1913297}) that the category of Casimir Lie algebras is precisely the category of algebras over the PROP $\pjac$; the Casimir element is encoded by the action of the element of $\pjac (\mathbf{0} , \mathbf{2})$ that is given by the canonical generator of $\cyclie (\mathbf{2})$. 
\end{rem}

\begin{rem}
Habiro and Massuyeau observed in \cite[Section 7]{MR4321214} that the universal enveloping algebra $U \g$ of a Casimir Lie algebra is a (cocommutative) Casimir Hopf algebra, as defined in \cite[Section 5]{MR4321214}. This is at the heart of the relationship between $\pjac$ and the category $\mathbf{A}$ introduced in \cite[Section 4]{MR4321214} and described in terms of Casimir Hopf algebras in \cite[Theorem 5.11]{MR4321214}. This is also related to the results of Hinich and Vaintrob, notably \cite[Theorem 7.1.1]{MR1913297}.
\end{rem}

\subsection{First properties of $\pjac$}

Using the explicit construction of $\pjac$, one deduces  the following, which gives rise to a $\catlie$-bimodule structure.

\begin{prop}
\label{prop:nat_grading_ojac}
The PROP $\pjac$ is $\nat$-graded, i.e., $\pjac(-,-)= \bigoplus_{n \in \nat} \pjac _n (-,-)$ and this grading  is compatible with the PROP structure. This grading is determined by placing an element of $\jac (X \amalg Y) $ (with $Y \neq \emptyset$),  represented by a connected, vertex-oriented uni-trivalent graph of degree $d$ in grading $d - |X|$. 

Moreover, $\pjac_0$ identifies with $\cat \lie$ as a PROP. Hence, for each $n \in \nat$, $\pjac_n$ has the structure of a $\cat \lie$-bimodule.
\end{prop}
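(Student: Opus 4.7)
The plan is to first check that the proposed grading is well-defined on $\jac$ and extends additively to $\pjac$, then to verify it is preserved by the PROP composition. The key ingredient is an Euler-characteristic calculation for a connected uni-trivalent graph $\Gamma$ with $u$ univalent and $t$ trivalent vertices: the number of edges is $(u+3t)/2$, so the first Betti number satisfies $2b_1 = t - u + 2$, and combining with $d = (u+t)/2$ yields the identity $d = u + b_1 - 1$. For a connected component of an element of $\jac(X_i \amalg Y_i)$, where $u = |X_i| + |Y_i|$, this gives $d - |X_i| = |Y_i| - 1 + b_1 \geq 0$, with equality iff $|Y_i|=1$ and the component is a tree. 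Since the AS and IHX relations preserve both the numbers of vertices of each valence and the labelling of the univalent vertices, the degree descends to $\jac$, and factor by factor the formula provides a well-defined non-negative $\nat$-grading on each summand of $\pjac(X,Y)$.

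Next I would verify compatibility with the composition of $\pjac$. A direct count shows that gluing $k$ pairs of univalent vertices between two uni-trivalent diagrams of degrees $d$ and $d'$ produces a diagram of total degree $d + d' - k$ (each gluing destroys two univalent vertices and creates one new internal edge, while leaving the trivalent count unchanged). For a composition $\pjac(Y,Z)\otimes_{\mathrm{Aut}(Y)}\pjac(X,Y)\rightarrow \pjac(X,Z)$ one has $k = |Y|$, and
\[
(d-|Y|) + (d'-|X|) \;=\; (d + d' - |Y|) - |X|,
\]
which is precisely the prescribed grading of the output. Closed components arising from gluing have empty label sets and so do not perturb the additivity; they are killed in $\jac$, which is consistent with any grading since $0$ lies in every graded piece. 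Applying this term-by-term to the tensor decomposition defining $\pjac$ shows that composition respects the total grading; the PROP's symmetric monoidal product is trivially graded since it is given by disjoint union.

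For the identification $\pjac_0 \cong \cat\lie$, the equality case of $d - |X_i| \geq 0$ implies that the grading-zero summand consists of tensors $\bigotimes_i \gamma_i$ with each $\gamma_i$ a tree satisfying $|Y_i|=1$. Such trees on $X_i \amalg Y_i$ modulo AS and IHX are, by definition of the cyclic Lie operad, the elements of $\cyclie(X_i \amalg Y_i)$; designating the unique $Y_i$-leaf as the root gives the identification $\cyclie(X_i \amalg Y_i) \cong \lie(X_i)$. Reindexing the double decomposition $X = \amalg X_i,\; Y = \amalg Y_i$ with $|Y_i|=1$ by the function $f : X \to Y$ that sends $x \in X_i$ to the unique element of $Y_i$ yields
\[
\pjac_0(X,Y) \;\cong\; \bigoplus_{f : X \to Y} \bigotimes_{y \in Y} \lie\bigl(f^{-1}(y)\bigr) \;=\; \cat\lie(X,Y),
\]
matching Notation \ref{nota:cat_opd}. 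The composition in $\pjac$ restricts under this identification to the composition of $\cat\lie$, because gluing trees along leaves is exactly the description of composition in $\cat\lie$. Once $\pjac$ is $\nat$-graded and $\pjac_0 = \cat\lie$, the compatibility with composition immediately implies that left and right multiplication by $\pjac_0$ preserves each $\pjac_n$; associativity of PROP composition shows that these actions commute, giving the desired $\cat\lie$-bimodule structure.

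The main delicate point is the Euler-characteristic bookkeeping, together with the component-by-component accounting after a composition (in which components may merge, split, or become closed and be deleted); one must check that all these operations are consistent with the simple additivity claimed for the grading. Once this is pinned down, the rest reduces to tracking gradings through the definitions and reconciling the formula in Notation \ref{nota:cat_opd} with the decomposition of $\pjac_0$ coming from tree components.
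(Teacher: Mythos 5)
Your proof is correct and fills in the details that the paper's sketch leaves implicit, but on one point it takes a genuinely different route. Where the paper asserts that the degree-zero part is identified with $\cat\lie$ ``by induction on the number of trivalent vertices,'' you instead derive the closed formula $d = u + b_1 - 1$ from an Euler-characteristic count, from which $d - |X_i| = |Y_i| - 1 + b_1 \geq 0$ falls out with the equality case (trees with a single exit) identified at once; this gives both the $\nat$-grading and the computation of $\pjac_0$ in a single stroke, and it is arguably cleaner than the inductive argument the paper gestures at. Your gluing count $d + d' - k$ with $k = |Y|$ is exactly what the paper means by ``the correction of the degree by subtracting the number of incoming univalent vertices is designed to ensure the appropriate additivity,'' so on the compatibility-with-composition step you are making the same argument explicit. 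One small remark: in the vertical composition of $\pjac$ (as opposed to the modular contraction of $\jac$), closed components in fact cannot arise, because every component of the $\pjac(Y,Z)$ factor retains its $Z$-exits and every component of the $\pjac(X,Y)$ factor is glued through its $Y$-exit to such a piece; your caveat about closed components and splitting is therefore unnecessary (and components can only merge, not split, under gluing), though including it does no harm since $0$ lies in every graded piece.
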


\begin{proof}
Using the PROP structure, it is clear that specifying the grading on terms of $\jac (X \amalg Y) $ determines the grading, by using the `horizontal' composition of the PROP given by the symmetric monoidal structure of the PROP.

One first checks that the given grading yields an $\nat$-grading (i.e., there are no terms in negative grading). The identification of the degree zero part is proved by induction on the number of trivalent vertices. 

A direct verification shows that  the grading is compatible with the `vertical' composition; indeed, the correction of the degree by subtracting the number of incoming univalent vertices is designed to ensure the appropriate additivity. 

The above analysis also yields the identification of $\pjac_0$ with $\cat \lie$ as a PROP. The composition of $\pjac$ therefore yields a $\cat \lie$-bimodule structure. 
\end{proof}

\begin{rem}
If the condition $Y_i \neq \emptyset$ were not imposed, one would not obtain an $\nat$-grading above. For instance, considering a generator of $\cyclie (\mathbf{3})$ (represented by the connected uni-trivalent graph with one trivalent vertex), as having three entries and no exits, the degree is $2$ so the grading would be $-1$. 
\end{rem}

The grading gives the following useful finiteness property: 

\begin{lem}
\label{lem:finiteness_pjac}
For $n, s,t \in \nat$: 
\begin{enumerate}
\item 
$\pjac_n (\mathbf{s}, \mathbf{t})$ has finite dimension; 
\item 
$\pjac_n (\mathbf{s}, \mathbf{t})=0$ if $t =0$ or $t > 2n+s$. 
\end{enumerate}
In particular $\sum_{t \in \nat} \dim_\rat \pjac_n (\mathbf{s}, \mathbf{t}) <\infty$. 
\end{lem}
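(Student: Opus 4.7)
The plan is to read off everything from the vertex combinatorics of the (open) uni--trivalent graphs representing elements of $\pjac_n(\mathbf{s},\mathbf{t})$, using the description of the grading given by Proposition \ref{prop:nat_grading_ojac}. Suppose a summand of $\pjac(\mathbf{s},\mathbf{t})$ is indexed by decompositions $\mathbf{s}=\amalg_{i\in\mathcal{I}} X_i$, $\mathbf{t}=\amalg_{i\in\mathcal{I}} Y_i$ (with $Y_i\neq\emptyset$) and connected open Jacobi diagrams in $\jac(X_i\amalg Y_i)$ of degree $d_i$; then the grading is $n=\sum_i(d_i-|X_i|)=d-s$, where $d=\sum_i d_i$ is the total degree.

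First I would prove the vanishing statements. The case $t=0$ (with $s>0$) is immediate from the condition $Y_i\neq\emptyset$: no admissible decomposition exists, so the summand defining $\pjac(\mathbf{s},\emptyset)$ is empty. For the bound $t\leq 2n+s$, I would count vertices in a representing uni--trivalent graph: the total number of univalent vertices across all components is $s+t$, and writing $v$ for the total number of trivalent vertices one has $d=\tfrac12(s+t+v)$. Combined with $d=n+s$, this gives $v=2n+s-t$. Since $v\geq 0$, any nontrivial summand forces $t\leq 2n+s$, proving (2).

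Next I would establish finite--dimensionality in (1) by bounding the parameters that index the generators. Once $n,s,t$ are fixed, so is the total degree $d=n+s$, so $\sum_i d_i=n+s$ with each $d_i\geq 1$; moreover $|\mathcal{I}|\leq |Y|=t$ because each $Y_i$ is nonempty. Hence there are only finitely many choices of index set $\mathcal{I}$, of decompositions $(X_i,Y_i)$ of $(\mathbf{s},\mathbf{t})$, and of partitions $(d_i)$ of $n+s$. It then suffices to observe that, for each fixed finite labelled set $X_i\amalg Y_i$ and fixed degree $d_i$, the component $\jac(X_i\amalg Y_i)$ in that degree is finite--dimensional: there are only finitely many isomorphism classes of connected vertex--oriented uni--trivalent graphs with a prescribed total number of vertices $2d_i$, each admits only finitely many labellings of its univalent vertices, and the AS/IHX relations are imposed on a finite--dimensional space. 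Assembling these finitely many finite--dimensional pieces gives (1).

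Finally, the ``in particular'' statement is an immediate consequence: by (2), $\pjac_n(\mathbf{s},\mathbf{t})$ vanishes for $t>2n+s$, and the remaining finitely many terms are each finite--dimensional by (1), so $\sum_{t\in\nat}\dim_\rat \pjac_n(\mathbf{s},\mathbf{t})<\infty$. I do not anticipate a serious obstacle; the only point needing a little care is the vertex count $d=\tfrac12(s+t+v)$, which depends on the graph being uni--trivalent so that the handshake count $2d=\sharp\text{vertices}$ is consistent.
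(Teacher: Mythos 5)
Your proof is correct and takes essentially the same approach as the paper: the key vertex count (total vertices $= 2d = 2(n+s)$, of which $s+t$ are univalent and $v = 2n+s-t \geq 0$ are trivalent) is exactly the observation the paper uses, and the finite-dimensionality in (1) rests on the same fact that there are only finitely many uni-trivalent graphs in a fixed degree. You merely spell out the bookkeeping (finitely many index sets $\mathcal{I}$, partitions $(d_i)$ of $n+s$ into positive integers, etc.) in more detail than the paper's terse proof.
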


\begin{proof}
For fixed degree $d$, there are only finitely-many uni-trivalent graphs of degree $d$. The finite-dimensionality of $\pjac_n (\mathbf{s}, \mathbf{t})$ follows. 

For the second statement, elements of $\pjac_n (\mathbf{s}, \mathbf{t})$ are represented by linear combinations of uni-trivalent graphs with $2(n+s)$ vertices, hence with at most $2n+s$ exit vertices. 

Putting these results together gives the total finite-dimensionality.  
\end{proof}

\begin{exam}
\label{exam:katada}
By Proposition, \ref{prop:nat_grading_ojac}, for any $n \in \nat$, $\pjac_n (\mathbf{0}, -)$ has the structure of a left $\cat \lie$-module. Moreover, the grading $n$ coincides with the  degree. One has $\pjac_n (\mathbf{0}, \mathbf{t})=0$ for $t > 2n$  and $\pjac_n (\mathbf{0}, \mathbf{t})$ is always finite-dimensional. 

The associated polynomial functor $J_n := \Phi U \lie \otimes_{\cat \lie} \pjac_n (\mathbf{0}, -)$ provided by the equivalence of Theorem \ref{thm:analytic_grop} lies in $\f_{2n} (\gr\op)$ and has a finite composition series.  This underlies the structures studied by Katada in \cite{2021arXiv210206382K}. (Katada does not work with functors on $\gr\op$, but restricts to the underlying representations of automorphism groups of free groups.)
\end{exam}

The symmetric monoidal structure of the PROP provides the `horizontal' compositions: 
$$
\pjac_m (\mathbf{s}, \mathbf{t}) \otimes \pjac_n (\mathbf{u}, \mathbf{v}) \rightarrow \pjac_{m+n} (\mathbf{s}\amalg \mathbf{u}, \mathbf{t}\amalg \mathbf{v}).
$$
These assemble to form a map of $\rat\fb\op \otimes \rat\fb$-modules, using the convolution product for bimodules:
\[
\pjac_m \circledcirc \pjac_n \rightarrow \pjac_{m+n}.
\]
This is associative and unital in the evident sense (noting that $\pjac (\emptyset, \emptyset) = \pjac_0 (\emptyset, \emptyset) =\rat$).

For each $t \in \nat$, we can consider $\pjac_t$ as a $\cat \lie$-bimodule. Hence,
using the convolution product structure of Proposition \ref{prop:convolution_bimodules}, one can consider $\pjac_m \circledcirc \pjac_n $ as a $\cat \lie$-bimodule.

\begin{prop}
\label{prop:horizontal}
For $m,n \in \nat$, the horizontal composition $\pjac_m \circledcirc \pjac_n \rightarrow \pjac_{m+n}$ is a morphism of right $\cat \lie$-modules. If both $m$ and $n$ are positive, it is not a morphism of left $\cat \lie$-modules.
\end{prop}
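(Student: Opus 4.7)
The plan is to deduce compatibility with the right $\cat\lie$-action directly from the PROP interchange law, and to rule out compatibility with the left action by an explicit small counterexample using the Casimir element.

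For the right module statement, the horizontal composition is the restriction of the symmetric monoidal tensor product of the PROP $\pjac$ to the summands in each degree, $\boxtimes: \pjac_m \times \pjac_n \to \pjac_{m+n}$. The right $\cat\lie$-action on each $\pjac_n$ is by pre-composition along the inclusion $\cat\lie = \pjac_0 \subset \pjac$. Using the explicit description $\cat\lie(X', X) = \bigoplus_{f: X' \to X} \bigotimes_{i \in X} \lie(f^{-1}(i))$, any morphism $\phi \in \cat\lie(X', X_1 \amalg X_2)$ decomposes canonically as $\phi_1 \boxtimes \phi_2$ along the partition $X' = f^{-1}(X_1) \amalg f^{-1}(X_2)$ of the source induced by $f$, with $\phi_i \in \cat\lie(f^{-1}(X_i), X_i)$. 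This canonical splitting underlies the right $\cat\lie$-action on the convolution extending the $\fb$-module structure of Lemma \ref{lem:biconvolution_fb}, and the interchange law $(\alpha \boxtimes \beta) \circ (\phi_1 \boxtimes \phi_2) = (\alpha \circ \phi_1) \boxtimes (\beta \circ \phi_2)$ yields the compatibility.

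For the left module statement, I would exhibit a small counterexample in the case $m = n = 1$; the general case $m, n > 0$ reduces similarly by padding with additional Casimir factors. Let $c \in \pjac_1(\mathbf{0}, \mathbf{2})$ be the Casimir (a single edge), and consider $c \otimes c \in (\pjac_1 \circledcirc \pjac_1)(\mathbf{0}, \mathbf{4})$ in the summand indexed by $s = t = 2$. Under $h$, this element is sent to the disjoint union $c \boxtimes c \in \pjac_2(\mathbf{0}, \mathbf{4})$. Take $\psi \in \cat\lie(\mathbf{4}, \mathbf{3})$ to be the ``mixing'' morphism obtained by applying the Lie bracket to the two half-edges labelled $1, 3$ and leaving $2, 4$ fixed, so its underlying set map has fibre $\{1, 3\}$ over the first output. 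Post-composing with $\psi$ in $\pjac_2$ glues the two Casimir edges through the trivalent vertex of the bracket, producing the non-zero tripod in $\pjac_2(\mathbf{0}, \mathbf{3})$. On the convolution side, the formula of Lemma \ref{lem:biconvolution_fb} gives $(\pjac_1 \circledcirc \pjac_1)(\mathbf{0}, \mathbf{3}) = \bigoplus_{s+t=3} (\pjac_1(\mathbf{0}, \mathbf{s}) \otimes \pjac_1(\mathbf{0}, \mathbf{t}))\!\uparrow$, and one checks $\pjac_1(\mathbf{0}, \mathbf{s}) = 0$ for $s \in \{0, 1, 3\}$; the crucial case $s = 1$ vanishes because the unique candidate degree-one diagram has a self-loop at its trivalent vertex and is killed by AS. Consequently the convolution target is zero, so $h(\psi \cdot (c \otimes c)) = 0$, while $\psi \circ h(c \otimes c)$ is the non-zero tripod, violating left $\cat\lie$-equivariance.

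The main obstacle is articulating the geometric asymmetry between the two actions. For the right action, any set map $f : X' \to X$ canonically induces a partition of its source from a given partition of the target, making the distributive decomposition automatic. For the left action, the images $f(Y_1), f(Y_2)$ of a partition $Y = Y_1 \amalg Y_2$ of the source can overlap in the target, and mixing morphisms (whose underlying map does not respect the partition) admit no canonical distributive action on the $\fb$-based convolution. The modular structure of $\jac$ is precisely what supplies such mixing compositions through contractions in $\pjac$, and the bracket $\psi$ applied across the two Casimir edges provides the minimal geometric witness to the resulting failure of left $\cat\lie$-equivariance.
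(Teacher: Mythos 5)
Your proposal follows the paper's argument closely in both halves. The right-module compatibility rests on the same observation as the paper's: pre-composition with $\phi\in\cat\lie(X',X_1\amalg X_2)$ decomposes canonically along the partition of the \emph{source} induced by the underlying set map (the paper expresses this by reducing to partial compositions $\lie(s)=\cat\lie(s,1)$ with a ``single exit''), and then the PROP interchange law gives the conclusion. For the left-module failure, the paper merely asserts that a mixing post-composition in $\cat\lie(2,1)$ exists; your explicit $m=n=1$ counterexample, with $c\boxtimes c$ and the mixing bracket $\psi\in\cat\lie(\mathbf{4},\mathbf{3})$ producing the tripod, is a genuine and welcome concretization of that sentence.

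One caveat on the left-module half: the reduction to general $m,n>0$ ``by padding with additional Casimir factors'' does not preserve the vanishing you invoke. For instance $(\pjac_2\circledcirc\pjac_1)(\mathbf{0},\mathbf{5})$ is nonzero, since it contains the summand $\bigl(\pjac_2(\mathbf{0},\mathbf{3})\otimes\pjac_1(\mathbf{0},\mathbf{2})\bigr)\!\uparrow$ and $\pjac_2(\mathbf{0},\mathbf{3})$ contains the tripod. So the literal ``target vanishes'' argument works only when $m=n=1$. The robust version is precisely what you articulate in your final paragraph: the left $\cat\lie$-action on the $\fb$-based convolution, being forced by functoriality to respect the two-block decomposition of the exits, necessarily sends any mixing morphism to zero on the summand at hand, while post-composition in $\pjac_{m+n}$ with that same mixing bracket produces a connected diagram whose trivalent vertex joins edges from both factors and is nonzero. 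This discrepancy witnesses non-equivariance for all $m,n>0$; the clean vanishing of $(\pjac_1\circledcirc\pjac_1)(\mathbf{0},\mathbf{3})$ is a feature of the minimal case, not the general mechanism.
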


\begin{proof}
 One can restrict to considering partial compositions, and use the identification $\lie (s) = \cat \lie (s,1)$,  which underlines the fact that there is a single `exit'. Hence, precomposing an element in the image of the horizontal composition with this corresponds either to precomposition in $\pjac_m$ or precomposition in $\pjac_n$. Unwinding the definitions, this implies that the horizontal composition respects the  right $\cat \lie$-module structure. 

This argument breaks down for the  left $\cat \lie$-module structure: if $m>0$ and $n>0$, a  post-composition with $\cat \lie (2,1)$ can always be found that mixes the contributions from $\pjac_m$ and $\pjac_n$.
\end{proof}

\subsection{Generating $\pjac $ as a $\cat \lie$-bimodule}

Unfortunately, a full understanding of the $\cat \lie$-bimodule structure of $\pjac$ (or even the underlying $\rat\fb$-bimodule) is out of reach; this stems from the fact that the $\sym_n$-representations $\jac (\mathbf{n})$, for $n \in \nat$, are not known for large $n$. 

Nevertheless, there is an explicit $\rat\fb$-bimodule that generates $\pjac$ as a $\cat \lie$-bimodule. This is based on the following, which is immediate from the definitions:

\begin{lem}
\label{lem:no_trivalent}
\
\begin{enumerate}
\item 
$\pjac_0 (\mathbf{1}, \mathbf{1}) \cong \rat$ (with trivial action of $\sym_1\op  \times  \sym_1$), with generator $\iota$ corresponding to the identity of $\lie (\mathbf{1})= \cyclie(\mathbf{2})$;
\item 
$\pjac_1 (\mathbf{0}, \mathbf{2}) \cong \rat$, with trivial action of $\sym_0\op \times \sym_2$, with generator $c$ corresponding to the Casimir element.
\end{enumerate}

Moreover, $c$ and $\iota$ generate the subspace of $\pjac$ generated by connected open Jacobi diagrams with no trivalent vertex.
\end{lem}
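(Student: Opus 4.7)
The plan is to exploit the explicit combinatorial description of $\pjac$ together with the grading formula of Proposition \ref{prop:nat_grading_ojac}, which determines the degree of a diagram from its number of vertices and its number of entry (input) univalent vertices. All three assertions follow by enumerating connected open Jacobi diagrams satisfying the given numerical constraints.

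For (1), I would invoke the identification $\pjac_0 \cong \cat\lie$ as PROPs (Proposition \ref{prop:nat_grading_ojac}), so $\pjac_0(\mathbf{1},\mathbf{1}) \cong \cat\lie(1,1) = \lie(\mathbf{1}) \cong \rat$, generated by the identity, which by construction corresponds to $\iota$. As a sanity check, this can be seen directly: the grading constraint $d-|X|=0$ with $|X|=1$ forces a connected open Jacobi diagram of degree $d=1$, i.e. with exactly two vertices; since both are univalent (one entry, one exit), the only such diagram is a single edge.

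For (2), the same grading formula gives $d - |X| = 1$ with $|X|=0$, so $d = 1$ and the diagram has exactly two vertices, both exits. The only such connected open Jacobi diagram is a single edge joining the two exit vertices; this is precisely the canonical generator of $\cyclie(\mathbf{2}) \hookrightarrow \jac(\mathbf{2})$, viewed as an element of $\pjac(\mathbf{0},\mathbf{2})$, namely $c$. There are no AS or IHX relations to quotient by at this level (they require trivalent vertices), so the space is indeed one-dimensional with trivial $\sym_2$-action.

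For the moreover statement, a connected uni-trivalent graph with no trivalent vertex is necessarily a single edge between its two univalent endpoints. Each endpoint is either an entry or an exit; the condition $Y_i \neq \emptyset$ built into the definition of $\pjac$ forces every connected component to carry at least one exit, which rules out the ``two entries, no exits'' edge. The remaining two cases are exactly $\iota$ (one entry, one exit) and $c$ (two exits). A general element of $\pjac(X,Y)$ represented by a disjoint union of such single-edge components is then obtained from copies of $\iota$ and $c$ by the horizontal (symmetric monoidal) composition of the PROP together with the $\mathrm{Aut}(X) \times \mathrm{Aut}(Y)$-action on labels, proving that $\iota$ and $c$ generate the stated subspace.

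There is no serious obstacle here; the only subtlety is to keep the bookkeeping of entries versus exits straight and to remember that the condition $Y_i \neq \emptyset$ in the definition of $\pjac$ is what eliminates the ``pure input'' edge. All relations in $\pjac$ (AS and IHX) involve trivalent vertices and therefore impose no constraints on the subspace under consideration.
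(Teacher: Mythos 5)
Your argument is correct and follows essentially the same route as the paper's proof: both reduce to the elementary graph-theoretic observation that a connected graph all of whose vertices are univalent is a single edge, then classify the two admissible labelings (entry--exit for $\iota$, exit--exit for $c$) using the constraint that each connected component must have at least one exit vertex. The only cosmetic difference is that you additionally invoke the identification $\pjac_0 \cong \cat\lie$ for part (1) and the grading formula of Proposition \ref{prop:nat_grading_ojac} as a cross-check, whereas the paper goes straight to the combinatorial enumeration and depicts the two diagrams.
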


\begin{proof}
There is a unique connected open Jacobi diagram with no trivalent vertex. After distinguishing the exit vertices (denoted by $\circ$) and the entrance vertices (denoted $\bullet$), since there must be at least one $\circ$ vertex, there are the following two possibilities
\begin{enumerate}
\item 
\begin{tikzpicture}[scale = .25]
\draw  (0,0) -- (0,2); 
\draw [fill=white,thick] (0,0) circle[radius = .2];
\draw [fill=black,thick] (0,2) circle[radius = .2];
 \end{tikzpicture}
  representing the generator $\iota$; 
\item 
\begin{tikzpicture}[scale = .25]
\draw  (-1,0) .. controls (0, 1).. (1, 0);
\draw [fill=white,thick] (1,0) circle[radius = .2];
\draw [fill=white,thick] (-1,0) circle[radius = .2];
 \end{tikzpicture}
 representing the generator $c$. 
 \end{enumerate}
 (We use the convention that entries are at the top and exits the bottom.)
\end{proof}

The symmetric monoidal structure of $\pjac$ allows one to generate a sub $\rat\fb$-bimodule of $\pjac$ from the elements $c$ and $\iota$. This can be expressed by using the convolution product $\circledcirc$ for $\rat\fb$-bimodules; here a more concrete description is given: 

\begin{defn}
\label{defn:chord}
Let $\chord \subset \pjac$ be the sub $\fb$-bimodule  generated by open Jacobi diagrams with no trivalent vertices
 and let $\chord_n \subset \pjac_n$ be the subspace of grading $n$.
 
 By convention, $\chord (\mathbf{0}, \mathbf{0}) = \pjac  (\mathbf{0}, \mathbf{0}) = \rat$, in grading $0$.
\end{defn}

\begin{rem}
As suggested by the notation, $\chord$ is related to {\em open} chord diagrams appearing in studying finite type invariants (see \cite[Chapter 4]{MR2962302}, for example).
\end{rem}

Via the symmetric monoidal structure of $\pjac$, $\chord$ is generated by $c$ and $\iota$ of Lemma \ref{lem:no_trivalent}. In particular one has:

\begin{lem}
\label{lem:chord_n}
For $n,s  \in \nat$,
\begin{enumerate}
\item 
 $\chord_n (\mathbf{s}, \mathbf{t})=0$ unless $t = 2n +s$; 
\item 
the vector space $\chord_n (\mathbf{s}, \mathbf{2n+s})$ has basis given by pairs given by an injection $\alpha : \mathbf{s} \hookrightarrow \mathbf{2n+s}$ together with a decomposition of $\mathbf{2n+s} \backslash \alpha (\mathbf{s})$ into $n$ subsets of cardinal two. 
\item
The group $\sym_s \op\times \sym_{2n+s}$ acts in the obvious way on this basis. 
\end{enumerate}
In particular, $\chord_n (\mathbf{s}, \mathbf{2n+s})$ is a permutation representation of $\sym_s \op\times \sym_{2n+s}$ .
\end{lem}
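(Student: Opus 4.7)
The plan is to exploit the explicit definition of $\chord$ as the sub $\fb$-bimodule generated by open Jacobi diagrams with no trivalent vertices, combined with the classification of such connected diagrams from Lemma \ref{lem:no_trivalent}: each connected component of a diagram representing an element of $\chord$ must be either the generator $\iota \in \pjac_0(\mathbf{1},\mathbf{1})$ or the Casimir generator $c \in \pjac_1(\mathbf{0},\mathbf{2})$.

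First I would decompose: by the construction of $\pjac$ recalled before Lemma \ref{lem:no_trivalent}, a diagram representing an element of $\pjac(\mathbf{s},\mathbf{t})$ corresponds to compatible decompositions $\mathbf{s}=\amalg_i X_i$ and $\mathbf{t}=\amalg_i Y_i$ with $Y_i\neq\emptyset$, together with connected Jacobi diagrams in each $\jac(X_i\amalg Y_i)$. Restricting to $\chord$ forces each connected piece to be an $\iota$ (contributing $(|X_i|,|Y_i|)=(1,1)$) or a $c$ (contributing $(0,2)$). If there are $k$ copies of $\iota$ and $m$ copies of $c$, then $s=k$ and $t=k+2m$. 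The grading computation of Proposition \ref{prop:nat_grading_ojac} gives contributions $1-1=0$ for each $\iota$ and $1-0=1$ for each $c$, so the total grading is $m$. Hence for an element in grading $n$ we must have $m=n$, $k=s$, and $t=2n+s$, yielding (1).

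Next I would identify the basis. When $t=2n+s$, a diagram of the above form is determined, up to isomorphism of underlying labelled graphs, by specifying: (a) for each of the $s$ copies of $\iota$, which element of $\mathbf{s}$ labels its entry and which element of $\mathbf{2n+s}$ labels its exit; (b) for each of the $n$ copies of $c$, which two-element subset of $\mathbf{2n+s}$ labels its pair of exits. Since each entry label in $\mathbf{s}$ appears on exactly one $\iota$-component, the data in (a) is exactly an injection $\alpha:\mathbf{s}\hookrightarrow\mathbf{2n+s}$, and (b) is exactly a partition of the complementary $2n$-element set $\mathbf{2n+s}\setminus\alpha(\mathbf{s})$ into $n$ blocks of size two. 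The natural $\sym_s^{op}\times\sym_{2n+s}$ action by relabelling entries and exits (replacing $\alpha$ by $\tau\circ\alpha\circ\sigma$ and transporting the pairing along $\tau$) is manifestly a permutation action on this set of pairs.

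The only non-routine point is verifying that these labelled diagrams are actually linearly independent in $\chord_n(\mathbf{s},\mathbf{2n+s})$, i.e.\ that no relation from $\pjac$ collapses them. But the only relations imposed in the definition of $\jac$ are the AS and IHX relations (which are supported at trivalent vertices, of which there are none here) and the vanishing of closed components; none of our connected components is closed, since each $\iota$ or $c$ has a nonempty exit set. Consequently the displayed labelled diagrams form a genuine basis and the representation is the claimed permutation representation of $\sym_s^{op}\times\sym_{2n+s}$.
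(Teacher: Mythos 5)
Your proposal is correct and follows essentially the same route as the paper's (very brief) proof: identify the only connected uni-trivalent graph with no trivalent vertex, distinguish its two decorations ($\iota$ and $c$) by the at-least-one-exit condition, and read off the grading and the combinatorial labelling data. The paper leaves the grading count, the basis identification, and the absence of relations implicit; you have supplied these details correctly — in particular, the observation that AS/IHX cannot act in the absence of trivalent vertices and that neither component is closed is exactly what is needed to justify linear independence.
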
 

\begin{proof}
This follows by identifying the uni-trivalent graphs with no trivalent vertex and applying the condition that each connected component must have at least one exit vertex (see the following example). The associated permutation representation can be written down explicitly. 
\end{proof}

The $\fb$-bimodule $\chord$ gives generators for $\pjac$, as follows. (This result is implicit in the work of Habiro and Massuyeau \cite{MR4321214}.)

\begin{prop}
\label{prop:chord_generates}
For $n \in \nat$, $\pjac_n$ is generated as a $\cat \lie$-bimodule by $\chord_n$, i.e., there is a surjection of $\cat \lie$-bimodules:
\[
\cat \lie \otimes_\fb \chord_n \otimes_\fb \cat \lie \twoheadrightarrow \pjac_n. 
\]
\end{prop}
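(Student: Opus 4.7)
The plan is to prove surjectivity by induction on the number $v$ of trivalent vertices of a Jacobi diagram $\Gamma$ representing a generator of $\pjac_n(\mathbf{s},\mathbf{t})$; by linearity it suffices to treat single Jacobi diagrams, and the induction automatically handles all connected components of $\Gamma$ simultaneously. The base case $v = 0$ is immediate since such a $\Gamma$ lies in $\chord_n(\mathbf{s},\mathbf{t})$ by definition, hence is the image of $\mathrm{id}_{\mathbf{t}} \otimes \Gamma \otimes \mathrm{id}_{\mathbf{s}}$ under the identity morphisms of $\cat\lie$.

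For the inductive step with $v \ge 1$, select a trivalent vertex $v_0$ of $\Gamma$ that is directly connected by an edge to some exit $e$ of $\Gamma$. Such a $v_0$ always exists: in any connected component of $\Gamma$ containing a trivalent vertex, pick any shortest path from a trivalent vertex to an exit of that component (which exists, by definition of $\pjac$). The internal vertex immediately preceding the terminal exit on this path must itself be trivalent, since all interior vertices on a path have degree $\ge 2$ and so cannot be univalent; and by construction it is adjacent to an exit. Invoking the cyclic symmetry of $\cyclie(\mathbf{3})$ (equivalently, the AS relation), root $v_0$ at the half-edge pointing to $e$, thereby identifying $v_0$ with the Lie bracket $[\cdot,\cdot]\in\cat\lie(\mathbf{2},\mathbf{1})$. ``Peel off'' $v_0$ by constructing $\Gamma''$ from $\Gamma$ as follows: delete the trivalent vertex $v_0$ and the univalent vertex $e$, and introduce at the two remaining free half-edges of $v_0$ a pair of new univalent vertices $\tilde x,\tilde y$, both marked as exits. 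A direct count shows that $\Gamma''$ has $v-1$ trivalent vertices, $s$ entries, $t+1$ exits, and grading
\[
\tfrac{1}{2}\bigl((t+1)+(v-1)-s\bigr) = \tfrac{1}{2}(t+v-s) = n,
\]
so $\Gamma'' \in \pjac_n(\mathbf{s},\mathbf{t+1})$. One verifies that each connected component of $\Gamma''$ still contains at least one exit (either an original one or one of $\tilde x, \tilde y$).

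By construction $\Gamma = \mu' \circ \Gamma''$, where $\mu' \in \cat\lie(\mathbf{t+1},\mathbf{t})$ applies $[\cdot,\cdot]$ to $(\tilde x,\tilde y)$ to produce $e$ and is the identity on the remaining exits. By the inductive hypothesis $\Gamma'' = \mu''\circ\gamma\circ\nu$ with $\mu'',\nu\in\cat\lie$ and $\gamma\in\chord_n$, whence $\Gamma = (\mu'\circ\mu'') \circ \gamma \circ \nu$ lies in the image, as required. The main delicacy in the argument is the combinatorial assertion that an exit-adjacent trivalent vertex $v_0$ always exists, together with the accompanying check that the peeling operation preserves the grading; the remaining verifications --- that $\mu' \circ \Gamma'' = \Gamma$ at the level of Jacobi diagrams in $\pjac$, and that the construction descends to a well-defined element of $\cat\lie \otimes_\fb \chord_n \otimes_\fb \cat\lie$ --- are routine consequences of the PROP composition of $\pjac$ and the AS/IHX relations built into $\jac$.
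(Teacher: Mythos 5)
Your proof is correct, and it takes essentially the same approach as the paper: the paper's proof consists of a single sentence ("This is proved by a straightforward induction on the number of vertices"), and your argument supplies the details of precisely that induction, peeling off an exit-adjacent trivalent vertex at each step. Note that your induction is really on the number of \emph{trivalent} vertices (the total vertex count $s+t+v$ is unchanged under peeling, since removing $v_0$ and $e$ is balanced by adding $\tilde x$ and $\tilde y$); this is almost certainly what the paper means, since $s,t$ are fixed in each bi-arity.
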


\begin{proof}
This is proved by a straightforward induction on the number of vertices. 
\end{proof}

\begin{exam}
The surjection of Proposition \ref{prop:chord_generates} gives, in particular, the surjection 
\[
\cat \lie (2n, -) 
\otimes_{\sym_{2n}} 
\chord_n (\mathbf{0}, \mathbf{2n}) 
\twoheadrightarrow 
\pjac _n (\mathbf{0}, -).
\]
This has been used by Katada (with a very different formulation) in \cite{2021arXiv210206382K}, based on the results of \cite{MR4321214}.
\end{exam}

\begin{rem}
The surjection of Proposition \ref{prop:chord_generates} is far from being an isomorphism. 
\begin{enumerate}
\item 
For $n=0$, $\chord_0$ identifies as the $\fb$-bimodule with $\chord_0 (\mathbf{s}, \mathbf{s}) \cong \rat \sym_s$. The surjection is $\cat \lie \otimes_\fb \cat \lie \twoheadrightarrow \cat \lie$ induced by composition. 
\item 
Moreover, for $n>0$, the surjection does not taken into account: 
\begin{enumerate}
\item 
the 4T-relation for chord diagrams (see \cite{MR2962302}, for example); 
\item 
analogous commutation relations between $c$ and $\iota$ (using the notation of Lemma \ref{lem:no_trivalent}).
\end{enumerate}
\end{enumerate}
\end{rem}

\subsection{Passage to bifunctors}
The general theory of Section \ref{sect:bifunctors} applies, generalizing the considerations of Section \ref{sect:model_qgr_pbif}, where $\cat \lie = \pjac_0$ was treated.

\begin{nota}
\label{nota:ajac}
For $n \in \nat$, let $\ajac(n)$ denote the image of $\pjac_n$ in $\f_\omega (\gr\op; \propoly)$ under the equivalence of Theorem  \ref{thm:bifunctors_sym_monoidal}, 
$
\ajac (n):= \Phi U \lie \otimes _{\cat \lie} \pjac_n \otimes _{\cat \lie} \underline{\malcev}
$.
\end{nota}

Proposition \ref{prop:chord_generates} immediately gives the following, which provides some intuition when considering $\ajac(*)$: 

\begin{prop}
\label{prop:generating_ajac_chord}
For $n \in \nat$, there is a surjection in $\f_\omega (\gr\op; \propoly)$:
\[
\Phi U \lie \otimes _{\fb} \chord_n \otimes _\fb \underline{\malcev}
\twoheadrightarrow 
\ajac (n).
\]
\end{prop}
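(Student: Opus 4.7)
The plan is to apply the functor $\Phi U \lie \otimes _{\cat \lie} (-) \otimes _{\cat \lie} \underline{\malcev}$ to the surjection of $\cat \lie$-bimodules
\[
\cat \lie \otimes_\fb \chord_n \otimes_\fb \cat \lie \twoheadrightarrow \pjac_n
\]
provided by Proposition \ref{prop:chord_generates}, and then identify the resulting left hand side with $\Phi U \lie \otimes _{\fb} \chord_n \otimes _\fb \underline{\malcev}$.

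First I would invoke Theorem \ref{thm:bifunctors_sym_monoidal}, which asserts that $\Phi U \lie \otimes_{\cat \lie} (-) \otimes_{\cat \lie} \underline{\malcev}$ is an equivalence of abelian categories $\bimod[\cat \lie] \stackrel{\cong}{\rightarrow} \f_\omega (\gr\op; \propoly)$; in particular it is exact, so it preserves the surjection of Proposition \ref{prop:chord_generates}. This produces by definition a surjection with codomain $\ajac(n)$ and with domain
\[
\Phi U \lie \otimes_{\cat \lie} \big(\cat \lie \otimes_\fb \chord_n \otimes_\fb \cat \lie\big) \otimes_{\cat \lie} \underline{\malcev}.
\]

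Next I would simplify this domain using the associativity of the various tensor products and the unit property of $\cat \lie$ for $\otimes_{\cat \lie}$; explicitly, the natural `change of rings' isomorphisms give
\[
\Phi U \lie \otimes_{\cat \lie} (\cat \lie \otimes_\fb -) \cong \Phi U \lie \otimes_\fb -,
\qquad
(- \otimes_\fb \cat \lie) \otimes_{\cat \lie} \underline{\malcev} \cong - \otimes_\fb \underline{\malcev},
\]
the right $\cat \lie$-module structure on $\Phi U \lie$ and the left $\cat \lie$-module structure on $\underline{\malcev}$ being used to form $\otimes_\fb$. Combining these yields the natural isomorphism
\[
\Phi U \lie \otimes_{\cat \lie} \big(\cat \lie \otimes_\fb \chord_n \otimes_\fb \cat \lie\big) \otimes_{\cat \lie} \underline{\malcev}
\cong
\Phi U \lie \otimes_{\fb} \chord_n \otimes_\fb \underline{\malcev},
\]
which delivers the asserted surjection.

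There is no genuine obstacle here: the argument is purely formal, the only point to watch being that one is working in the bicomplete abelian category $\f_\omega (\gr\op; \propoly)$ so that the various coend constructions underlying $\otimes_\fb$ and $\otimes_{\cat \lie}$ are well-defined and compatible.
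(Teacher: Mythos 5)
Your proposal is correct and follows exactly the route the paper has in mind: the paper records this proposition as following ``immediately'' from Proposition \ref{prop:chord_generates}, and you have simply spelled out the details --- apply the exact equivalence $\Phi U \lie \otimes_{\cat \lie}(-)\otimes_{\cat \lie}\underline{\malcev}$ of Theorem \ref{thm:bifunctors_sym_monoidal} to the surjection $\cat \lie \otimes_\fb \chord_n \otimes_\fb \cat \lie \twoheadrightarrow \pjac_n$, and cancel the units $\cat\lie \otimes_\fb(-)$ and $(-)\otimes_\fb \cat\lie$ against the outer $\otimes_{\cat\lie}$ via the standard free-module/change-of-rings isomorphisms. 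Nothing to add.
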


By construction, $\ajac(n)$ is given by the tower of bifunctors:
\begin{eqnarray}
\label{eqn:tower_ajac}
\ldots \twoheadrightarrow \ajac(n)_d \twoheadrightarrow \ajac(n)_{d-1} \twoheadrightarrow \ldots \ajac(n)_1 \twoheadrightarrow 
\ajac(n)_0,
\end{eqnarray}
where $\ajac(n)_d$ is polynomial of degree $d$ with respect to $\gr$. 

\begin{exam}
The tower $\ajac(0)_\bullet$ coincides with the tower $\qgr_\bullet \pbif$. 
\end{exam}

Lemma \ref{lem:finiteness_pjac} implies the following: 

\begin{prop}
\label{prop:properties_ajac}
For $d, n \in \nat$, $\ajac(n)_d$ lies in $\f_{< \infty} (\gr\op \times \gr)$ and has a finite composition series. 
 More precisely, $\ajac(n)_d$ has polynomial degree $d$ with respect to $\gr$ and polynomial degree $\leq 2n+ d$ with respect to $\gr\op$.

Hence,  (\ref{eqn:tower_ajac}) is a tower of surjections between finite objects in $\f_{< \infty} (\gr\op \times \gr)$. 
\end{prop}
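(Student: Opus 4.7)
The plan is to identify $\ajac(n)_d$ with the bifunctor associated via the equivalence of Theorem \ref{thm:equivalence_bimodules} to the truncated $\cat \lie$-bimodule $\pjac_n^{\leq d}$, obtained by applying the right-truncation of Lemma \ref{lem:truncate_modules} to the right $\cat \lie$-action on $\pjac_n$, and then to extract the properties of $\ajac(n)_d$ from those of $\pjac_n^{\leq d}$.

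First I would verify the identification $\ajac(n)_d \cong \Phi U \lie \otimes_{\cat \lie} \pjac_n^{\leq d} \otimes_{\cat \lie} \underline{\malcev}$. For each fixed $\Gamma \in \ob \gr\op$, Proposition \ref{prop:compare_filtrations} applied to the right $\cat \lie$-module $\Phi U\lie(\Gamma) \otimes_{\cat \lie} \pjac_n$ shows that its $d$-th polynomial component in $\propoly$ coincides with that of its $\leq d$-truncation. A direct unwinding of the definition of $\otimes_{\cat \lie}$ produces the natural isomorphism $(\Phi U\lie(\Gamma) \otimes_{\cat \lie} \pjac_n)^{\leq d} \cong \Phi U\lie(\Gamma) \otimes_{\cat \lie} \pjac_n^{\leq d}$, natural in $\Gamma$, from which the identification follows.

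Next I would invoke Lemma \ref{lem:finiteness_pjac}. Combined with the condition $s \leq d$, it yields that $\pjac_n^{\leq d}(\mathbf{s}, \mathbf{t}) = 0$ unless $s \leq d$ and $1 \leq t \leq 2n+s \leq 2n+d$, and gives finite-dimensionality in each remaining bi-arity. Hence $\pjac_n^{\leq d}$ lies in $\bimod[\cat \lie]^{<\infty}$ and has finite total $\rat$-dimension. The restriction of Theorem \ref{thm:equivalence_bimodules} to the finite-support subcategories then places $\ajac(n)_d$ in $\f_{<\infty}(\gr\op \times \gr)$; being an exact equivalence, it transports the finite length of $\pjac_n^{\leq d}$ to a finite composition series of $\ajac(n)_d$.

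For the sharper degree bounds, the $\gr$-polynomial degree $d$ is immediate, since $\ajac(n)_d$ is by construction the $d$-th term of a $\propoly$-valued functor and so lies pointwise in $\f_d(\gr)$. For the $\gr\op$-polynomial degree, view $\pjac_n^{\leq d}$ as a left $\cat \lie$-module in $\rmod[\cat \lie]$; the above support analysis shows that for each $H \in \ob \gr$ the left $\cat \lie$-module $(\pjac_n^{\leq d} \otimes_{\cat \lie} \underline{\malcev})(H)$ is supported in arities $t \leq 2n+d$. Since $\Phi U\lie(t)$ has polynomial degree $t$ on $\gr\op$ by Remark \ref{rem:identifications_Phi_ULie}, the equivalence $\Phi U\lie \otimes_{\cat \lie} -$ of Theorem \ref{thm:analytic_grop} then produces a functor on $\gr\op$ of polynomial degree $\leq 2n+d$. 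The surjectivity of the structure maps in the tower (\ref{eqn:tower_ajac}) is inherited from Remark \ref{rem:propoly_surj}, applied pointwise on $\gr\op$. The main obstacle I anticipate is the careful bookkeeping of variance in the first step: verifying that the right-truncation of the bimodule commutes with $\Phi U\lie \otimes_{\cat \lie} -$, and correctly pinning down which of the two $\cat \lie$-actions on $\pjac_n$ governs the $\gr$-polynomial degree and which governs the $\gr\op$-polynomial degree.
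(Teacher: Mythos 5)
Your argument is correct and follows the route the paper intends: the paper states only that Lemma \ref{lem:finiteness_pjac} implies the result, leaving the reader to supply the translation through the equivalence of Theorem \ref{thm:equivalence_bimodules}. Your identification of $\ajac(n)_d$ (via $\qgr_\bullet$) with the image of the truncated bimodule $\pjac_n^{\leq d}$, justified by Proposition \ref{prop:compare_filtrations} together with the observation that the right-truncation $(-)^{\leq d}$ in the $s$-arity commutes with $\Phi U\lie \otimes_{\cat\lie}-$, is exactly the way to make that deduction precise, and the support and dimension bookkeeping (as well as the variance tracking you flagged as the delicate point) is handled correctly.
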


The (vertical) composition in the PROP $\pjac_*$ induces composition operations between the bifunctors $\ajac(*)$. This requires using the symmetric monoidal structure $\ogr$ on $\f_\omega (\gr\op; \propoly)$ given in Proposition \ref{prop:otimes_gr_analytic_coanalytic}.

\begin{lem}
\label{lem:composition_ajac}
For $m,n \in \nat$, the composition operation in $\cat\lie$-bimodules
\[
\pjac_m \otimes_{\cat\lie} \pjac_n \rightarrow \pjac_{m+n}
\]
induces the natural transformation 
$
\ajac(n) \ogr \ajac (m) \rightarrow \ajac (m+n)
$
in $\f_\omega (\gr\op;\propoly)$. 

Via the natural projections in the respective towers, in polynomial degree $d$ with respect to $\gr$, this is determined by the morphism of polynomial bifunctors:
\[
\ajac(n)_d \otimes_\gr \ajac (m)_{2n+d} \rightarrow \ajac (m+n)_d. 
\]
\end{lem}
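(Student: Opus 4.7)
The plan is to deduce both assertions from the symmetric monoidal equivalence of Theorem \ref{thm:bifunctors_sym_monoidal}, together with Theorem \ref{thm:monoidal_otimes_catlie_gr} which governs how this equivalence interacts with $\otimes_{\cat\lie}$ and $\ogr$, and the polynomiality bounds of Proposition \ref{prop:properties_ajac}.

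First, I would observe that the vertical composition of $\pjac$, together with the $\cat\lie$-bimodule structure on each $\pjac_n$ given by Proposition \ref{prop:nat_grading_ojac}, yields a morphism of $\cat\lie$-bimodules $\pjac_m \otimes_{\cat\lie} \pjac_n \to \pjac_{m+n}$. Applying the functor $\Phi U\lie \otimes_{\cat\lie} (-) \otimes_{\cat\lie} \underline{\malcev}$ and invoking the canonical isomorphism of Theorem \ref{thm:monoidal_otimes_catlie_gr} with $M_1 = \pjac_m$ and $M_2 = \pjac_n$, one identifies the image of the domain as $\ajac(n) \ogr \ajac(m)$ (the order of factors is reversed since the equivalence is monoidal for $\otimes_{\cat\lie}\op$), while the image of the codomain is $\ajac(m+n)$ by the definition in Notation \ref{nota:ajac}. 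This produces the required natural transformation in $\f_\omega(\gr\op; \propoly)$.

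For the second assertion, I would compute the polynomial degree $d$ part of $\ajac(n) \ogr \ajac(m)$ explicitly using the coend presentation of $\ogr$ inherited from $\otimes_\gr$ (see Section \ref{subsect:tensor_gr} and Proposition \ref{prop:otimes_gr_analytic_coanalytic}). In bifunctor form,
\[
(\ajac(n) \ogr \ajac(m))(\Gamma, H) = \int^K \ajac(m)(\Gamma, K) \otimes \ajac(n)(K, H),
\]
so the output variable $H \in \gr$ lies in the $\ajac(n)$ factor. Truncating to polynomial degree $d$ in $H$ therefore replaces $\ajac(n)$ by $\ajac(n)_d$, which by Proposition \ref{prop:properties_ajac} is polynomial of degree at most $2n+d$ with respect to $\gr\op$ in its first variable $K$. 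Applying Proposition \ref{prop:otimes_gr_poly} to the coend over $\gr$ in the $K$-variable, the canonical surjection $\ajac(m) \twoheadrightarrow \qgr_{2n+d}\ajac(m) = \ajac(m)_{2n+d}$ induces an isomorphism
\[
(\ajac(n) \ogr \ajac(m))_d \cong \ajac(n)_d \otimes_\gr \ajac(m)_{2n+d},
\]
where the right-hand tensor product is the usual $\otimes_\gr$ of polynomial bifunctors of Proposition \ref{prop:gr_bifunctors_monoidal}. Post-composing with the degree $d$ component of the natural transformation constructed in the first step yields the stated map to $\ajac(m+n)_d$, and, by pro-polynomiality, this family of morphisms determines the global morphism.

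The main technical subtlety is bookkeeping: one must correctly track which variable is contravariant and which is covariant, and how the order of the factors reverses upon passing through the monoidal equivalence of Theorem \ref{thm:monoidal_otimes_catlie_gr}, then match this against the convention for the coend defining $\otimes_\gr$ on bifunctors. Once this is carried out carefully, the substantive input is simply the polynomial degree bound $2n+d$ of Proposition \ref{prop:properties_ajac} fed into Proposition \ref{prop:otimes_gr_poly}.
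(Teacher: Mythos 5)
Your proposal is correct and uses essentially the same approach as the paper's proof: Theorem \ref{thm:monoidal_otimes_catlie_gr} for the first statement, and Proposition \ref{prop:otimes_gr_poly} combined with the degree-$(2n+d)$ bound from Proposition \ref{prop:properties_ajac} for the second. The extra coend bookkeeping you spell out is implicit in the paper's (much terser) argument.
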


\begin{proof}
The first statement follows  from Theorem \ref{thm:monoidal_otimes_catlie_gr}. The second statement follows from Proposition \ref{prop:otimes_gr_poly} by using the fact that $\ajac(n)_d$ is polynomial of degree $2n+d$ with respect to $\gr\op$, so that there is a natural isomorphism
$ 
\ajac(n)_d \otimes_\gr \ajac (m) \cong \ajac(n)_d \otimes_\gr \ajac (m)_{2n+d}$.
\end{proof}

\begin{rem}
Contrary to the behaviour exhibited in Theorem \ref{thm:tower_isomorphism}, composition on $\ajac(*)$ does not correspond to a tower of monoid structures on $\ajac(*)_d$, for $d \in \nat$. This justifies the additional work in Section \ref{sect:bifunctors} in developing the framework based upon $\f_\omega (\gr\op; \propoly)$.
\end{rem}

Putting these facts together, one has:

\begin{thm}
\label{thm:ajac}
The (pro)bifunctors $\ajac(*)$ have the structure of a unital, $\nat$-graded associative monoid in 
the monoidal category 
$(\f_\omega (\gr\op; \propoly), \ogr, \qgr_\bullet P_\zed)$, with $\ajac(0)$ isomorphic to $(\qgr_\bullet \rat \gr)\op$ considered as a monoid.
\end{thm}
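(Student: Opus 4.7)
The plan is to realize $\pjac = \bigoplus_n \pjac_n$ as an $\nat$-graded associative monoid in $(\bimod[\cat\lie], \otimes_{\cat\lie}, \cat\lie)$, and then transport this structure along the monoidal equivalence of Theorem \ref{thm:monoidal_otimes_catlie_gr}. Since $\pjac$ is a $\rat$-linear category (indeed a PROP) with the same objects as $\cat\lie$, containing $\cat\lie$ as a wide subcategory, composition endows the $\cat\lie$-bimodule $\pjac$ with a multiplication $\mu : \pjac \otimes_{\cat\lie} \pjac \to \pjac$ that is associative and unital with unit the inclusion $\cat\lie \hookrightarrow \pjac$. This is the standard translation between $\rat$-linear categories extending $\cat\lie$ and associative monoids in $(\bimod[\cat\lie], \otimes_{\cat\lie}, \cat\lie)$.

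Next I would verify that the $\nat$-grading of Proposition \ref{prop:nat_grading_ojac} is compatible with $\mu$, namely that $\mu$ restricts to $\pjac_m \otimes_{\cat\lie} \pjac_n \to \pjac_{m+n}$. Given $f \in \pjac_m(A,B)$ and $g \in \pjac_n(B,C)$, represented by uni-trivalent graphs of degrees $m + |A|$ and $n + |B|$ respectively, the composite $gf$ is obtained by glueing the univalent vertices labelled by $B$ and has degree $(m + |A|) + (n + |B|) - |B| = m + n + |A|$, placing it in grading $m+n$ of $\pjac(A,C)$, as required. Thus $\pjac$ is an $\nat$-graded associative monoid with $\pjac_0 = \cat\lie$ as unit component.

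Now I would apply Theorem \ref{thm:monoidal_otimes_catlie_gr}, which identifies $(\bimod[\cat\lie], \otimes_{\cat\lie}\op, \cat\lie)$ with $(\f_\omega(\gr\op;\propoly), \ogr, \qgr_\bullet \pbif)$ as symmetric monoidal categories. Since the notion of a monoid depends only on the underlying monoidal data and behaves the same way for $\otimes_{\cat\lie}$ and for $\otimes_{\cat\lie}\op$ (only the reading of the variance changes), $\pjac$ transports to an $\nat$-graded associative monoid $\ajac(*) = \bigoplus_n \ajac(n)$ in $(\f_\omega(\gr\op;\propoly), \ogr, \qgr_\bullet \pbif)$. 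Concretely, the multiplication in bi-degree $(m,n)$ is the map $\ajac(n) \ogr \ajac(m) \to \ajac(m+n)$ of Lemma \ref{lem:composition_ajac}, the reversal being forced by the use of $\otimes_{\cat\lie}\op$; the unit of the monoid is the image of $\cat\lie \hookrightarrow \pjac$, which lands in $\ajac(0) \cong \qgr_\bullet \pbif$. To finish I would identify $\ajac(0)$ with $(\qgr_\bullet \rat\gr)\op$ as a monoid: under the monoidal equivalence, the monoid structure on $\ajac(0)$ comes from the composition of $\pjac_0 = \cat\lie$, which by Theorem \ref{thm:tower_isomorphism}, applied one polynomial degree at a time and assembled into the tower, corresponds exactly to the composition of $(\qgr_\bullet \rat\gr)\op$.

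The main obstacle is not conceptual but bookkeeping: one must carefully track variance throughout, so that the opposite monoidal structure on $\bimod[\cat\lie]$ used in Theorem \ref{thm:monoidal_otimes_catlie_gr} indeed produces the composition $\ajac(n) \ogr \ajac(m) \to \ajac(m+n)$ (rather than its reverse), and so that the identification of the unit component $\ajac(0)$ with $(\qgr_\bullet \rat\gr)\op$ truly preserves the composition laws on both sides — both of which are ensured by the naturality packaged into Theorems \ref{thm:monoidal_otimes_catlie_gr} and \ref{thm:tower_isomorphism}, but require careful checking.
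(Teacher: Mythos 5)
Your proposal is correct and follows essentially the same route as the paper, which derives Theorem \ref{thm:ajac} by "putting together" Proposition \ref{prop:nat_grading_ojac} (the grading and the identification $\pjac_0 = \cat\lie$), Lemma \ref{lem:composition_ajac} (transporting the composition maps via Theorem \ref{thm:monoidal_otimes_catlie_gr}), and Theorem \ref{thm:tower_isomorphism} (identifying $\ajac(0)$ with $(\qgr_\bullet\rat\gr)\op$). Your explicit degree bookkeeping for the compatibility of composition with the grading is a re-derivation of what is asserted in the proof of Proposition \ref{prop:nat_grading_ojac}, and your attention to the variance reversal built into $\otimes_{\cat\lie}\op$ matches the form of Lemma \ref{lem:composition_ajac} precisely.
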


\subsection{Relating to $\mathbf{A}$}
\label{subsect:sketchy}

This section sketches how  $\ajac(*)$ is related to Habiro and Massuyeau's category $\mathbf{A}$ \cite{MR4321214}.

By definition, $\ob \mathbf{A}= \nat$  and $\mathbf{A}$ is $\nat$-graded. It is equipped with an embedding $\rat \gr\op \hookrightarrow \mathbf{A}$ that sends the free group $\zed^{\star r}$ to the object $r \in \nat$ and induces an equivalence $\rat \gr\op \cong \mathbf{A}_0$. In particular, for each grading $n \in \nat$, $\mathbf{A}_n(-,-)$ is a bifunctor in $\f (\gr\op \times \gr) \cong \f (\gr\op ; \f (\gr))$.

Post-composing with $\qgr_\bullet$ gives 
$
\qgr_\bullet \mathbf{A}_n (-, -) \in \f (\gr\op; \propoly).
$ 
More precisely, one has the analogue of Proposition \ref{prop:properties_ajac}, namely that $\qgr_d \mathbf{A}_n$ has polynomial degree at most $2n +d$ with respect to $\gr\op$. It follows in particular that 
\[
\qgr_\bullet \mathbf{A}_n (-, -) \in \f_\omega (\gr\op; \propoly).
\]
Moreover, the composition of $\mathbf{A}$ induces an associative composition operation in $\f _\omega (\gr\op; \propoly)$:
\[
\qgr_\bullet \mathbf{A} \ogr \qgr_\bullet \mathbf{A} 
\rightarrow 
\qgr_\bullet \mathbf{A}
\] 
making $\qgr_\bullet \mathbf{A}$ into an associative monoid with unit $\qgr_\bullet \mathbf{A}_0 \cong (\qgr_\bullet \rat \gr)\op$.

\begin{thm}
\label{thm:ajac_versus_A}
There is an isomorphism of $\nat$-graded associative monoids in 
$(\f_\omega (\gr\op; \propoly), \ogr, \qgr_\bullet P_\zed)$:
\[
\ajac (*) \cong \qgr_\bullet \mathbf{A}.
\]
\end{thm}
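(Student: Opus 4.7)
The plan is to use Habiro and Massuyeau's characterization of $\mathbf{A}$ (their \cite[Theorem 5.11]{MR4321214}) as the universal $\rat$-linear symmetric monoidal category receiving a cocommutative Casimir Hopf algebra, applied to a canonical Casimir Hopf algebra structure on $U\malcev$ in $(\propoly, \obar, \rat)$.

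First, I would equip $U\malcev \cong \qgr_\bullet P_\zed$ (Proposition \ref{prop:U_malcev}) with a cocommutative Casimir Hopf algebra structure in $\propoly$. The Hopf algebra structure is already in hand from Proposition \ref{prop:P_zed_propoly}. The Casimir element is the morphism $\rat \to \malcev^{\obar 2}$ in $\propoly$ obtained by applying $(-)\otimes_{\cat\lie} \underline{\malcev}$ to the generator $c \in \pjac_1(\mathbf{0},\mathbf{2})$ of Lemma \ref{lem:no_trivalent}. The Casimir axioms hold because $\malcev$ is a Casimir Lie algebra in $\propoly$ (the AS and IHX relations defining $\pjac$ are precisely the Casimir constraints), and the lift of a Casimir Lie algebra to a Casimir Hopf algebra via $U$ is carried out in \cite[Section 7]{MR4321214}.

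By the universal property of $\mathbf{A}$, this structure determines a strict symmetric monoidal $\rat$-linear functor $\Psi : \mathbf{A}^{op} \to \propoly$ sending $n$ to $(U\malcev)^{\obar n} \cong \qgr_\bullet P_{\zed^{\star n}}$, and whose restriction to $\mathbf{A}_0 \cong \rat\gr^{op}$ recovers the canonical functor $\zed^{\star n} \mapsto \qgr_\bullet P_{\zed^{\star n}}$. Naturality of $\Psi$ in source and target, combined with the exponential property $\qgr_\bullet \pbif \cong \Phi(\qgr_\bullet P_\zed)$ of Lemma \ref{lem:expo_propoly} and the duality adjunction of Proposition \ref{prop:D_propoly_analytic_adjoint}, packages $\Psi$ into a morphism $\qgr_\bullet \mathbf{A} \to \ajac(*)$ in $(\f_\omega(\gr\op; \propoly), \ogr, \qgr_\bullet \pbif)$. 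That this is a morphism of $\nat$-graded associative monoids follows from the functoriality of $\Psi$ together with Theorem \ref{thm:bifunctors_sym_monoidal} and Theorem \ref{thm:monoidal_otimes_catlie_gr}.

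It remains to show this morphism is an isomorphism, which by Theorem \ref{thm:bifunctors_sym_monoidal} reduces to showing that the induced map of $\cat\lie$-bimodules between the bimodule attached to $\qgr_\bullet \mathbf{A}_k$ and $\pjac_k$ is an isomorphism for each $k$. Surjectivity follows from Proposition \ref{prop:chord_generates}: $\pjac_k$ is generated as a $\cat\lie$-bimodule by $\chord_k$, and the chord-type generators lie in the image of $\Psi$ by construction. The main obstacle will be injectivity, since Habiro--Massuyeau impose more relations in $\mathbf{A}$ (notably the $4T$-relation and the commutation between $c$ and $\iota$) than $\pjac$ imposes; the clean way to dispose of this is to observe that these extra relations are already automatic inside any primitively-generated cocommutative Casimir Hopf algebra of the form $U\g$ (they are consequences of cocommutativity and the Casimir Lie bracket), so that $\Psi$ identifies $\mathbf{A}_k$ with $\pjac_k$ itself rather than with a proper quotient, and a bi-arity-by-bi-arity finiteness check using Lemma \ref{lem:finiteness_pjac} and Proposition \ref{prop:properties_ajac} closes the argument.
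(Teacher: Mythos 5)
Your approach via the universal property of $\mathbf{A}$ is genuinely different from the paper's sketch (which proceeds by the bimodule-comparison methods of Theorem \ref{thm:tower_isomorphism}), and it is an attractive idea, but the foundational step fails: $U\malcev$ does \emph{not} carry a Casimir Hopf algebra structure in $(\propoly, \obar, \rat)$.

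The Casimir element would have to be a morphism $\rat \to \malcev^{\obar 2}$ in $\propoly$. But $\malcev \subset \qgr_\bullet \pbar$ is reduced (it vanishes at the trivial group, since $\pbar(\zed^{\star 0})=0$), hence so is $\malcev^{\obar 2}$, while $\rat$ is the constant tower. A morphism in $\propoly$ from a constant functor to a reduced functor is necessarily zero: at each polynomial level it is a natural transformation from the constant functor $\rat$ on $\gr$, which is determined by its value at $\{e\}$, and that value must lie in $(\malcev^{\obar 2})_n(\{e\}) = 0$. So the only candidate Casimir element is zero, and the universal property of $\mathbf{A}$ applied with target $\propoly$ cannot see the positively-graded part of $\mathbf{A}$ at all — any functor you build this way factors through $\mathbf{A}_0 \cong \rat\gr\op$.

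There is also a confusion in how the element $c \in \pjac_1(\mathbf{0},\mathbf{2})$ is supposed to be converted: by Yoneda, $c$ gives a morphism of right $\cat\lie$-modules $\cat\lie(-,\mathbf{2}) \to \pjac_1(\mathbf{0},-)$, and applying $(-)\otimes_{\cat\lie}\underline{\malcev}$ produces a morphism $\malcev^{\obar 2} \to \pjac_1(\mathbf{0},-)\otimes_{\cat\lie}\underline{\malcev}$, which is a component of the (pro)bifunctor $\ajac(1)$ evaluated in the $\gr\op$-variable at $\zed^{\star 0}$ — not a morphism $\rat \to \malcev^{\obar 2}$. The deeper point is that the Casimir element of $\mathbf{A}$ carries $\nat$-grading degree $1$, and this grading is exactly the one that matches the grading on $\ajac(*)$; to deploy the universal property one would have to work in a $\nat$-graded symmetric monoidal target in which the Casimir element lives in grading $1$, not in the ungraded $\propoly$. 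The paper's sketch instead constructs the isomorphisms $\ajac(n) \cong \qgr_\bullet\mathbf{A}_n$ degree by degree by comparing $\cat\lie$-bimodule presentations, and then checks compatibility of the compositions; this avoids needing a Casimir structure on $\malcev$ altogether.
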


\begin{proof}
(Indications.) 
The proof that there is a natural isomorphism $\ajac(n) \cong \qgr_\bullet \mathbf{A}_n$ in  $\f_\omega (\gr\op; \propoly)$ uses the methods employed in the proof of Theorem \ref{thm:tower_isomorphism}. (A partial result in this sense is given in \cite[Theorem 6.7]{2022arXiv220210907V}.)

A careful analysis of the definition of $\mathbf{A}$ in \cite[Section 4]{MR4321214} shows that these natural isomorphisms respect the respective monoid structures.  (Heuristically, this follows from the presentation of the category $\mathbf{A}$ that is given in \cite[Theorem 5.11]{MR4321214} and is related to \cite[Theorem 7.1.1]{MR1913297}.)
\end{proof}

\begin{rem}
Using the constructions of this paper, the monoid $\ajac (*)$ arises directly from the Casimir PROP $\pjac$ constructed from the Lie operad. The significance of Theorem \ref{thm:ajac_versus_A} is that it shows that the objects  $\ajac (n)$ arise from honest bifunctors of $\f (\gr\op \times \gr)$, rather than just $\f_\omega (\gr\op; \propoly)$.
\end{rem}


\begin{thebibliography}{{Pow}21}

\bibitem[CDM12]{MR2962302}
S.~Chmutov, S.~Duzhin, and J.~Mostovoy, \emph{Introduction to {V}assiliev knot
  invariants}, Cambridge University Press, Cambridge, 2012. \MR{2962302}

\bibitem[DPV16]{MR3505136}
Aur\'{e}lien Djament, Teimuraz Pirashvili, and Christine Vespa,
  \emph{Cohomologie des foncteurs polynomiaux sur les groupes libres}, Doc.
  Math. \textbf{21} (2016), 205--222. \MR{3505136}

\bibitem[EML54]{MR65162}
Samuel Eilenberg and Saunders Mac~Lane, \emph{On the groups {$H(\Pi,n)$}. {II}.
  {M}ethods of computation}, Ann. of Math. (2) \textbf{60} (1954), 49--139.
  \MR{65162}

\bibitem[Fre98]{MR1617616}
Benoit Fresse, \emph{Lie theory of formal groups over an operad}, J. Algebra
  \textbf{202} (1998), no.~2, 455--511. \MR{1617616}

\bibitem[Fre09]{MR2494775}
\bysame, \emph{Modules over operads and functors}, Lecture Notes in
  Mathematics, vol. 1967, Springer-Verlag, Berlin, 2009. \MR{2494775}

\bibitem[Fre17]{MR3643404}
\bysame, \emph{Homotopy of operads and {G}rothendieck-{T}eichm\"{u}ller groups.
  {P}art 1}, Mathematical Surveys and Monographs, vol. 217, American
  Mathematical Society, Providence, RI, 2017, The algebraic theory and its
  topological background. \MR{3643404}

\bibitem[HM21]{MR4321214}
Kazuo Habiro and Gw\'{e}na\"{e}l Massuyeau, \emph{The {K}ontsevich integral for
  bottom tangles in handlebodies}, Quantum Topol. \textbf{12} (2021), no.~4,
  593--703. \MR{4321214}

\bibitem[HPV15]{MR3340364}
Manfred Hartl, Teimuraz Pirashvili, and Christine Vespa, \emph{Polynomial
  functors from algebras over a set-operad and nonlinear {M}ackey functors},
  Int. Math. Res. Not. IMRN (2015), no.~6, 1461--1554. \MR{3340364}

\bibitem[HV02]{MR1913297}
Vladimir Hinich and Arkady Vaintrob, \emph{Cyclic operads and algebra of chord
  diagrams}, Selecta Math. (N.S.) \textbf{8} (2002), no.~2, 237--282.
  \MR{1913297}

\bibitem[Jen55]{MR68540}
S.~A. Jennings, \emph{The group ring of a class of infinite nilpotent groups},
  Canadian J. Math. \textbf{7} (1955), 169--187. \MR{68540}

\bibitem[{Kat}21]{2021arXiv210206382K}
Mai {Katada}, \emph{{Actions of automorphism groups of free groups on spaces of
  Jacobi diagrams. I}}, arXiv e-prints (2021), arXiv:2102.06382.

\bibitem[KM01]{MR1854112}
M.~Kapranov and Yu. Manin, \emph{Modules and {M}orita theorem for operads},
  Amer. J. Math. \textbf{123} (2001), no.~5, 811--838. \MR{1854112}

\bibitem[Mag37]{MR1581549}
Wilhelm Magnus, \emph{\"{U}ber {B}eziehungen zwischen h\"{o}heren
  {K}ommutatoren}, J. Reine Angew. Math. \textbf{177} (1937), 105--115.
  \MR{1581549}

\bibitem[Mit72]{MR294454}
Barry Mitchell, \emph{Rings with several objects}, Advances in Math. \textbf{8}
  (1972), 1--161. \MR{294454}

\bibitem[MKS04]{MR2109550}
Wilhelm Magnus, Abraham Karrass, and Donald Solitar, \emph{Combinatorial group
  theory}, second ed., Dover Publications, Inc., Mineola, NY, 2004,
  Presentations of groups in terms of generators and relations. \MR{2109550}

\bibitem[Pas79]{MR537126}
Inder Bir~S. Passi, \emph{Group rings and their augmentation ideals}, Lecture
  Notes in Mathematics, vol. 715, Springer, Berlin, 1979. \MR{537126}

\bibitem[{Pow}21]{2021arXiv211001934P}
Geoffrey {Powell}, \emph{{On analytic contravariant functors on free groups}},
  arXiv e-prints (2021), arXiv:2110.01934.

\bibitem[PV18]{PV}
G.~{Powell} and C.~{Vespa}, \emph{{Higher Hochschild homology and exponential
  functors}}, ArXiv:1802.07574 (2018).

\bibitem[Qui68]{MR231919}
Daniel~G. Quillen, \emph{On the associated graded ring of a group ring}, J.
  Algebra \textbf{10} (1968), 411--418. \MR{231919}

\bibitem[Qui69]{MR258031}
Daniel Quillen, \emph{Rational homotopy theory}, Ann. of Math. (2) \textbf{90}
  (1969), 205--295. \MR{258031}

\bibitem[{Ves}22]{2022arXiv220210907V}
Christine {Vespa}, \emph{{On the functors associated with beaded open Jacobi
  diagrams}}, arXiv e-prints (2022), arXiv:2202.10907.

\end{thebibliography}

\providecommand{\bysame}{\leavevmode\hbox to3em{\hrulefill}\thinspace}
\providecommand{\MR}{\relax\ifhmode\unskip\space\fi MR }
\providecommand{\MRhref}[2]{%
  \href{http://www.ams.org/mathscinet-getitem?mr=#1}{#2}
}
\providecommand{\href}[2]{#2}

\end{document}